\newtheorem{thm}{Theorem}[section]
\newtheorem{cor}[thm]{Corollary}
\newtheorem{prop}[thm]{Proposition}
\theoremstyle{definition}
\newtheorem{dfn}[thm]{Definition}
\newtheorem{ex}[thm]{Example}
\newtheorem{claim}[thm]{Claim}
\newtheorem{lem}[thm]{Lemma}
\newtheorem{fact}[thm]{Fact}
\newtheorem{cond}[thm]{Condition}
\theoremstyle{remark}
\newtheorem{rem}[thm]{Remark}
\newcommand{\spmatrix}[2]{\big[\!\!\!\begin{array}{c}{}_{#1}\\ {}^{#2}\end{array}\!\!\!\!\big]}
\numberwithin{equation}{section}
\begin{document}
%\subjclass[2000]{,}

\title[A simultaneous generalization of mutation and recollement]{A simultaneous generalization of mutation and recollement of cotorsion pairs on a triangulated category}

\author{Hiroyuki NAKAOKA}
\address{Research and Education Assembly, Science and Engineering Area, Research Field in Science, Kagoshima University, 1-21-35 Korimoto, Kagoshima, 890-0065 Japan}

\email{nakaoka@sci.kagoshima-u.ac.jp}

\thanks{The author wishes to thank Professor Yann Palu for stimulating arguments, plenty of ideas, and useful comments and advices.}
\thanks{The author wishes to thank Professor Steffen Koenig and Professor Jorge Vit\'{o}ria for introducing him the notion of recollement, and for their advices.}
\thanks{The author wishes to thank Professor Osamu Iyama for introducing their results to him.}
\thanks{This work is supported by JSPS KAKENHI Grant Numbers 25800022,\, 24540085.}

\begin{abstract}
In this article, we introduce the notion of {\it concentric twin cotorsion pair} on a triangulated category. This notion contains the notions of $t$-structure, cluster tilting subcategory, co-$t$-structure and functorally finite rigid subcategory as examples. Moreover, a recollement of triangulated categories can be regarded as a special case of concentric twin cotorsion pair.

To any concentric twin cotorsion pair, we associate a pretriangulated subquotient category. This enables us to give a simultaneous generalization of the Iyama-Yoshino reduction and the recollement of cotorsion pairs. This allows us to give a generalized mutation on cotorsion pairs defined by the concentric twin cotorsion pair.
\end{abstract}

\maketitle

\tableofcontents

\section{Introduction and Preliminaries}

Let $\mathcal{C}$ be a triangulated category with the shift functor $[1]$, throughout this article.
A cotorsion pair $(\mathcal{U},\mathcal{V})$, essentially equal to the notion of a torsion pair (\cite[Definition 2.2]{IYo}) on $\mathcal{C}$, is a unifying notion of $t$-structure (\cite[D\'{e}finition 1.3.1]{BBD}) and cluster tilting subcategory (\cite[section 2.1]{KR}, \cite[Definition 3.1]{KZ}).
\begin{dfn}\label{DefCPC}
Let $\mathcal{U},\mathcal{V}\subseteq\mathcal{C}$ be full additive subcategories closed under isomorphisms and direct summands.
The pair $(\mathcal{U},\mathcal{V})$ is called a {\it cotorsion pair} on $\mathcal{C}$ if it satisfies the following.
\begin{itemize}
\item[{\rm (i)}] $\mathcal{C}=\mathcal{U}\ast\mathcal{V} [1]$.
\item[{\rm (ii)}] $\mathrm{Ext}^1(\mathcal{U},\mathcal{V})=0$, where $\mathrm{Ext}^1(X,Y)=\mathcal{C}(X,Y[1])$ for any $X,Y\in\mathcal{C}$.
\end{itemize}
Denote the class of cotorsion pairs on $\mathcal{C}$ by $\mathfrak{CP}(\mathcal{C})$.
In a cotorsion pair $(\mathcal{U},\mathcal{V})$, subcategories $\mathcal{U}$ and $\mathcal{V}$ determine each other, as right and left orthogonal categories. Indeed, we have $\mathcal{V}=\mathcal{U}[-1]^{\perp}$ and $\mathcal{U}={}^{\perp}\mathcal{V}[1]$. Here, $\mathcal{U}[-1]^{\perp}$ denotes the full subcategory of $\mathcal{C}$ consisting of objects $C\in\mathcal{C}$ satisfying $\mathcal{C}(\mathcal{U}[-1],C)=0$. Similarly for ${}^{\perp}\mathcal{V}[1]$. In particular, $\mathcal{U}$ and $\mathcal{V}$ are closed under extensions. A pair $(\mathcal{U},\mathcal{V})$ is cotorsion pair if and only if $(\mathcal{U}[-1],\mathcal{V})$ is {\it torsion pair}.
\end{dfn}
In the above definition, for any pair of subcategories $\mathcal{X},\mathcal{Y}\subseteq\mathcal{C}$, we denoted by $\mathcal{X}\ast\mathcal{Y}\subseteq\mathcal{C}$ the full subcategory of $\mathcal{C}$ consisting of those $C\in\mathcal{C}$ admitting a distinguished triangle
\begin{equation}\label{XCY}
X\to C\to Y\to X[1]
\end{equation}
with $X\in\mathcal{X}$ and $Y\in\mathcal{Y}$. Abbreviately, for any pair of objects $X,Y\in\mathcal{C}$,  we denote by $X\ast Y\subseteq\mathcal{C}$ the full subcategory of $\mathcal{C}$ consisting of those $C\in\mathcal{C}$ admitting a distinguished triangle $(\ref{XCY})$.

\medskip

The aim of this article is to give a simultaneous generalization of the following two constructions. The one is the {\it mutation} in a triangulated category, originally due to Iyama and Yoshino \cite{IYo}. Mutations in triangulated categories are investigated by several researchers, such as \cite{AI}, \cite{IYo}, \cite{ZZ1}. The key result is the following.
\begin{fact}\label{FactIYo}$($\cite[Theorem 4.2]{IYo}$)$
Let $\mathcal{I}\subseteq\mathcal{Z}\subseteq\mathcal{C}$ be full additive subcategories closed under isomorphisms and direct summands. Assume $\mathcal{Z}$ is closed under extensions. If $(\mathcal{Z},\mathcal{Z})$ is an $\mathcal{I}$-mutation pair (\cite[Definition 2.5]{IYo}), i.e., if it satisfies
\[ \mathcal{Z}\subseteq(\mathcal{Z}[-1]\ast\mathcal{I})\cap\mathcal{I}[-1]^{\perp}\quad\text{and}\quad\mathcal{Z}\subseteq(\mathcal{I}\ast\mathcal{Z}[1])\cap {}^{\perp}\mathcal{I}[1], \]
then the ideal quotient $\mathcal{Z}/\mathcal{I}$ becomes a triangulated category.
\end{fact}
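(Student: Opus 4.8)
The plan is to equip the ideal quotient $\mathcal{Z}/\mathcal{I}$ with an explicit triangulated structure, in the spirit of the classical subquotient constructions. Since $\mathcal{I}\subseteq\mathcal{Z}$ are full additive subcategories closed under direct summands, $\mathcal{Z}/\mathcal{I}$ is automatically additive, so the work is to produce a shift autoequivalence, a class of candidate distinguished triangles, and a verification of (TR1)--(TR4). For the shift, use the hypothesis $\mathcal{Z}\subseteq\mathcal{Z}[-1]\ast\mathcal{I}$: for each $X\in\mathcal{Z}$ it supplies, after a rotation, a distinguished triangle
\[ X\to I_X\to X\langle 1\rangle\to X[1] \]
in $\mathcal{C}$ with $I_X\in\mathcal{I}$ and $X\langle 1\rangle\in\mathcal{Z}$; fix one such choice for every $X$. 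Given a morphism $u\colon X\to Y$ in $\mathcal{Z}$, the composite $X\to Y\to I_Y$ factors through $X\to I_X$ because the obstruction to doing so lands in $\mathrm{Ext}^1(X\langle 1\rangle,I_Y)\subseteq\mathrm{Ext}^1(\mathcal{Z},\mathcal{I})$, which vanishes by $\mathcal{Z}\subseteq{}^{\perp}\mathcal{I}[1]$; completing to a morphism of triangles (TR3 in $\mathcal{C}$) yields a morphism $X\langle 1\rangle\to Y\langle 1\rangle$, well defined modulo morphisms factoring through $\mathcal{I}$ by the usual argument. Checking compatibility with composition and identities gives an additive endofunctor $\langle 1\rangle$ of $\mathcal{Z}/\mathcal{I}$, independent up to natural isomorphism of the chosen triangles. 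It is an equivalence: essential surjectivity follows from $\mathcal{Z}\subseteq\mathcal{I}\ast\mathcal{Z}[1]$, which exhibits each $Y\in\mathcal{Z}$ in a triangle $Z'\to I\to Y\to Z'[1]$ with $Z'\in\mathcal{Z}$, $I\in\mathcal{I}$, so that $Y\cong Z'\langle 1\rangle$ in $\mathcal{Z}/\mathcal{I}$; fullness and faithfulness reduce, after a diagram chase, to the vanishing $\mathrm{Ext}^1(\mathcal{I},\mathcal{Z})=0$, which is $\mathcal{Z}\subseteq\mathcal{I}[-1]^{\perp}$. Fix a quasi-inverse and call it $\langle -1\rangle$.

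Next, define the candidate triangles. To a morphism $u\colon X\to Y$ in $\mathcal{Z}$ associate the homotopy pushout of $u$ along the fixed map $X\to I_X$: complete the morphism $X\to Y\oplus I_X$ with components $-u$ and $X\to I_X$ to a distinguished triangle $X\to Y\oplus I_X\to Z\to X[1]$ in $\mathcal{C}$. The octahedral axiom then produces a distinguished triangle $Y\to Z\to X\langle 1\rangle\to Y[1]$ in $\mathcal{C}$ (together with one of the form $I_X\to Z\to\mathrm{Cone}(u)\to I_X[1]$); in particular $Z$ is an extension of $X\langle 1\rangle\in\mathcal{Z}$ by $Y\in\mathcal{Z}$, hence $Z\in\mathcal{Z}$ since $\mathcal{Z}$ is closed under extensions. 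Declare the image in $\mathcal{Z}/\mathcal{I}$ of $X\to Y\to Z\to X\langle 1\rangle$, with the last map coming from the triangle $Y\to Z\to X\langle 1\rangle\to Y[1]$, to be a \emph{standard triangle}, and a \emph{distinguished triangle} in $\mathcal{Z}/\mathcal{I}$ to be any sequence isomorphic in $\mathcal{Z}/\mathcal{I}$ to a standard one. The first check is that this is well defined: another homotopy pushout of the same data is isomorphic already in $\mathcal{C}$, and replacing $u$ by an equal morphism in $\mathcal{Z}/\mathcal{I}$ — one differing from it by a morphism factoring through $\mathcal{I}$ — yields an isomorphic standard triangle, using once more the two $\mathrm{Ext}$-vanishing conditions to identify the resulting homotopy pushouts in $\mathcal{Z}/\mathcal{I}$.

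Then verify the axioms. (TR1): closure under isomorphism holds by definition; the split triangle $X\to X\to I_X\to X\langle 1\rangle$ (with $I_X\cong 0$ in $\mathcal{Z}/\mathcal{I}$) is the standard triangle of $\mathrm{id}_X$; and every morphism of $\mathcal{Z}/\mathcal{I}$ is represented by a morphism of $\mathcal{Z}$, which sits in a standard triangle by construction. (TR2): rotation — show that the rotation of the standard triangle of $u$ is isomorphic in $\mathcal{Z}/\mathcal{I}$ to the standard triangle of the map $Y\to Z$, by an octahedral diagram chase combined with the description of $\langle 1\rangle$ already obtained. (TR3): given a commutative square between the first two terms of two standard triangles, produce the map on the third terms by applying TR3 in $\mathcal{C}$ to the defining homotopy-pushout diagrams, then check it descends to $\mathcal{Z}/\mathcal{I}$ and is well defined there. (TR4): for composable $u\colon X\to Y$ and $u'\colon Y\to Y'$ in $\mathcal{Z}$, assemble the octahedron in $\mathcal{Z}/\mathcal{I}$ from the three homotopy pushouts (of $u$, of $u'$ and of $u'u$) together with a genuine octahedron in $\mathcal{C}$ relating them, verifying at each vertex and edge that the comparison morphisms supplied by $\mathcal{C}$ land in $\mathcal{Z}$ and descend to the quotient.

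I expect the main difficulty to be not any single axiom but the pervasive bookkeeping that all of this requires: the objects $I_X$ and $X\langle 1\rangle$, the homotopy pushouts $Z$, and every comparison morphism depend on choices, and one must check at each step that different choices agree in $\mathcal{Z}/\mathcal{I}$ — and it is exactly the mutation-pair hypotheses (the existence statements $\mathcal{Z}\subseteq\mathcal{Z}[-1]\ast\mathcal{I}$ and $\mathcal{Z}\subseteq\mathcal{I}\ast\mathcal{Z}[1]$, the orthogonality $\mathcal{Z}\subseteq\mathcal{I}[-1]^{\perp}\cap{}^{\perp}\mathcal{I}[1]$, and extension-closedness of $\mathcal{Z}$) that cause these ambiguities to collapse. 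Among the individual axioms, the octahedral axiom (TR4) is the most demanding, since several homotopy pushouts must be glued coherently; the remaining points are careful but essentially routine adaptations of the classical subquotient arguments.
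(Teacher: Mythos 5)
Your outline is correct and, modulo the routine verifications you defer, it is essentially the standard Iyama--Yoshino argument that the paper itself invokes for this Fact (which is quoted from \cite{IYo} rather than reproved), and it is also the same strategy the paper uses for its generalization: the shift $\langle1\rangle$ of Definition \ref{Def_poshi_neshi}, standard triangles built from the cone of $\begin{bmatrix}f\\ \iota_X\end{bmatrix}\colon X\to Y\oplus I_X$ as in Lemma \ref{LemInvariantStan}, and the axiom checks of Proposition \ref{PropZIRightTria}, which specialize back to this statement via Corollary \ref{CorIIItoMut}. Your identification of where each hypothesis enters --- extension-closedness of $\mathcal{Z}$ to keep the cone in $\mathcal{Z}$, $\mathrm{Ext}^1(\mathcal{Z},\mathcal{I})=0$ to define $\langle1\rangle$, and $\mathrm{Ext}^1(\mathcal{I},\mathcal{Z})=0$ to make it an equivalence --- is exactly right.
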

More precisely, we give a generalization of the following version of mutation for cotorsion pairs, given by Zhou and Zhu.
\begin{fact}\label{FactZZ}$($\cite[Theorem 3.5]{ZZ1}$)$
Let $\mathcal{I}\subseteq\mathcal{C}$ be a functorially finite additive rigid subcategory, closed under isomorphisms and direct summands. Assume $\mathcal{I}[-1]^{\perp}={}^{\perp}\mathcal{I}[1]$ holds. Denote this equal subcategory by $\mathcal{Z}$. Then, $(\mathcal{Z},\mathcal{Z})$ is an $\mathcal{I}$-mutation pair, and there is a bijection between the following classes.
\begin{enumerate}
\item The class of cotorsion pairs $(\mathcal{U},\mathcal{V})$ on $\mathcal{C}$ satisfying $\mathcal{I}\subseteq \mathcal{U}\subseteq\mathcal{Z}$ $($or equivalently, $\mathcal{I}\subseteq\mathcal{V}\subseteq\mathcal{Z}$$)$.
\item The class of cotorsion pairs on $\mathcal{Z}/\mathcal{I}$.
\end{enumerate}
Through this bijection, {\it mutation} of cotorsion pairs in the class {\rm (1)}, is defined by pulling back the shift $\mathbb{Z}$-action on the class {\rm (2)}.
\end{fact}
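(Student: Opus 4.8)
The plan is to construct the bijection by hand in both directions and verify the cotorsion-pair axioms by transporting $\ast$-decompositions between $\mathcal C$ and the ideal quotient $\mathcal Z/\mathcal I$, using functorial finiteness of $\mathcal I$ to repair objects into $\mathcal Z$ and rigidity of $\mathcal I$ to kill the $\mathrm{Ext}$-groups that arise. First I would check that the reduction is available, i.e.\ that $(\mathcal Z,\mathcal Z)$ is an $\mathcal I$-mutation pair: the orthogonality inclusions $\mathcal Z\subseteq\mathcal I[-1]^{\perp}$ and $\mathcal Z\subseteq{}^{\perp}\mathcal I[1]$ are precisely the standing hypothesis $\mathcal Z=\mathcal I[-1]^{\perp}={}^{\perp}\mathcal I[1]$, and for the $\ast$-inclusions, given $Z\in\mathcal Z$ I take a left $\mathcal I$-approximation $Z\to I$, complete it to a triangle $Z\to I\to Z'\to Z[1]$, and apply $\mathcal C(-,I'[1])$ for $I'\in\mathcal I$; since $Z\to I$ is an approximation and $\mathrm{Ext}^1(\mathcal I,\mathcal I)=0$, one reads off $Z'\in{}^{\perp}\mathcal I[1]=\mathcal Z$, giving $\mathcal Z\subseteq\mathcal I\ast\mathcal Z[1]$, and dually $\mathcal Z\subseteq\mathcal Z[-1]\ast\mathcal I$ from a right $\mathcal I$-approximation. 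Running the same argument on an arbitrary object of $\mathcal C$ (approximating by $\mathcal I[1]$, resp.\ $\mathcal I[-1]$) also yields the decompositions $\mathcal C=\mathcal Z\ast\mathcal I[1]$ and $\mathcal C=\mathcal I[-1]\ast\mathcal Z$, which I will use below. By Fact~\ref{FactIYo}, $\mathcal Z/\mathcal I$ is triangulated, and I would use the explicit model from \cite{IYo}: the shift $\langle 1\rangle$ is computed from $\mathcal I$-envelopes $Z\to I_Z\to Z\langle 1\rangle\to Z[1]$, and every distinguished triangle of $\mathcal Z/\mathcal I$ is, up to isomorphism, induced by a distinguished triangle of $\mathcal C$ with all terms in $\mathcal Z$. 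Note also $\mathcal I\subseteq\mathcal Z$ by rigidity, and that $\mathcal Z$ is extension-closed.

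Next I would define the two maps. In one direction, $(\mathcal U,\mathcal V)\mapsto(\mathcal U/\mathcal I,\mathcal V/\mathcal I)$; the equivalence of ``$\mathcal I\subseteq\mathcal U\subseteq\mathcal Z$'' and ``$\mathcal I\subseteq\mathcal V\subseteq\mathcal Z$'' is immediate from $\mathcal V=\mathcal U[-1]^{\perp}$, $\mathcal U={}^{\perp}\mathcal V[1]$ (Definition~\ref{DefCPC}) together with $\mathcal Z=\mathcal I[-1]^{\perp}={}^{\perp}\mathcal I[1]$. For axiom (ii) in $\mathcal Z/\mathcal I$: $\mathrm{Ext}^1_{\mathcal Z/\mathcal I}(\bar U,\bar V)=(\mathcal Z/\mathcal I)(\bar U,\bar V\langle 1\rangle)$, and applying $\mathcal C(U,-)$ to $V\to I_V\to V\langle 1\rangle\to V[1]$ together with $\mathrm{Ext}^1_{\mathcal C}(U,V)=0$ shows every $U\to V\langle 1\rangle$ factors through $I_V\in\mathcal I$, so this group vanishes. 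For axiom (i): rotate the $\mathcal C$-decomposition $U\to Z\to V[1]\to U[1]$ of $Z\in\mathcal Z$ to the triangle $V\to U\to Z\to V[1]$, all of whose terms lie in $\mathcal Z$; it descends to a standard triangle $\bar V\to\bar U\to\bar Z\to\bar V\langle 1\rangle$, so $\bar Z\in(\mathcal U/\mathcal I)\ast(\mathcal V/\mathcal I)\langle 1\rangle$. Summand-closure of $\mathcal U/\mathcal I$ follows by lifting a splitting to $\mathcal C$: the lift presents $Z$ as a genuine summand of $U\oplus I$ for some $I\in\mathcal I\subseteq\mathcal U$, hence $Z\in\mathcal U$. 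In the other direction, $(\overline{\mathcal X},\overline{\mathcal Y})\mapsto(\widetilde{\mathcal X},\widetilde{\mathcal Y})$ with $\widetilde{\mathcal X}=\{Z\in\mathcal Z\mid\bar Z\in\overline{\mathcal X}\}$ and similarly $\widetilde{\mathcal Y}$; these automatically satisfy $\mathcal I\subseteq\widetilde{\mathcal X}\cap\widetilde{\mathcal Y}\subseteq\mathcal Z$ and are summand-closed, and for their axiom (ii) a map $f\colon X\to Y[1]$ ($X\in\widetilde{\mathcal X}$, $Y\in\widetilde{\mathcal Y}$) lifts along $Y\to I_Y\to Y\langle 1\rangle\to Y[1]$ to $\tilde f\colon X\to Y\langle 1\rangle$ (as $\mathrm{Ext}^1_{\mathcal C}(X,I_Y)=0$, using $X\in{}^{\perp}\mathcal I[1]$), and $\bar{\tilde f}\in\mathrm{Ext}^1_{\mathcal Z/\mathcal I}(\bar X,\bar Y)=0$ forces $\tilde f$ through some $I\in\mathcal I$, so $f$ factors through $\mathcal C(I,Y[1])=\mathrm{Ext}^1_{\mathcal C}(I,Y)=0$ (using $Y\in\mathcal I[-1]^{\perp}$); hence $f=0$.

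The main obstacle is the remaining axiom (i), $\mathcal C=\widetilde{\mathcal X}\ast\widetilde{\mathcal Y}[1]$. The plan there: given $C\in\mathcal C$, first use $\mathcal C=\mathcal Z\ast\mathcal I[1]$ to write a triangle $Z_0\to C\to I_0[1]\to Z_0[1]$ with $Z_0\in\mathcal Z$, $I_0\in\mathcal I$; then decompose $\bar Z_0$ in $\mathcal Z/\mathcal I$ via $(\overline{\mathcal X},\overline{\mathcal Y})$ and realize the resulting standard triangle by an honest $\mathcal C$-triangle inside $\mathcal Z$ via the \cite{IYo} recipe, which in general introduces auxiliary summands lying in $\mathcal I$; then splice the two triangles by an octahedron and absorb the stray $\mathcal I$- and $\mathcal I[1]$-summands using $\mathcal I\subseteq\widetilde{\mathcal X}\cap\widetilde{\mathcal Y}$, extension-closedness of $\widetilde{\mathcal X}$ and $\widetilde{\mathcal Y}$ (preimages of extension-closed subcategories), and the $\mathrm{Ext}$-orthogonality just proved. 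This bookkeeping, together with the elementary facts about isomorphisms in the ideal quotient, is where the real effort goes. After that, mutual inverseness is routine: $(\mathcal U,\mathcal V)\mapsto(\mathcal U/\mathcal I,\mathcal V/\mathcal I)\mapsto(\widetilde{\phantom{x}}\cdot\widetilde{\phantom{x}})$ recovers $(\mathcal U,\mathcal V)$ because $\bar Z\cong\bar U$ in $\mathcal Z/\mathcal I$ forces $Z$ to be a summand of some $U\oplus I$ with $I\in\mathcal I\subseteq\mathcal U$, so $Z\in\mathcal U$; the reverse composite is immediate from the definitions. Finally, $\langle 1\rangle$ is an autoequivalence, hence carries cotorsion pairs of $\mathcal Z/\mathcal I$ to cotorsion pairs, giving a $\mathbb Z$-action on the class (2); the mutation on class (1) is then by definition its pull-back along the bijection, so the last assertion needs no separate proof.
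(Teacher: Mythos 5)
Your plan is essentially a direct reconstruction of Zhou--Zhu's original argument, and it is workable: the mutation-pair check via $\mathcal{I}$-approximations plus rigidity, the two $\mathrm{Ext}$-transfers (factoring maps $U\to V\langle1\rangle$ through $I_V$ in one direction; lifting $X\to Y[1]$ through $Y\langle1\rangle$ and then using $\mathcal{C}(\mathcal{I},Y[1])=0$ in the other), and the closing bookkeeping for $\mathcal{C}=\widetilde{\mathcal{X}}\ast\widetilde{\mathcal{Y}}[1]$ all go through; the stray $\mathcal{I}$-summands really do get absorbed because the relevant triangles split, as $\mathcal{C}(\mathcal{I},Z[1])=0$ for $Z\in\mathcal{Z}$. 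Two small remarks: you have swapped left and right approximations (the left $\mathcal{I}$-approximation of $Z$ gives $\mathcal{Z}\subseteq\mathcal{Z}[-1]\ast\mathcal{I}$, the right one gives $\mathcal{Z}\subseteq\mathcal{I}\ast\mathcal{Z}[1]$), and in the key step your quotient decomposition of $\bar Z_0$ has third term $\bar Y\langle1\rangle$, so you still need one explicit octahedron against the envelope triangle $I_Y\to Y\langle1\rangle\to Y[1]\to I_Y[1]$ to convert it into a $\mathcal{C}$-triangle whose third term lies in $\widetilde{\mathcal{Y}}[1]$ --- this is the one genuinely hidden step in your sketch, though it is routine. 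Be aware, however, that the paper does not prove this statement: it is quoted from \cite{ZZ1}, and is instead recovered as the degenerate case of the general theory. In the paper's route one observes (Example \ref{Thefollowingareexamples}(5), Example \ref{ExZZ}) that your auxiliary decompositions $\mathcal{C}=\mathcal{Z}\ast\mathcal{I}[1]$ and $\mathcal{C}=\mathcal{I}[-1]\ast\mathcal{Z}$ say precisely that $\mathcal{P}=((\mathcal{I},\mathcal{Z}),(\mathcal{Z},\mathcal{I}))$ is a concentric TCP satisfying (III); then (I)+(II) hold (Proposition \ref{PropIIItoIandII}), $\Sigma\cong\langle1\rangle$ (Proposition \ref{PropIIItoMut}, Corollary \ref{CorIIItoMut}), and Theorem \ref{ThmBij} gives the bijection, with your hand-made steps corresponding to Propositions \ref{PropABtoLR}, \ref{PropLRtoAB} and \ref{PropExtInj} in the case where $\sigma$ and $\omega$ are trivial. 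What your direct route buys is self-containedness within the \cite{ZZ1} hypotheses and, usefully, an explicit verification that the extra condition $\mathrm{Ext}^1_{\mathcal{Z}/\mathcal{I}}(\mathcal{A}/\mathcal{I},\mathcal{B}/\mathcal{I})=0$ appearing in $\mathfrak{M}_{\mathcal{P}}$ is automatic here, so that $\mathfrak{M}_{\mathcal{P}}$ really is the class (1) of the statement; what the paper's route buys is that the same machinery simultaneously handles the recollement/Hovey situation, where $\Sigma$ is no longer $\langle1\rangle$ and your splicing arguments would have to be routed through the adjoints $\sigma$ and $\omega$.
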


The other is the {\it recollement} of cotorsion pairs. A recollement of triangulated categories is introduced in \cite{BBD}. Recollements are also investigated by several researchers, such as \cite{IKM}, \cite{J}, \cite{LV}. 
\begin{dfn}$($\cite[1.4.3]{BBD}$)$
Let
\begin{equation}\label{IntroRecoll}
\xy
(-20,0)*+{\mathcal{N}}="0";
(0,0)*+{\mathcal{C}}="2";
(20,0)*+{\mathcal{C}_{\mathcal{N}}}="4";
(-10,4)*+{\perp}="11";
(-10,-5)*+{\perp}="12";
(10,4)*+{\perp}="13";
(10,-5)*+{\perp}="14";
{\ar@{^(->}_{i_{\ast}} "0";"2"};
{\ar@/^1.80pc/^{i^{!}} "2";"0"};
{\ar@/_1.80pc/_{i^{\ast}} "2";"0"};
{\ar_{j^{\ast}} "2";"4"};
{\ar@/^1.80pc/^{j_{\ast}} "4";"2"};
{\ar@/_1.80pc/_{j_{!}} "4";"2"};
\endxy
\end{equation}
be a diagram of triangulated categories and triangle functors between them. This is called a {\it recollement} of triangulated categories, if it satisfies the following conditions.
\begin{enumerate}
\item $i^{\ast}\dashv i_{\ast}\dashv i^!$ and $j_!\dashv j^{\ast}\dashv j_{\ast}$ are adjoint triplets.
\item $i_{\ast},j_!,j_{\ast}$ are fully faithful.
\item $j^{\ast}\circ i_{\ast}=0$ holds.
\item For any $C\in\mathcal{C}$, the units and counits of the above adjoints give distinguished triangles
\[ i_{\ast}i^! C\to C\to j_{\ast}j^{\ast}C\to (i_{\ast}i^! C)[1]%
\quad\text{and}\quad%
j_{!}j^{\ast} C\to C\to i_{\ast}i^{\ast}C\to (j_!j^{\ast} C)[1]. \]
\end{enumerate}
\end{dfn}
If a recollement $(\ref{IntroRecoll})$ is given, we can {\it glue} $t$-structures on $\mathcal{C}_{\mathcal{N}}$ and $\mathcal{N}$, to obtain a $t$-structure on $\mathcal{C}$ (\cite[Th\'{e}or\`{e}me 1.4.10]{BBD}). More generally, we can glue cotorsion pairs in the same way (\cite[Theorem 3.1, Theorem 3.3]{C}). For the details, see Fact \ref{FactChen}.

\medskip

We note that both constructions give correspondences of cotorsion pairs on triangulated categories.
In this article, we will give a simultaneous generalization of these constructions, using {\it twin cotorsion pairs}.
\begin{dfn}$($\cite[Definition 2.7]{N2}$)$
Let $(\mathcal{S},\mathcal{T}),(\mathcal{U},\mathcal{V})$ be two cotorsion pairs on $\mathcal{C}$. The pair $\mathcal{P}=((\mathcal{S},\mathcal{T}),(\mathcal{U},\mathcal{V}))$ is called a {\it twin cotorsion pair $(${\it TCP} {\rm for short}$)$} on $\mathcal{C}$ if it satisfies $\mathrm{Ext}^1(\mathcal{S},\mathcal{V})=0$. Note that this condition is equivalent to $\mathcal{S}\subseteq\mathcal{U}$, and also to $\mathcal{V}\subseteq\mathcal{T}$.

Remark that any cotorsion pair $(\mathcal{U},\mathcal{V})$ on $\mathcal{C}$ can be regarded as a twin cotorsion pair $((\mathcal{U},\mathcal{V}),(\mathcal{U},\mathcal{V}))$. A twin cotorsion pair $\mathcal{P}=((\mathcal{S},\mathcal{T}),(\mathcal{U},\mathcal{V}))$ is said to be {\it degenerated to a single cotorsion pair} if it satisfies $\mathcal{S}=\mathcal{U}$ (equivalently, $\mathcal{T}=\mathcal{V}$).
\end{dfn}

\begin{ex}\label{Thefollowingareexamples} The following are examples of twin cotorsion pairs.
\begin{enumerate}
\item A cotorsion pair $(\mathcal{U},\mathcal{V})$ satisfies $\mathcal{U}[1]\subseteq\mathcal{U}$ if and only if $(\mathcal{U}[-1],\mathcal{V}[1])$ is a $t$-structure.
\item A cotorsion pair $(\mathcal{U},\mathcal{V})$ satisfies $\mathcal{U}=\mathcal{V}$ if and only if $\mathcal{U}\subseteq\mathcal{C}$ is a cluster tilting subcategory.
\item A cotorsion pair $(\mathcal{U},\mathcal{V})$ is called a co-$t$-structure if it satisfies $\mathcal{U}[-1]\subseteq\mathcal{U}$ (\cite[Definition 2.4]{P}).
\item A TCP $((\mathcal{S},\mathcal{T}),(\mathcal{U},\mathcal{V}))$ satisfies $\mathcal{S}=\mathcal{V}$ if and only if $\mathcal{S}\subseteq\mathcal{C}$ is a functorially finite rigid subcategory.
\item A TCP $((\mathcal{S},\mathcal{T}),(\mathcal{U},\mathcal{V}))$ satisfies $\mathcal{S}=\mathcal{V}$ and $\mathcal{U}=\mathcal{T}$ if and only if $\mathcal{Z}=\mathcal{U}$ and $\mathcal{I}=\mathcal{S}$ satisfy the assumption of Fact \ref{FactZZ}.
\end{enumerate}
\end{ex}
Remark that, in each of these examples, TCP $\mathcal{P}=((\mathcal{S},\mathcal{T}),(\mathcal{U},\mathcal{V}))$ satisfies the additional condition $\mathcal{S}\cap\mathcal{T}=\mathcal{U}\cap\mathcal{V}$.
In this article, we call such $\mathcal{P}$ a {\it concentric} TCP (Definition \ref{DefConcentric}). 

In section \ref{section_General}, we review some properties of (twin) cotorsion pairs. In section \ref{section_Concentric}, we define the notion of concentric twin cotorsion pair, and state basic properties of the associated subquotient category $\mathcal{Z}/\mathcal{I}$.
In section \ref{section_PreTr}, we construct a pretriangulated structure on $\mathcal{Z}/\mathcal{I}$, for any concentric TCP $\mathcal{P}$ (Theorem \ref{ThmPreTriangulated}).
In section \ref{section_Mutation}, under the assumption of some condition (Condition \ref{CondCDE}), we show that this pretriangulated category becomes indeed triangulated (Corollary \ref{CorSigmaOmega}). We construct a bijection between $\mathfrak{CP}(\mathcal{Z}/\mathcal{I})$ and the class
\[ \mathfrak{M}_{\mathcal{P}}=\{(\mathcal{A},\mathcal{B})\in\mathfrak{CP}(\mathcal{C})\mid\mathcal{A}=\mathcal{U}\cap(\mathcal{S}[-1]\ast\mathcal{A}),\ \mathcal{B}=\mathcal{T}\cap(\mathcal{B}\ast\mathcal{V}[1])\} \]
as in Theorem \ref{ThmBij} and Corollary \ref{CorABBij2}.
This enables us to generalize {\it mutation} onto $\mathfrak{M}_{\mathcal{P}}$ (Definition \ref{DefGeneralMut}).
Indeed, in the last section \ref{section_Typical}, we will see how these constructions recover the above mentioned two constructions, {\it mutation} and {\it recollement}.

\section{General properties of (twin) cotorsion pairs}\label{section_General}

In this section, we review basic properties of cotorsion pairs, from \cite{AN},\cite{N1},\cite{N2}, \cite{N3} and \cite{ZZ2}.

\begin{fact}\label{FactCPAdj}
For any $(\mathcal{U},\mathcal{V})\in\mathfrak{CP}(\mathcal{C})$, define
\begin{eqnarray*}
\mathcal{I}&=&\mathcal{U}\cap\mathcal{V},\\
\mathcal{C}^+&=&\mathcal{I}\ast\mathcal{V}[1]=\mathcal{V}\ast\mathcal{V}[1],\\
\mathcal{C}^-&=&\mathcal{U}[-1]\ast\mathcal{I}=\mathcal{U}[-1]\ast\mathcal{U},\\
\mathcal{H}&=&\mathcal{C}^+\cap\mathcal{C}^-
\end{eqnarray*}
and $\mathscr{A}_{(\mathcal{U},\mathcal{V})}=\mathcal{H}/\mathcal{I}$, the {\it heart} of $(\mathcal{U},\mathcal{V})$. Then $\mathscr{A}_{(\mathcal{U},\mathcal{V})}$ becomes an abelian category $($\cite[Theorem 6.4]{N1}$)$.

Moreover, the following holds. For any morphism $f$ in $\mathcal{C}$, let $\underline{f}$ denote its image in the ideal quotient $\mathcal{C}/\mathcal{I}$.
\begin{enumerate}
\item $($\cite[Proposition 3.1]{AN}$)$ The inclusion $\mathcal{U}/\mathcal{I}\hookrightarrow\mathcal{C}/\mathcal{I}$ has a right adjoint $\omega_{\mathcal{U}}$. For any $C\in\mathcal{C}$, if we decompose it into a distinguished triangle
\begin{equation}\label{TriaAdj1}
U_C\overset{u_C}{\longrightarrow}C\to V_C[1]\to U_C[1]\quad (U_C\in\mathcal{U},V_C\in\mathcal{V}),
\end{equation}
then $U_C$ satisfies $U_C\cong\omega_{\mathcal{U}}(C)$ in $\mathcal{U}/\mathcal{I}$, and $\underline{u}_C\colon U_C\to C$ gives the counit of the adjoint.
In particular, in $(\ref{TriaAdj1})$, $U_C$ is determined up to isomorphism in $\mathcal{U}/\mathcal{I}$.

Dually, the inclusion $\mathcal{V}/\mathcal{I}\hookrightarrow\mathcal{C}/\mathcal{I}$ has a left adjoint $\sigma_{\mathcal{V}}$. For any $C\in\mathcal{C}$, any distinguished triangle
\[ U[-1]\to C\to V\to U\quad(U\in\mathcal{U},V\in\mathcal{V}) \]
gives $V\cong\sigma_{\mathcal{V}}(C)$ in $\mathcal{V}/\mathcal{I}$.
\item  $($\cite[Propositions 3.4,\, 4.2]{AN}$)$ The inclusion $\mathcal{C}^+/\mathcal{I}\hookrightarrow\mathcal{C}/\mathcal{I}$ has a left adjoint $\tau^+_{(\mathcal{U},\mathcal{V})}$. 
This restricts to give the left adjoint functor $\mathcal{C}^-/\mathcal{I}\to\mathscr{A}_{(\mathcal{U},\mathcal{V})}$ of the inclusion $\mathscr{A}_{(\mathcal{U},\mathcal{V})}\hookrightarrow\mathcal{C}^-/\mathcal{I}$, which we denote by the same symbol $\tau^+_{(\mathcal{U},\mathcal{V})}$.
Explicitly, $Z=\tau^+_{(\mathcal{U},\mathcal{V})}(C)$ is given by the following.
\begin{itemize}
\item[-] Decompose $C$ into a distinguished triangle
\[ U\to C\to V[1]\to U[1]\quad(U\in\mathcal{U},\, V\in\mathcal{V}). \]
\item[-] Decompose $U$ into a distinguished triangle
\[ U^{\prime}[-1]\to U\to V^{\prime}\to U^{\prime}\quad(U^{\prime}\in\mathcal{U},\, V^{\prime}\in\mathcal{V}). \]
\item[-] By the octahedron axiom, we obtain the following commutative diagram made of distinguished triangles.
\[
\xy
(-14,18)*+{U^{\prime}[-1]}="0";
(-14.1,0)*+{U}="2";
(-14,-16)*+{V^{\prime}}="4";
(-3,0)*+{C}="6";
(2,-9)*+{Z}="8";
(20,0)*+{V[1]}="10";
(-8,-8)*+_{_{\circlearrowright}}="12";
(5,-3)*+_{_{\circlearrowright}}="14";
(-10,5)*+_{_{\circlearrowright}}="14";
{\ar_{} "0";"2"};
{\ar_{} "2";"4"};
{\ar^{} "0";"6"};
{\ar_{} "2";"6"};
{\ar_{z} "6";"8"};
{\ar^{} "6";"10"};
{\ar_{} "4";"8"};
{\ar_{} "8";"10"};
\endxy
\]
\end{itemize}
Dually, the inclusion $\mathcal{C}^-/\mathcal{I}\hookrightarrow\mathcal{C}/\mathcal{I}$ has a right adjoint $\tau^-_{(\mathcal{U},\mathcal{V})}$, which restricts to give a right adjoint functor $\mathcal{C}^-/\mathcal{I}\to\mathscr{A}_{(\mathcal{U},\mathcal{V})}$ of the inclusion $\mathscr{A}_{(\mathcal{U},\mathcal{V})}\hookrightarrow\mathcal{C}^-/\mathcal{I}$.

These functors satisfy $\tau^+_{(\mathcal{U},\mathcal{V})}\circ\tau^-_{(\mathcal{U},\mathcal{V})}\cong\tau^-_{(\mathcal{U},\mathcal{V})}\circ\tau^+_{(\mathcal{U},\mathcal{V})}$.

\item $($\cite[Theorem 5.7]{AN}$)$ Define $H_{(\mathcal{U},\mathcal{V})}\colon \mathcal{C}\to\mathscr{A}_{(\mathcal{U},\mathcal{V})}$ to be the composition
\[ H_{(\mathcal{U},\mathcal{V})}=\bigg( \mathcal{C}\to\mathcal{C}/\mathcal{I}\overset{\tau^+_{(\mathcal{U},\mathcal{V})}}{\longrightarrow}\mathcal{C}^+/\mathcal{I}\overset{\tau^-_{(\mathcal{U},\mathcal{V})}}{\longrightarrow}\mathscr{A}_{(\mathcal{U},\mathcal{V})}\bigg). \]
Then $H_{(\mathcal{U},\mathcal{V})}$ is cohomological, satisfying $H_{(\mathcal{U},\mathcal{V})}(\mathcal{U})=H_{(\mathcal{U},\mathcal{V})}(\mathcal{V})=0$.

\item  $($\cite[Corollary 5.8]{AN}$)$ For any $X\in\mathcal{C}$, the following are equivalent.
\begin{itemize}
\item[{\rm (i)}] $H_{(\mathcal{U},\mathcal{V})}(X)=0$.
\item[{\rm (ii)}] There exists a distinguished triangle
\begin{equation}\label{UXVU}
U\to X\overset{v}{\longrightarrow}V[1]\to U[1]\quad(U\in\mathcal{U},V\in\mathcal{V})
\end{equation}
where $v$ factors through some $V_0\in\mathcal{V}$.
\item[{\rm (iii)}] In any distinguished triangle $(\ref{UXVU})$, the morphism $v$ factors through some $V_0\in\mathcal{V}$.
\item[{\rm (iv)}] $\tau^+_{(\mathcal{U},\mathcal{V})}(X)\in\mathcal{V}/\mathcal{I}$.
\end{itemize}

\end{enumerate}
\end{fact}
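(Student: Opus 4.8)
The statement collects results of \cite{N1,AN}, so the plan is to reconstruct its four parts from one recurring device: the decomposition triangles provided by axiom~(i) of a cotorsion pair, used together with the orthogonality $\mathrm{Ext}^1(\mathcal U,\mathcal V)=0$ and the extension-closedness of $\mathcal U$ and $\mathcal V$. For the abelianness of $\mathscr A_{(\mathcal U,\mathcal V)}=\mathcal H/\mathcal I$ I would follow \cite[Theorem~6.4]{N1}: given $\underline f\colon H\to H'$ in $\mathcal H/\mathcal I$, complete $f$ to a triangle in $\mathcal C$, truncate its third term back into $\mathcal H$ by the adjoints built in (1)--(2), and check the abelian axioms, the delicate point being that every monomorphism is a kernel and every epimorphism a cokernel. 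I expect this verification, together with the cohomological property in (3), to be the main obstacle in a self-contained account; since the present statement is cited, I would simply invoke \cite{N1} and \cite[Theorem~5.7]{AN} for these two points and concentrate on the explicit adjoints.

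For (1), axiom~(i) gives for each $C$ a triangle $V_C\xrightarrow{\,g\,}U_C\xrightarrow{\,u_C\,}C\to V_C[1]$ with $U_C\in\mathcal U$, $V_C\in\mathcal V$. Applying $\mathcal C(U',-)$ for $U'\in\mathcal U$ and using $\mathcal C(U',V_C[1])=\mathrm{Ext}^1(U',V_C)=0$ makes $(u_C)_\ast\colon\mathcal C(U',U_C)\to\mathcal C(U',C)$ surjective; for bijectivity in $\mathcal C/\mathcal I$, suppose $a\colon U'\to U_C$ has $u_Ca$ factoring through $I\in\mathcal I$. Then $\mathrm{Ext}^1(I,V_C)=0$ lets $I\to C$ lift along $u_C$, so $a$ agrees modulo $\mathcal I$ with some $g\circ h$ where $h\colon U'\to V_C$; decomposing $V_C$ as $P\to V_C\to Q[1]\to P[1]$ with $P\in\mathcal U$, $Q\in\mathcal V$ forces $P\in\mathcal U\cap\mathcal V=\mathcal I$ because $\mathcal V$ is closed under extensions, and $\mathrm{Ext}^1(U',Q)=0$ routes $h$ through $P$, so $g\circ h$ factors through $\mathcal I$ as well. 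Hence $\underline{u_C}$ is the counit exhibiting $U_C\cong\omega_{\mathcal U}(C)$ in $\mathcal U/\mathcal I$, and the claim for $\sigma_{\mathcal V}$ is formally dual.

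For (2), I would take the explicit octahedron in the statement as the definition of $\tau^+_{(\mathcal U,\mathcal V)}(C)=Z$: it places $Z$ in $\mathcal V\ast\mathcal V[1]=\mathcal C^+$, and $\underline z\colon C\to Z$ is the unit because, applying $\mathcal C(-,W)$ for $W\in\mathcal C^+$ to $U'[-1]\to C\xrightarrow{\,z\,}Z\to U'$ and using that $U\to V'$ is a left $\mathcal V$-approximation (a consequence of $\mathrm{Ext}^1(U',\mathcal V)=0$), every map $C\to W$ annihilates $U'[-1]\to C$, the ambiguity of factorizations being absorbed into $\mathcal I$ exactly as in (1). Restriction to $\mathcal C^-/\mathcal I$ lands in $\mathcal H/\mathcal I=\mathscr A$ since $\mathcal C^-\ast\mathcal U=\mathcal C^-$, and $\tau^+\tau^-\cong\tau^-\tau^+$ follows by comparing the two iterated octahedra, for which I would cite \cite[Propositions~3.4,~4.2]{AN}. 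Finally, (3) reduces to exactness of the truncation adjoints on triangles (\cite[Theorem~5.7]{AN}), with $H(\mathcal U)=H(\mathcal V)=0$ because every object of $\mathcal U$ or $\mathcal V$ becomes an object of $\mathcal V/\mathcal I$ after one application of $\tau^+$, which $\tau^-$ kills; and (4) follows by unwinding the construction of $\tau^+(X)$ on the triangle $(\ref{UXVU})$: it lies in $\mathcal V/\mathcal I$ precisely when $v$ factors through an object of $\mathcal V$, the independence of $\tau^+(X)$ from the chosen triangle giving (ii)$\Leftrightarrow$(iii) and the fact that $\tau^-$ kills exactly $\mathcal V/\mathcal I$ within $\mathcal C^+/\mathcal I$ giving the equivalence with (i).
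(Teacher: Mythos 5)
This statement is quoted in the paper as a background Fact with no proof beyond the citations to \cite{N1} and \cite{AN}, and your plan handles it the same way for the genuinely substantive points (abelianness of the heart, the cohomological property of $H_{(\mathcal{U},\mathcal{V})}$, commutation of the truncations, and Corollary 5.8 of \cite{AN}), while your reconstructions of the adjunctions for $\omega_{\mathcal{U}}$, $\sigma_{\mathcal{V}}$ and $\tau^{\pm}_{(\mathcal{U},\mathcal{V})}$ are exactly the standard arguments of those references: decomposition triangles from $\mathcal{C}=\mathcal{U}\ast\mathcal{V}[1]$, the vanishing $\mathrm{Ext}^1(\mathcal{U},\mathcal{V})=0$, and the observation (via extension-closedness) that any morphism from an object of $\mathcal{U}$ to an object of $\mathcal{V}$, or to an object of $\mathcal{C}^+=\mathcal{I}\ast\mathcal{V}[1]$, factors through $\mathcal{I}$. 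So the proposal is correct and takes essentially the same approach as the paper; I see no gap.
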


\begin{rem}\label{RemHeartEquiv}
For any $(\mathcal{S},\mathcal{T}),(\mathcal{U},\mathcal{V})\in\mathfrak{CP}(\mathcal{C})$, the following are equivalent (\cite[section 6]{ZZ2},\cite[Corollary 4.3]{N3}).
\begin{enumerate}
\item $H_{(\mathcal{U},\mathcal{V})}(\mathcal{S})=H_{(\mathcal{U},\mathcal{V})}(\mathcal{T})=0$ and $H_{(\mathcal{S},\mathcal{T})}(\mathcal{U})=H_{(\mathcal{S},\mathcal{T})}(\mathcal{V})=0$ hold.
\item There is an equivalence $\mathscr{A}_{(\mathcal{S},\mathcal{T})}\overset{\simeq}{\longrightarrow}\mathscr{A}_{(\mathcal{U},\mathcal{V})}$ which makes the following diagram commutative up to natural isomorphism.
\[
\xy
(0,6)*+{\mathcal{C}}="0";
(0,-10)*+{}="1";
(-12,-8)*+{\mathscr{A}_{(\mathcal{S},\mathcal{T})}}="2";
(12,-8)*+{\mathscr{A}_{(\mathcal{U},\mathcal{V})}}="4";
{\ar_{H_{(\mathcal{S},\mathcal{T})}} "0";"2"};
{\ar^{H_{(\mathcal{U},\mathcal{V})}} "0";"4"};
{\ar^{\simeq} "2";"4"};
{\ar@{}|\circlearrowright "0";"1"};
\endxy
\]
In this case, we say $(\mathcal{S},\mathcal{T})$ and $(\mathcal{U},\mathcal{V})$ are {\it heart-equivalent}.
\end{enumerate}
\end{rem}

Let us apply the above remarks to a twin cotorsion pair.
\begin{rem}\label{RemTCPAdj}
For any twin cotorsion pair $\mathcal{P}=((\mathcal{S},\mathcal{T}),(\mathcal{U},\mathcal{V}))$, these $(\mathcal{S},\mathcal{T})$ and $(\mathcal{U},\mathcal{V})$ are heart-equivalent if and only if
\[ H_{(\mathcal{S},\mathcal{T})}(\mathcal{U})=0\ \ \text{and}\ \ H_{(\mathcal{U},\mathcal{V})}(\mathcal{T})=0 \]
hold.
\end{rem}

\begin{rem}\label{RemIntegral}
If a TCP $\mathcal{P}$ satisfies the above condition, then its {\it heart} $($\cite[Definition 2.8]{N2}$)$
\[ ((\mathcal{S}[-1]\ast\mathcal{S})\cap(\mathcal{V}\ast\mathcal{V}[1]))/(\mathcal{U}\cap\mathcal{T}) \]
becomes integral preabelian category. Indeed, the same proof as in \cite[Theorem 6.3]{N2} works.
\end{rem}

\section{Concentric twin cotorsion pair}\label{section_Concentric}

We introduce the notion of a {\it concentric twin cotorsion pair}, and state its basic properties.

\begin{dfn}\label{DefNiNf}
Let $\mathcal{P}=((\mathcal{S},\mathcal{T}),(\mathcal{U},\mathcal{V}))$ be a TCP. Let us define as
\[ \mathcal{N}^i=\mathcal{S}\ast\mathcal{V}[1],\quad\mathcal{N}^f=\mathcal{S}[-1]\ast\mathcal{V}, \]
and put $\mathcal{Z}=\mathcal{T}\cap\mathcal{U}$.
\end{dfn}

\begin{rem}\label{RemTCP_UNTN}
Let $\mathcal{P}=((\mathcal{S},\mathcal{T}),(\mathcal{U},\mathcal{V}))$ be any TCP. The following hold.
\begin{enumerate}
\item $\mathcal{U}\cap\mathcal{N}^i=\mathcal{S}$ and $\mathcal{T}\cap\mathcal{N}^f=\mathcal{V}$.
\item Let $U\in\mathcal{U},T\in\mathcal{T}$ be any pair of objects. If $f\in\mathcal{C}(U,T[1])$ factors through some $N\in\mathcal{N}^i$, then $f=0$.
\end{enumerate}
\end{rem}

\begin{dfn}\label{DefConcentric}
A TCP $\mathcal{P}=((\mathcal{S},\mathcal{T}),(\mathcal{U},\mathcal{V}))$ is called {\it concentric} if it satisfies $\mathcal{S}\cap\mathcal{T}=\mathcal{U}\cap\mathcal{V}$. We denote this equal subcategory by $\mathcal{I}$.
\end{dfn}

\begin{rem}\label{RemConcentric}
For any concentric $\mathcal{P}$, the following holds.
\begin{itemize}
\item[{\rm (i)}] $\mathcal{S}\cap\mathcal{V}=\mathcal{I}$.
\item[{\rm (ii)}] $\mathcal{S}\subseteq\mathcal{S}[-1]\ast\mathcal{I},\ \mathcal{V}\subseteq\mathcal{I}\ast\mathcal{V}[1]$.
\end{itemize}
\end{rem}

\subsection{Subquotient category $\mathcal{Z}/\mathcal{I}$}

Let $\mathcal{P}=((\mathcal{S},\mathcal{T}),(\mathcal{U},\mathcal{V}))$ be a concentric TCP throughout this section. We continue to use the symbols $\mathcal{Z}=\mathcal{U}\cap\mathcal{T}$ and $\mathcal{I}=\mathcal{S}\cap\mathcal{V}$.
\begin{rem}\label{RemQuot1}
Let $f\in\mathcal{Z}(X,Y)$ be a morphism which induces isomorphism $\underline{f}$ in $\mathcal{Z}/\mathcal{I}$. Then the following holds.
\begin{enumerate}
\item There exist $J\in\mathcal{I}$ and $j\in\mathcal{C}(X,J)$, such that $\begin{bmatrix}f\\ j\end{bmatrix}\colon X\to Y\oplus J$ becomes a section in $\mathcal{C}$.
\item There exist $I\in\mathcal{I}$ and $i\in\mathcal{C}(I,Y)$, such that $[f\ \, i]\colon X\oplus I\to Y$ becomes a retraction in $\mathcal{C}$.
\end{enumerate}
\end{rem}

\begin{lem}\label{Lem3-1}
Let $f\in\mathcal{Z}(X,Y)$ be a morphism which induces isomorphism $\underline{f}$ in $\mathcal{Z}/\mathcal{I}$. Then the following holds.
\begin{enumerate}
\item If $f$ is a section, then $\mathrm{Cone}(f)\in\mathcal{I}$.
\item If $f$ is a retraction, then $\mathrm{Cone}(f)[-1]\in\mathcal{I}$.
\end{enumerate}
\end{lem}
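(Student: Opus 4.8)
The plan is to reduce both assertions to the elementary fact that in a triangulated category a split monomorphism (resp. split epimorphism) sits in a \emph{split} distinguished triangle, together with the observation that $\mathcal{Z}=\mathcal{U}\cap\mathcal{T}$ and $\mathcal{I}=\mathcal{S}\cap\mathcal{V}$ are closed under direct summands, being intersections of subcategories with this property.

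For (1), suppose $f\colon X\to Y$ is a section. Then the distinguished triangle $X\overset{f}{\to}Y\to\mathrm{Cone}(f)\to X[1]$ is split (its connecting morphism vanishes), so there is an isomorphism $Y\cong X\oplus\mathrm{Cone}(f)$ in $\mathcal{C}$ under which $f$ becomes the canonical inclusion. Since $C:=\mathrm{Cone}(f)$ is then a direct summand of $Y\in\mathcal{Z}$, we get $C\in\mathcal{Z}$, and the above isomorphism descends to $\mathcal{Z}/\mathcal{I}$. Now I would invoke the hypothesis that $\underline{f}$ is invertible in $\mathcal{Z}/\mathcal{I}$: comparing $\underline{f}$, via this isomorphism, with the inclusion $\underline{X}\hookrightarrow\underline{X}\oplus\underline{C}$ and performing an immediate matrix computation with the presumed inverse forces $\underline{\mathrm{id}_C}=0$ in $\mathcal{Z}/\mathcal{I}$, i.e. $\mathrm{id}_C$ factors through some object of $\mathcal{I}$. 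Hence $C$ is a direct summand of an object of $\mathcal{I}$, so $C\in\mathcal{I}$.

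Statement (2) is proved dually. If $f$ is a retraction, rotating the triangle on $f$ shows that $\mathrm{Cone}(f)[-1]\to X\overset{f}{\to}Y\to\mathrm{Cone}(f)$ is split, so $X\cong\mathrm{Cone}(f)[-1]\oplus Y$ in $\mathcal{C}$ with $f$ corresponding to the projection; in particular $D:=\mathrm{Cone}(f)[-1]$ is a direct summand of $X\in\mathcal{Z}$, hence $D\in\mathcal{Z}$. The same matrix computation, again using that $\underline{f}$ is an isomorphism, yields $\underline{\mathrm{id}_D}=0$ in $\mathcal{Z}/\mathcal{I}$, so $\mathrm{id}_D$ factors through an object of $\mathcal{I}$ and $D\in\mathcal{I}$.

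I do not anticipate a real obstacle here; the delicate points are merely bookkeeping: $\mathrm{Cone}(f)$ is only defined up to isomorphism (harmless for a membership statement), and one must check that the relevant direct summand lies in $\mathcal{Z}$ before speaking of invertibility of $\underline{f}$ in $\mathcal{Z}/\mathcal{I}$. One could instead deduce both cases from Remark~\ref{RemQuot1}, but the direct argument above seems cleaner.
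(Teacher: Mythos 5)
Your proof is correct, but it takes a different route from the paper. The paper first invokes Remark \ref{RemQuot1} to produce a retraction $[f\ \, i]\colon X\oplus I\to Y$ with $I\in\mathcal{I}$, and then applies the octahedron axiom to the composite $X\to X\oplus I\to Y$; the splitness of $f$ and of $[f\ \, i]$ makes two maps in the resulting diagram vanish, which exhibits $\mathrm{Cone}(f)$ directly as a direct summand of $I$ inside $\mathcal{C}$. You instead split the triangle over $f$ itself, so that $C=\mathrm{Cone}(f)$ (resp.\ $D=\mathrm{Cone}(f)[-1]$) is a summand of $Y$ (resp.\ $X$) lying in $\mathcal{Z}$, and then exploit the invertibility of $\underline{f}$ in the additive quotient $\mathcal{Z}/\mathcal{I}$: the matrix computation indeed forces $\underline{\mathrm{id}}_C=0$, since biproducts descend along the additive quotient functor and the entries of the matrix identity can be compared. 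Your approach avoids both Remark \ref{RemQuot1} and the octahedron axiom, at the price of transferring the argument into $\mathcal{Z}/\mathcal{I}$; the paper's argument stays in $\mathcal{C}$ and produces the summand decomposition explicitly, in a form that is reused in the proof of Corollary \ref{Cor3-2}. One small point you glossed over: from $\underline{\mathrm{id}}_C=0$ you first get only that $\mathrm{id}_C$ factors through some $I_0\in\mathcal{I}$, i.e.\ that $C$ is a retract of $I_0$; to conclude that $C$ is a genuine direct summand (so that closure of $\mathcal{I}$ under direct summands applies) you should note the standard fact that in a triangulated category a retract is a direct summand --- complete the split monomorphism $C\to I_0$ to a distinguished triangle, whose connecting morphism vanishes, giving $I_0\cong C\oplus\mathrm{Cone}(C\to I_0)$. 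With that remark added, your argument is complete.
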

\begin{proof}
Since $\underline{f}$ is isomorphism, there exist $I\in\mathcal{I}$ and $i\in\mathcal{Z}(I,Y)$ such that $[f\  i]\colon X\oplus I\to Y$ becomes a retraction by Remark \ref{RemQuot1}.
By the octahedron axiom, we have the following commutative diagram made of distinguished triangles.
\[
\xy
(-20,0)*+{X}="0";
(-1,8)*+{X\oplus I}="2";
(16,15)*+{I}="4";
(3,0)*+{Y}="6";
(16.1,0)*+{\mathrm{Cone}(f)}="8";
(16,-19)*+{{}^{\exists}Q}="10";
(36,0)*+{X[1]}="12";
(9,6)*+_{_{\circlearrowright}}="21";
(-5,3)*+_{_{\circlearrowright}}="22";
(12,-6)*+_{_{\circlearrowright}}="23";
(-12,7)*+{\spmatrix{1}{0}}="30";
%
%{\ar^{\left[\begin{array}{c}1\\0\end{array}\right]} "0";"2"};
{\ar^{} "0";"2"};
{\ar^{[0\ 1]} "2";"4"};
{\ar_{f} "0";"6"};
{\ar^{[f\ i]} "2";"6"};
{\ar^{} "6";"8"};
{\ar_{0} "6";"10"};
{\ar^{} "4";"8"};
{\ar^{{}^{\exists}h} "8";"10"};
{\ar_{0} "8";"12"};
\endxy
\]
This shows $h=0$, and thus $\mathrm{Cone}(f)$ is a summand of $I\in\mathcal{I}$. Since $\mathcal{I}$ is closed under direct summands, this shows $\mathrm{Cone}(f)\in\mathcal{I}$.
{\rm (2)} can be shown dually.
\end{proof}

\begin{cor}\label{Cor3-2}
For any $f\in\mathcal{Z}(X,Y)$, the following are equivalent.
\begin{enumerate}
\item $\underline{f}$ is isomorphism in $\mathcal{Z}/\mathcal{I}$.
\item There exist $I,J\in\mathcal{I}$ and an isomorphism from $X\oplus I$ to $Y\oplus J$ in $\mathcal{C}$, of the form
\begin{equation}\label{fijk}
\left[\begin{array}{cc}f&i\\ j&k\end{array}\right]\colon X\oplus I\overset{\cong}{\longrightarrow}Y\oplus J.
\end{equation}
\item $\mathrm{Cone}(f)\in\mathcal{I}\ast\mathcal{I}[1]$.
\item $\mathrm{Cone}(f)\in\mathcal{N}^i$.
\end{enumerate}
\end{cor}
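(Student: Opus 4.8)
The plan is to prove the cycle of implications $(1)\Rightarrow(2)\Rightarrow(3)\Rightarrow(4)\Rightarrow(1)$, leaning on Remark~\ref{RemQuot1} and Lemma~\ref{Lem3-1} for the first steps and on the structure of the cotorsion pairs for the last.

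For $(1)\Rightarrow(2)$: assuming $\underline{f}$ is an isomorphism, apply Remark~\ref{RemQuot1}(1) to obtain $J\in\mathcal{I}$ and $j\in\mathcal{C}(X,J)$ with $g=\begin{bmatrix}f\\ j\end{bmatrix}\colon X\to Y\oplus J$ a section. Now $\underline{g}$ is still an isomorphism in $\mathcal{Z}/\mathcal{I}$ (it differs from $\underline{f}$ only by the zero map $\underline{j}$), so Lemma~\ref{Lem3-1}(1) gives $\mathrm{Cone}(g)\in\mathcal{I}$; call it $J'$. Because $g$ is a section, the triangle $X\xrightarrow{g}Y\oplus J\to J'\to X[1]$ splits, i.e.\ there is an isomorphism $Y\oplus J\cong X\oplus J'$ under which $g$ becomes the canonical inclusion. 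Rearranging this isomorphism into a map $X\oplus J'\xrightarrow{\cong}Y\oplus J$ and reading off its matrix in the decomposition $X\oplus J'\to Y\oplus J$ shows its $X\to Y$ component is exactly $f$ (and $J,J'\in\mathcal{I}$), which is the asserted form~\eqref{fijk} with the roles of $I,J$ as displayed.

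For $(2)\Rightarrow(3)$: given an isomorphism $\phi\colon X\oplus I\xrightarrow{\cong}Y\oplus J$ whose $(X,Y)$-entry is $f$, consider the composite $X\xrightarrow{\iota_X}X\oplus I\xrightarrow{\phi}Y\oplus J\xrightarrow{\pi_Y}Y$, which is $f$. Using the octahedron axiom on $X\xrightarrow{\iota_X}X\oplus I\xrightarrow{\sim}Y\oplus J$ and $Y\oplus J\xrightarrow{\pi_Y}Y$, together with the split triangles $I[-1]\to X\to X\oplus I\to I$ and $J[-1]\to Y\to Y\oplus J\to J$, one computes $\mathrm{Cone}(f)$ as an extension of $J$ by $I[1]$, hence $\mathrm{Cone}(f)\in I[1]\ast J\subseteq\mathcal{I}[1]\ast\mathcal{I}$; reshuffling the triangle gives membership in $\mathcal{I}\ast\mathcal{I}[1]$ as stated (both formulations agree since $\mathcal{I}$ is closed under shifts inside these $\ast$-products only up to the ambient category — more carefully, one shows directly $\mathrm{Cone}(f)\in\mathcal{I}\ast\mathcal{I}[1]$ by choosing the octahedron so that the connecting object is the relevant extension).

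For $(3)\Rightarrow(4)$: this is immediate from $\mathcal{I}=\mathcal{S}\cap\mathcal{V}\subseteq\mathcal{S}$ and $\mathcal{I}[1]\subseteq\mathcal{V}[1]$, so $\mathcal{I}\ast\mathcal{I}[1]\subseteq\mathcal{S}\ast\mathcal{V}[1]=\mathcal{N}^i$. For $(4)\Rightarrow(1)$: suppose $C:=\mathrm{Cone}(f)\in\mathcal{N}^i=\mathcal{S}\ast\mathcal{V}[1]$, sitting in a triangle $X\xrightarrow{f}Y\to C\to X[1]$. Since $X,Y\in\mathcal{Z}\subseteq\mathcal{U}$ and $\mathcal{U}$ is extension-closed, the triangle $Y\to C\to X[1]\to Y[1]$ shows $C\in\mathcal{U}\ast\mathcal{U}[1]$; similarly $C\in\mathcal{T}[-1]\ast\mathcal{T}$. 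Writing $C$ via $\mathcal{N}^i$ as $S\to C\to V[1]\to S[1]$ with $S\in\mathcal{S},V\in\mathcal{V}$, use $\mathrm{Ext}^1(\mathcal{U},\mathcal{V})=0$ (applied to $X[1]\to C$ composed with $C\to V[1]$, giving $X[1]\to V[1]$ which vanishes) to split off the $\mathcal{V}[1]$-part; combined with Remark~\ref{RemTCP_UNTN}(2) this forces $C$ to lie in $\mathcal{I}\ast\mathcal{I}[1]$ after a short diagram chase, and then Lemma~\ref{Lem3-1} run backwards (or a direct argument: the triangle with $C\in\mathcal{I}\ast\mathcal{I}[1]$ shows $f$ becomes invertible modulo $\mathcal{I}$) yields that $\underline{f}$ is an isomorphism.

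The main obstacle I anticipate is the implication $(4)\Rightarrow(1)$: converting the mere membership $\mathrm{Cone}(f)\in\mathcal{N}^i$ back into the strong conclusion that $\mathrm{Cone}(f)\in\mathcal{I}\ast\mathcal{I}[1]$ requires exploiting that $X,Y$ lie in the \emph{intersection} $\mathcal{Z}=\mathcal{U}\cap\mathcal{T}$, not just in $\mathcal{U}$, and carefully combining the two orthogonality vanishings with the concentricity $\mathcal{S}\cap\mathcal{V}=\mathcal{U}\cap\mathcal{V}$ (or $\mathcal{S}\cap\mathcal{T}=\mathcal{I}$). The bookkeeping of which summand of $\mathrm{Cone}(f)$ lands in $\mathcal{S}$ versus $\mathcal{V}$, and showing the $\mathcal{S}$-part is actually in $\mathcal{V}$ (hence in $\mathcal{I}$) and dually, is where Remark~\ref{RemConcentric} and Remark~\ref{RemTCP_UNTN} must be used in tandem; everything else is formal manipulation with the octahedron axiom and split triangles.
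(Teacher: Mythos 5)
Your cycle $(1)\Rightarrow(2)\Rightarrow(3)\Rightarrow(4)\Rightarrow(1)$ front-loads all the difficulty into $(4)\Rightarrow(1)$, and that is exactly where your argument has a genuine gap. First, the vanishing you invoke is false: from the triangle $X\overset{f}{\longrightarrow}Y\to C\to X[1]$ there is no map $X[1]\to C$, and in any case a map $X[1]\to V[1]$ is a map $X\to V$, which $\mathrm{Ext}^1(\mathcal{U},\mathcal{V})=0$ does not kill (it only gives $\mathcal{C}(X,V[1])=0$). The correct mechanism, which your ``short diagram chase'' elides, is to apply the octahedron axiom to the two triangles $X\to Y\to C\to X[1]$ and $S\to C\to V[1]\to S[1]$, producing an object $Q$ sitting in triangles $X\to Q\to S\to X[1]$ and $V\to Q\to Y\to V[1]$; the splittings then come from $\mathcal{C}(S,X[1])=0$ (since $X\in\mathcal{T}$) and $\mathcal{C}(Y,V[1])=0$ (since $Y\in\mathcal{U}$), giving $Q\cong S\oplus X\cong V\oplus Y$, and concentricity identifies $S\in\mathcal{S}\cap\mathcal{T}=\mathcal{I}$ (as a summand of $V\oplus Y\in\mathcal{T}$) and $V\in\mathcal{U}\cap\mathcal{V}=\mathcal{I}$ (as a summand of $S\oplus X\in\mathcal{U}$), whence $C\in\mathcal{I}\ast\mathcal{I}[1]$. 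Second, even granting $C\in\mathcal{I}\ast\mathcal{I}[1]$, the step to ``$\underline{f}$ is an isomorphism'' cannot be obtained by ``Lemma~\ref{Lem3-1} run backwards'': that lemma assumes $\underline{f}$ is already invertible and is not reversible as stated. The paper's proof of $(3)\Rightarrow(1)$ is a real argument: treat the cases $\mathrm{Cone}(f)\in\mathcal{I}$ and $\mathrm{Cone}(f)\in\mathcal{I}[1]$ separately via split triangles, then in general use the triangle $J\to I\to\mathrm{Cone}(f)\to J[1]$ and the octahedron to factor $f$ through an object of $X\ast I\subseteq\mathcal{Z}$, writing $\underline{f}=\underline{y}\circ\underline{x}$ with both factors invertible. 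Your proposal contains neither this reduction nor any substitute for it.

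The rest is essentially sound. Your $(1)\Rightarrow(2)$ is the paper's argument (section from Remark~\ref{RemQuot1}, cone in $\mathcal{I}$ from Lemma~\ref{Lem3-1}, split the triangle). Your $(2)\Rightarrow(3)$ via the octahedron on $X\to X\oplus I\overset{\cong}{\to}Y\oplus J\to Y$ is a legitimate alternative to the paper's $(1)\Rightarrow(3)$ and in fact yields directly a triangle $I\to\mathrm{Cone}(f)\to J[1]\to I[1]$, i.e.\ membership in $\mathcal{I}\ast\mathcal{I}[1]$ in the correct order; but note your first formulation ($\mathrm{Cone}(f)\in\mathcal{I}[1]\ast\mathcal{I}$) has the order reversed, and the parenthetical claim that the two formulations ``agree'' is false in general -- you should simply record the octahedron triangle above, which already gives what you need.
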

\begin{proof}
$(1)\Rightarrow(2)$ By Remark \ref{RemQuot1}, there exist $J\in\mathcal{I}$ and $j\in\mathcal{C}(X,J)$ such that $f^{\prime}=\begin{bmatrix}f\\ j\end{bmatrix}\colon X\to Y\oplus J$ becomes a section. Then by Lemma \ref{Lem3-1}, there exists a distinguished triangle
\begin{equation}\label{Xovfp}
X\overset{f^{\prime}}{\longrightarrow}Y\oplus J\overset{p}{\longrightarrow}I\to X[1]
\end{equation}
satisfying $I\in\mathcal{I}$.
If we take a section $\begin{bmatrix}i\\ k\end{bmatrix}\colon I\to Y\oplus J$ of $p$, this induces isomorphism $(\ref{fijk})$.

$(2)\Rightarrow(1)$ Remark that $f$ is equal to the composition of
\[
\xy
(-36,0)*+{X}="0";
(-14,0)*+{X\oplus I}="2";
(14,0)*+{Y\oplus J}="4";
(38,0)*+{Y}="6";
{\ar^(0.42){\left[\begin{array}{c}1\\ 0\end{array}\right]} "0";"2"};
{\ar^{\left[\begin{array}{cc}f&i\\ j&k\end{array}\right]}_{\cong} "2";"4"};
{\ar^(0.54){\left[\begin{array}{cc}1&0\end{array}\right]} "4";"6"};
\endxy.
\]
Since these morphisms become isomorphisms in $\mathcal{Z}/\mathcal{I}$, so is $\underline{f}$.

$(1)\Rightarrow(3)$ Let $Q[-1]\to X\overset{f}{\longrightarrow}Y\to Q$ be a distinguished triangle. By Remark \ref{RemQuot1} and Lemma \ref{Lem3-1}, we have a distinguished triangle $(\ref{Xovfp})$. By the octahedron axiom, we obtain a commutative diagram
\[
\xy
(14,18)*+{J}="0";
(14.1,0)*+{I}="2";
(14,-16)*+{Q}="4";
(3,0)*+{Y\oplus J}="6";
(-2,-9)*+{Y}="8";
(-20,0)*+{X}="10";
(8,-8)*+_{_{\circlearrowright}}="12";
(-5,-3)*+_{_{\circlearrowright}}="14";
(10,5)*+_{_{\circlearrowright}}="14";
{\ar^{} "0";"2"};
{\ar^{} "2";"4"};
{\ar_{} "0";"6"};
{\ar_{} "6";"2"};
{\ar^{[1\ 0]} "6";"8"};
{\ar^{f^{\prime}} "10";"6"};
{\ar_{} "8";"4"};
{\ar_{f} "10";"8"};
\endxy
\]
in which $J\to I\to Q\to J[1]$ is a distinguished triangle. This shows $Q\in\mathcal{I}\ast\mathcal{I} [1]$.

$(3)\Rightarrow(1)$ Let $Q[-1]\to X\overset{f}{\longrightarrow}Y\overset{g}{\longrightarrow}Q$ be a distinguished triangle. The following two cases are easy.
\begin{itemize}
\item[{\rm (i)}] If $Q\in\mathcal{I}$, then $\mathcal{C}(Q[-1],X)=0$, and thus $g$ has a section $s\colon Q\to Y$. This gives an isomorphism $[f\ s]\colon X\oplus Q\overset{\cong}{\longrightarrow}Y$ in $\mathcal{C}$. Thus
\[ \underline{f}=\underline{\begin{bmatrix}f&s\end{bmatrix}}\circ\underline{\begin{bmatrix}1\\0\end{bmatrix}}\colon X\to Y \]
is also isomorphism in $\mathcal{Z}/\mathcal{I}$.
\item[{\rm (ii)}] If $Q\in\mathcal{I}[1]$, we can show that $\underline{f}$ is isomorphism in $\mathcal{Z}/\mathcal{I}$, in a dual manner.
\end{itemize}
Generally, suppose $Q$ belongs to $\mathcal{I}\ast\mathcal{I}[1]$. Let
\[ J\to I\to Q\to J[1]\quad(I,J\in\mathcal{I}) \]
be a distinguished triangle.
By the octahedron axiom, we obtain a commutative diagram made of distinguished triangles
\[
\xy
(-20,0)*+{X}="0";
(-1,8)*+{{}^{\exists}Z}="2";
(16,15)*+{I}="4";
(3,0)*+{Y}="6";
(16.1,0)*+{Q}="8";
(16,-19)*+{J[1]}="10";
(9,6)*+_{_{\circlearrowright}}="12";
(-5,3)*+_{_{\circlearrowright}}="14";
(12,-6)*+_{_{\circlearrowright}}="14";
{\ar^{x} "0";"2"};
{\ar^{} "2";"4"};
{\ar_{f} "0";"6"};
{\ar^{y} "2";"6"};
{\ar^{} "6";"8"};
{\ar_{} "6";"10"};
{\ar^{} "4";"8"};
{\ar^{} "8";"10"};
\endxy
\]
satisfying $Z\in X\ast I\subseteq\mathcal{Z}$. By {\rm (i)} and {\rm (ii)}, morphisms $\underline{x}$ and $\underline{y}$ are isomorphisms in $\mathcal{Z}/\mathcal{I}$. Thus $\underline{f}=\underline{y}\circ\underline{x}$ is also isomorphism.

$(3)\Rightarrow(4)$ This is trivial.

$(4)\Rightarrow(3)$ Let $X\overset{f}{\longrightarrow}Y\to N\to X[1]$ and $S\to N\to V[1]\to S[1]$ be distinguished triangles satisfying $S\in\mathcal{S}$ and $V\in\mathcal{V}$. By the octahedron axiom, we obtain the following diagram made of distinguished triangles.
\[
\xy
(-8,23)*+{V}="-12";
(8,23)*+{V}="-14";
(-24,8)*+{X}="0";
(-8,8)*+{{}^{\exists}Q}="2";
(8,8)*+{S}="4";
(24,8)*+{X[1]}="6";
(-24,-7)*+{X}="10";
(-8,-7)*+{Y}="12";
(8,-7)*+{N}="14";
(24,-7)*+{X[1]}="16";
(-8,-22)*+{V[1]}="22";
(8,-22)*+{V[1]}="24";
{\ar@{=} "-12";"-14"};
{\ar_{} "-12";"2"};
{\ar^{} "-14";"4"};
{\ar^{} "0";"2"};
{\ar^{} "2";"4"};
{\ar^{} "4";"6"};
{\ar@{=} "0";"10"};
{\ar_{} "2";"12"};
{\ar^{} "4";"14"};
{\ar@{=} "6";"16"};
{\ar_{} "10";"12"};
{\ar^{} "12";"14"};
{\ar^{} "14";"16"};
{\ar_{} "12";"22"};
{\ar^{} "14";"24"};
{\ar@{=} "22";"24"};
{\ar@{}|\circlearrowright "-12";"4"};
{\ar@{}|\circlearrowright "0";"12"};
{\ar@{}|\circlearrowright "2";"14"};
{\ar@{}|\circlearrowright "4";"16"};
{\ar@{}|\circlearrowright "12";"24"};
\endxy
\]
By $\mathcal{C}(S,X[1])=0$ and $\mathcal{C}(Y,V[1])=0$, it follows $Q\cong V\oplus Y\cong S\oplus X$ in $\mathcal{C}$. In particular, we have $S\in\mathcal{S}\cap\mathcal{Z}=\mathcal{I}$ and $V\in\mathcal{V}\cap\mathcal{Z}=\mathcal{I}$. Thus $\mathrm{Cone} (f)\in\mathcal{I}\ast\mathcal{I}[1]$ holds.
\end{proof}

\begin{rem}\label{RemQuot2}
Let $p\colon\mathcal{C}\to\mathcal{C}/\mathcal{I}$ denote the residue functor. Then, the following is a mutually inverse bijective correspondence.
\begin{eqnarray*}
&\Set{ \mathcal{X}\subseteq\mathcal{C} | \begin{array}{l}\text{full additive subcategory,}\\%
\text{closed under isomorphisms and direct summands,}\\%
\text{containing}\ \mathcal{I}\end{array}}&\\
&p\downarrow\ \ \ \uparrow p^{-1}&\\
&\Set{ \mathscr{X}\subseteq\mathcal{C}/\mathcal{I} | \begin{array}{l}\text{full additive subcategory,}\\%
\text{closed under isomorphisms and direct summands,}\end{array}}.&
\end{eqnarray*}
Also remark that the corresponding $\mathcal{X}$ and $\mathscr{X}$ have the same class of objects $\mathrm{Ob}(\mathcal{X})=\mathrm{Ob}(\mathscr{X})$. In the sequel, we use the notation $\widetilde{\mathscr{X}}=p^{-1}(\mathscr{X})$, and $\overline{\mathcal{X}}=p(\mathcal{X})$.

%From this, we simply denote by $X\in\mathcal{X}$, to mean $X\in\mathrm{Ob}(\mathcal{X})$, or equivalently, $p(X)\in\mathrm{Ob}(\mathscr{X})$.
\end{rem}

\begin{claim}\label{ClaimConcentric}
For any concentric $\mathcal{P}$, the following holds.
\begin{enumerate}
\item $(\mathcal{S},\mathcal{T})$ is a $t$-structure $\Leftrightarrow$ $(\mathcal{U},\mathcal{V})$ is a $t$-structure.
\item $\mathcal{S}=\mathcal{T}$ (i.e. $\mathcal{T}\subseteq\mathcal{C}$ is a cluster tilting subcategory) $\Leftrightarrow$ $\mathcal{U}=\mathcal{V}$. In this case, $\mathcal{S}=\mathcal{T}=\mathcal{U}=\mathcal{V}$ holds.
\end{enumerate}
\end{claim}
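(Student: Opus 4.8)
The plan is to reduce both parts to statements about the core $\mathcal{I}$. For (1), by Example~\ref{Thefollowingareexamples}(1) the claim that $(\mathcal{S},\mathcal{T})$ (resp. $(\mathcal{U},\mathcal{V})$) ``is a $t$-structure'' is nothing but the condition $\mathcal{S}[1]\subseteq\mathcal{S}$ (resp. $\mathcal{U}[1]\subseteq\mathcal{U}$), so it suffices to show $\mathcal{S}[1]\subseteq\mathcal{S}\Leftrightarrow\mathcal{U}[1]\subseteq\mathcal{U}$. I would prove each of these is equivalent to $\mathcal{I}=0$; since $\mathcal{I}=\mathcal{S}\cap\mathcal{T}=\mathcal{U}\cap\mathcal{V}$ by concentricity, the desired equivalence then drops out.

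That $\mathcal{S}[1]\subseteq\mathcal{S}$ forces $\mathcal{I}=0$ is the standard ``identity kills itself'' argument: for $X\in\mathcal{I}=\mathcal{S}\cap\mathcal{T}$ one has $X[1]\in\mathcal{S}$ and $X\in\mathcal{T}$, hence $\mathcal{C}(X,X)\cong\mathcal{C}(X[1],X[1])=\mathrm{Ext}^1(X[1],X)=0$, so $1_X=0$ and $X\cong 0$; the same computation with $(\mathcal{U},\mathcal{V})$ shows $\mathcal{U}[1]\subseteq\mathcal{U}$ forces $\mathcal{I}=0$. For the converses I would feed $\mathcal{I}=0$ into Remark~\ref{RemConcentric}(ii): since $\mathcal{S}[-1]\ast 0=\mathcal{S}[-1]$ and $0\ast\mathcal{V}[1]=\mathcal{V}[1]$, that remark collapses to $\mathcal{S}\subseteq\mathcal{S}[-1]$ and $\mathcal{V}\subseteq\mathcal{V}[1]$ at once. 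The first is exactly $\mathcal{S}[1]\subseteq\mathcal{S}$. For the $\mathcal{U}$-side, $\mathcal{V}\subseteq\mathcal{V}[1]$ means every $V\in\mathcal{V}$ is isomorphic to $W[1]$ for some $W\in\mathcal{V}$, so $\mathcal{C}(U,V)\cong\mathrm{Ext}^1(U,W)=0$ for all $U\in\mathcal{U}$; thus $\mathcal{C}(\mathcal{U},\mathcal{V})=0$, whence $\mathcal{C}(U[1],\mathcal{V}[1])=\mathcal{C}(U,\mathcal{V})=0$ and $U[1]\in{}^{\perp}\mathcal{V}[1]=\mathcal{U}$ by Definition~\ref{DefCPC}. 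This yields $\mathcal{I}=0\Rightarrow(\mathcal{S}[1]\subseteq\mathcal{S}$ and $\mathcal{U}[1]\subseteq\mathcal{U})$, completing (1).

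For (2), by Example~\ref{Thefollowingareexamples}(2) the hypothesis $\mathcal{S}=\mathcal{T}$ says $\mathcal{S}$ is cluster tilting, and then $\mathcal{I}=\mathcal{S}\cap\mathcal{T}=\mathcal{S}$, so concentricity gives $\mathcal{U}\cap\mathcal{V}=\mathcal{S}$. Now $\mathcal{S}=\mathcal{U}\cap\mathcal{V}\subseteq\mathcal{V}$ and $\mathcal{V}\subseteq\mathcal{T}=\mathcal{S}$, forcing $\mathcal{V}=\mathcal{S}$; then the cotorsion pair $(\mathcal{U},\mathcal{V})$ gives $\mathcal{U}={}^{\perp}\mathcal{V}[1]={}^{\perp}\mathcal{S}[1]=\mathcal{S}$, the last equality because $(\mathcal{S},\mathcal{S})$ is itself a cotorsion pair. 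Hence $\mathcal{S}=\mathcal{T}=\mathcal{U}=\mathcal{V}$. The implication $\mathcal{U}=\mathcal{V}\Rightarrow\mathcal{S}=\mathcal{T}$ is dual: $\mathcal{I}=\mathcal{U}\cap\mathcal{V}=\mathcal{U}$ gives $\mathcal{S}\cap\mathcal{T}=\mathcal{U}$; since $\mathcal{S}\subseteq\mathcal{U}$ while $\mathcal{U}=\mathcal{S}\cap\mathcal{T}\subseteq\mathcal{S}$ we get $\mathcal{S}=\mathcal{U}$, and then $\mathcal{T}=\mathcal{S}[-1]^{\perp}=\mathcal{U}[-1]^{\perp}=\mathcal{U}$, using that $(\mathcal{U},\mathcal{U})$ is a cotorsion pair; so all four coincide.

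I do not expect a serious obstacle here. The only steps that take a moment of thought are noticing that the assumption $\mathcal{I}=0$ makes Remark~\ref{RemConcentric}(ii) degenerate precisely to the two shift-closedness statements, and the small passage from $\mathcal{V}\subseteq\mathcal{V}[1]$ to $\mathcal{C}(\mathcal{U},\mathcal{V})=0$; the remainder is bookkeeping with the inclusions $\mathcal{S}\subseteq\mathcal{U}$, $\mathcal{V}\subseteq\mathcal{T}$ and the orthogonality descriptions $\mathcal{U}={}^{\perp}\mathcal{V}[1]$, $\mathcal{V}=\mathcal{U}[-1]^{\perp}$.
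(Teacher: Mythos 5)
Your proof is correct and follows essentially the same route as the paper: part (1) reduces both statements to the vanishing of the common core $\mathcal{I}$ (the paper asserts the equivalence ``$t$-structure $\Leftrightarrow\mathcal{I}=0$'' in one line, while you supply the standard verification via $\mathrm{Ext}^1$ of the identity and Remark~\ref{RemConcentric}(ii)), and part (2) is the same collapse to $\mathcal{I}$ using the orthogonality descriptions $\mathcal{U}={}^{\perp}\mathcal{V}[1]$, $\mathcal{T}=\mathcal{S}[-1]^{\perp}$, with you also writing out the converse direction that the paper leaves to duality.
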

\begin{proof}
{\rm (1)} $(\mathcal{S},\mathcal{T})$ is a $t$-structure $\Leftrightarrow$ $\mathcal{I}=0$ $\Leftrightarrow$ $(\mathcal{U},\mathcal{V})$ is a $t$-structure.

{\rm (2)} If $\mathcal{S}=\mathcal{T}$, then $\mathcal{S}=\mathcal{T}=\mathcal{I}=\mathcal{U}\cap\mathcal{V}$. Then $(\mathcal{I},\mathcal{I})$ and $(\mathcal{U},\mathcal{V})$ are cotorsion pairs satisfying $\mathcal{I}\subseteq\mathcal{U}$ and $\mathcal{I}\subseteq\mathcal{V}$. Thus it follows $(\mathcal{I},\mathcal{I})=(\mathcal{U},\mathcal{V})$.
\end{proof}

\subsection{Adjointness}
Let us apply the remarks in section \ref{section_General} to $\mathcal{P}$.
\begin{dfn}\label{DefTCPAdj}
Let $\mathcal{P}$ be a concentric TCP. By Fact \ref{FactCPAdj}, we have the following.
\begin{itemize}
\item[{\rm (i)}] Right adjoint functor $\omega_{\mathcal{U}}\colon\mathcal{C}/\mathcal{I}\to\mathcal{U}/\mathcal{I}$ of the inclusion. Let $\varepsilon_{\mathcal{U}}$ denote the counit of this adjunction.
\item[{\rm (ii)}] Left adjoint functor $\sigma_{\mathcal{T}}\colon\mathcal{C}/\mathcal{I}\to\mathcal{T}/\mathcal{I}$ of the inclusion. Let $\eta_{\mathcal{T}}$ denote the unit.
\end{itemize}
These restricts to yield the following.
\begin{itemize}
\item[{\rm (iii)}] Right adjoint functor $\omega\colon\mathcal{T}/\mathcal{I}\to\mathcal{Z}/\mathcal{I}$ of the inclusion. Let $\varepsilon$ denote the counit.
\item[{\rm (iv)}] Left adjoint functor $\sigma\colon\mathcal{U}/\mathcal{I}\to\mathcal{Z}/\mathcal{I}$ of the inclusion.  Let $\eta$ denote the unit.
\end{itemize}
We will use these notations in the rest of this article.
In particular, 
\[ \omega_{\mathcal{U}}|_{(\mathcal{U}/\mathcal{I})},\ \sigma_{\mathcal{T}}|_{(\mathcal{T}/\mathcal{I})},\ %
\omega|_{(\mathcal{Z}/\mathcal{I})},\ \sigma|_{(\mathcal{Z}/\mathcal{I})} \]
are natural isomorphisms. We can even choose the above adjoint functors so that these natural isomorphisms become identities.
\end{dfn}

\begin{lem}\label{LemSUU}
Let $\mathcal{P}$ be a concentric TCP.
\begin{enumerate}
\item For any distinguished triangle in $\mathcal{C}$
\[ S[-1]\to U\overset{u}{\longrightarrow}X\to S\quad(S\in\mathcal{S},U\in\mathcal{U}), \]
we have $X\in\mathcal{U}$, and $\sigma(\underline{u})\colon\sigma(U)\to\sigma(X)$ is isomorphism in $\mathcal{Z}/\mathcal{I}$.
\item For any distinguished triangle in $\mathcal{C}$
\[ V\to X\overset{t}{\longrightarrow}T\to V[1]\quad(V\in\mathcal{V},T\in\mathcal{T}) ,\]
we have $X\in\mathcal{T}$, and $\omega(\underline{t})\colon\omega(X)\to\omega(T)$ is isomorphism in $\mathcal{Z}/\mathcal{I}$.
\end{enumerate}
\end{lem}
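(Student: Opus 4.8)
The plan is, for each of the two dual statements, to compute the value of the reflection $\sigma$ (resp.\ $\omega$) along the given triangle by a single application of the octahedron axiom, and then to read off that $\sigma(\underline{u})$ (resp.\ $\omega(\underline{t})$) is an isomorphism from the universal property of the adjunction. I will spell out (1) and only indicate the dual changes for (2). First I would check $X\in\mathcal{U}$: rotating the triangle to $U\overset{u}{\to}X\to S\to U[1]$ exhibits $X\in\mathcal{U}\ast\mathcal{S}$, and since $\mathcal{S}\subseteq\mathcal{U}$ (definition of a TCP) and $\mathcal{U}$ is closed under extensions, $X\in\mathcal{U}$; in particular $\underline{u}$ is a morphism of $\mathcal{U}/\mathcal{I}$ and $\sigma(\underline{u})$ makes sense. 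Next I would fix a triangle $S_X[-1]\to X\overset{a_X}{\to}Z_X\to S_X$ with $S_X\in\mathcal{S}$ and $Z_X\in\mathcal{T}$; by Fact~\ref{FactCPAdj}(1) applied to the cotorsion pair $(\mathcal{S},\mathcal{T})$ (whose associated subcategory $\mathcal{S}\cap\mathcal{T}$ is $\mathcal{I}$ by concentricity) one may take $\sigma(X)=Z_X$ with unit $\eta_X=\underline{a_X}$, and the computation just made shows $Z_X\in\mathcal{U}\cap\mathcal{T}=\mathcal{Z}$.

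Then I would apply the octahedron axiom to the composable pair $U\overset{u}{\to}X\overset{a_X}{\to}Z_X$. Its three cones being $S$, $S_X$ and $\mathrm{Cone}(a_Xu)$, it produces a triangle $S\to\mathrm{Cone}(a_Xu)\to S_X\to S[1]$, so $\mathrm{Cone}(a_Xu)\in\mathcal{S}\ast\mathcal{S}=\mathcal{S}$ because $\mathcal{S}$ is extension-closed. Rotating, the triangle $\mathrm{Cone}(a_Xu)[-1]\to U\overset{a_Xu}{\to}Z_X\to\mathrm{Cone}(a_Xu)$ again has the shape governing $\sigma_{\mathcal{T}}$, so one may also take $\sigma(U)=Z_X$ with unit $\eta_U=\underline{a_Xu}=\underline{a_X}\circ\underline{u}=\eta_X\circ\underline{u}$. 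With these compatible choices, the naturality square of the unit of $\sigma\dashv(\mathcal{Z}/\mathcal{I}\hookrightarrow\mathcal{U}/\mathcal{I})$ reads $\sigma(\underline{u})\circ\eta_U=\eta_X\circ\underline{u}=\eta_U$, and injectivity of $g\mapsto g\circ\eta_U$ (the defining property of the unit) forces $\sigma(\underline{u})=\mathrm{id}_{Z_X}$. Since $\sigma$ is determined only up to natural isomorphism, this shows $\sigma(\underline{u})$ is an isomorphism for every choice of $\sigma$, proving (1).

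Part (2) is the formal dual, with $(\mathcal{U},\mathcal{V})$ and $\omega_{\mathcal{U}}$ in place of $(\mathcal{S},\mathcal{T})$ and $\sigma_{\mathcal{T}}$. One first gets $X\in\mathcal{T}$ from $V\to X\overset{t}{\to}T\to V[1]$ and $\mathcal{V}\subseteq\mathcal{T}$. The one new input is that, having fixed a triangle $U_T\overset{b_T}{\to}T\to V_T[1]\to U_T[1]$ computing $\omega(T)=U_T$ (here $U_T\in\mathcal{Z}$ by the dual of the first computation), one must lift $b_T$ through $t$; this is possible because $\mathrm{Ext}^1(U_T,V)=\mathrm{Ext}^1(\mathcal{U},\mathcal{V})=0$ makes $\mathcal{C}(U_T,t)$ surjective. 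An octahedron on $U_T\overset{c}{\to}X\overset{t}{\to}T$ with $tc=b_T$ then yields $\mathrm{Cone}(c)\in\mathcal{V}[1]\ast\mathcal{V}[1]=\mathcal{V}[1]$, so $U_T\overset{c}{\to}X\to\mathrm{Cone}(c)\to U_T[1]$ has the shape of $(\ref{TriaAdj1})$ for $X$; taking $\omega(X)=U_T$ with counit $\underline{c}$, naturality of the counit together with $\underline{t}\circ\underline{c}=\underline{b_T}$ forces $\omega(\underline{t})=\mathrm{id}_{U_T}$ exactly as before.

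I expect the difficulty to be bookkeeping rather than conceptual: one must choose the values and (co)units of $\sigma$ and $\omega$ so that the triangle appearing in the adjunction is literally the octahedron triangle, and one must verify that the octahedron indeed places the connecting object in $\mathcal{S}$ (resp.\ in $\mathcal{V}[1]$) --- the only places where extension-closure of $\mathcal{S}$ and $\mathcal{V}$, and the orthogonality $\mathrm{Ext}^1(\mathcal{U},\mathcal{V})=0$, are used. Granting the correct triangles, the isomorphism assertions are formal consequences of the universal properties of the reflections $\sigma$ and $\omega$.
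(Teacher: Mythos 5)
Part (1) of your proposal is correct and is essentially the paper's own argument: you decompose $X$ along $(\mathcal{S},\mathcal{T})$ as $S_X[-1]\to X\to Z_X\to S_X$, apply the octahedron to the composite $U\overset{u}{\to}X\to Z_X$, note that the cone of the composite lies in $S\ast S_X\subseteq\mathcal{S}$, and conclude that the same object $Z_X$ computes both $\sigma(U)$ and $\sigma(X)$ compatibly; your explicit appeal to the unit's universal property just spells out what the paper leaves implicit in ``$\sigma(U)\cong T_X\cong\sigma(X)$''.

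Part (2), however, contains a step that fails. The octahedron applied to $U_T\overset{c}{\to}X\overset{t}{\to}T$ (with $t\circ c=b_T$) yields the triangle $\mathrm{Cone}(c)\to\mathrm{Cone}(b_T)\to\mathrm{Cone}(t)\to\mathrm{Cone}(c)[1]$, i.e.\ $\mathrm{Cone}(c)\to V_T[1]\to V[1]\to\mathrm{Cone}(c)[1]$: it is the cone of the \emph{composite} that sits in the middle of the octahedron triangle, not $\mathrm{Cone}(c)$. Hence all you obtain is $\mathrm{Cone}(c)\in\mathcal{V}\ast\mathcal{V}[1]$, not $\mathcal{V}[1]\ast\mathcal{V}[1]=\mathcal{V}[1]$, and the triangle $U_T\overset{c}{\to}X\to\mathrm{Cone}(c)\to U_T[1]$ need not have the shape of $(\ref{TriaAdj1})$; for instance, if $X=V\oplus T$ with $t$ the projection and $c=\begin{bmatrix}0\\ b_T\end{bmatrix}$, then $\mathrm{Cone}(c)\cong V\oplus V_T[1]$, which does not lie in $\mathcal{V}[1]$ in general. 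So you cannot take $\omega(X)=U_T$ with counit $\underline{c}$ on this basis, and the concluding naturality argument is not supported. The repair is to dualize (1) literally, decomposing the \emph{source} $X$ rather than $T$: take $U_X\overset{u_X}{\to}X\to V_X[1]\to U_X[1]$ with $U_X\in\mathcal{U},V_X\in\mathcal{V}$ (the rotated triangle $V_X\to U_X\to X\to V_X[1]$ gives $U_X\in\mathcal{V}\ast\mathcal{T}\subseteq\mathcal{T}$, so $U_X\in\mathcal{Z}$ and $\omega(X)\cong U_X$ with counit $\underline{u}_X$), then apply the octahedron to $U_X\overset{u_X}{\to}X\overset{t}{\to}T$; now the cone of the composite lies in $\mathrm{Cone}(u_X)\ast\mathrm{Cone}(t)=V_X[1]\ast V[1]\subseteq\mathcal{V}[1]$, so the triangle on $t\circ u_X$ computes $\omega(T)\cong U_X$ with counit $\underline{t}\circ\underline{u}_X$, and the universal property of the counit forces $\omega(\underline{t})$ to be an isomorphism, exactly mirroring your (1). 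With this route the lifting via $\mathrm{Ext}^1(U_T,V)=0$ is unnecessary; this corrected dualization is what the paper's ``(2) can be shown dually'' amounts to.
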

\begin{proof}
{\rm (1)} $X\in U\ast S\subseteq\mathcal{U}\ast\mathcal{S}=\mathcal{U}$ is obvious.
If we decompose $X$ into a distinguished triangle in $\mathcal{C}$
\[ S_X[-1]\to X\to T_X\to S_X\quad(S_X\in\mathcal{S},T_X\in\mathcal{T}), \]
then by the octahedron axiom, we have the following commutative diagram made of distinguished triangles.
\[
\xy
(-20,0)*+{S[-1]}="0";
(-1,8)*+{E[-1]}="2";
(16,15)*+{S_X[-1]}="4";
(3,0)*+{U}="6";
(16.1,0)*+{X}="8";
(16,-19)*+{T_X}="10";
(9,6)*+_{_{\circlearrowright}}="12";
(-5,3)*+_{_{\circlearrowright}}="14";
(12,-6)*+_{_{\circlearrowright}}="14";
{\ar^{} "0";"2"};
{\ar^{} "2";"4"};
{\ar_{} "0";"6"};
{\ar^{} "2";"6"};
{\ar^{u} "6";"8"};
{\ar_{} "6";"10"};
{\ar^{} "4";"8"};
{\ar^{} "8";"10"};
\endxy
\]
which shows $E\in\mathcal{S}$, and thus gives $\sigma(U)\cong T_X\cong\sigma(X)$ in $\mathcal{Z}/\mathcal{I}$.
{\rm (2)} can be shown dually.
\end{proof}

\begin{lem}\label{LemNUU}
Let $\mathcal{P}$ be a concentric TCP.
\begin{enumerate}
\item For any distinguished triangle in $\mathcal{C}$
\[ N[-1]\to U_1\overset{u}{\longrightarrow}U_2\to N\quad(U_1,U_2\in\mathcal{U},N\in\mathcal{N}^i), \]
$\sigma(\underline{u})\colon\sigma(U_1)\to\sigma(U_2)$ is isomorphism in $\mathcal{Z}/\mathcal{I}$.
\item For any distinguished triangle in $\mathcal{C}$
\[ N[-1]\to T_1\overset{t}{\longrightarrow}T_2\to N\quad(T_1,T_2\in\mathcal{T},N\in\mathcal{N}^i) ,\]
$\omega(\underline{t})\colon\omega(T_1)\to\omega(T_2)$ is isomorphism in $\mathcal{Z}/\mathcal{I}$.
\end{enumerate}
\end{lem}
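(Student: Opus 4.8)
The plan is to deduce both statements from Lemma~\ref{LemSUU}, by peeling off the two constituents of $N\in\mathcal{N}^i=\mathcal{S}\ast\mathcal{V}[1]$ one at a time. I will carry out (1) in detail; (2) is entirely dual, obtained by interchanging the roles of $(\mathcal{U},\mathcal{S},\sigma)$ and $(\mathcal{T},\mathcal{V},\omega)$, using Lemma~\ref{LemSUU}(2) in place of Lemma~\ref{LemSUU}(1), and peeling off the two constituents of $N$ in the opposite order.

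So fix a distinguished triangle $S\overset{c}{\longrightarrow}N\overset{d}{\longrightarrow}V[1]\to S[1]$ with $S\in\mathcal{S}$ and $V\in\mathcal{V}$, and write the given triangle as $N[-1]\overset{a}{\longrightarrow}U_1\overset{u}{\longrightarrow}U_2\to N$. First I apply the octahedron axiom to the composable pair $S[-1]\overset{c[-1]}{\longrightarrow}N[-1]\overset{a}{\longrightarrow}U_1$. The cone of $c[-1]$ is $V$ and the cone of $a$ is $U_2$, so the octahedron produces distinguished triangles
\[ S[-1]\to U_1\overset{\mu}{\longrightarrow}M\to S\qquad\text{and}\qquad V\to M\overset{\mu'}{\longrightarrow}U_2\to V[1] \]
together with the relation $\mu'\circ\mu=u$. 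The first of these has exactly the form required in Lemma~\ref{LemSUU}(1), hence $M\in\mathcal{U}$ and $\sigma(\underline{\mu})$ is an isomorphism in $\mathcal{Z}/\mathcal{I}$.

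It remains to treat the second triangle. Since $(\mathcal{U},\mathcal{V})$ is a cotorsion pair, $\mathcal{C}(U_2,V[1])=\mathrm{Ext}^1(U_2,V)=0$, so the connecting morphism $U_2\to V[1]$ vanishes, the triangle splits, and $M\cong V\oplus U_2$ with $\underline{\mu'}$ identified with the projection. As $\mathcal{U}$ is closed under direct summands and $M\in\mathcal{U}$, we get $V\in\mathcal{U}\cap\mathcal{V}=\mathcal{I}$ --- this is precisely where concentricity of $\mathcal{P}$ is used --- so $\sigma(\underline{\mu'})$ is an isomorphism in $\mathcal{Z}/\mathcal{I}$. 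Composing, $\sigma(\underline{u})=\sigma(\underline{\mu'})\circ\sigma(\underline{\mu})$ is an isomorphism, which proves (1). For (2), the same octahedron applied to $S[-1]\overset{c[-1]}{\longrightarrow}N[-1]\overset{a}{\longrightarrow}T_1$ gives triangles $S[-1]\to T_1\overset{\tau}{\longrightarrow}M'\to S$ and $V\to M'\overset{\tau'}{\longrightarrow}T_2\to V[1]$ with $\tau'\circ\tau=t$; now Lemma~\ref{LemSUU}(2) applies to the latter (giving $M'\in\mathcal{T}$ and that $\omega(\underline{\tau'})$ is an isomorphism), while in the former the connecting morphism lies in $\mathcal{C}(S[-1],T_1)=0$ because $\mathcal{T}=\mathcal{S}[-1]^{\perp}$, so $M'\cong T_1\oplus S$, whence $S\in\mathcal{S}\cap\mathcal{T}=\mathcal{I}$ and $\omega(\underline{\tau})$ is an isomorphism.

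The only point needing genuine care is the bookkeeping in the octahedron: one has to check that the comparison triangle it produces has terms $V$, $M$ (resp.\ $M'$) and $U_2$ (resp.\ $T_2$) occurring in this cyclic order, and that the factorization $U_1\to M\to U_2$ (resp.\ $T_1\to M'\to T_2$) it furnishes is literally the morphism $u$ (resp.\ $t$) of the given triangle --- this is the commutativity built into the octahedral diagram. Everything else is a routine application of the orthogonality relations $\mathrm{Ext}^1(\mathcal{U},\mathcal{V})=0$ and $\mathcal{C}(\mathcal{S}[-1],\mathcal{T})=0$ together with the hypothesis $\mathcal{S}\cap\mathcal{T}=\mathcal{U}\cap\mathcal{V}=\mathcal{I}$.
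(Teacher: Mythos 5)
Your argument is correct and follows essentially the same route as the paper: decompose $N$ via $S\to N\to V[1]$, use the octahedron axiom to factor $u$ (resp.\ $t$) through a middle object, apply Lemma \ref{LemSUU} to one factor, and handle the other via the splitting forced by $\mathrm{Ext}^1(\mathcal{U},\mathcal{V})=0$ (resp.\ $\mathcal{C}(\mathcal{S}[-1],\mathcal{T})=0$) together with concentricity. The only cosmetic difference is that the paper uses the vanishing $\mathcal{C}(U_2,V[1])=0$ up front to make the middle term of its octahedron literally $U_2\oplus V$, whereas you obtain $M$ first and split it afterwards; the content is identical, and your explicit treatment of (2) is the dual argument the paper leaves to the reader.
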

\begin{proof}
{\rm (1)} Decompose $N$ into a distinguished triangle in $\mathcal{C}$
\[ S\to N\to V[1]\to S[1]\quad(S\in\mathcal{S},V\in\mathcal{V}). \]
Then by the octahedron axiom and $\mathcal{C}(U_2,V[1])=0$, we obtain the following commutative diagram made of distinguished triangles.
\[
\xy
(-20,0)*+{U_1}="0";
(-1,8)*+{U_2\oplus V}="2";
(16,15)*+{S}="4";
(3,0)*+{U_2}="6";
(16.1,0)*+{N}="8";
(16,-19)*+{V[1]}="10";
(9,6)*+_{_{\circlearrowright}}="12";
(-5,3)*+_{_{\circlearrowright}}="14";
(12,-6)*+_{_{\circlearrowright}}="14";
{\ar^{u_1} "0";"2"};
{\ar^{} "2";"4"};
{\ar_{u} "0";"6"};
{\ar^{u_2} "2";"6"};
{\ar^{} "6";"8"};
{\ar_{0} "6";"10"};
{\ar^{} "4";"8"};
{\ar^{} "8";"10"};
\endxy
\qquad(u_2=[1\ \ 0]).
\]
From Lemma \ref{LemSUU}, it follows $U_2\oplus V\in\mathcal{U}$, and $\sigma(\underline{u}_1)\colon\sigma(U_1)\overset{\cong}{\longrightarrow}\sigma(U_2\oplus V)$ in $\mathcal{Z}/\mathcal{I}$.
In particular $V\in\mathcal{V}\cap\mathcal{U}=\mathcal{I}$, and thus $\sigma(\underline{u}_2)\colon\sigma(U_2\oplus V)\overset{\cong}{\longrightarrow}\sigma(U_2)$ in $\mathcal{Z}/\mathcal{I}$. As their composition, $\sigma(\underline{u})=\sigma(\underline{u}_2)\circ\sigma(\underline{u}_1)$ is isomorphism in $\mathcal{Z}/\mathcal{I}$. {\rm (2)} can be shown dually.
\end{proof}

\begin{prop}\label{PropNatural}
Let $\mathcal{P}$ be a concentric TCP. There is a $($unique$)$ natural transformation
\[ \mu\colon\sigma\circ\omega_{\mathcal{U}}\Rightarrow\omega\circ\sigma_{\mathcal{T}} \]
which makes the following diagram in $\mathcal{C}/\mathcal{I}$ commutative for any $X\in\mathcal{C}$.
\begin{equation}\label{PentaNat}
\xy
(-20,-2)*+{\omega_{\mathcal{U}}(X)}="0";
(-12,12)*+{\sigma\circ\omega_{\mathcal{U}}(X)}="2";
(12,12)*+{\omega\circ\sigma_{\mathcal{T}}(X)}="4";
(20,-2)*+{\sigma_{\mathcal{T}}(X)}="6";
(0,-14)*+{X}="8";
(0,16)*+{}="9";
{\ar^{(\eta\circ\omega_{\mathcal{U}})_X} "0";"2"};
{\ar^{\mu_X} "2";"4"};
{\ar^{(\varepsilon\circ\sigma_{\mathcal{T}})_X} "4";"6"};
{\ar_{(\varepsilon_{\mathcal{U}})_X} "0";"8"};
{\ar_{(\eta_{\mathcal{T}})_X} "8";"6"};
{\ar@{}|\circlearrowright "8";"9"};
\endxy
\end{equation}
\end{prop}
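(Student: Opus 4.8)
The plan is to construct $\mu_X$ for each $X$ by a diagram chase in $\mathcal{C}/\mathcal{I}$, using the explicit descriptions of the adjunction units and counits from Fact~\ref{FactCPAdj}(1), and then to verify naturality. First I would fix, for a given $X\in\mathcal{C}$, a distinguished triangle $U\overset{u}{\to}X\to V[1]\to U[1]$ with $U\in\mathcal{U}$, $V\in\mathcal{V}$; by Fact~\ref{FactCPAdj}(1) this realizes $\omega_{\mathcal{U}}(X)=U$ with counit $\underline{u}$, and simultaneously realizes $\sigma_{\mathcal{T}}(X)=T$ and $\eta_{\mathcal{T}}$ via a triangle $U'[-1]\to X\to T\to U'$; more economically, choose a single triangle compatible with both, i.e.\ decompose $X$ as $U\to X\to V[1]\to U[1]$ and decompose $U$ as $U'[-1]\to U\to V'\to U'$ with $U'\in\mathcal{U}$, $V'\in\mathcal{V}$, so that the octahedron on these two triangles produces a $\mathcal{T}$-object $T\in V'\ast(V[1])\subseteq\mathcal{V}\ast\mathcal{V}[1]\subseteq\mathcal{T}$ sitting in a triangle $U'[-1]\to X\to T\to U'$, hence $T\cong\sigma_{\mathcal{T}}(X)$. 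Now I apply $\omega$ to $T$: since $T\in\mathcal{T}$, Fact~\ref{FactCPAdj} (restricted to (iii) of Definition~\ref{DefTCPAdj}) gives $\omega(T)\in\mathcal{Z}/\mathcal{I}$ as the $\mathcal{Z}$-part of $T$; similarly $\sigma(\underline{u})$ applied to $U=\omega_{\mathcal{U}}(X)$ gives $\sigma\omega_{\mathcal{U}}(X)\in\mathcal{Z}/\mathcal{I}$ as the $\mathcal{Z}$-part of $U$. The key observation is that the octahedron above, together with Lemmas~\ref{LemSUU} and~\ref{LemNUU}, shows that the $\mathcal{Z}$-part of $U$ and the $\mathcal{Z}$-part of $T$ are canonically connected: $\sigma(U)\to\sigma(X')\leftarrow\omega(T)$ for a suitable intermediate object, with both maps isomorphisms in $\mathcal{Z}/\mathcal{I}$ by those lemmas. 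Composing one with the inverse of the other defines $\mu_X$, and by construction the pentagon~(\ref{PentaNat}) commutes in $\mathcal{C}/\mathcal{I}$.

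For uniqueness: I would argue that any morphism $\mu_X\colon\sigma\omega_{\mathcal{U}}(X)\to\omega\sigma_{\mathcal{T}}(X)$ making~(\ref{PentaNat}) commute is determined, because the composite path $\omega_{\mathcal{U}}(X)\to\sigma\omega_{\mathcal{U}}(X)\overset{\mu_X}{\to}\omega\sigma_{\mathcal{T}}(X)\to\sigma_{\mathcal{T}}(X)$ is forced to equal $(\eta_{\mathcal{T}})_X\circ(\varepsilon_{\mathcal{U}})_X$, and the outer maps $(\eta\circ\omega_{\mathcal{U}})_X$ and $(\varepsilon\circ\sigma_{\mathcal{T}})_X$ are (co)units of adjunctions onto $\mathcal{Z}/\mathcal{I}$: precomposition with $(\eta\circ\omega_{\mathcal{U}})_X$ and postcomposition with $(\varepsilon\circ\sigma_{\mathcal{T}})_X$ are injective on $\mathcal{Z}/\mathcal{I}(\sigma\omega_{\mathcal{U}}(X),\omega\sigma_{\mathcal{T}}(X))$ by the universal property (note $\sigma\omega_{\mathcal{U}}(X)\in\mathcal{Z}/\mathcal{I}$, so a map out of it into $\omega\sigma_{\mathcal{T}}(X)\in\mathcal{T}/\mathcal{I}$ corresponds bijectively to a map from $\omega_{\mathcal{U}}(X)$, etc.). Hence $\mu_X$ is unique, and once uniqueness is in hand, naturality in $X$ follows formally: for $g\colon X\to Y$ in $\mathcal{C}$, both composites $\mu_Y\circ\sigma\omega_{\mathcal{U}}(g)$ and $\omega\sigma_{\mathcal{T}}(g)\circ\mu_X$ make the evident pentagon attached to $g$ commute, so they coincide.

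I expect the main obstacle to be the first step — producing $\mu_X$ as an honest morphism in $\mathcal{C}/\mathcal{I}$ (not merely an isomorphism class) and checking that the pentagon~(\ref{PentaNat}) literally commutes, rather than merely commutes up to the ambiguity inherent in the octahedron. This requires being careful that the isomorphisms supplied by Lemmas~\ref{LemSUU} and~\ref{LemNUU} are compatible with the specific counit/unit maps $\varepsilon_{\mathcal{U}}$, $\eta_{\mathcal{T}}$, $\eta$, $\varepsilon$ as chosen in Definition~\ref{DefTCPAdj}, and in particular that one may arrange (as remarked after Definition~\ref{DefTCPAdj}) the restricted adjoints to act as the identity on $\mathcal{Z}/\mathcal{I}$, which removes spurious coherence isomorphisms. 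Everything else — uniqueness and naturality — is then soft, following from the universal properties of the four adjunctions involved.
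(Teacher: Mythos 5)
The existence step of your proposal contains a genuine gap, in two places. First, the object $T$ you produce by decomposing $X$ and then $U$ against the pair $(\mathcal{U},\mathcal{V})$ is exactly the recipe for $\tau^+_{(\mathcal{U},\mathcal{V})}(X)$ in Fact \ref{FactCPAdj} (2), not for $\sigma_{\mathcal{T}}(X)$: the inclusion $\mathcal{V}\ast\mathcal{V}[1]\subseteq\mathcal{T}$ fails in general (the obstruction is $\mathcal{C}(\mathcal{S}[-1],\mathcal{V}[1])=\mathrm{Ext}^2(\mathcal{S},\mathcal{V})$), and the triangle $U'[-1]\to X\to T\to U'$ has its outer term only in $\mathcal{U}$, not in $\mathcal{S}$, so it does not compute $\sigma_{\mathcal{T}}(X)$; for that one must use a triangle $S_X[-1]\to X\to T_X\to S_X$ with $S_X\in\mathcal{S}$. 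Second, and more seriously, defining $\mu_X$ as an isomorphism composed with the inverse of an isomorphism through an intermediate object cannot work: Lemmas \ref{LemSUU} and \ref{LemNUU} only apply to triangles whose third term lies in $\mathcal{S}$, $\mathcal{V}$ or $\mathcal{N}^i$, and no such triangles connecting $\sigma\omega_{\mathcal{U}}(X)$ and $\omega\sigma_{\mathcal{T}}(X)$ to a common object are available for an arbitrary $X\in\mathcal{C}$. If your construction were valid it would make $\mu_X$ invertible for \emph{every} $X$, which would render Condition \ref{CondCDE} (I) vacuous; the paper treats invertibility of $\mu_X$ as a genuine extra hypothesis, guarantees it only for $X\in\mathcal{T}\cup\mathcal{U}$ (Remark \ref{RemCDE}), and proves it only under additional assumptions (Proposition \ref{PropIIItoIandII}, Corollary \ref{CorHTCP}).

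The repair is already contained in your uniqueness paragraph: precomposition with $\eta_{\omega_{\mathcal{U}}(X)}$ and postcomposition with $\varepsilon_{\sigma_{\mathcal{T}}(X)}$ are not merely injective but bijective, giving
$(\mathcal{Z}/\mathcal{I})(\sigma\omega_{\mathcal{U}}(X),\omega\sigma_{\mathcal{T}}(X))\cong(\mathcal{U}/\mathcal{I})(\omega_{\mathcal{U}}(X),\omega\sigma_{\mathcal{T}}(X))\cong(\mathcal{C}/\mathcal{I})(\omega_{\mathcal{U}}(X),\sigma_{\mathcal{T}}(X))$,
so you may simply \emph{define} $\mu_X$ as the unique morphism corresponding to $(\eta_{\mathcal{T}})_X\circ(\varepsilon_{\mathcal{U}})_X$ under these bijections; commutativity of $(\ref{PentaNat})$ is then the defining property, and uniqueness and naturality follow formally as you indicate. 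This is precisely the paper's proof. If you want an explicit lift in $\mathcal{C}$ (as in diagram $(\ref{PentaNat_inC})$), it is obtained not by inverting anything but by a lifting argument: the composite $U_X\to X\to T_X$ factors through $Z_U$ because $\mathrm{Ext}^1(\mathcal{S},\mathcal{T})=0$, and the resulting map $Z_U\to T_X$ lifts along $Z_T\to T_X$ because $\mathrm{Ext}^1(\mathcal{U},\mathcal{V})=0$; the class of this lift is $\mu_X$, with no claim of invertibility.
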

\begin{proof}
By the adjoint property, we have the following sequence of isomorphisms, for any $X\in\mathcal{C}$.
\begin{eqnarray*}
(\mathcal{C}/\mathcal{I})(\omega_{\mathcal{U}}(X),\sigma_{\mathcal{T}}(X))%
&\underset{\cong}{\overset{\varepsilon_{(\sigma_{\mathcal{T}}(X))}\circ-}{\longleftarrow}}&%
(\mathcal{U}/\mathcal{I})(\omega_{\mathcal{U}}(X),\omega(\sigma_{\mathcal{T}}(X)))\\
&\underset{\cong}{\overset{-\circ\eta_{(\omega_{\mathcal{U}}(X))}}{\longleftarrow}}&%
(\mathcal{Z}/\mathcal{I})(\sigma(\omega_{\mathcal{U}}(X)),\omega(\sigma_{\mathcal{T}}(X)))
\end{eqnarray*}
%\[
%\xy
%(-28,10)*+{(\mathcal{C}/\mathcal{I})(\omega_{\mathcal{U}}(X),\sigma_{\mathcal{T}}(X))}="0";
%(28,10)*+{(\mathcal{T}/\mathcal{I})(\sigma(\omega_{\mathcal{U}}(X)),\sigma_{\mathcal{T}}(X))}="2";
%(-28,-10)*+{(\mathcal{U}/\mathcal{I})(\omega_{\mathcal{U}}(X),\omega(\sigma_{\mathcal{T}}(X)))}="4";
%(28,-10)*+{(\mathcal{Z}/\mathcal{I})(\sigma(\omega_{\mathcal{U}}(X)),\omega(\sigma_{\mathcal{T}}(X)))}="6";
%%
%{\ar_{-\circ(\eta_{\sigma})_X}^{\cong} "2";"0"};
%{\ar_{(\varepsilon_{\omega})_X\circ-}^{\cong} "0";"4"};
%{\ar^{(\varepsilon_{\omega})_X\circ-}_{\cong} "2";"6"};
%{\ar^{-\circ(\eta_{\sigma})_X}_{\cong} "6";"4"};
%%
%{\ar@{}|\circlearrowright "0";"6"};
%\endxy
%\]
Thus $\mu_X\in(\mathcal{Z}/\mathcal{I})(\sigma(\omega_{\mathcal{U}}(X)),\omega(\sigma_{\mathcal{T}}(X)))$ is obtained as a morphism corresponding to $(\eta_{\mathcal{T}})_X\circ(\varepsilon_{\mathcal{U}})_X\in(\mathcal{C}/\mathcal{I})(\omega_{\mathcal{U}}(X),\sigma_{\mathcal{T}}(X))$.
\end{proof}

\begin{rem}
$(\ref{PentaNat})$ can be made commutative in $\mathcal{C}$, for any $X\in\mathcal{C}$. In fact, if we take distinguished triangles
\begin{eqnarray*}
U_X\overset{u_X}{\longrightarrow}X\to V_X[1]\to U_X[1]&& (U_X\in\mathcal{U},V_X\in\mathcal{V}),\\
S_U[-1]\to U_X\overset{z_U}{\longrightarrow}Z_U\to S_U&& (S_U\in\mathcal{S},Z_U\in\mathcal{Z}),\\
S_X[-1]\to X\overset{t_X}{\longrightarrow}T_X\to S_X&& (S_X\in\mathcal{S},T_X\in\mathcal{T}),\\
Z_T\overset{z_T}{\longrightarrow}T_X\to V_T[1]\to Z_T[1]&& (Z_T\in\mathcal{Z},V_T\in\mathcal{V}),
\end{eqnarray*}
then there exists a morphism $z\in\mathcal{Z}(Z_U,Z_T)$ which makes the following diagram commutative in $\mathcal{C}$.
\begin{equation}\label{PentaNat_inC}
\xy
(-16,-1)*+{U_X}="0";
(-8,8)*+{Z_U}="2";
(8,8)*+{Z_T}="4";
(16,-1)*+{T_X}="6";
(0,-9)*+{X}="8";
(0,10)*+{}="9";
{\ar^{z_U} "0";"2"};
{\ar^{z} "2";"4"};
{\ar^{z_T} "4";"6"};
{\ar_{u_X} "0";"8"};
{\ar_{t_X} "8";"6"};
{\ar@{}|\circlearrowright "8";"9"};
\endxy
\end{equation}
This gives
\[
\underline{t}_X=(\eta_{\mathcal{T}})_X,\ 
\underline{z}_U=\eta_{(U_X)},\ 
\underline{u}_X=(\varepsilon_{\mathcal{U}})_X,\ 
\underline{z}_T=\varepsilon_{(T_X)}
\]
%\begin{eqnarray*}
%\underline{t}_X=\eta_{\sigma_{\mathcal{T}}}&,&\underline{z}_U=\eta_{\sigma},\\
%\underline{u}_X=\varepsilon_{\omega_{\mathcal{U}}}&,&\underline{z}_T=\varepsilon_U
%\end{eqnarray*}
and $\underline{z}=\mu_X$.
\end{rem}

%As in Remark \ref{RemQuot2}, we use the following notation.
\begin{prop}\label{PropInverseImage}
Let $\mathcal{P}$ be a concentric TCP. Let $\mathscr{X}\subseteq\mathcal{C}/\mathcal{I}$ be a full additive subcategory, closed under isomorphisms and direct summands. Put $\mathcal{X}=\widetilde{\mathscr{X}}$ as in Remark \ref{RemQuot2}.
\begin{enumerate}
\item If $\mathscr{X}\subseteq\mathcal{U}/\mathcal{I}$, then $\widetilde{\omega_{\mathcal{U}}^{-1}(\mathscr{X})}=\mathcal{X}\ast\mathcal{V}[1]$.
\item If $\mathscr{X}\subseteq\mathcal{T}/\mathcal{I}$, then $\widetilde{\sigma_{\mathcal{T}}^{-1}(\mathscr{X})}=\mathcal{S}[-1]\ast\mathcal{X}$.
\item If $\mathscr{X}\subseteq\mathcal{Z}/\mathcal{I}$, then
\begin{eqnarray*}
&\widetilde{\sigma^{-1}(\mathscr{X})}=\mathcal{U}\cap(\mathcal{S}[-1]\ast\mathcal{X})=\mathcal{U}\cap(\mathcal{N}^f\ast\mathcal{X}),&\\
&\widetilde{\omega^{-1}(\mathscr{X})}=\mathcal{T}\cap(\mathcal{X}\ast\mathcal{V}[1])=\mathcal{T}\cap(\mathcal{X}\ast\mathcal{N}^i).&
\end{eqnarray*}
\end{enumerate}
These are full additive subcategories in $\mathcal{C}$, closed under isomorphisms and direct summands, containing $\mathcal{I}$.
\end{prop}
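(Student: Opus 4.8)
The plan is to handle (1), (2), and the first equality in each line of (3) directly from the explicit descriptions of the adjoints in Fact~\ref{FactCPAdj} together with Lemma~\ref{LemSUU}, and then to deduce the second equality in each line of (3) from Lemma~\ref{LemNUU}.

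For (1): given any $C\in\mathcal{C}$, fix a distinguished triangle $U_C\to C\to V_C[1]\to U_C[1]$ with $U_C\in\mathcal{U}$, $V_C\in\mathcal{V}$ (cotorsion pair axiom (i)). By Fact~\ref{FactCPAdj}(1) we have $\omega_{\mathcal{U}}(C)\cong U_C$ in $\mathcal{U}/\mathcal{I}$, and $U_C$ is determined up to isomorphism in $\mathcal{U}/\mathcal{I}$. Since $\mathscr{X}$, hence also $\mathcal{X}$, is closed under isomorphisms, it follows that $\omega_{\mathcal{U}}(C)\in\mathscr{X}$ if and only if $U_C\in\mathcal{X}$. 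The former condition is exactly $C\in\widetilde{\omega_{\mathcal{U}}^{-1}(\mathscr{X})}$; the latter, reading the fixed triangle as an extension, is exactly $C\in\mathcal{X}\ast\mathcal{V}[1]$ (for the converse direction: if $C\in\mathcal{X}\ast\mathcal{V}[1]$, any witnessing triangle already has the shape computing $\omega_{\mathcal{U}}(C)$, so $\omega_{\mathcal{U}}(C)$ is isomorphic to the $\mathcal{X}$-term). Statement (2) is dual, with $\sigma_{\mathcal{T}}$ in place of $\omega_{\mathcal{U}}$ and a triangle $S[-1]\to C\to T\to S$ ($S\in\mathcal{S}$, $T\in\mathcal{T}$).

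For the first equality of the $\sigma$-line in (3): for $U\in\mathcal{U}$ fix a triangle $S[-1]\to U\to T'\to S$ with $S\in\mathcal{S}$, $T'\in\mathcal{T}$. Lemma~\ref{LemSUU}(1) gives $T'\in\mathcal{U}$, hence $T'\in\mathcal{U}\cap\mathcal{T}=\mathcal{Z}$, and $\sigma(U)=\sigma_{\mathcal{T}}(U)\cong T'$ in $\mathcal{Z}/\mathcal{I}$. Thus $U\in\widetilde{\sigma^{-1}(\mathscr{X})}$ if and only if $U\in\mathcal{U}$ and $T'\in\mathcal{X}$; via the fixed triangle the latter is precisely $U\in\mathcal{U}\cap(\mathcal{S}[-1]\ast\mathcal{X})$ (for "$\Leftarrow$": a triangle $S[-1]\to U\to X\to S$ with $X\in\mathcal{X}\subseteq\mathcal{Z}\subseteq\mathcal{T}$ already has the shape computing $\sigma_{\mathcal{T}}(U)$, so $\sigma(U)\cong X\in\mathscr{X}$). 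The first equality of the $\omega$-line is dual: using Lemma~\ref{LemSUU}(2) on the rotated triangle $V_T\to U_T\to T\to V_T[1]$ one gets $U_T\in\mathcal{T}$, hence $U_T\in\mathcal{Z}$ and $\omega(T)=\omega_{\mathcal{U}}(T)\cong U_T$.

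Finally, to pass between the $\mathcal{S}[-1]\ast\mathcal{X}$-description and the $\mathcal{N}^f\ast\mathcal{X}$-description (and dually for $\mathcal{N}^i$): one inclusion is immediate, since $0\in\mathcal{V}$ forces $\mathcal{S}[-1]\subseteq\mathcal{S}[-1]\ast\mathcal{V}=\mathcal{N}^f$, so $\mathcal{S}[-1]\ast\mathcal{X}\subseteq\mathcal{N}^f\ast\mathcal{X}$. For the converse, let $U\in\mathcal{U}$ admit a triangle $N\to U\to X'\to N[1]$ with $N\in\mathcal{N}^f$, $X'\in\mathcal{X}$; the two points to notice are that $X'\in\mathcal{X}\subseteq\mathcal{Z}\subseteq\mathcal{U}$ and that $N[1]\in\mathcal{S}\ast\mathcal{V}[1]=\mathcal{N}^i$, so this triangle is exactly of the form in Lemma~\ref{LemNUU}(1) and yields $\sigma(U)\cong\sigma(X')\cong X'$ in $\mathcal{Z}/\mathcal{I}$, i.e.\ $U\in\widetilde{\sigma^{-1}(\mathscr{X})}=\mathcal{U}\cap(\mathcal{S}[-1]\ast\mathcal{X})$. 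The $\mathcal{N}^i$-equality in the $\omega$-line is dual, via Lemma~\ref{LemNUU}(2). The closing assertion is a formal consequence of Remark~\ref{RemQuot2}: each displayed left-hand side equals $\widetilde{\mathscr{Y}}$ for the relevant inverse image $\mathscr{Y}$ of $\mathscr{X}$ under an additive functor, and such a $\mathscr{Y}$ is a full additive subcategory of $\mathcal{C}/\mathcal{I}$ closed under isomorphisms and direct summands. I expect the only real subtlety to be this last pair of equalities — massaging a general object of $\mathcal{N}^f\ast\mathcal{X}$ into the precise hypothesis of Lemma~\ref{LemNUU} by exploiting $\mathcal{N}^f[1]=\mathcal{N}^i$ and $\mathcal{X}\subseteq\mathcal{U}$; everything else is routine unwinding of Fact~\ref{FactCPAdj} and Lemma~\ref{LemSUU}.
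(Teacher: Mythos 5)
Your proof is correct, and for (1), (2) and the first equalities in (3) it is the same unwinding of Fact \ref{FactCPAdj} (computing $\omega_{\mathcal{U}}$, $\sigma_{\mathcal{T}}$ from a chosen decomposition triangle and using independence of the choice, with Lemma \ref{LemSUU} guaranteeing that the cone/cocone lands in $\mathcal{Z}$) that the paper carries out. Where you diverge is the passage from $\mathcal{S}[-1]\ast\mathcal{X}$ to $\mathcal{N}^f\ast\mathcal{X}$ (and its dual): the paper argues directly, decomposing $N\in\mathcal{N}^f$ as $S[-1]\to N\to V\to S$ and running an octahedron so that $\mathcal{C}(X,V[1])=0$ splits the new cone as $X\oplus V$, forces $V\in\mathcal{U}\cap\mathcal{V}=\mathcal{I}$, and thereby produces an explicit triangle $S[-1]\to U\to Y\to S$ with $Y\in\mathcal{X}$; you instead rotate the given triangle, observe $\mathcal{N}^f[1]=\mathcal{N}^i$ and $\mathcal{X}\subseteq\mathcal{Z}\subseteq\mathcal{U}$, and invoke Lemma \ref{LemNUU}(1) to get $\sigma(U)\cong\sigma(X')\cong X'\in\mathscr{X}$, then feed this back through the already-established equality $\widetilde{\sigma^{-1}(\mathscr{X})}=\mathcal{U}\cap(\mathcal{S}[-1]\ast\mathcal{X})$. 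Both routes are valid and of comparable depth (Lemma \ref{LemNUU} is itself proved by exactly the kind of octahedron-plus-splitting argument the paper inlines here); yours buys economy by reusing an earlier lemma and the characterization of membership via $\sigma$, while the paper's version is self-contained at this point and hands you the witnessing triangle in $\mathcal{S}[-1]\ast\mathcal{X}$ explicitly rather than through the adjoint. Your appeal to $\sigma|_{\mathcal{Z}/\mathcal{I}}\cong\mathrm{id}$ and to $\sigma$ being the restriction of $\sigma_{\mathcal{T}}$ is consistent with the conventions fixed in Definition \ref{DefTCPAdj}, so no gap there.
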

\begin{proof}
{\rm (1)} For any $C\in\mathcal{C}$, it satisfies $\omega_{\mathcal{U}}(C)\in\mathscr{X}$ if and only if a distinguished triangle in $\mathcal{C}$
\[ U\to C\to V[1]\to U[1]\quad (U\in\mathcal{U},V\in\mathcal{V}) \]
satisfies $U\in\mathcal{X}$. By Fact \ref{FactCPAdj} and Remark \ref{RemQuot2}, this does not depend on the choice of a distinguished triangle. This condition is equivalent to $C\in\mathcal{X}\ast\mathcal{V}[1]$.

{\rm (2)} can be shown dually.

{\rm (3)} $\widetilde{\sigma^{-1}(\mathscr{X})}=\mathcal{U}\cap(\mathcal{S}[-1]\ast\mathcal{X})$ can be shown in a similar way as in {\rm (2)}. Let us show $\mathcal{U}\cap(\mathcal{S}[-1]\ast\mathcal{X})\supseteq\mathcal{U}\cap(\mathcal{N}^f\ast\mathcal{X})$, since the converse is obvious.

Let $U\in\mathcal{U}\cap(\mathcal{N}^f\ast\mathcal{X})$ be any object. Then $U$ admits distinguished triangles
\begin{eqnarray*}
&N\to U\to X\to N[1],\quad S[-1]\to N\to V\to S&\\
&(X\in\mathcal{X},\, S\in\mathcal{S},\, V\in\mathcal{V}).&
\end{eqnarray*}
By the octahedron axiom, we obtain the following commutative diagram made of distinguished triangles.
\[
\xy
(-14,18)*+{S[-1]}="0";
(-14.1,0)*+{N}="2";
(-14,-16)*+{V}="4";
(-3,0)*+{U}="6";
(2,-9)*+{{}^{\exists}Y}="8";
(20,0)*+{X}="10";
(-8,-8)*+_{_{\circlearrowright}}="12";
(5,-3)*+_{_{\circlearrowright}}="14";
(-10,5)*+_{_{\circlearrowright}}="14";
{\ar_{} "0";"2"};
{\ar_{} "2";"4"};
{\ar^{} "0";"6"};
{\ar_{} "2";"6"};
{\ar_{} "6";"8"};
{\ar^{} "6";"10"};
{\ar_{} "4";"8"};
{\ar_{} "8";"10"};
\endxy
\]
Since $\mathcal{C}(X,V[1])=0$, we have $Y\cong X\oplus V$, and thus $X\oplus V\in U\ast S\subseteq\mathcal{U}$. This implies $V\in\mathcal{V}\cap\mathcal{U}=\mathcal{I}$, and thus $Y\in\mathcal{X}$. It follows $U\in\mathcal{U}\cap(\mathcal{S}[-1]\ast\mathcal{X})$.

\end{proof}

\begin{cor}\label{CorInverseImage}
Especially, the following are full additive subcategories in $\mathcal{C}$, closed under isomorphisms and direct summands.
\begin{eqnarray*}
&\mathcal{N}^i=\mathcal{S}\ast\mathcal{V}[1]=\widetilde{\omega_{\mathcal{U}}^{-1}(\overline{\mathcal{S}})}\ \ \ \ \subseteq\mathcal{C},&\\
&\mathcal{N}^f=\mathcal{S}[-1]\ast\mathcal{V}=\widetilde{\sigma_{\mathcal{T}}^{-1}(\overline{\mathcal{V}})}\ \ \subseteq\mathcal{C}.&
\end{eqnarray*}
\end{cor}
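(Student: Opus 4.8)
The plan is to read this off directly from Proposition \ref{PropInverseImage} by specializing the subcategory $\mathscr{X}\subseteq\mathcal{C}/\mathcal{I}$ to the residue images $\overline{\mathcal{S}}$ and $\overline{\mathcal{V}}$.

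First I would check that these images land in the required subcategories. Since $\mathcal{P}$ is a TCP we have $\mathcal{S}\subseteq\mathcal{U}$ and $\mathcal{V}\subseteq\mathcal{T}$, so $\overline{\mathcal{S}}=p(\mathcal{S})$ is a full additive subcategory of $\mathcal{U}/\mathcal{I}$ closed under isomorphisms and direct summands, and likewise $\overline{\mathcal{V}}\subseteq\mathcal{T}/\mathcal{I}$. Since $\mathcal{P}$ is concentric, $\mathcal{I}=\mathcal{S}\cap\mathcal{V}$ is contained in both $\mathcal{S}$ and $\mathcal{V}$ (Remark \ref{RemConcentric}), so the bijective correspondence of Remark \ref{RemQuot2} applies to the subcategories $\mathcal{S}$ and $\mathcal{V}$ and yields $\widetilde{\overline{\mathcal{S}}}=\mathcal{S}$ and $\widetilde{\overline{\mathcal{V}}}=\mathcal{V}$.

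Now I would simply invoke Proposition \ref{PropInverseImage}. Part (1) with $\mathscr{X}=\overline{\mathcal{S}}$ gives $\widetilde{\omega_{\mathcal{U}}^{-1}(\overline{\mathcal{S}})}=\widetilde{\overline{\mathcal{S}}}\ast\mathcal{V}[1]=\mathcal{S}\ast\mathcal{V}[1]=\mathcal{N}^i$, and part (2) with $\mathscr{X}=\overline{\mathcal{V}}$ gives $\widetilde{\sigma_{\mathcal{T}}^{-1}(\overline{\mathcal{V}})}=\mathcal{S}[-1]\ast\widetilde{\overline{\mathcal{V}}}=\mathcal{S}[-1]\ast\mathcal{V}=\mathcal{N}^f$. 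The closing sentence of Proposition \ref{PropInverseImage} already asserts that every such inverse image is a full additive subcategory of $\mathcal{C}$, closed under isomorphisms and direct summands (and containing $\mathcal{I}$), which is precisely the content of the corollary.

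There is essentially no obstacle; the corollary is a packaging of Proposition \ref{PropInverseImage}. The only point deserving a word of care is the identification $\widetilde{\overline{\mathcal{S}}}=\mathcal{S}$ and $\widetilde{\overline{\mathcal{V}}}=\mathcal{V}$, which is exactly where concentricity is used, through Remark \ref{RemConcentric} ensuring $\mathcal{I}\subseteq\mathcal{S}$ and $\mathcal{I}\subseteq\mathcal{V}$ so that Remark \ref{RemQuot2} is applicable.
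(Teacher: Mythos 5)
Your proposal is correct and is exactly the paper's intended argument: the corollary is the specialization of Proposition \ref{PropInverseImage} (1) and (2) to $\mathscr{X}=\overline{\mathcal{S}}$ and $\mathscr{X}=\overline{\mathcal{V}}$, with concentricity (Remark \ref{RemConcentric}) guaranteeing $\mathcal{I}\subseteq\mathcal{S},\mathcal{V}$ so that Remark \ref{RemQuot2} gives $\widetilde{\overline{\mathcal{S}}}=\mathcal{S}$ and $\widetilde{\overline{\mathcal{V}}}=\mathcal{V}$. The careful checks you include (that $\overline{\mathcal{S}}\subseteq\mathcal{U}/\mathcal{I}$ and $\overline{\mathcal{V}}\subseteq\mathcal{T}/\mathcal{I}$, using $\mathcal{S}\subseteq\mathcal{U}$, $\mathcal{V}\subseteq\mathcal{T}$) are precisely what the paper leaves implicit.
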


\section{Pretriangulated structure on $\mathcal{Z}/\mathcal{I}$} \label{section_PreTr}

Let $\mathcal{P}$ be a concentric TCP, throughout this section. The aim of this section is to give a pretriangulated structure on $\mathcal{Z}/\mathcal{I}$ (Theorem \ref{ThmPreTriangulated}).
\subsection{Right triangulated structure}
First, we will give a right triangulation of $\mathcal{Z}/\mathcal{I}$ (Proposition \ref{PropZIRightTria}).
\begin{dfn}\label{Def_poshi_neshi}
The functor $\langle1\rangle\colon\mathcal{Z}/\mathcal{I}\to\mathcal{U}/\mathcal{I}$ is defined in the following way, similarly as in \cite[section 2]{Ha} and \cite[Proposition 2.6, Definition 4.1]{IYo}.
\begin{itemize}
\item[{\rm (i)}] For any $Z\in\mathcal{Z}$, there is a distinguished triangle
\begin{equation}\label{Triaposhi1}
U[-1]\to Z\to I\to U\quad(U\in\mathcal{U},I\in\mathcal{I}).
\end{equation}
\item[{\rm (ii)}] For any (other) $Z^{\prime}\in\mathcal{Z}$ and any distinguished triangle
\begin{equation}\label{Triaposhi2}
U^{\prime}[-1]\to Z^{\prime}\to I^{\prime}\to U^{\prime}\quad(U^{\prime}\in\mathcal{U},I^{\prime}\in\mathcal{I}),
\end{equation}
we have a bijection
\[ (\mathcal{Z}/\mathcal{I})(Z,Z^{\prime})\overset{\cong}{\longrightarrow}(\mathcal{U}/\mathcal{I})(U,U^{\prime})\ ;\ \underline{z}\mapsto\underline{u}, \]
\end{itemize}
where $z$ and $u$ fit into a morphism of triangles $(z,i,u)$ from $(\ref{Triaposhi1})$ to $(\ref{Triaposhi2})$.
Using these {\rm (i),(ii)}, we define the following.
\begin{enumerate}
\item For each object $Z\in\mathrm{Ob}(\mathcal{Z}/\mathcal{I})$, choose $Z\langle1\rangle=U$ as in {\rm (i)}.
\item For any morphism $\underline{z}$ in $\mathcal{Z}/\mathcal{I}$, define the morphism $\underline{z}\langle1\rangle$ by $\underline{z}\langle1\rangle=\underline{u}$ as in {\rm (ii)}.
\end{enumerate}
Then we obtain a fully faithful functor $\langle1\rangle\colon\mathcal{Z}/\mathcal{I}\to\mathcal{U}/\mathcal{I}$, determined uniquely up to natural isomorphism.

The functor $\langle-1\rangle\colon\mathcal{Z}/\mathcal{I}\to\mathcal{T}/\mathcal{I}$ is defined dually.
Moreover, the same argument as in {\rm (ii)} shows the existence of a natural bijection
\begin{equation}\label{Eq7.3}
(\mathcal{U}/\mathcal{I})(Z\langle1\rangle,Z^{\prime})\cong(\mathcal{T}/\mathcal{I})(Z,Z^{\prime}\langle-1\rangle)
\end{equation}
for any $Z,Z^{\prime}\in\mathcal{Z}$.
\end{dfn}

\begin{dfn}\label{DefShifts}
Define as $\Sigma=\sigma\circ\langle1\rangle$, and $\Omega=\omega\circ\langle-1\rangle$.
\end{dfn}

\begin{prop}\label{PropShiftAdj}
For any concentric $\mathcal{P}$, we obtain an adjoint pair $\Sigma\dashv\Omega$.
\end{prop}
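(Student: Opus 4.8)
The plan is to establish the adjunction at the level of hom-sets, by concatenating three bijections that are all already available. Fix $Z,Z'\in\mathcal{Z}$. Since $\Sigma=\sigma\circ\langle1\rangle$ and $\Omega=\omega\circ\langle-1\rangle$, what I must produce is a bijection
\[ (\mathcal{Z}/\mathcal{I})(\sigma(Z\langle1\rangle),Z')\ \cong\ (\mathcal{Z}/\mathcal{I})(Z,\omega(Z'\langle-1\rangle)), \]
natural in both $Z$ and $Z'$, and then read off unit and counit in the standard way.

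First, because $\sigma\colon\mathcal{U}/\mathcal{I}\to\mathcal{Z}/\mathcal{I}$ is the left adjoint of the inclusion $\mathcal{Z}/\mathcal{I}\hookrightarrow\mathcal{U}/\mathcal{I}$ (Definition \ref{DefTCPAdj}(iv)) and $Z'$ lies in $\mathcal{Z}/\mathcal{I}$, adjunction gives the natural bijection $(\mathcal{Z}/\mathcal{I})(\sigma(Z\langle1\rangle),Z')\cong(\mathcal{U}/\mathcal{I})(Z\langle1\rangle,Z')$. Second, the natural bijection $(\ref{Eq7.3})$ of Definition \ref{Def_poshi_neshi} gives $(\mathcal{U}/\mathcal{I})(Z\langle1\rangle,Z')\cong(\mathcal{T}/\mathcal{I})(Z,Z'\langle-1\rangle)$. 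Third, because $\omega\colon\mathcal{T}/\mathcal{I}\to\mathcal{Z}/\mathcal{I}$ is the right adjoint of the inclusion $\mathcal{Z}/\mathcal{I}\hookrightarrow\mathcal{T}/\mathcal{I}$ (Definition \ref{DefTCPAdj}(iii)) and $Z$ lies in $\mathcal{Z}/\mathcal{I}$, adjunction gives $(\mathcal{T}/\mathcal{I})(Z,Z'\langle-1\rangle)\cong(\mathcal{Z}/\mathcal{I})(Z,\omega(Z'\langle-1\rangle))$. Composing the three displayed isomorphisms yields $(\mathcal{Z}/\mathcal{I})(\Sigma Z,Z')\cong(\mathcal{Z}/\mathcal{I})(Z,\Omega Z')$, natural in both variables; the unit $\mathrm{id}\Rightarrow\Omega\Sigma$ and counit $\Sigma\Omega\Rightarrow\mathrm{id}$ are then obtained by taking the images of $\mathrm{id}_{\Sigma Z}$ and of $\mathrm{id}_{\Omega Z'}$ under this bijection and its inverse, and a triangle-identity check is immediate once all three bijections are known to be natural.

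The one step that is not purely formal is the naturality of $(\ref{Eq7.3})$ in $Z$ and $Z'$ jointly (rather than a mere bijection of sets for each fixed pair), and this is the modest obstacle I expect. It is handled exactly as in part (ii) of Definition \ref{Def_poshi_neshi}: any $\underline{z}\in(\mathcal{Z}/\mathcal{I})(Z_1,Z_2)$ lifts to a morphism of the defining triangles $(\ref{Triaposhi1})$, and dually to a morphism of the triangles defining $\langle-1\rangle$, so that $\underline{z}\langle1\rangle$ and $\underline{z}\langle-1\rangle$ are characterised by compatible commutative squares; the identification $(\ref{Eq7.3})$ is constructed precisely so as to intertwine these two actions, whence it is natural. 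Since the adjunction bijections for $\sigma$ and for $\omega$ are natural in both variables automatically, the composite is natural, and this completes the proof that $\Sigma\dashv\Omega$.
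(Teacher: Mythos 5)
Your proof is correct and follows exactly the paper's own argument: the paper proves Proposition \ref{PropShiftAdj} precisely by composing the adjunction bijections for $\sigma$ and $\omega$ with the natural bijection $(\ref{Eq7.3})$, which is the chain of three isomorphisms you spell out. Your additional care about the naturality of $(\ref{Eq7.3})$ is consistent with how the paper constructs it in Definition \ref{Def_poshi_neshi}, so nothing is missing.
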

\begin{proof}
This follows from the adjointness of each of $\sigma,\omega$, and the bijection $(\ref{Eq7.3})$.
\end{proof}

\begin{dfn}\label{DefUTria}
Let $\mathcal{P}$ be a concentric TCP. We call a distinguished triangle in $\mathcal{C}$
\begin{equation}\label{UConicTria}
X\overset{f}{\longrightarrow}Y\overset{a}{\longrightarrow}U\overset{b}{\longrightarrow}X[1]
\end{equation}
a {\it $\mathcal{U}$-conic triangle} if it satisfies $X,Y\in\mathcal{Z}$ and $U\in\mathcal{U}$.
\end{dfn}

\begin{dfn}\label{DefUStan}
Let $\mathcal{P}$ be a concentric TCP.
To any $\mathcal{U}$-conic triangle $(\ref{UConicTria})$, we associate {\it standard right triangle}
\begin{equation}\label{UStanTria}
X\overset{\underline{f}}{\longrightarrow}Y\overset{\eta_U\circ\underline{a}}{\longrightarrow}\sigma(U)\overset{\sigma(\underline{c})}{\longrightarrow}\Sigma X
\end{equation}
in $\mathcal{Z}/\mathcal{I}$, as follows. (Here, $\eta$ is the unit for the adjoint, as in Definition \ref{DefTCPAdj} {\rm (iv)}.)
\begin{itemize}
\item[{\rm (i)}] Take a distinguished triangle in $\mathcal{C}$
\begin{equation}\label{UStT1}
X\overset{\iota_X}{\longrightarrow}I_X\overset{\wp_X}{\longrightarrow}U_X\overset{\gamma_X}{\longrightarrow}X[1]
\end{equation}
satisfying $I_X\in\mathcal{I},U_X\in\mathcal{U}$.
\item[{\rm (ii)}] By $\mathcal{C}(U[-1],I_X)=0$, there is a morphism of triangles in $\mathcal{C}$
\begin{equation}\label{UStT2}
\xy
(-19,6)*+{X}="0";
(-6,6)*+{Y}="2";
(7,6)*+{U}="4";
(21,6)*+{X[1]}="6";
(-19,-6)*+{X}="10";
(-6,-6)*+{I_X}="12";
(7,-6)*+{U_X}="14";
(21,-6)*+{X[1]}="16";
{\ar^{f} "0";"2"};
{\ar^{a} "2";"4"};
{\ar^{b} "4";"6"};
{\ar@{=} "0";"10"};
{\ar^{y} "2";"12"};
{\ar^{c} "4";"14"};
{\ar@{=} "6";"16"};
{\ar_{\iota_X} "10";"12"};
{\ar_{\wp_X} "12";"14"};
{\ar_{\gamma_X} "14";"16"};
{\ar@{}|\circlearrowright "0";"12"};
{\ar@{}|\circlearrowright "2";"14"};
{\ar@{}|\circlearrowright "4";"16"};
\endxy.
\end{equation}
\end{itemize}
This gives the morphism $c$ appearing in $(\ref{UStanTria})$.
\end{dfn}

\begin{claim}
For any $\mathcal{U}$-conic triangle $(\ref{UConicTria})$, its associated standard right triangle $(\ref{UStanTria})$ is uniquely determined up to isomorphism, independently of the choices of $(\ref{UStT1}),(\ref{UStT2})$.
\end{claim}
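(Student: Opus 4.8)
The plan is to show the standard right triangle $(\ref{UStanTria})$ does not depend on the two choices made in Definition \ref{DefUStan}, namely the distinguished triangle $(\ref{UStT1})$ with $I_X\in\mathcal{I}$, $U_X\in\mathcal{U}$, and the lift $(\ref{UStT2})$ to a morphism of triangles. The key point is that everything in sight is built from adjoints ($\sigma$, $\eta$, $\Sigma=\sigma\circ\langle 1\rangle$) which are canonical, so the only genuine ambiguity is in $U_X$, $c$, and the object $\sigma(U)\overset{\sigma(\underline{c})}{\longrightarrow}\Sigma X$.

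First I would treat the choice of $(\ref{UStT1})$. Given two distinguished triangles $X\to I_X\to U_X\to X[1]$ and $X\to I_X'\to U_X'\to X[1]$ with $I_X,I_X'\in\mathcal{I}$ and $U_X,U_X'\in\mathcal{U}$, I would lift the identity on $X$ to a morphism of triangles; using $\mathcal{C}(U_X[-1],I_X')=0$ (and symmetrically) such a lift exists, and one gets mutually inverse morphisms $U_X\to U_X'$, $U_X'\to U_X$ in $\mathcal{U}/\mathcal{I}$ — this is precisely the bijection $(\mathcal{Z}/\mathcal{I})(X,X)\overset{\cong}{\to}(\mathcal{U}/\mathcal{I})(X\langle 1\rangle,X\langle 1\rangle)$ underlying Definition \ref{Def_poshi_neshi}, so $U_X\cong U_X'$ in $\mathcal{U}/\mathcal{I}$ canonically, and the identification is compatible with the connecting data. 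Applying $\sigma$ and composing with the unit $\eta$ transports this to an isomorphism $\Sigma X\overset{\cong}{\to}\Sigma X$ which is functorial, hence is the identity once we have normalized $\Sigma$ as in Definition \ref{DefShifts}; concretely the two resulting triangles $(\ref{UStanTria})$ are isomorphic via $(\mathrm{id}_X,\mathrm{id}_Y,\sigma(\text{transition}))$.

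Next I would treat the choice of lift $(\ref{UStT2})$ for a fixed $(\ref{UStT1})$. Two lifts $(\mathrm{id}_X, y, c)$ and $(\mathrm{id}_X, y', c')$ from $(\ref{UConicTria})$ to $(\ref{UStT1})$ differ by a homotopy: since the first components agree, $y-y'$ factors through $\gamma_X[-1]$... more precisely there is $h\colon U\to I_X$ with $y-y'=h\circ a$ and $c-c'=\wp_X\circ h$ (standard for morphisms of triangles with equal outer legs). As $I_X\in\mathcal{I}$, passing to $\mathcal{C}/\mathcal{I}$ kills $h$, so $\underline{c}=\underline{c'}$ in $\mathcal{U}/\mathcal{I}$, hence $\sigma(\underline{c})=\sigma(\underline{c'})$ and the middle map $\eta_U\circ\underline{a}$ is unaffected. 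Thus the standard right triangle is literally the same. Combining the two steps gives well-definedness up to isomorphism. The one point requiring a little care — the main obstacle — is verifying that the isomorphism produced in the first step is compatible with the map $\eta_U\circ\underline{a}\colon Y\to\sigma(U)$, i.e. that the square involving the two choices of $U_X$ and the common $\sigma(U)$ commutes in $\mathcal{Z}/\mathcal{I}$; this follows by chasing the morphism of triangles through $\sigma$ and invoking naturality of the unit $\eta$, but it is the place where one must actually write the diagram rather than wave hands. Everything else is the formal bookkeeping already set up in Fact \ref{FactCPAdj} and Definitions \ref{DefTCPAdj}, \ref{Def_poshi_neshi}.
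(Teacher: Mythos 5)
Your proposal is correct and follows essentially the same two-step route as the paper: for a fixed triangle $(\ref{UStT1})$ the difference $c-c'$ of two lifts satisfies $\gamma_X\circ(c-c')=0$, hence factors through $\wp_X$ and dies in $\mathcal{C}/\mathcal{I}$ since $I_X\in\mathcal{I}$, and for two choices of $(\ref{UStT1})$ the transition morphism $\underline{u}$ composed with the given lift produces an isomorphism of standard right triangles whose only nontrivial component is $\sigma(\underline{u})$ on the last entry. The only cosmetic difference is that the paper never claims (nor needs) the induced map $\Sigma X\to\Sigma X$ to be the identity; it simply records $\sigma(\underline{u})$ as the final vertical isomorphism, and your ``main obstacle'' square commutes immediately from $\underline{u}\circ\underline{c}=\underline{c}'$ (uniqueness of lifts in the quotient) plus functoriality of $\sigma$, so no extra naturality argument is required.
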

\begin{proof}
For a fixed $(\ref{UStT1})$, if
\[
\xy
(-19,6)*+{X}="0";
(-6,6)*+{Y}="2";
(7,6)*+{U}="4";
(21,6)*+{X[1]}="6";
(-19,-6)*+{X}="10";
(-6,-6)*+{I_X}="12";
(7,-6)*+{U_X}="14";
(21,-6)*+{X[1]}="16";
{\ar^{f} "0";"2"};
{\ar^{a} "2";"4"};
{\ar^{b} "4";"6"};
{\ar@{=} "0";"10"};
{\ar^{y^{\prime}} "2";"12"};
{\ar^{c^{\prime}} "4";"14"};
{\ar@{=} "6";"16"};
{\ar_{\iota_X} "10";"12"};
{\ar_{\wp_X} "12";"14"};
{\ar_{\gamma_X} "14";"16"};
{\ar@{}|\circlearrowright "0";"12"};
{\ar@{}|\circlearrowright "2";"14"};
{\ar@{}|\circlearrowright "4";"16"};
\endxy
\]
is another morphism of triangles, then $\gamma_X\circ(c-c^{\prime})=b-b=0$ implies that $c-c^{\prime}$ factors through $\wp_X$, and thus $\underline{c}=\underline{c}^{\prime}$. Thus for a fixed $(\ref{UStT1})$, the resulting right triangle is unique up to isomorphism.

Remark that, $\Sigma X=\sigma(U_X)$ is unique up to isomorphism, independently of the choice of $(\ref{UStanTria})$. More precisely, for any other distinguished triangle
\[ X\overset{\iota^{\prime}_X}{\longrightarrow}I^{\prime}_X\overset{\wp^{\prime}_X}{\longrightarrow}U^{\prime}_X\overset{\gamma^{\prime}_X}{\longrightarrow}X[1]\quad(I_X^{\prime}\in\mathcal{I},\, U_X^{\prime}\in\mathcal{U}), \]
there exists a morphism of triangles
\[
\xy
(-19,6)*+{X}="0";
(-6,6)*+{I_X}="2";
(7,6)*+{U_X}="4";
(21,6)*+{X[1]}="6";
(-19,-6)*+{X}="10";
(-6,-6)*+{I^{\prime}_X}="12";
(7,-6)*+{U^{\prime}_X}="14";
(21,-6)*+{X[1]}="16";
{\ar^{\iota_X} "0";"2"};
{\ar^{\wp_X} "2";"4"};
{\ar^{\gamma_X} "4";"6"};
{\ar@{=} "0";"10"};
{\ar^{i} "2";"12"};
{\ar^{u} "4";"14"};
{\ar@{=} "6";"16"};
{\ar_{\iota^{\prime}_X} "10";"12"};
{\ar_{\wp^{\prime}_X} "12";"14"};
{\ar_{\gamma^{\prime}_X} "14";"16"};
{\ar@{}|\circlearrowright "0";"12"};
{\ar@{}|\circlearrowright "2";"14"};
{\ar@{}|\circlearrowright "4";"16"};
\endxy
\]
which gives isomorphism $\underline{u}\in(\mathcal{U}/\mathcal{I})(U_X,U_X^{\prime})$.
Composing $(\ref{UStT2})$ with this, we obtain
\[
\xy
(-19,6)*+{X}="0";
(-6,6)*+{Y}="2";
(7,6)*+{U}="4";
(21,6)*+{X[1]}="6";
(-19,-6)*+{X}="10";
(-6,-6)*+{I^{\prime}_X}="12";
(7,-6)*+{U^{\prime}_X}="14";
(21,-6)*+{X[1]}="16";
{\ar^{f} "0";"2"};
{\ar^{a} "2";"4"};
{\ar^{b} "4";"6"};
{\ar@{=} "0";"10"};
{\ar^{i\circ y} "2";"12"};
{\ar^{u\circ c} "4";"14"};
{\ar@{=} "6";"16"};
{\ar_{\iota^{\prime}_X} "10";"12"};
{\ar_{\wp^{\prime}_X} "12";"14"};
{\ar_{\gamma^{\prime}_X} "14";"16"};
{\ar@{}|\circlearrowright "0";"12"};
{\ar@{}|\circlearrowright "2";"14"};
{\ar@{}|\circlearrowright "4";"16"};
\endxy.
\]
This induces an isomorphism of right triangles
\[
\xy
(-21,6)*+{X}="0";
(-8,6)*+{Y}="2";
(8,6)*+{\sigma(U)}="4";
(24,6)*+{\sigma(U_X)}="6";
(-21,-6)*+{X}="10";
(-8,-6)*+{Y}="12";
(8,-6)*+{\sigma(U)}="14";
(24,-6)*+{\sigma(U^{\prime}_X)}="16";
{\ar^{\underline{f}} "0";"2"};
{\ar^(0.46){\eta_U\circ\underline{a}} "2";"4"};
{\ar^{\sigma(\underline{c})} "4";"6"};
{\ar@{=} "0";"10"};
{\ar@{=} "2";"12"};
{\ar@{=} "4";"14"};
{\ar_{\cong}^{\sigma(\underline{u})} "6";"16"};
{\ar_{\underline{f}} "10";"12"};
{\ar_(0.46){\eta_U\circ\underline{a}} "12";"14"};
{\ar_{\sigma(\underline{u}\circ\underline{c})} "14";"16"};
{\ar@{}|\circlearrowright "0";"12"};
{\ar@{}|\circlearrowright "2";"14"};
{\ar@{}|\circlearrowright "4";"16"};
\endxy.
\]
\end{proof}

\begin{prop}\label{PropMorphStan}
For $i=1,2$, let $X_i\overset{f_i}{\longrightarrow}Y_i\overset{a_i}{\longrightarrow}U_i\overset{b_i}{\longrightarrow}X_i[1]$ be $\mathcal{U}$-conic triangles. For any morphism of triangles in $\mathcal{C}$
\[
\xy
(-19,6)*+{X_1}="0";
(-6,6)*+{Y_1}="2";
(7,6)*+{U_1}="4";
(21,6)*+{X_1[1]}="6";
(-19,-6)*+{X_2}="10";
(-6,-6)*+{Y_2}="12";
(7,-6)*+{U_2}="14";
(21,-6)*+{X_2[1]}="16";
{\ar^{f_1} "0";"2"};
{\ar^{a_1} "2";"4"};
{\ar^{b_1} "4";"6"};
{\ar_{x} "0";"10"};
{\ar^{y} "2";"12"};
{\ar^{u} "4";"14"};
{\ar^{x[1]} "6";"16"};
{\ar_{f_2} "10";"12"};
{\ar_{a_2} "12";"14"};
{\ar_{b_2} "14";"16"};
{\ar@{}|\circlearrowright "0";"12"};
{\ar@{}|\circlearrowright "2";"14"};
{\ar@{}|\circlearrowright "4";"16"};
\endxy,
\]
we have the following morphism between standard right triangles.
\begin{equation}\label{TTT}
\xy
(-21,6)*+{X_1}="0";
(-8,6)*+{Y_1}="2";
(9,6)*+{\sigma(U_1)}="4";
(25,6)*+{\Sigma X_1}="6";
(-21,-6)*+{X_2}="10";
(-8,-6)*+{Y_2}="12";
(9,-6)*+{\sigma(U_2)}="14";
(25,-6)*+{\Sigma X_2}="16";
{\ar^{\underline{f}_1} "0";"2"};
{\ar^(0.46){\eta_{U_1}\circ\underline{a}_1} "2";"4"};
{\ar^{\sigma(\underline{c}_1)} "4";"6"};
{\ar_{\underline{x}} "0";"10"};
{\ar^{\underline{y}} "2";"12"};
{\ar^{\sigma(\underline{u})} "4";"14"};
{\ar^{\Sigma\underline{x}} "6";"16"};
{\ar_{\underline{f}_2} "10";"12"};
{\ar_(0.46){\eta_{U_2}\circ\underline{a}_2} "12";"14"};
{\ar_{\sigma(\underline{c}_2)} "14";"16"};
{\ar@{}|\circlearrowright "0";"12"};
{\ar@{}|\circlearrowright "2";"14"};
{\ar@{}|\circlearrowright "4";"16"};
\endxy
\end{equation}
\end{prop}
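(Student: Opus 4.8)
The plan is to produce the square of $\mathcal{Z}/\mathcal{I}$-morphisms in $(\ref{TTT})$ by first lifting the given morphism of $\mathcal{U}$-conic triangles to a morphism between the auxiliary triangles $(\ref{UStT1})$ used in Definition \ref{DefUStan}, then applying the functor $\sigma$ together with naturality of the unit $\eta$. Concretely, for $i=1,2$ fix distinguished triangles $X_i\overset{\iota_{X_i}}{\to}I_{X_i}\overset{\wp_{X_i}}{\to}U_{X_i}\overset{\gamma_{X_i}}{\to}X_i[1]$ with $I_{X_i}\in\mathcal{I}$, $U_{X_i}\in\mathcal{U}$, and morphisms of triangles $(\mathrm{id}_{X_i},y_i,c_i)$ from the $i$-th $\mathcal{U}$-conic triangle to this auxiliary triangle, exactly as in Definition \ref{DefUStan}(ii). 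Since $I_{X_1}\in\mathcal{I}$ and $\mathcal{C}(U_1[-1],I_{X_2})=0$ (equivalently, using $\mathrm{Ext}^1(\mathcal{U},\mathcal{I})=0$ from $\mathcal{I}\subseteq\mathcal{V}$), the composite $\iota_{X_2}\circ x\colon X_1\to I_{X_2}$ together with the axioms of a triangulated category yields a morphism $i\colon I_{X_1}\to I_{X_2}$ and $u_X\colon U_{X_1}\to U_{X_2}$ fitting into a morphism of triangles from $(\ref{UStT1})$ for $X_1$ to $(\ref{UStT1})$ for $X_2$, over $x$.

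Next I would assemble the resulting prism: the outer faces are the given morphism of $\mathcal{U}$-conic triangles and the just-constructed morphism of auxiliary triangles, and the top and bottom faces are the morphisms of triangles $(\mathrm{id}_{X_1},y_1,c_1)$ and $(\mathrm{id}_{X_2},y_2,c_2)$. Commutativity of the two pentagonal/square faces involving $u$, $u_X$, $c_1$, $c_2$ need not hold on the nose in $\mathcal{C}$, but it does hold after passing to $\mathcal{C}/\mathcal{I}$: the difference $u_X\circ c_1-c_2\circ u$ satisfies $\gamma_{X_2}\circ(u_X\circ c_1-c_2\circ u)=x[1]\circ b_1-b_2\circ u=0$ (by the given morphism of $\mathcal{U}$-conic triangles), hence factors through $\wp_{X_2}$, so its image in $\mathcal{C}/\mathcal{I}$ vanishes. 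Thus $\underline{u_X}\circ\underline{c}_1=\underline{c}_2\circ\underline{u}$ in $\mathcal{U}/\mathcal{I}$, and by Definition \ref{Def_poshi_neshi}(2) we get $\underline{x}\langle 1\rangle=\underline{u_X}$, so $\Sigma\underline{x}=\sigma(\underline{u_X})$. Applying $\sigma$ to the identity $\underline{u_X}\circ\underline{c}_1=\underline{c}_2\circ\underline{u}$ gives commutativity of the right-hand square of $(\ref{TTT})$.

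For the left-hand and middle squares, the square $X_1\to Y_1\to X_2\to Y_2$ commutes already in $\mathcal{C}$ (it is the left face of the given morphism of $\mathcal{U}$-conic triangles), so its image commutes in $\mathcal{Z}/\mathcal{I}$. For the middle square I would use naturality of the unit $\eta$ of the adjunction $\sigma\dashv(\text{inclusion})$: from $u\circ a_1=a_2\circ y$ in $\mathcal{C}$ we get $\underline{u}\circ\underline{a}_1=\underline{a}_2\circ\underline{y}$ in $\mathcal{U}/\mathcal{I}$, and precomposing the naturality square $\eta_{U_2}\circ\underline{u}=\sigma(\underline{u})\circ\eta_{U_1}$ with $\underline{y}$ on the right yields $\eta_{U_2}\circ\underline{a}_2\circ\underline{y}=\sigma(\underline{u})\circ\eta_{U_1}\circ\underline{a}_1$, which is precisely commutativity of the middle square. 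I expect the main obstacle to be the bookkeeping in the octahedron/prism that produces $u_X$ with the required compatibilities and the verification that $\underline{x}\langle1\rangle=\underline{u_X}$ under the bijection of Definition \ref{Def_poshi_neshi}(ii) — i.e. checking that the lift $u_X$ we build genuinely represents the functor $\langle1\rangle$ applied to $\underline{x}$, rather than some unrelated morphism; once that identification is in place, the rest is formal manipulation with $\sigma$, $\eta$, and functoriality.
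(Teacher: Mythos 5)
Your proposal is correct and follows essentially the same route as the paper's proof: lift $x$ to a morphism $(x,i,u_X)$ between the auxiliary triangles $X_i\to I_{X_i}\to U_{X_i}\to X_i[1]$ so that $\Sigma\underline{x}=\sigma(\underline{u_X})$, use $\gamma_{X_2}\circ(u_X\circ c_1-c_2\circ u)=x[1]\circ b_1-b_2\circ u=0$ to see the difference factors through $\wp_{X_2}$ and hence vanishes in $\mathcal{U}/\mathcal{I}$, and apply $\sigma$; the identification $\underline{x}\langle1\rangle=\underline{u_X}$ you worried about is immediate from the bijection in Definition \ref{Def_poshi_neshi}(ii). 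Your explicit treatment of the left and middle squares (via naturality of $\eta$) is exactly what the paper leaves implicit when it says it suffices to check the right square.
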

\begin{proof}
For each $i=1,2$, let $X_i\overset{\iota_{X_i}}{\longrightarrow}I_{X_i}\overset{\wp_{X_i}}{\longrightarrow}U_{X_i}\overset{\gamma_{X_i}}{\longrightarrow}X_i[1]$ $(i=1,2)$ be a distinguished triangle in $\mathcal{C}$ satisfying $I_{X_i}\in\mathcal{I},\, U_{X_i}\in\mathcal{U}$, and let
\begin{equation}\label{c_i}
\xy
(-19,6)*+{X_i}="0";
(-6,6)*+{Y_i}="2";
(7,6)*+{U_i}="4";
(21,6)*+{X_i[1]}="6";
(-19,-6)*+{X_i}="10";
(-6,-6)*+{I_{X_i}}="12";
(7,-6)*+{U_{X_i}}="14";
(21,-6)*+{X_i[1]}="16";
{\ar^{f_i} "0";"2"};
{\ar^{a_i} "2";"4"};
{\ar^{b_i} "4";"6"};
{\ar@{=} "0";"10"};
{\ar^{y_i} "2";"12"};
{\ar^{c_i} "4";"14"};
{\ar@{=} "6";"16"};
{\ar_{\iota_{X_i}} "10";"12"};
{\ar_{\wp_{X_i}} "12";"14"};
{\ar_{\gamma_{X_i}} "14";"16"};
{\ar@{}|\circlearrowright "0";"12"};
{\ar@{}|\circlearrowright "2";"14"};
{\ar@{}|\circlearrowright "4";"16"};
\endxy
\end{equation}
be a morphism of triangles.
It suffices to show the commutativity of the right square of $(\ref{TTT})$
\begin{equation}\label{RightTTT}
\xy
(-8,6)*+{\sigma(U_1)}="0";
(8,6)*+{\Sigma X_1}="2";
(-8,-6)*+{\sigma(U_2)}="4";
(8,-6)*+{\Sigma X_2}="6";
{\ar^{\sigma(\underline{c}_1)} "0";"2"};
{\ar_{\sigma(\underline{u})} "0";"4"};
{\ar^{\Sigma\underline{x}} "2";"6"};
{\ar_{\sigma(\underline{c}_2)} "4";"6"};
{\ar@{}|\circlearrowright "0";"6"};
\endxy.
\end{equation}
Take a morphism of triangles
\[
\xy
(-19,6)*+{X_1}="0";
(-6,6)*+{I_{X_1}}="2";
(7,6)*+{U_{X_1}}="4";
(21,6)*+{X_1[1]}="6";
(-19,-6)*+{X_2}="10";
(-6,-6)*+{I_{X_2}}="12";
(7,-6)*+{U_{X_2}}="14";
(21,-6)*+{X_2[1]}="16";
{\ar^{\iota_{X_1}} "0";"2"};
{\ar^{\wp_{X_1}} "2";"4"};
{\ar^{\gamma_{X_1}} "4";"6"};
{\ar_{x} "0";"10"};
{\ar^{} "2";"12"};
{\ar^{s} "4";"14"};
{\ar^{x[1]} "6";"16"};
{\ar_{\iota_{X_2}} "10";"12"};
{\ar_{\wp_{X_2}} "12";"14"};
{\ar_{\gamma_{X_2}} "14";"16"};
{\ar@{}|\circlearrowright "0";"12"};
{\ar@{}|\circlearrowright "2";"14"};
{\ar@{}|\circlearrowright "4";"16"};
\endxy
\]
which gives $\underline{s}=\underline{x}\langle1\rangle$, and thus $\sigma(\underline{s})=\Sigma\underline{x}$. Then we have
\begin{eqnarray*}
\gamma_{X_2}\circ(s\circ c_1-c_2\circ u)&=&x[1]\circ\gamma_{X_1}\circ c_1-\gamma_{X_2}\circ c_2\circ u\\
&=&x[1]\circ b_1-b_2\circ u\ =\ 0.
\end{eqnarray*}
Thus $s\circ c_1-c_2\circ u$ factors through $\wp_{X_2}$, which means $\underline{s}\circ\underline{c}_1=\underline{c}_2\circ\underline{u}$, i.e., the following diagram is commutative in $\mathcal{U}/\mathcal{I}$.
\[
\xy
(-7,6)*+{U_1}="0";
(7,6)*+{U_{X_1}}="2";
(-7,-6)*+{U_2}="4";
(7,-6)*+{U_{X_2}}="6";
{\ar^{\underline{c}_1} "0";"2"};
{\ar_{\underline{u}} "0";"4"};
{\ar^{\underline{s}} "2";"6"};
{\ar_{\underline{c}_2} "4";"6"};
{\ar@{}|\circlearrowright "0";"6"};
\endxy.
\]
Applying $\sigma$, we obtain the commutative square $(\ref{RightTTT})$.
\end{proof}

\begin{lem}\label{LemInvariantStan}
Let $\mathcal{P}$ be a concentric TCP. Let $f\in\mathcal{Z}(X,Y)$ be any morphism, and let $X\overset{\iota_X}{\longrightarrow}I_X\overset{\wp_X}{\longrightarrow}U_X\overset{\gamma_X}{\longrightarrow}X[1]$ be a distinguished triangle as in $(\ref{UStT1})$. Then, we have the following.
\begin{enumerate}
\item If we complete $\begin{bmatrix}f\\ \iota_X\end{bmatrix}\in\mathcal{C}(X,Y\oplus I_X)$ into a distinguished triangle in $\mathcal{C}$
\begin{equation}\label{CanonUTria}
X\overset{\scriptstyle\begin{bmatrix}f\\ \iota_X\end{bmatrix}}{\longrightarrow}Y\oplus I_X\overset{a_f}{\longrightarrow}C_f\overset{b_f}{\longrightarrow}X[1],
\end{equation}
then it is a $\mathcal{U}$-conic triangle. Remark that, the associated standard right triangle
\begin{equation}\label{AssocRightCanon}
X\overset{\begin{bmatrix}\underline{f}\\ \underline{\iota_X}\end{bmatrix}}{\longrightarrow}Y\oplus I_X\overset{\eta_{C_f}\circ\underline{a}_f}{\longrightarrow}\sigma(C_f)\overset{\sigma(\underline{c}_f)}{\longrightarrow}\Sigma X
\end{equation}
is isomorphic to
\[
X\overset{\underline{f}}{\longrightarrow}Y\overset{\eta_{C_f}\circ\underline{d}_f}{\longrightarrow}\sigma(C_f)\overset{\sigma(\underline{c}_f)}{\longrightarrow}\Sigma X
\]
as right triangles in $\mathcal{Z}/\mathcal{I}$, where we express $a_f$ as $a_f=[d_f\ e_f]\colon Y\oplus I_X\to C_f$. Here $c_f$ denotes the morphism obtained in diagram $(\ref{ShiftedOcta})$ in the proof below.
\item Let $X\overset{f^{\prime}}{\longrightarrow}Y\overset{a^{\prime}}{\longrightarrow}U^{\prime}\overset{b^{\prime}}{\longrightarrow}X[1]$ be any $\mathcal{U}$-conic triangle satisfying $\underline{f}=\underline{f}^{\prime}$, and let
\[ X\overset{\underline{f}^{\prime}}{\longrightarrow}Y\overset{\eta_{U^{\prime}}\circ\underline{a}^{\prime}}{\longrightarrow}\sigma(U^{\prime})\overset{\sigma(\underline{c}^{\prime})}{\longrightarrow}\Sigma X \]
be the associated standard right triangle. Then, it is isomorphic to $(\ref{AssocRightCanon})$ as right triangles in $\mathcal{Z}/\mathcal{I}$.
\end{enumerate}
\end{lem}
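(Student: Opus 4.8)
For part (1), the plan is to feed the octahedron axiom the factorization $X \xrightarrow{\left[\begin{smallmatrix}f\\ \iota_X\end{smallmatrix}\right]} Y\oplus I_X \xrightarrow{[0\ 1]} I_X$ of $\iota_X$. The octahedral diagram this produces — which I record as $(\ref{ShiftedOcta})$ —
\begin{equation}\label{ShiftedOcta}
Y\longrightarrow C_f\overset{c_f}{\longrightarrow}U_X\longrightarrow Y[1]
\end{equation}
does two jobs simultaneously. First, the distinguished triangle $(\ref{ShiftedOcta})$ gives $C_f\in Y\ast U_X\subseteq\mathcal{Z}\ast\mathcal{U}\subseteq\mathcal{U}$ (using $\mathcal{Z}\subseteq\mathcal{U}$ and that $\mathcal{U}$ is extension-closed), while $Y\oplus I_X\in\mathcal{Z}$ because $\mathcal{I}\subseteq\mathcal{Z}$; hence $(\ref{CanonUTria})$ is a $\mathcal{U}$-conic triangle. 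Second, the octahedron also exhibits $(\mathrm{id}_X,\,[0\ 1],\,c_f)$ as a morphism of triangles from $(\ref{CanonUTria})$ to $(\ref{UStT1})$, which is exactly the datum Definition~\ref{DefUStan} uses to form the standard right triangle of $(\ref{CanonUTria})$; with these choices that standard right triangle is $(\ref{AssocRightCanon})$. It then remains to identify $(\ref{AssocRightCanon})$ with $X\xrightarrow{\underline f}Y\xrightarrow{\eta_{C_f}\circ\underline{d}_f}\sigma(C_f)\xrightarrow{\sigma(\underline{c}_f)}\Sigma X$ up to isomorphism of right triangles. Since $I_X\in\mathcal{I}$ we have $\overline{I_X}=0$ in $\mathcal{Z}/\mathcal{I}$, so the projection $[1\ 0]\colon\overline{Y\oplus I_X}\to\overline Y$ is an isomorphism there; and since $e_f\colon I_X\to C_f$ has domain in $\mathcal{I}$ we get $\underline{a}_f=[\underline{d}_f\ 0]$. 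A direct check then shows $(\mathrm{id}_X,\,[1\ 0],\,\mathrm{id},\,\mathrm{id})$ is the desired isomorphism of right triangles.

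For part (2), let $X\xrightarrow{f'}Y\xrightarrow{a'}U'\xrightarrow{b'}X[1]$ be any $\mathcal{U}$-conic triangle with $\underline f=\underline{f'}$; the plan is to compare it with the canonical triangle $(\ref{CanonUTria})$ through Proposition~\ref{PropMorphStan}. I would first note that $\iota_X$ is a left $\mathcal{I}$-approximation of $X$: for $J\in\mathcal{I}$ one has $J\in\mathcal{V}$, hence $\mathcal{C}(U_X[-1],J)\cong\mathrm{Ext}^1(U_X,J)\subseteq\mathrm{Ext}^1(\mathcal{U},\mathcal{V})=0$, and applying $\mathcal{C}(-,J)$ to $(\ref{UStT1})$ shows every $X\to J$ factors through $\iota_X$. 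Writing $f-f'=\beta\alpha$ with $\alpha\colon X\to J\in\mathcal{I}$ and $\beta\colon J\to Y$, and factoring $\alpha=\alpha'\iota_X$, the map $y=[\,\mathrm{id}_Y\ \ {-}\beta\alpha'\,]\colon Y\oplus I_X\to Y$ satisfies $y\circ\left[\begin{smallmatrix}f\\ \iota_X\end{smallmatrix}\right]=f'$. Running the octahedron axiom on $X\xrightarrow{\left[\begin{smallmatrix}f\\ \iota_X\end{smallmatrix}\right]}Y\oplus I_X\xrightarrow{y}Y$ (composite $f'$) then produces $u\colon C_f\to U'$ that is at once the third leg of a morphism of triangles $(\mathrm{id}_X,y,u)$ from $(\ref{CanonUTria})$ to the $f'$-triangle, and sits in a distinguished triangle $I_X\to C_f\xrightarrow{u}U'\to I_X[1]$: indeed $y$ becomes the projection $Y\oplus I_X\to Y$ after composing with the automorphism $\left[\begin{smallmatrix}1 & \beta\alpha'\\ 0 & 1\end{smallmatrix}\right]$ of $Y\oplus I_X$, so $\mathrm{Cone}(y)\cong I_X[1]$.

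Feeding $(\mathrm{id}_X,y,u)$ into Proposition~\ref{PropMorphStan} yields a morphism of standard right triangles from $(\ref{AssocRightCanon})$ to the standard right triangle of the $f'$-triangle, with vertical components $\mathrm{id}_X$, $\underline y$, $\sigma(\underline u)$ and $\mathrm{id}_{\Sigma X}$. Here $\underline y=[\,\underline{\mathrm{id}}_Y\ \ 0\,]$ because $\beta\alpha'$ factors through $\mathcal{I}$, and this is exactly the canonical isomorphism $\overline{Y\oplus I_X}\xrightarrow{\ \simeq\ }\overline Y$ in $\mathcal{Z}/\mathcal{I}$; and $\sigma(\underline u)$ is an isomorphism by Lemma~\ref{LemNUU}(1) applied to $I_X\to C_f\xrightarrow{u}U'\to I_X[1]$, since $I_X\in\mathcal{I}\subseteq\mathcal{V}$ forces $I_X[1]\in\mathcal{V}[1]\subseteq\mathcal{S}\ast\mathcal{V}[1]=\mathcal{N}^i$. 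As all four components are isomorphisms, the standard right triangle of the $f'$-triangle is isomorphic to $(\ref{AssocRightCanon})$, which proves (2).

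I expect part (2) to be the delicate step. The two places needing care are: (i) arranging the second octahedron so that $\mathrm{Cone}(y)$ is correctly identified as $I_X[1]$ and, crucially, so that the rotated triangle $I_X\to C_f\to U'\to I_X[1]$ has middle term in $\mathcal{N}^i$ — this is precisely what lets Lemma~\ref{LemNUU}(1) conclude that $\sigma(\underline u)$ is invertible; and (ii) the bookkeeping inside $\mathcal{Z}/\mathcal{I}$, checking that $\underline y$ is genuinely the canonical isomorphism $\overline{Y\oplus I_X}\simeq\overline Y$ and that the two outer legs reduce to identities, so that Proposition~\ref{PropMorphStan} really delivers a levelwise isomorphism of right triangles. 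The one new ingredient beyond octahedral bookkeeping of the kind already used repeatedly in Section~\ref{section_Concentric} is the left-$\mathcal{I}$-approximation property of $\iota_X$, which is needed just to write down $y$.
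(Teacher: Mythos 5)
Your proposal is correct and follows essentially the same route as the paper: the canonical triangle on $\begin{bmatrix}f\\ \iota_X\end{bmatrix}$, the octahedron over $X\to Y\oplus I_X\to I_X$ to get $C_f\in\mathcal{U}$ and the comparison map $c_f$, and for (2) the factorization $f-f'$ through $\iota_X$, a second octahedron producing $u$ with cone $I_X[1]\in\mathcal{N}^i$, and then Lemma \ref{LemNUU} plus Proposition \ref{PropMorphStan} to conclude. Your extra details (the left $\mathcal{I}$-approximation property of $\iota_X$ and the explicit check that $[1\ 0]$ gives the isomorphism in (1)) are points the paper leaves implicit, not a different method.
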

\begin{proof}
{\rm (1)} By the octahedron axiom, we have the following commutative diagram made of distinguished triangles
\begin{equation}\label{ShiftedOcta}
\xy
(-8,23)*+{Y}="-12";
(8,23)*+{Y}="-14";
(-24,8)*+{X}="0";
(-26,8)*+{}="1";
(-8,8)*+{Y\oplus I_X}="2";
(8,8)*+{C_f}="4";
(24,8)*+{X[1]}="6";
(-24,-7)*+{X}="10";
(-8,-7)*+{I_X}="12";
(8,-7)*+{U_X}="14";
(24,-7)*+{X[1]}="16";
(24,-20)*+{}="17";
(-8,-22)*+{Y[1]}="22";
(8,-22)*+{Y[1]}="24";
(19,-22)*+{}="25";
(28,-22)*+{Y[1]\oplus I_X[1]}="28";
{\ar@{=} "-12";"-14"};
{\ar_{\spmatrix{1}{0}} "-12";"2"};
{\ar^{d_f} "-14";"4"};
{\ar^(0.42){\spmatrix{f}{\iota_X}} "0";"2"};
{\ar^{a_f} "2";"4"};
{\ar^{b_f} "4";"6"};
{\ar@{=} "0";"10"};
{\ar_{[0\ 1]} "2";"12"};
{\ar^{{}^{\exists}c_f} "4";"14"};
{\ar@{=} "6";"16"};
{\ar_{\iota_X} "10";"12"};
{\ar_{\wp_X} "12";"14"};
{\ar_{\gamma_X} "14";"16"};
{\ar_{0} "12";"22"};
{\ar^{{}^{\exists}q} "14";"24"};
{\ar^(0.56){\spmatrix{f[1]}{\iota_X[1]}} "16";"17"};
{\ar@{=} "22";"24"};
{\ar_(0.56){-\spmatrix{1}{0}} "24";"25"};
{\ar@{}|\circlearrowright "-12";"4"};
{\ar@{}|\circlearrowright "1";"12"};
{\ar@{}|\circlearrowright "2";"14"};
{\ar@{}|\circlearrowright "4";"16"};
{\ar@{}|\circlearrowright "12";"24"};
{\ar@{}|\circlearrowright "16";"24"};
\endxy,
\end{equation}
which shows $C_f\in\mathcal{U}$.

{\rm (2)} By $\underline{f}=\underline{f}^{\prime}$, their difference $f-f^{\prime}$ factors through $\iota_X$. Namely, there exists $j^{\prime}\in\mathcal{C}(I_X,Y)$ satisfying $f-f^{\prime}=j^{\prime}\circ\iota_X$. Thus the following diagram is commutative in $\mathcal{C}$.
\[
\xy
(-9,7)*+{X}="0";
(9,7)*+{Y\oplus I_X}="2";
(-9,-7)*+{X}="4";
(9,-7)*+{Y}="6";
{\ar^{\spmatrix{f}{\iota_X}} "0";"2"};
{\ar@{=} "0";"4"};
{\ar^{[1\ j^{\prime}]} "2";"6"};
{\ar_{f^{\prime}} "4";"6"};
{\ar@{}|\circlearrowright "0";"6"};
\endxy.
\]
Remark that, 
\[ I_X\overset{\spmatrix{-j^{\prime}}{1}}{\longrightarrow}Y\oplus I_X\overset{[1\ j^{\prime}]}{\longrightarrow}Y\overset{0}{\longrightarrow}I_X[1] \]
is a distinguished triangle in $\mathcal{C}$. 
By the octahedron axiom, we have
\[
\xy
(-8,23)*+{I_X}="-12";
(8,23)*+{I_X}="-14";
(-24,8)*+{X}="0";
(-26,8)*+{}="1";
(-8,8)*+{Y\oplus I_X}="2";
(8,8)*+{C_f}="4";
(24,8)*+{X[1]}="6";
(-24,-7)*+{X}="10";
(-8,-7)*+{Y}="12";
(8,-7)*+{U^{\prime}}="14";
(24,-7)*+{X[1]}="16";
(-8,-22)*+{I_X[1]}="22";
(8,-22)*+{I_X[1]}="24";
{\ar@{=} "-12";"-14"};
{\ar_{} "-12";"2"};
{\ar^{} "-14";"4"};
{\ar^(0.4){\spmatrix{f}{\iota_X}} "0";"2"};
{\ar^{a_f} "2";"4"};
{\ar^{b_f} "4";"6"};
{\ar@{=} "0";"10"};
{\ar_{[1\ j^{\prime}]} "2";"12"};
{\ar^{{}^{\exists}u^{\prime}} "4";"14"};
{\ar@{=} "6";"16"};
{\ar_{f^{\prime}} "10";"12"};
{\ar^{a^{\prime}} "12";"14"};
{\ar^{b^{\prime}} "14";"16"};
{\ar_{0} "12";"22"};
{\ar^{0} "14";"24"};
{\ar@{=} "22";"24"};
{\ar@{}|\circlearrowright "-12";"4"};
{\ar@{}|\circlearrowright "1";"12"};
{\ar@{}|\circlearrowright "2";"14"};
{\ar@{}|\circlearrowright "4";"16"};
{\ar@{}|\circlearrowright "12";"24"};
\endxy.
\]
By Lemma \ref{LemNUU}, $\sigma(\underline{u}^{\prime})$ is isomorphism.
Thus it gives an isomorphism of associated standard right triangles
\[
\xy
(-25,7)*+{X}="0";
(-8,7)*+{Y\oplus I_X}="2";
(8,7)*+{\sigma(C_f)}="4";
(25,7)*+{\Sigma X}="6";
(-25,-7)*+{X}="10";
(-8,-7)*+{Y}="12";
(8,-7)*+{\sigma(U^{\prime})}="14";
(11,-7)*+{}="15";
(25,-7)*+{\Sigma X}="16";
(28,-7)*+{}="17";
{\ar^{\spmatrix{\underline{f}}{\underline{\iota}_X}} "0";"2"};
{\ar^{} "2";"4"};
{\ar^{} "4";"6"};
{\ar@{=} "0";"10"};
{\ar^{[1\ \underline{j}^{\prime}]}_{\cong} "2";"12"};
{\ar^{\sigma(\underline{u}^{\prime})}_{\cong} "4";"14"};
{\ar@{=} "6";"16"};
{\ar_{\underline{f}^{\prime}} "10";"12"};
{\ar_{} "12";"14"};
{\ar_{} "14";"16"};
{\ar@{}|\circlearrowright "0";"12"};
{\ar@{}|\circlearrowright "2";"15"};
{\ar@{}|\circlearrowright "4";"17"};
\endxy
\]
by Proposition \ref{PropMorphStan}.
\end{proof}

\begin{prop}\label{PropUniqueRightComplete}
Let $\mathcal{P}$ be a concentric TCP. Let $X,Y\in\mathcal{Z}$ be any pair of objects. Let
\begin{eqnarray*}
&X\overset{f}{\longrightarrow}Y\overset{a}{\longrightarrow}U\overset{b}{\longrightarrow}X[1]&\\
&X\overset{f^{\prime}}{\longrightarrow}Y\overset{a^{\prime}}{\longrightarrow}U^{\prime}\overset{b^{\prime}}{\longrightarrow}X[1]&
\end{eqnarray*}
be $\mathcal{U}$-conic triangles, satisfying $\underline{f}=\underline{f}^{\prime}$. Then there exists a morphism $\delta\in(\mathcal{Z}/\mathcal{I})(\sigma(U),\sigma(U^{\prime}))$ which gives an isomorphism of right triangles
\begin{equation}\label{RightTriaIsomXY}
\xy
(-26,7)*+{X}="0";
(-9,7)*+{Y}="2";
(9,7)*+{\sigma(U)}="4";
(26,7)*+{\Sigma X}="6";
(-26,-7)*+{X}="10";
(-9,-7)*+{Y}="12";
(9,-7)*+{\sigma(U^{\prime})}="14";
(26,-7)*+{\Sigma X}="16";
{\ar^{\underline{f}} "0";"2"};
{\ar^(0.46){\eta_U\circ\underline{a}} "2";"4"};
{\ar^{\sigma(\underline{c})} "4";"6"};
{\ar@{=} "0";"10"};
{\ar@{=} "2";"12"};
{\ar^{\delta}_{\cong} "4";"14"};
{\ar@{=} "6";"16"};
{\ar_{\underline{f}^{\prime}} "10";"12"};
{\ar_(0.46){\eta_{U^{\prime}}\circ\underline{a}^{\prime}} "12";"14"};
{\ar_{\sigma(\underline{c}^{\prime})} "14";"16"};
{\ar@{}|\circlearrowright "0";"12"};
{\ar@{}|\circlearrowright "2";"14"};
{\ar@{}|\circlearrowright "4";"16"};
\endxy
\end{equation}
in $\mathcal{Z}/\mathcal{I}$.
\end{prop}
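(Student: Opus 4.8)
The plan is to deduce this from Lemma \ref{LemInvariantStan}, which is tailored for exactly this situation. First I would fix a distinguished triangle $X\overset{\iota_X}{\to}I_X\overset{\wp_X}{\to}U_X\overset{\gamma_X}{\to}X[1]$ with $I_X\in\mathcal{I}$ and $U_X\in\mathcal{U}$, as in $(\ref{UStT1})$, so that $\Sigma X=\sigma(U_X)$ is pinned down once and for all. Completing $\begin{bmatrix}f\\ \iota_X\end{bmatrix}$ into the $\mathcal{U}$-conic triangle $(\ref{CanonUTria})$ and forming its associated standard right triangle $(\ref{AssocRightCanon})$ as in Lemma \ref{LemInvariantStan}(1) yields a fixed reference right triangle, with middle term $Y\oplus I_X$.

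Next I would apply Lemma \ref{LemInvariantStan}(2) twice: once to the first given $\mathcal{U}$-conic triangle $X\overset{f}{\to}Y\overset{a}{\to}U\overset{b}{\to}X[1]$, which is a legitimate instance with $f'=f$, and once to $X\overset{f'}{\to}Y\overset{a'}{\to}U'\overset{b'}{\to}X[1]$; both satisfy the hypothesis $\underline f=\underline{f'}$. Each application produces an isomorphism of right triangles from $(\ref{AssocRightCanon})$ to the standard right triangle attached to the conic triangle in question. Reading off the proof of Lemma \ref{LemInvariantStan}(2), these isomorphisms have the identity as their $X$- and $\Sigma X$-components, have $\sigma$-components $\sigma(\underline u)$, $\sigma(\underline{u}')$ of the type made invertible by Lemma \ref{LemNUU}, and have middle-left component of the form $[1\ \underline{j}']\colon Y\oplus I_X\to Y$ with $j'\in\mathcal{C}(I_X,Y)$ satisfying $f-f'=j'\circ\iota_X$. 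For the first conic triangle one may take $j'=0$, so that its middle-left component is $[1\ 0]$, which is an isomorphism in $\mathcal{Z}/\mathcal{I}$ — here one uses $\mathcal{I}\subseteq\mathcal{Z}$, so $I_X$ is a zero object of $\mathcal{Z}/\mathcal{I}$ — with inverse $\begin{bmatrix}1\\ 0\end{bmatrix}$.

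Finally I would compose: inverting the isomorphism attached to the first conic triangle and following it with the one attached to the second yields an isomorphism of right triangles from the standard right triangle of $X\overset{f}{\to}Y\overset{a}{\to}U\overset{b}{\to}X[1]$ to that of $X\overset{f'}{\to}Y\overset{a'}{\to}U'\overset{b'}{\to}X[1]$, whose $X$- and $\Sigma X$-components are the identity and whose middle-left vertical map is $[1\ \underline{j}']\circ\begin{bmatrix}1\\ 0\end{bmatrix}=\underline{1}_Y$ (with $j'$ the one attached to the second conic triangle); declaring $\delta$ to be the resulting composite of the two $\sigma$-components, which is an isomorphism by Lemma \ref{LemNUU}, then produces the diagram $(\ref{RightTriaIsomXY})$. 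The one delicate point — and the reason the extra summand $I_X$ is dragged through Lemma \ref{LemInvariantStan} — is forcing the middle-left map to be exactly $1_Y$ rather than merely some automorphism of $Y$ in $\mathcal{Z}/\mathcal{I}$; everything else is routine bookkeeping with the octahedral diagrams already recorded in the proof of Lemma \ref{LemInvariantStan}.
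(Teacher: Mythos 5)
Your proposal is correct and follows essentially the same route as the paper's own proof: fix the triangle $(\ref{UStT1})$, build the reference $\mathcal{U}$-conic triangle $(\ref{CanonUTria})$ via Lemma \ref{LemInvariantStan}(1), apply Lemma \ref{LemInvariantStan}(2) to both given conic triangles, and set $\delta=\sigma(\underline{u}')\circ\sigma(\underline{u})^{-1}$ after observing that the composed middle map is $\mathrm{id}_Y$ in $\mathcal{Z}/\mathcal{I}$ (since $\underline{j},\underline{j}'$ vanish there). The paper does exactly this, only without your explicit normalization $j=0$ for the first triangle, which is an inessential simplification.
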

\begin{proof}
Let $X\overset{\iota_X}{\longrightarrow}I_X\overset{\wp_X}{\longrightarrow}U_X\overset{\gamma_X}{\longrightarrow}X[1]$ be a distinguished right triangle as in $(\ref{UStT1})$. Applying Lemma \ref{LemInvariantStan} {\rm (1)} to $f$, we obtain a $\mathcal{U}$-conic triangle $(\ref{CanonUTria})$.
By Lemma \ref{LemInvariantStan} {\rm (2)}, we have isomorphisms of right triangles
\[
\xy
(-30,7)*+{X}="0";
(-11,7)*+{Y\oplus I_X}="2";
(11,7)*+{\sigma(C_f)}="4";
(31,7)*+{\Sigma X}="6";
(-30,-7)*+{X}="10";
(-11,-7)*+{Y}="12";
(11,-7)*+{\sigma(U)}="14";
(31,-7)*+{\Sigma X}="16";
{\ar^{\spmatrix{\underline{f}}{\underline{\iota}_X}} "0";"2"};
{\ar^{\eta_{C_f}\circ\underline{a}_f} "2";"4"};
{\ar^{\sigma(\underline{c}_f)} "4";"6"};
{\ar@{=} "0";"10"};
{\ar^{[1\ \underline{j}]}_{\cong} "2";"12"};
{\ar^{\sigma(\underline{u})}_{\cong} "4";"14"};
{\ar@{=} "6";"16"};
{\ar_{\underline{f}} "10";"12"};
{\ar_{\eta_U\circ\underline{a}} "12";"14"};
{\ar_{\sigma(\underline{c})} "14";"16"};
{\ar@{}|\circlearrowright "0";"12"};
{\ar@{}|\circlearrowright "2";"14"};
{\ar@{}|\circlearrowright "4";"16"};
\endxy
\]
and
\[
\xy
(-30,7)*+{X}="0";
(-11,7)*+{Y\oplus I_X}="2";
(11,7)*+{\sigma(C_f)}="4";
(31,7)*+{\Sigma X}="6";
(-30,-7)*+{X}="10";
(-11,-7)*+{Y}="12";
(11,-7)*+{\sigma(U^{\prime})}="14";
(31,-7)*+{\Sigma X}="16";
{\ar^{\spmatrix{\underline{f}}{\underline{\iota}_X}} "0";"2"};
{\ar^{\eta_{C_f}\circ\underline{a}_f} "2";"4"};
{\ar^{\sigma(\underline{c}_f)} "4";"6"};
{\ar@{=} "0";"10"};
{\ar^{[1\ \underline{j}^{\prime}]}_{\cong} "2";"12"};
{\ar^{\sigma(\underline{u}^{\prime})}_{\cong} "4";"14"};
{\ar@{=} "6";"16"};
{\ar_{\underline{f}^{\prime}} "10";"12"};
{\ar_{\eta_{U^{\prime}}\circ\underline{a}^{\prime}} "12";"14"};
{\ar_{\sigma(\underline{c}^{\prime})} "14";"16"};
{\ar@{}|\circlearrowright "0";"12"};
{\ar@{}|\circlearrowright "2";"14"};
{\ar@{}|\circlearrowright "4";"16"};
\endxy.
\]
Since $[1\ \underline{j}^{\prime}]\circ [1\ \underline{j}]^{-1}=\mathrm{id}_Y$, if we put $\delta=\sigma(\underline{u}^{\prime})\circ\sigma(\underline{u})^{-1}$, then it gives an isomorphism of right triangles $(\ref{RightTriaIsomXY})$.
\end{proof}

\begin{dfn}\label{DefDistRight}
Let $\mathcal{P}$ be a concentric TCP. We define a {\it distinguished right triangle} to be a right triangle isomorphic to the standard one associated to some $\mathcal{U}$-conic triangle in $\mathcal{C}$.
\end{dfn}

\begin{lem}\label{LemUUUTria}
Let $\mathcal{P}$ be a concentric TCP. Let
\[ U_1\overset{\ell}{\longrightarrow}U_2\overset{m}{\longrightarrow}U_3\overset{n}{\longrightarrow}U_1[1] \]
be a distinguished triangle in $\mathcal{C}$ satisfying $U_1,U_2,U_3\in\mathcal{U}$. Then there exist distinguished triangles in $\mathcal{C}$
\begin{eqnarray*}
&S_1[-1]\to U_1\overset{z_1}{\longrightarrow}Z_1\to S_1,&\\
&S_2[-1]\to U_2\overset{z_2}{\longrightarrow}Z_2\to S_2,&\\
&S_3[-1]\to U_3\overset{u_3}{\longrightarrow}U_3^{\prime}\to S_3,&\\
&Z_1\overset{x}{\longrightarrow}Z_2\overset{y}{\longrightarrow}U_3^{\prime}\overset{z}{\longrightarrow}Z_1[1]&
\end{eqnarray*}
satisfying $S_i\in\mathcal{S}\, (i=1,2,3)$, $Z_1,Z_2\in\mathcal{Z}\, (i=1,2)$ and $U_3^{\prime}\in\mathcal{U}$, which give the following morphism of triangles in $\mathcal{C}$.
\begin{equation}\label{MorphUUUTria}
\xy
(-19,6)*+{U_1}="0";
(-6,6)*+{U_2}="2";
(7,6)*+{U_3}="4";
(21,6)*+{U_1[1]}="6";
(-19,-6)*+{Z_1}="10";
(-6,-6)*+{Z_2}="12";
(7,-6)*+{U_3^{\prime}}="14";
(21,-6)*+{Z_1[1]}="16";
{\ar^{\ell} "0";"2"};
{\ar^{m} "2";"4"};
{\ar^{n} "4";"6"};
{\ar_{z_1} "0";"10"};
{\ar^{z_2} "2";"12"};
{\ar^{u_3} "4";"14"};
{\ar^{z_1[1]} "6";"16"};
{\ar_{x} "10";"12"};
{\ar_{y} "12";"14"};
{\ar_{z} "14";"16"};
{\ar@{}|\circlearrowright "0";"12"};
{\ar@{}|\circlearrowright "2";"14"};
{\ar@{}|\circlearrowright "4";"16"};
\endxy
\end{equation}

Thus it gives a distinguished right triangle
\begin{equation}\label{RightTria_m}
\sigma(U_1)\overset{\sigma(\underline{\ell})}{\longrightarrow}\sigma(U_2)\overset{\sigma(\eta_{U_3}\circ\underline{m})}{\longrightarrow}\sigma(U_3)\to\Sigma(\sigma U_1)
\end{equation}
in $\mathcal{Z}/\mathcal{I}$.
\end{lem}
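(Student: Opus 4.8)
The plan is to construct the required triangles and the morphism $(\ref{MorphUUUTria})$ by replacing $U_1$ and then $U_2$ by objects of $\mathcal{Z}$ one step at a time, using the octahedron axiom repeatedly.

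First I would decompose $U_1$ with respect to the cotorsion pair $(\mathcal{S},\mathcal{T})$: pick a distinguished triangle $S_1[-1]\overset{w_1}{\longrightarrow}U_1\overset{z_1}{\longrightarrow}Z_1\to S_1$ with $S_1\in\mathcal{S}$, $Z_1\in\mathcal{T}$; Lemma \ref{LemSUU}(1) then gives $Z_1\in\mathcal{U}$, so $Z_1\in\mathcal{Z}$. Applying the octahedron axiom to $S_1[-1]\overset{w_1}{\longrightarrow}U_1\overset{\ell}{\longrightarrow}U_2$ produces an object $P=\mathrm{Cone}(\ell w_1)$ together with distinguished triangles $S_1[-1]\to U_2\overset{\pi}{\longrightarrow}P\to S_1$ and $Z_1\overset{\lambda}{\longrightarrow}P\overset{\rho}{\longrightarrow}U_3\overset{\chi}{\longrightarrow}Z_1[1]$, and --- reading off the octahedral commutativities --- a morphism of triangles $(z_1,\pi,\mathrm{id}_{U_3},z_1[1])$ from the given triangle $U_1\overset{\ell}{\longrightarrow}U_2\overset{m}{\longrightarrow}U_3\overset{n}{\longrightarrow}U_1[1]$ to $Z_1\overset{\lambda}{\longrightarrow}P\overset{\rho}{\longrightarrow}U_3\overset{\chi}{\longrightarrow}Z_1[1]$. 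Here $P\in\mathcal{U}$ (by Lemma \ref{LemSUU}(1), or since $P\in U_2\ast S_1$ and $\mathcal{U}$ is extension closed) and $\mathrm{Cone}(\pi)=S_1\in\mathcal{S}$.

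Next I would decompose $P$ in the same way, obtaining a distinguished triangle $S_3[-1]\to P\overset{z_P}{\longrightarrow}Z_2\to S_3$ with $S_3\in\mathcal{S}$, $Z_2\in\mathcal{Z}$, and then apply the octahedron axiom to $Z_1\overset{\lambda}{\longrightarrow}P\overset{z_P}{\longrightarrow}Z_2$. This yields an object $U_3'=\mathrm{Cone}(z_P\lambda)$, a distinguished triangle $Z_1\overset{x}{\longrightarrow}Z_2\overset{y}{\longrightarrow}U_3'\overset{z}{\longrightarrow}Z_1[1]$ with $x=z_P\lambda$, a distinguished triangle $U_3\overset{u_3}{\longrightarrow}U_3'\to S_3\to U_3[1]$ --- so that $\mathrm{Cone}(u_3)=S_3\in\mathcal{S}$ exactly as required, and $U_3'\in\mathcal{U}$ because $U_3'\in U_3\ast S_3$ with $\mathcal{U}$ extension closed --- and a morphism of triangles $(\mathrm{id}_{Z_1},z_P,u_3,\mathrm{id})$ from the $\lambda$-triangle to the $x$-triangle. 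Composing the two morphisms of triangles gives $(z_1,z_2,u_3,z_1[1])$ with $z_2:=z_P\pi$, which is precisely the asserted diagram $(\ref{MorphUUUTria})$. Finally, applying the octahedron axiom once more to $U_2\overset{\pi}{\longrightarrow}P\overset{z_P}{\longrightarrow}Z_2$ shows $\mathrm{Cone}(z_2)\in S_1\ast S_3$, whence $S_2:=\mathrm{Cone}(z_2)\in\mathcal{S}$, completing the list of triangles. The step that genuinely requires care is this one: a single simultaneous replacement --- for instance a direct application of the $3\times 3$-lemma to $U_1\to U_2$ and $Z_1\to Z_2$ --- would only place $\mathrm{Cone}(u_3)$ in $\mathcal{S}\ast\mathcal{S}[1]$, which need not lie in $\mathcal{S}$ since $\mathcal{S}$ is not closed under $[1]$; the staggered two-octahedron argument is what keeps all three cones inside $\mathcal{S}$.

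For the final assertion, note that $Z_1\overset{x}{\longrightarrow}Z_2\overset{y}{\longrightarrow}U_3'\overset{z}{\longrightarrow}Z_1[1]$ is a $\mathcal{U}$-conic triangle, so by definition its standard right triangle $Z_1\overset{\underline{x}}{\longrightarrow}Z_2\overset{\eta_{U_3'}\circ\underline{y}}{\longrightarrow}\sigma(U_3')\to\Sigma Z_1$ is distinguished. By Lemma \ref{LemSUU}(1), the morphisms $\sigma(\underline{z_1})\colon\sigma(U_1)\to Z_1$, $\sigma(\underline{z_2})\colon\sigma(U_2)\to Z_2$ and $\sigma(\underline{u_3})\colon\sigma(U_3)\to\sigma(U_3')$ are isomorphisms in $\mathcal{Z}/\mathcal{I}$, and a routine check --- using the commutativities in $(\ref{MorphUUUTria})$, the naturality of $\eta$, and the triangle identity $\sigma(\eta_{U_3})=\mathrm{id}$ --- shows that these assemble into an isomorphism of right triangles onto $\sigma(U_1)\overset{\sigma(\underline{\ell})}{\longrightarrow}\sigma(U_2)\overset{\sigma(\eta_{U_3}\circ\underline{m})}{\longrightarrow}\sigma(U_3)\to\Sigma(\sigma U_1)$. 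Hence $(\ref{RightTria_m})$ is a distinguished right triangle.
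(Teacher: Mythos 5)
Your construction is correct and follows essentially the same route as the paper: the same two staggered octahedra (first replacing $U_1$ by $Z_1$ to get $P=U_2^{\prime}$, then replacing $P$ by $Z_2$ to get $U_3^{\prime}$), a third octahedron to see $S_2\in S_1\ast S_3\subseteq\mathcal{S}$, and the final identification of the standard right triangle of $Z_1\to Z_2\to U_3^{\prime}\to Z_1[1]$ with $(\ref{RightTria_m})$ via the isomorphisms $\sigma(\underline{z}_1),\sigma(\underline{z}_2),\sigma(\underline{u}_3)$ from Lemma \ref{LemSUU}. No gaps worth noting.
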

\begin{proof}
Decompose $U_1$ into a distinguished triangle
\[ S_1[-1]\overset{s_1}{\longrightarrow}U_1\overset{z_1}{\longrightarrow}Z_1\to S_1\quad(S_1\in\mathcal{S},Z_1\in\mathcal{Z}) \]
in $\mathcal{C}$. By the octahedron axiom, we obtain a commutative diagram
\[
\xy
(-27,23)*+{S_1[-1]}="-12";
(-9,23)*+{S_1[-1]}="-14";
(-27,8)*+{U_1}="2";
(-9,8)*+{U_2}="4";
(9,8)*+{U_3}="6";
(27,8)*+{U_1[1]}="8";
(-27,-7)*+{Z_1}="12";
(-9,-7)*+{{}^{\exists}U_2^{\prime}}="14";
(9,-7)*+{U_3}="16";
(27,-7)*+{Z_1[1]}="18";
{\ar@{=} "-12";"-14"};
{\ar_{s_1} "-12";"2"};
{\ar^{} "-14";"4"};
{\ar^{\ell} "2";"4"};
{\ar^{m} "4";"6"};
{\ar^{n} "6";"8"};
{\ar_{z_1} "2";"12"};
{\ar^{{}^{\exists}u_2} "4";"14"};
{\ar@{=} "6";"16"};
{\ar^{z_1[1]} "8";"18"};
{\ar_{{}^{\exists}x^{\prime}} "12";"14"};
{\ar_{{}^{\exists}y^{\prime}} "14";"16"};
{\ar_{{}^{\exists}k} "16";"18"};
{\ar@{}|\circlearrowright "-12";"4"};
{\ar@{}|\circlearrowright "2";"14"};
{\ar@{}|\circlearrowright "4";"16"};
{\ar@{}|\circlearrowright "6";"18"};
\endxy
\]
where
\[ S_1[-1]\to U_2\overset{u_2}{\longrightarrow}U_2^{\prime}\to S_1,\quad Z_1\overset{x^{\prime}}{\longrightarrow}U_2^{\prime}\overset{y^{\prime}}{\longrightarrow}U_3\overset{k}{\longrightarrow}Z_1[1] \]
are distinguished triangles. It follows $U_2^{\prime}\in\mathcal{U}$.

Decompose $U_2^{\prime}$ into a distinguished triangle
\[ S_3[-1]\to U_2^{\prime}\overset{z_2^{\prime}}{\longrightarrow}Z_2\to S_3\quad(S_3\in\mathcal{S},Z_2\in\mathcal{Z}) \]
in $\mathcal{C}$. Put $x=z_2^{\prime}\circ x^{\prime}$, and complete it into a distinguished triangle
\[ Z_1\overset{x}{\longrightarrow}Z_2\overset{y}{\longrightarrow}U_3^{\prime}\overset{z}{\longrightarrow}Z_1[1]. \]
By the octahedron axiom, we obtain
\[
\xy
(-8,23)*+{S_3[-1]}="-12";
(8,23)*+{S_3[-1]}="-14";
(-24,8)*+{Z_1}="0";
(-8,8)*+{U_2^{\prime}}="2";
(8,8)*+{U_3}="4";
(24,8)*+{Z_1[1]}="6";
(-24,-7)*+{Z_1}="10";
(-8,-7)*+{Z_2}="12";
(8,-7)*+{U_3^{\prime}}="14";
(24,-7)*+{Z_1[1]}="16";
(-8,-22)*+{S_3}="22";
(8,-22)*+{S_3}="24";
{\ar@{=} "-12";"-14"};
{\ar_{} "-12";"2"};
{\ar^{} "-14";"4"};
{\ar^{x^{\prime}} "0";"2"};
{\ar^{y^{\prime}} "2";"4"};
{\ar^{k} "4";"6"};
{\ar@{=} "0";"10"};
{\ar_{z_2^{\prime}} "2";"12"};
{\ar^{{}^{\exists}u_3} "4";"14"};
{\ar@{=} "6";"16"};
{\ar_{x} "10";"12"};
{\ar_{y} "12";"14"};
{\ar_{z} "14";"16"};
{\ar_{} "12";"22"};
{\ar^{} "14";"24"};
{\ar@{=} "22";"24"};
{\ar@{}|\circlearrowright "-12";"4"};
{\ar@{}|\circlearrowright "0";"12"};
{\ar@{}|\circlearrowright "2";"14"};
{\ar@{}|\circlearrowright "4";"16"};
{\ar@{}|\circlearrowright "12";"24"};
\endxy
\]
which shows $U_3^{\prime}\in\mathcal{U}$.

Put $z_2=z_2^{\prime}\circ u_2\colon U_2\to Z_2$, and complete it into a distinguished triangle
\[ S_2[-1]\to U_2\overset{z_2}{\longrightarrow}Z_2\to S_2 \]
in $\mathcal{C}$. Then the octahedron axiom gives the following commutative diagram made of distinguished triangles
\[
\xy
(-20,16)*+{S_1[-1]}="0";
(0.5,15)*+{{}^{\exists}S_2[-1]}="2";
(17,14)*+{S_3[-1]}="4";
(-2,4)*+{U_2}="6";
(6.1,-0.6)*+{U_2^{\prime}}="8";
(-5.8,-14.2)*+{Z_2}="10";
(7.5,8.5)*+_{_{\circlearrowright}}="12";
(-5.5,11.5)*+_{_{\circlearrowright}}="14";
(-0.5,-4.3)*+_{_{\circlearrowright}}="14";
{\ar^{} "0";"2"};
{\ar^{} "2";"4"};
{\ar_{} "0";"6"};
{\ar^{} "2";"6"};
{\ar^{u_2} "6";"8"};
{\ar_{z_2} "6";"10"};
{\ar^{} "4";"8"};
{\ar^{z_2^{\prime}} "8";"10"};
\endxy
\]
which shows $S_2\in\mathcal{S}$. Commutativity of $(\ref{MorphUUUTria})$ can be checked easily.

Take the standard right triangle
\begin{equation}\label{RightTria_v}
Z_1\overset{\underline{x}}{\longrightarrow}Z_2\overset{\eta_{U_3^{\prime}}\circ\underline{y}}{\longrightarrow}\sigma(U_3^{\prime})\overset{\sigma(\underline{v})}{\longrightarrow}\Sigma Z_1
\end{equation}
associated to $Z_1\overset{x}{\longrightarrow}Z_2\overset{y}{\longrightarrow}U_3^{\prime}\overset{z}{\longrightarrow}Z_1[1]$. Remark that we have $\underline{x}=\sigma(\underline{\ell})$. Here,
\begin{equation}\label{S_T_A_R}
\xy
(-19,6)*+{Z_1}="0";
(-6,6)*+{Z_2}="2";
(7,6)*+{U_3^{\prime}}="4";
(21,6)*+{Z_1[1]}="6";
(-19,-6)*+{Z_1}="10";
(-6,-6)*+{I_{Z_1}}="12";
(7,-6)*+{U_{Z_1}}="14";
(21,-6)*+{Z_1[1]}="16";
{\ar^{x} "0";"2"};
{\ar^{y} "2";"4"};
{\ar^{z} "4";"6"};
{\ar@{=} "0";"10"};
{\ar "2";"12"};
{\ar^{v} "4";"14"};
{\ar@{=} "6";"16"};
{\ar_{\iota_{Z_1}} "10";"12"};
{\ar_{\wp_{Z_1}} "12";"14"};
{\ar_{\gamma_{Z_1}} "14";"16"};
{\ar@{}|\circlearrowright "0";"12"};
{\ar@{}|\circlearrowright "2";"14"};
{\ar@{}|\circlearrowright "4";"16"};
\endxy
\end{equation}
is a morphism of triangles, with $I_{Z_1}\in\mathcal{I},U_{Z_1}\in\mathcal{U}$.
Remark that we have a commutative square
\[
\xy
(-14,6)*+{U_2}="0";
(0,6)*+{U_3}="2";
(16,6)*+{\sigma(U_3)}="4";
(-14,-6)*+{Z_2}="10";
(0,-6)*+{U_3^{\prime}}="12";
(16,-6)*+{\sigma(U_3^{\prime})}="14";
{\ar^{\underline{m}} "0";"2"};
{\ar^{\eta_{U_3}} "2";"4"};
{\ar_{\underline{z}_2} "0";"10"};
{\ar^{\underline{u}_3} "2";"12"};
{\ar^{\sigma(\underline{u}_3)} "4";"14"};
{\ar_{\underline{y}} "10";"12"};
{\ar_{\eta_{U_3^{\prime}}} "12";"14"};
{\ar@{}|\circlearrowright "0";"12"};
{\ar@{}|\circlearrowright "2";"14"};
\endxy
\]
in $\mathcal{U}/\mathcal{I}$, and $\sigma(\underline{u}_3)$ is isomorphism by Lemma \ref{LemSUU}. This shows $\sigma(\underline{u}_3)^{-1}\circ\eta_{U_3^{\prime}}\circ\underline{y}=\sigma(\eta_{U_3}\circ\underline{m})$.
Composing the isomorphism $\sigma(\underline{u}_3)$ with $(\ref{RightTria_v})$, we obtain distinguished right triangle $(\ref{RightTria_m})$.
\end{proof}

For the notion of a {\it right triangulated category} below, see for example \cite[Chapter {\rm II}]{BR}.
\begin{prop}\label{PropZIRightTria}
Let $\mathcal{P}$ be any concentric TCP. With the above structures, $\mathcal{Z}/\mathcal{I}$ is a right triangulated category.
\end{prop}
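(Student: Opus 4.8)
The plan is to verify the axioms of a right triangulated category, following the standard Heller-style list (see \cite[Chapter II]{BR}): (RT0) the class of distinguished right triangles is closed under isomorphism and contains the trivial triangles $X\overset{\mathrm{id}}{\to}X\to 0\to\Sigma X$; (RT1) every morphism $\underline{f}\in(\mathcal{Z}/\mathcal{I})(X,Y)$ fits into a distinguished right triangle; (RT2) the rotation axiom, i.e. $X\overset{\underline{f}}{\to}Y\to\sigma(U)\to\Sigma X$ is distinguished if and only if its rotate $Y\to\sigma(U)\to\Sigma X\overset{-\Sigma\underline{f}}{\to}\Sigma Y$ is distinguished; (RT3) the morphism-completion (``fill-in'') axiom for any commutative square; and (RT4) the octahedral axiom for right triangles. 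The functor $\Sigma=\sigma\circ\langle1\rangle$ and its right adjoint $\Omega$ are already in place by Definition~\ref{DefShifts} and Proposition~\ref{PropShiftAdj}, and the class of distinguished right triangles was defined in Definition~\ref{DefDistRight}, so (RT0) is immediate from that definition together with the trivial $\mathcal{U}$-conic triangle $X\overset{\mathrm{id}}{\to}X\to 0\to X[1]$.

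For (RT1), given $\underline{f}\in(\mathcal{Z}/\mathcal{I})(X,Y)$ I would lift it to $f\in\mathcal{Z}(X,Y)$ and apply Lemma~\ref{LemInvariantStan}(1): completing $\big[\begin{smallmatrix}f\\ \iota_X\end{smallmatrix}\big]$ into a triangle produces a $\mathcal{U}$-conic triangle, hence a distinguished right triangle starting with $\underline{f}$; Proposition~\ref{PropUniqueRightComplete} then guarantees this is well-defined up to isomorphism of right triangles, independently of the lift and of the choices. For (RT3), the fill-in axiom, I would represent the two given distinguished right triangles by $\mathcal{U}$-conic triangles, lift the commuting square $Y_1\to Y_2$, $X_1\to X_2$ (after adjusting by the homotopy witnessing commutativity in $\mathcal{Z}/\mathcal{I}$, exactly as in the proof of Lemma~\ref{LemInvariantStan}(2)) to a genuine commuting square in $\mathcal{C}$, complete it to a morphism of distinguished triangles in $\mathcal{C}$ using (TR3) there, and then push down via $\sigma$ and $\eta$; Proposition~\ref{PropMorphStan} shows the resulting diagram of standard right triangles commutes, which is precisely the required fill-in. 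The octahedral axiom (RT4) is the statement supplied by Lemma~\ref{LemUUUTria}: given composable $\underline{g}\circ\underline{f}$ in $\mathcal{Z}/\mathcal{I}$, one realizes the relevant cones as objects of $\mathcal{U}$ sitting in a triangle $U_1\to U_2\to U_3\to U_1[1]$ in $\mathcal{C}$, and Lemma~\ref{LemUUUTria} produces the compatible $\mathcal{U}$-conic triangles and the connecting morphism, yielding the octahedron of distinguished right triangles in $\mathcal{Z}/\mathcal{I}$.

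The delicate point, and the place I would spend most care, is the rotation axiom (RT2). The nontrivial direction is to show that the rotate of a standard right triangle is again distinguished, i.e. isomorphic to a standard right triangle of some $\mathcal{U}$-conic triangle; the subtlety is that rotating once moves us to a triangle whose first term is $Y$ and whose third term involves $\Sigma X$, so one must reconstruct a $\mathcal{U}$-conic triangle over $Y$ whose cone is a $\mathcal{U}$-object mapping to $\Sigma X=\sigma(U_X)$ appropriately. The mechanism is to rotate the chosen distinguished triangle $X\to I_X\to U_X\to X[1]$ in $\mathcal{C}$ to $I_X\to U_X\to X[1]\to I_X[1]$ and combine it, via the octahedron axiom, with the $\mathcal{U}$-conic triangle $(\ref{UConicTria})$; one extracts from the resulting diagram a new distinguished triangle over $Y$ with third term in $\mathcal{U}$ mapping to $U_X$, then applies $\sigma$, $\eta$ and the identifications of Lemma~\ref{LemSUU} and Lemma~\ref{LemNUU} to match the rotate term-by-term with the standard right triangle of that new $\mathcal{U}$-conic triangle. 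The sign $-\Sigma\underline{f}$ is bookkeeping coming from the sign in the rotation of triangles in $\mathcal{C}$. The converse direction of (RT2) follows formally once the forward direction and (RT1), (RT3) are known, by the usual argument comparing a candidate rotate with the genuine rotate of the distinguished right triangle on its first morphism.
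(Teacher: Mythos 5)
Your plan follows the paper's proof essentially step for step: (TR1) via Lemma \ref{LemInvariantStan} (1) and the trivial $\mathcal{U}$-conic triangle, (TR2) by reducing to a standard right triangle and combining, via octahedra in $\mathcal{C}$, the $\mathcal{U}$-conic triangle with $X\to I_X\to U_X\to X[1]$ before descending through $\sigma$, $\eta$ and the isomorphisms of Lemmas \ref{LemSUU} and \ref{LemNUU} (with the sign $-\Sigma\underline{f}$ arising exactly as you say), (TR3) by adjusting the commuting square by a morphism factoring through $I_X$, applying (TR3) in $\mathcal{C}$ and then Proposition \ref{PropMorphStan} together with Lemma \ref{LemInvariantStan}, and (TR4) via Proposition \ref{PropUniqueRightComplete} and Lemma \ref{LemUUUTria}, which is the paper's route. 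The only superfluous point is your ``if and only if'' formulation of the rotation axiom: for a right triangulated category one only needs that the rotate of a distinguished right triangle is again distinguished, which is all the paper proves, so the converse direction you sketch need not (and in this one-sided setting should not be claimed to) be established.
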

\begin{proof}
{\rm (TR1)} By definition, the class of distinguished right triangles is closed under isomorphisms.

For any $X\in\mathcal{C}$, since $X\overset{\mathrm{id}}{\longrightarrow}X\to 0\to X[1]$ is a $\mathcal{U}$-conic triangle in $\mathcal{C}$, it follows that $X\overset{\mathrm{id}_X}{\longrightarrow}X\to0\to\Sigma X$ is a distinguished right triangle in $\mathcal{Z}/\mathcal{I}$.

For any morphism $\alpha=\underline{f}\in(\mathcal{Z}/\mathcal{I})(X,Y)$, existence of a distinguished right triangle $X\overset{\alpha}{\longrightarrow}Y\to Z\to \Sigma X$ follows from Lemma \ref{LemInvariantStan} {\rm (1)}.

\medskip

{\rm (TR2)} Take any distinguished right triangle. By Lemma \ref{LemInvariantStan}, replacing by an isomorphism of right triangles in $\mathcal{Z}/\mathcal{I}$, we may assume it is a standard right triangle associated to $(\ref{CanonUTria})$. If we draw diagram $(\ref{ShiftedOcta})$ and take a distinguished triangle
\[ S[-1]\to C_f\overset{z}{\longrightarrow}Z\to S\quad(S\in\mathcal{S},Z\in\mathcal{Z}), \]
we may assume $\sigma(C_f)=Z$ and $\underline{z}=\eta_{C_f}$ in $\mathcal{Z}/\mathcal{I}$. Then the associated standard right triangle is isomorphic to
\[ X\overset{\underline{f}}{\longrightarrow}Y\overset{\underline{z}\circ\underline{d}_f}{\longrightarrow}Z\overset{\sigma(\underline{c}_f)}{\longrightarrow}\Sigma X. \]
Also remark that the commutativity of $(\ref{ShiftedOcta})$ especially implies $-q=f[1]\circ\gamma_X$.

Complete $z\circ d_f$ into a distinguished triangle
\begin{equation}\label{UcalShifted}
Y\overset{z\circ d_f}{\longrightarrow}Z\overset{d_1}{\longrightarrow}Q\overset{d_2}{\longrightarrow}Y[1]
\end{equation}
in $\mathcal{C}$. By the octahedron axiom, we obtain the following commutative diagram made of distinguished triangles in $\mathcal{C}$.
\[
\xy
(-8,23)*+{S[-1]}="-12";
(8,23)*+{S[-1]}="-14";
(-24,8)*+{Y}="0";
(-8,8)*+{C_f}="2";
(8,8)*+{U_X}="4";
(24,8)*+{Y[1]}="6";
(-24,-7)*+{Y}="10";
(-8,-7)*+{Z}="12";
(8,-7)*+{Q}="14";
(24,-7)*+{Y[1]}="16";
(-8,-22)*+{S}="22";
(8,-22)*+{S}="24";
{\ar@{=} "-12";"-14"};
{\ar_{} "-12";"2"};
{\ar^{} "-14";"4"};
{\ar^{d_f} "0";"2"};
{\ar^{c_f} "2";"4"};
{\ar^{q} "4";"6"};
{\ar@{=} "0";"10"};
{\ar_{z} "2";"12"};
{\ar^{{}^{\exists}r} "4";"14"};
{\ar@{=} "6";"16"};
{\ar_{z\circ d_f} "10";"12"};
{\ar_{d_1} "12";"14"};
{\ar_{d_2} "14";"16"};
{\ar_{} "12";"22"};
{\ar^{} "14";"24"};
{\ar@{=} "22";"24"};
{\ar@{}|\circlearrowright "-12";"4"};
{\ar@{}|\circlearrowright "0";"12"};
{\ar@{}|\circlearrowright "2";"14"};
{\ar@{}|\circlearrowright "4";"16"};
{\ar@{}|\circlearrowright "12";"24"};
\endxy
\]
This shows $Q\in\mathcal{U}$, and thus $(\ref{UcalShifted})$ is a $\mathcal{U}$-conic triangle.
It suffices to show that its associated standard right triangle is isomorphic to
\[ Y\overset{\underline{z}\circ \underline{d}_f}{\longrightarrow}Z\overset{\sigma(\underline{c}_f)}{\longrightarrow}\Sigma X\overset{-\Sigma\underline{f}}{\longrightarrow}\Sigma Y. \]
As in Definition \ref{DefUStan}, let $Y\overset{\iota_Y}{\longrightarrow}I_Y\overset{\wp_Y}{\longrightarrow}U_Y\overset{\gamma_Y}{\longrightarrow}Y[1]$ be a distinguished triangle in $\mathcal{C}$ satisfying $I_Y\in\mathcal{I},\, U_Y\in\mathcal{U}$, and take a morphism of triangles
\[
\xy
(-19,6)*+{Y}="0";
(-6,6)*+{Z}="2";
(7,6)*+{Q}="4";
(21,6)*+{Y[1]}="6";
(-19,-6)*+{Y}="10";
(-6,-6)*+{I_Y}="12";
(7,-6)*+{U_Y}="14";
(21,-6)*+{Y[1]}="16";
{\ar^{z\circ d_f} "0";"2"};
{\ar^{d_1} "2";"4"};
{\ar^{d_2} "4";"6"};
{\ar@{=} "0";"10"};
{\ar^{g} "2";"12"};
{\ar^{h} "4";"14"};
{\ar@{=} "6";"16"};
{\ar_{\iota_Y} "10";"12"};
{\ar_{\wp_Y} "12";"14"};
{\ar_{\gamma_Y} "14";"16"};
{\ar@{}|\circlearrowright "0";"12"};
{\ar@{}|\circlearrowright "2";"14"};
{\ar@{}|\circlearrowright "4";"16"};
\endxy
\]
in $\mathcal{C}$. By definition, the standard right triangle associated to $(\ref{UcalShifted})$ is given by
\[ Y\overset{\underline{z}\circ\underline{d}_f}{\longrightarrow}Z\overset{\eta_{Q}\circ\underline{d}_1}{\longrightarrow}\sigma(Q)\overset{\sigma(\underline{h})}{\longrightarrow}\Sigma Y. \]
Since $f[1]\circ\gamma_X=-q=-d_2\circ r=-\gamma_Y\circ h\circ r$, we obtain a morphism of triangles
\[
\xy
(-21,7)*+{X}="0";
(-7,7)*+{I_X}="2";
(8,7)*+{U_X}="4";
(23,7)*+{X[1]}="6";
(-21,-7)*+{Y}="10";
(-7,-7)*+{I_Y}="12";
(8,-7)*+{U_Y}="14";
(23,-7)*+{Y[1]}="16";
{\ar^{\iota_X} "0";"2"};
{\ar^{\wp_X} "2";"4"};
{\ar^{\gamma_X} "4";"6"};
{\ar_{f} "0";"10"};
{\ar^{} "2";"12"};
{\ar|*+{_{-h\circ r}} "4";"14"};
{\ar^{f[1]} "6";"16"};
{\ar_{\iota_Y} "10";"12"};
{\ar_{\wp_Y} "12";"14"};
{\ar_{\gamma_Y} "14";"16"};
{\ar@{}|\circlearrowright "0";"12"};
{\ar@{}|\circlearrowright "2";"14"};
{\ar@{}|\circlearrowright "4";"16"};
\endxy
\]
by {\rm (TR2)} and {\rm (TR3)} for $\mathcal{C}$. This gives $-\underline{h}\circ\underline{r}=\underline{f}\langle1\rangle$, and thus $\sigma(\underline{h})\circ\sigma(\underline{r})=-\Sigma\underline{f}$.

By Lemma \ref{LemSUU}, $\sigma(\underline{r})$ becomes isomorphism. This gives an isomorphism of right triangles
\[
\xy
(-23,7)*+{Y}="0";
(-8,7)*+{Z}="2";
(8,7)*+{\Sigma X}="4";
(24,7)*+{\Sigma Y}="6";
(-23,-7)*+{Y}="10";
(-8,-7)*+{Z}="12";
(8,-7)*+{\sigma(Q)}="14";
(24,-7)*+{\Sigma Y}="16";
{\ar^{\underline{z}\circ\underline{d}_f} "0";"2"};
{\ar^{\sigma(\underline{c}_f)} "2";"4"};
{\ar^{-\Sigma\underline{f}} "4";"6"};
{\ar@{=} "0";"10"};
{\ar@{=} "2";"12"};
{\ar_{\cong}^{\sigma(\underline{r})} "4";"14"};
{\ar@{=} "6";"16"};
{\ar_{\underline{z}\circ\underline{d}_f} "10";"12"};
{\ar_{\eta_{Q}\circ\underline{d}_1} "12";"14"};
{\ar_{\sigma(\underline{h})} "14";"16"};
{\ar@{}|\circlearrowright "0";"12"};
{\ar@{}|\circlearrowright "2";"14"};
{\ar@{}|\circlearrowright "4";"16"};
\endxy.
\]

\medskip

{\rm (TR3)} Replacing by isomorphisms of right triangles, this is also reduced to the case of standard right triangles. Let $X\overset{f}{\longrightarrow}Y\overset{a}{\longrightarrow}U\overset{b}{\longrightarrow}X[1]$ and $X^{\prime}\overset{f^{\prime}}{\longrightarrow}Y^{\prime}\overset{a^{\prime}}{\longrightarrow}U^{\prime}\overset{b^{\prime}}{\longrightarrow}X^{\prime}[1]$ be $\mathcal{U}$-conic triangles in $\mathcal{C}$. Suppose we are given a commutative square
\[
\xy
(-7,6)*+{X}="0";
(7,6)*+{Y}="2";
(-7,-6)*+{X^{\prime}}="4";
(7,-6)*+{Y^{\prime}}="6";
{\ar^{\underline{f}} "0";"2"};
{\ar_{\underline{x}} "0";"4"};
{\ar^{\underline{y}} "2";"6"};
{\ar_{\underline{f}^{\prime}} "4";"6"};
{\ar@{}|\circlearrowright "0";"6"};
\endxy
\]
in $\mathcal{Z}/\mathcal{I}$. Let $X\overset{\iota_X}{\longrightarrow}I_X\overset{\wp_X}{\longrightarrow}U_X\overset{\gamma_X}{\longrightarrow}X[1]$ be an distinguished triangle as in $(\ref{UStT1})$. Then $f^{\prime}\circ x-y\circ f$ factors through $\iota_X$. Namely, there exists $j\in\mathcal{C}(I_X,Y^{\prime})$ satisfying
\[ f^{\prime}\circ x=y\circ f+j\circ\iota_X. \]
Complete $\begin{bmatrix}f\\ \iota_X\end{bmatrix}$ into a distinguished triangle
\[ X\overset{\spmatrix{f}{\iota_X}}{\longrightarrow}Y\oplus I_X\overset{a_f}{\longrightarrow}C_f\overset{b_f}{\longrightarrow}X[1]. \]
Then by {\rm (TR3)} for $\mathcal{C}$, we obtain a morphism of triangles
\[
\xy
(-25,7)*+{X}="0";
(-9,7)*+{Y\oplus I_X}="2";
(8,7)*+{C_f}="4";
(23,7)*+{X[1]}="6";
(-25,-7)*+{X^{\prime}}="10";
(-9,-7)*+{Y^{\prime}}="12";
(8,-7)*+{U^{\prime}}="14";
(23,-7)*+{X^{\prime}[1]}="16";
{\ar^{\spmatrix{f}{\iota_X}} "0";"2"};
{\ar^(0.56){a_f} "2";"4"};
{\ar^{b_f} "4";"6"};
{\ar_{x} "0";"10"};
{\ar^{[y\ j]} "2";"12"};
{\ar^{{}^{\exists}u} "4";"14"};
{\ar^{x[1]} "6";"16"};
{\ar_{f^{\prime}} "10";"12"};
{\ar_{a^{\prime}} "12";"14"};
{\ar_{b^{\prime}} "14";"16"};
{\ar@{}|\circlearrowright "0";"12"};
{\ar@{}|\circlearrowright "2";"14"};
{\ar@{}|\circlearrowright "4";"16"};
\endxy
\]
in $\mathcal{C}$. Now {\rm (TR3)} for $\mathcal{Z}/\mathcal{I}$ follows from Proposition \ref{PropMorphStan} and Lemma \ref{LemInvariantStan}.

\medskip

{\rm (TR4)}
Replacing by isomorphisms of right triangles, it suffices to show for standard right triangles associated to $\mathcal{U}$-conic triangles.

Let $X,Y,Z\in\mathcal{Z}$ be arbitrary objects, let
\begin{eqnarray}
&X\overset{f}{\longrightarrow}Y\overset{a_1}{\longrightarrow}U_1\overset{b_1}{\longrightarrow}X[1],&\\
&X\overset{h}{\longrightarrow}Z\overset{a_2}{\longrightarrow}U_2\overset{b_2}{\longrightarrow}X[1],&\label{DTria_h}\\
&Y\overset{g}{\longrightarrow}Z\overset{a_3}{\longrightarrow}U_3\overset{b_3}{\longrightarrow}Y[1]&
\end{eqnarray}
be $\mathcal{U}$-conic triangles, and suppose $\underline{h}=\underline{g}\circ\underline{f}$ holds.

Complete $g\circ f$ into a distinguished triangle
\begin{equation}\label{DTria_gf}
X\overset{g\circ f}{\longrightarrow}Z\to Q\to X[1].
\end{equation}
Then by the octahedron axiom in $\mathcal{C}$, it follows $Q\in\mathcal{U}$. By Proposition \ref{PropUniqueRightComplete}, the standard right triangles associated to $(\ref{DTria_h})$ and $(\ref{DTria_gf})$ are isomorphic as right triangles.

Thus we may replace $(\ref{DTria_h})$ by $(\ref{DTria_gf})$, and assume that $h=g\circ f$ holds from the beginning.
By the octahedron axiom in $\mathcal{C}$, we obtain a distinguished triangle
\[ U_1\overset{\ell}{\longrightarrow}U_2\overset{m}{\longrightarrow}U_3\overset{n}{\longrightarrow}U_1[1] \]
which makes the following diagram commutative in $\mathcal{C}$.
\begin{equation}\label{Octa_13A}
\xy
(-24,23)*+{X}="0";
(-8,23)*+{Y}="2";
(8,23)*+{U_1}="4";
(24,23)*+{X[1]}="6";
(-24,8)*+{X}="10";
(-8,8)*+{Z}="12";
(8,8)*+{U_2}="14";
(24,8)*+{X[1]}="16";
(-8,-7)*+{U_3}="22";
(8,-7)*+{U_3}="24";
(24,-7)*+{Y[1]}="26";
(-8,-22)*+{Y[1]}="32";
(8,-22)*+{U_1[1]}="34";
{\ar^{f} "0";"2"};
{\ar^{a_1} "2";"4"};
{\ar^{b_1} "4";"6"};
{\ar@{=} "0";"10"};
{\ar_{g} "2";"12"};
{\ar^{\ell} "4";"14"};
{\ar@{=} "6";"16"};
{\ar_{h} "10";"12"};
{\ar_{a_2} "12";"14"};
{\ar_{b_2} "14";"16"};
{\ar_{a_3} "12";"22"};
{\ar^{m} "14";"24"};
{\ar^{f[1]} "16";"26"};
{\ar@{=} "22";"24"};
{\ar_{b_3} "24";"26"};
{\ar_{b_3} "22";"32"};
{\ar^{n} "24";"34"};
{\ar_{a_1[1]} "32";"34"};
{\ar@{}|\circlearrowright "0";"12"};
{\ar@{}|\circlearrowright "2";"14"};
{\ar@{}|\circlearrowright "4";"16"};
{\ar@{}|\circlearrowright "12";"24"};
{\ar@{}|\circlearrowright "14";"26"};
{\ar@{}|\circlearrowright "22";"34"};
\endxy
\end{equation}
By Lemma \ref{LemUUUTria}, we obtain distinguished triangles in $\mathcal{C}$
\begin{eqnarray*}
&S_1[-1]\to U_1\overset{z_1}{\longrightarrow}Z_1\to S_1,&\\
&S_2[-1]\to U_2\overset{z_2}{\longrightarrow}Z_2\to S_2,&\\
&S_3[-1]\to U_3\overset{u_3}{\longrightarrow}U_3^{\prime}\to S_3,&\\
&Z_1\overset{x}{\longrightarrow}Z_2\overset{y}{\longrightarrow}U_3^{\prime}\overset{z}{\longrightarrow}Z_1[1],&\\
&(S_1,S_2,S_3\in\mathcal{S},\, Z_1,Z_2\in\mathcal{Z},\, U_3^{\prime}\in\mathcal{U}),&
\end{eqnarray*}
which give a morphism of triangles $(\ref{MorphUUUTria})$.
By composition, we obtain a morphism of triangles
\begin{equation}\label{MorphYZU}
\xy
(-21,6)*+{Y}="0";
(-7,6)*+{Z}="2";
(8,6)*+{U_3}="4";
(23,6)*+{Y[1]}="6";
(-21,-7)*+{Z_1}="10";
(-7,-7)*+{Z_2}="12";
(8,-7)*+{U_3^{\prime}}="14";
(23,-7)*+{Z_1[1]}="16";
{\ar^{g} "0";"2"};
{\ar^{a_3} "2";"4"};
{\ar^{b_3} "4";"6"};
{\ar_{z_1\circ a_1} "0";"10"};
{\ar|*+{_{z_2\circ a_2}} "2";"12"};
{\ar^{u_3} "4";"14"};
{\ar^{(z_1\circ a_1)[1]} "6";"16"};
{\ar_{x} "10";"12"};
{\ar_{y} "12";"14"};
{\ar_{z} "14";"16"};
{\ar@{}|\circlearrowright "0";"12"};
{\ar@{}|\circlearrowright "2";"14"};
{\ar@{}|\circlearrowright "4";"16"};
\endxy.
\end{equation}
Take morphisms of distinguished triangles in $\mathcal{C}$
\[
\xy
(-21,6)*+{X}="0";
(-7,6)*+{Y}="2";
(8,6)*+{U_1}="4";
(23,6)*+{X[1]}="6";
(-21,-6)*+{X}="10";
(-7,-6)*+{I_X}="12";
(8,-6)*+{U_X}="14";
(23,-6)*+{X[1]}="16";
{\ar^{f} "0";"2"};
{\ar^{a_1} "2";"4"};
{\ar^{b_1} "4";"6"};
{\ar@{=} "0";"10"};
{\ar "2";"12"};
{\ar^{c_1} "4";"14"};
{\ar@{=} "6";"16"};
{\ar_{\iota_X} "10";"12"};
{\ar_{\wp_X} "12";"14"};
{\ar_{\gamma_X} "14";"16"};
{\ar@{}|\circlearrowright "0";"12"};
{\ar@{}|\circlearrowright "2";"14"};
{\ar@{}|\circlearrowright "4";"16"};
\endxy\ \ ,
\ \ \quad
\xy
(-21,6)*+{X}="0";
(-7,6)*+{Z}="2";
(8,6)*+{U_2}="4";
(23,6)*+{X[1]}="6";
(-21,-6)*+{X}="10";
(-7,-6)*+{I_X}="12";
(8,-6)*+{U_X}="14";
(23,-6)*+{X[1]}="16";
{\ar^{h} "0";"2"};
{\ar^{a_2} "2";"4"};
{\ar^{b_2} "4";"6"};
{\ar@{=} "0";"10"};
{\ar "2";"12"};
{\ar^{c_2} "4";"14"};
{\ar@{=} "6";"16"};
{\ar_{\iota_X} "10";"12"};
{\ar_{\wp_X} "12";"14"};
{\ar_{\gamma_X} "14";"16"};
{\ar@{}|\circlearrowright "0";"12"};
{\ar@{}|\circlearrowright "2";"14"};
{\ar@{}|\circlearrowright "4";"16"};
\endxy,
\]
\[
\xy
(-21,6)*+{Y}="0";
(-7,6)*+{Z}="2";
(8,6)*+{U_3}="4";
(23,6)*+{Y[1]}="6";
(-21,-6)*+{Y}="10";
(-7,-6)*+{I_Y}="12";
(8,-6)*+{U_Y}="14";
(23,-6)*+{Y[1]}="16";
{\ar^{g} "0";"2"};
{\ar^{a_3} "2";"4"};
{\ar^{b_3} "4";"6"};
{\ar@{=} "0";"10"};
{\ar "2";"12"};
{\ar^{c_3} "4";"14"};
{\ar@{=} "6";"16"};
{\ar_{\iota_Y} "10";"12"};
{\ar_{\wp_Y} "12";"14"};
{\ar_{\gamma_Y} "14";"16"};
{\ar@{}|\circlearrowright "0";"12"};
{\ar@{}|\circlearrowright "2";"14"};
{\ar@{}|\circlearrowright "4";"16"};
\endxy
\]
with $I_X,I_Y\in\mathcal{I},\, U_X,U_Y\in\mathcal{U}$.

Applying Proposition \ref{PropMorphStan} to $(\ref{Octa_13A})$ and $(\ref{MorphYZU})$, we obtain morphisms of right triangles
\[
\xy
(-24,6)*+{X}="0";
(-9,6)*+{Y}="2";
(9,6)*+{\sigma(U_1)}="4";
(27,6)*+{\Sigma X}="6";
(-24,-6)*+{X}="10";
(-9,-6)*+{Z}="12";
(9,-6)*+{\sigma(U_2)}="14";
(27,-6)*+{\Sigma X}="16";
{\ar^{\underline{f}} "0";"2"};
{\ar^(0.46){\eta_{U_1}\circ\underline{a}_1} "2";"4"};
{\ar^{\sigma(\underline{c}_1)} "4";"6"};
{\ar@{=} "0";"10"};
{\ar^{\underline{g}} "2";"12"};
{\ar^{\sigma(\underline{\ell})} "4";"14"};
{\ar@{=} "6";"16"};
{\ar_{\underline{h}} "10";"12"};
{\ar_(0.46){\eta_{U_2}\circ\underline{a}_2} "12";"14"};
{\ar_{\sigma(\underline{c}_2)} "14";"16"};
{\ar@{}|\circlearrowright "0";"12"};
{\ar@{}|\circlearrowright "2";"14"};
{\ar@{}|\circlearrowright "4";"16"};
\endxy,
\]
\[
\xy
(-24,7)*+{Y}="0";
(-9,7)*+{Z}="2";
(9,7)*+{\sigma(U_3)}="4";
(11,7)*+{}="5";
(27,7)*+{\Sigma Y}="6";
(-24,-6)*+{Z_1}="10";
(-9,-6)*+{Z_2}="12";
(9,-6)*+{\sigma(U_3^{\prime})}="14";
(27,-6)*+{\Sigma Z_1}="16";
{\ar^{\underline{g}} "0";"2"};
{\ar^(0.46){\eta_{U_3}\circ\underline{a}_3} "2";"4"};
{\ar^{\sigma(\underline{c}_3)} "4";"6"};
{\ar_{\underline{z}_1\circ\underline{a}_1} "0";"10"};
{\ar|*+{_{\underline{z}_2\circ\underline{a}_2}} "2";"12"};
{\ar^{\sigma(\underline{u}_3)}_{\cong} "4";"14"};
{\ar^{\Sigma(\underline{z}_1\circ\underline{a}_1)} "6";"16"};
{\ar_{\underline{x}} "10";"12"};
{\ar_(0.46){\eta_{U_3^{\prime}}\circ\underline{y}} "12";"14"};
{\ar_{\sigma(\underline{v})} "14";"16"};
{\ar@{}|\circlearrowright "0";"12"};
{\ar@{}|\circlearrowright "2";"14"};
{\ar@{}|\circlearrowright "5";"16"};
\endxy,
\]
where $v$ is the morphism appearing in $(\ref{S_T_A_R})$.
This gives the following commutative diagram of distinguished right triangles in $\mathcal{Z}/\mathcal{I}$.
\[
\xy
(-28,23)*+{X}="0";
(-10,23)*+{Y}="2";
(10,23)*+{\sigma(U_1)}="4";
(30,23)*+{\Sigma X}="6";
(-28,8)*+{X}="10";
(-10,8)*+{Z}="12";
(10,8)*+{\sigma(U_2)}="14";
(30,8)*+{\Sigma X}="16";
(-10,-7)*+{\sigma(U_3)}="22";
(10,-7)*+{\sigma(U_3)}="24";
%(24,-7)*+{B[1]}="26";
%
(-10,-22)*+{\Sigma Y}="32";
(10,-22)*+{\Sigma(\sigma U_1)}="34";
{\ar^{\underline{f}} "0";"2"};
{\ar^{\eta_{U_1}\circ\underline{a}_1} "2";"4"};
{\ar^{\sigma(\underline{c}_1)} "4";"6"};
{\ar@{=} "0";"10"};
{\ar_{\underline{g}} "2";"12"};
{\ar^{\sigma(\underline{\ell})} "4";"14"};
{\ar@{=} "6";"16"};
{\ar_{\underline{h}} "10";"12"};
{\ar_{\eta_{U_2}\circ\underline{a}_2} "12";"14"};
{\ar_{\sigma(\underline{c}_2)} "14";"16"};
{\ar_{\eta_{U_3}\circ\underline{a}_3} "12";"22"};
{\ar^{\sigma(\eta_{U_3}\circ\underline{m})=\sigma(\underline{u}_3)^{-1}\circ\eta_{U_3^{\prime}}\circ\underline{y}} "14";"24"};
{\ar@{=} "22";"24"};
{\ar_{\sigma(\underline{c}_3)} "22";"32"};
{\ar^{\sigma(\underline{v})\circ\sigma(\underline{u}_3)^{-1}} "24";"34"};
{\ar_{\Sigma(\eta_{U_1}\circ\underline{a}_1)} "32";"34"};
{\ar@{}|\circlearrowright "0";"12"};
{\ar@{}|\circlearrowright "2";"14"};
{\ar@{}|\circlearrowright "4";"16"};
{\ar@{}|\circlearrowright "12";"24"};
%{\ar@{}|\circlearrowright "14";"26"};
{\ar@{}|\circlearrowright "22";"34"};
\endxy
\]
It remains to show the commutativity of
\begin{equation}\label{Comm13A}
\xy
(-9,7)*+{\sigma(U_2)}="0";
(9,7)*+{\Sigma X}="2";
(-9,-6)*+{\sigma(U_3)}="4";
(9,-6)*+{\Sigma Y}="6";
{\ar^{\sigma(\underline{c}_2)} "0";"2"};
{\ar_{\sigma(\eta_{U_3}\circ\underline{m})} "0";"4"};
{\ar^{\Sigma\underline{f}} "2";"6"};
{\ar_{\sigma(\underline{c}_3)} "4";"6"};
{\ar@{}|\circlearrowright "0";"6"};
\endxy.
\end{equation}
By definition, if we take a morphism of triangles
\[
\xy
(-21,6)*+{X}="0";
(-7,6)*+{I_X}="2";
(8,6)*+{U_X}="4";
(23,6)*+{X[1]}="6";
(-21,-7)*+{Y}="10";
(-7,-7)*+{I_Y}="12";
(8,-7)*+{U_Y}="14";
(23,-7)*+{Y[1]}="16";
{\ar^{\iota_X} "0";"2"};
{\ar^{\wp_X} "2";"4"};
{\ar^{\gamma_X} "4";"6"};
{\ar_{f} "0";"10"};
{\ar "2";"12"};
{\ar^{u_f} "4";"14"};
{\ar^{f[1]} "6";"16"};
{\ar_{\iota_Y} "10";"12"};
{\ar_{\wp_Y} "12";"14"};
{\ar_{\gamma_Y} "14";"16"};
{\ar@{}|\circlearrowright "0";"12"};
{\ar@{}|\circlearrowright "2";"14"};
{\ar@{}|\circlearrowright "4";"16"};
\endxy
\]
in $\mathcal{C}$, this gives $\Sigma\underline{f}=\sigma(\underline{u}_f)$.

By the functoriality of $\sigma$, to show the commutativity of $(\ref{Comm13A})$, it suffices to show the commutativity of
\[
\xy
(-7,6)*+{U_2}="0";
(8,6)*+{U_X}="2";
(-7,-6)*+{U_3}="4";
(8,-6)*+{U_Y}="6";
{\ar^{\underline{c}_2} "0";"2"};
{\ar_{\underline{m}} "0";"4"};
{\ar^{\underline{u}_f} "2";"6"};
{\ar_{\underline{c}_3} "4";"6"};
{\ar@{}|\circlearrowright "0";"6"};
\endxy
\]
in $\mathcal{U}/\mathcal{I}$. This follows from the existence of the distinguished triangle $Y\overset{\iota_Y}{\longrightarrow}I_Y\overset{\wp_Y}{\longrightarrow}U_Y\overset{\gamma_Y}{\longrightarrow}Y[1]$, and the equality
\[ \gamma_Y\circ (c_3\circ m)=b_3\circ m=f[1]\circ b_2%
=f[1]\circ\gamma_X\circ c_2=\gamma_Y\circ (u_f\circ c_2) \]
in $\mathcal{C}$.
%\begin{eqnarray*}
%\gamma_Y\circ (c_3\circ m)&=&b_3\circ m\ =\ f[1]\circ b_2\\
%&=&f[1]\circ\gamma_X\circ c_2\ =\ \gamma_Y\circ (u_f\circ c_2).
%\end{eqnarray*}
%it reduces to show the commutativity of
%\[
%\xy
%(-10,7)*+{U_2}="0";
%(10,7)*+{U_X}="2";
%(-10,-7)*+{U_3}="4";
%(10,-7)*+{Y[1]}="6";
%%
%{\ar^{c_2} "0";"2"};
%{\ar_{m} "0";"4"};
%{\ar^{\gamma_Y\circ u_f} "2";"6"};
%{\ar_{\gamma_Y\circ c_3} "4";"6"};
%%
%{\ar@{}|\circlearrowright "0";"6"};
%\endxy.
%\]
%This follows from
\end{proof}

\subsection{Pretriangulated structure}
So far we have given a right triangulation of $\mathcal{Z}/\mathcal{I}$. Dually, the following construction gives a left triangulation of $\mathcal{Z}/\mathcal{I}$.
\begin{dfn}\label{DefLeftTriangle}
Let $\mathcal{P}$ be a concentric TCP. To any distinguished triangle in $\mathcal{C}$
\[ P[-1]\overset{k}{\longrightarrow}T\overset{\ell}{\longrightarrow}Z\overset{m}{\longrightarrow}P \]
satisfying $T\in\mathcal{T}$ and $Z,P\in\mathcal{Z}$, we associate {\it standard left triangle}
\[ \Omega P\overset{\omega(\underline{n})}{\longrightarrow}\omega(T)\overset{\underline{\ell}\circ\varepsilon_T}{\longrightarrow}Z\overset{\underline{m}}{\longrightarrow}P \]
by the following {\rm (i)$^{\prime}$,(ii)$^{\prime}$}.
\begin{itemize}
\item[{\rm (i)$^{\prime}$}] Take a distinguished triangle in $\mathcal{C}$
\[ T_P\overset{\iota^{\prime}_P}{\longrightarrow}I^{\prime}_P\overset{\wp_P^{\prime}}{\longrightarrow}P\overset{\gamma_P^{\prime}}{\longrightarrow}T_P[1] \]
satisfying $T_P\in\mathcal{T},\, I_P^{\prime}\in\mathcal{I}$. Remark that this gives $T_P\cong P\langle-1\rangle$ in $\mathcal{T}/\mathcal{I}$, and thus $\omega(T_P)\cong\Omega P$ in $\mathcal{Z}/\mathcal{I}$.
\item[{\rm (ii)$^{\prime}$}] By $\mathcal{C}(I^{\prime}_P,T[1])=0$, we obtain a morphism of triangles in $\mathcal{C}$
\begin{equation}\label{Left}
\xy
(-21,6)*+{T_P}="0";
(-7,6)*+{I^{\prime}_P}="2";
(8,6)*+{P}="4";
(23,6)*+{T_P[1]}="6";
(-21,-7)*+{T}="10";
(-7,-7)*+{Z}="12";
(8,-7)*+{P}="14";
(23,-7)*+{T[1]}="16";
{\ar^{\iota_P^{\prime}} "0";"2"};
{\ar^{\wp_P^{\prime}} "2";"4"};
{\ar^{\gamma_P^{\prime}} "4";"6"};
{\ar_{n} "0";"10"};
{\ar "2";"12"};
{\ar@{=} "4";"14"};
{\ar^{n[1]} "6";"16"};
{\ar_{\ell} "10";"12"};
{\ar_{m} "12";"14"};
{\ar_{-k[1]} "14";"16"};
{\ar@{}|\circlearrowright "0";"12"};
{\ar@{}|\circlearrowright "2";"14"};
{\ar@{}|\circlearrowright "4";"16"};
\endxy.
\end{equation}
\end{itemize}
A left triangle in $\mathcal{Z}/\mathcal{I}$ isomorphic to a standard one, is called a {\it distinguished left triangle}.
\end{dfn}

\begin{prop}\label{PropLeftTriangulated}
Let $\mathcal{P}$ be any concentric TCP. Then, $\mathcal{Z}/\mathcal{I}$ becomes a left triangulated category with the functor $\Omega$ and the class of distinguished left triangles given in Definition \ref{DefLeftTriangle}.
\end{prop}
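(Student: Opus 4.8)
The plan is to deduce this as the formal dual of Proposition \ref{PropZIRightTria} by passing to the opposite triangulated category, rather than rerunning the whole argument of Section \ref{section_PreTr} with arrows reversed. Regard $\mathcal{C}^{\mathrm{op}}$ as a triangulated category with shift functor $[-1]$, and attach to the concentric TCP $\mathcal{P}=((\mathcal{S},\mathcal{T}),(\mathcal{U},\mathcal{V}))$ the pair $\mathcal{P}^{\mathrm{op}}=((\mathcal{V},\mathcal{U}),(\mathcal{T},\mathcal{S}))$ on $\mathcal{C}^{\mathrm{op}}$. First I would check that $\mathcal{P}^{\mathrm{op}}$ is again a concentric TCP: reversing distinguished triangles turns the cotorsion pairs $(\mathcal{U},\mathcal{V})$ and $(\mathcal{S},\mathcal{T})$ on $\mathcal{C}$ into the cotorsion pairs $(\mathcal{V},\mathcal{U})$ and $(\mathcal{T},\mathcal{S})$ on $\mathcal{C}^{\mathrm{op}}$ (in the sense of Definition \ref{DefCPC}, with the orthogonality conditions swapping sides); the compatibility $\mathrm{Ext}^1_{\mathcal{C}^{\mathrm{op}}}(\mathcal{V},\mathcal{S})=\mathrm{Ext}^1_{\mathcal{C}}(\mathcal{S},\mathcal{V})=0$ holds because $\mathcal{P}$ is a TCP; and concentricity is preserved since $\mathcal{V}\cap\mathcal{U}=\mathcal{T}\cap\mathcal{S}=\mathcal{I}$ by Definition \ref{DefConcentric} and Remark \ref{RemConcentric}. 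Moreover, by Definition \ref{DefNiNf} the subcategory $\mathcal{Z}_{\mathcal{P}^{\mathrm{op}}}=\mathcal{U}\cap\mathcal{T}$ equals $\mathcal{Z}$ and $\mathcal{I}_{\mathcal{P}^{\mathrm{op}}}=\mathcal{V}\cap\mathcal{S}=\mathcal{I}$, so the subquotient category associated with $\mathcal{P}^{\mathrm{op}}$ is precisely $(\mathcal{Z}/\mathcal{I})^{\mathrm{op}}$.

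Next I would match the auxiliary data. Under order reversal the functor $\langle1\rangle$ of Definition \ref{Def_poshi_neshi} for $\mathcal{P}^{\mathrm{op}}$ becomes the functor $\langle-1\rangle$ for $\mathcal{P}$: a defining triangle $U[-1]\to Z\to I\to U$ on $\mathcal{C}^{\mathrm{op}}$ with $U\in\mathcal{U}_{\mathcal{P}^{\mathrm{op}}}=\mathcal{T}$ reverses to a triangle $T\to I\to Z\to T[1]$ on $\mathcal{C}$ with $T\in\mathcal{T}$, which is exactly the datum defining $Z\langle-1\rangle$. The adjoints of Definition \ref{DefTCPAdj} for $\mathcal{P}^{\mathrm{op}}$ become those for $\mathcal{P}$ with left and right interchanged; in particular the left adjoint $\sigma$ for $\mathcal{P}^{\mathrm{op}}$, with its unit, corresponds to the right adjoint $\omega$ for $\mathcal{P}$, with its counit $\varepsilon$, and the left adjoint $\sigma_{\mathcal{T}}$ for $\mathcal{P}^{\mathrm{op}}$ corresponds to $\sigma_{\mathcal{T}}$ for $\mathcal{P}$. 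Hence the suspension $\Sigma=\sigma\circ\langle1\rangle$ of Definition \ref{DefShifts} for $\mathcal{P}^{\mathrm{op}}$ corresponds to $\omega\circ\langle-1\rangle=\Omega$. I would then verify that a $\mathcal{U}$-conic triangle for $\mathcal{P}^{\mathrm{op}}$ (Definition \ref{DefUTria}), read on $\mathcal{C}$, is exactly a triangle $P[-1]\to T\to Z\to P$ with $T\in\mathcal{T}$ and $Z,P\in\mathcal{Z}$, and that the standard right triangle of Definition \ref{DefUStan} associated with it, read on $\mathcal{C}$, is exactly the standard left triangle of Definition \ref{DefLeftTriangle} (the chosen triangle $X\to I_X\to U_X\to X[1]$ corresponds to $T_P\to I'_P\to P\to T_P[1]$, the unit $\eta_U$ to the counit $\varepsilon_T$, and $\Sigma X$ to $\Omega P$); consequently the distinguished right triangles of $\mathcal{P}^{\mathrm{op}}$ correspond precisely to the distinguished left triangles of $\mathcal{P}$.

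With this dictionary in hand, Proposition \ref{PropZIRightTria} applied to the concentric TCP $\mathcal{P}^{\mathrm{op}}$ asserts that $(\mathcal{Z}/\mathcal{I})^{\mathrm{op}}$ is a right triangulated category with suspension $\Sigma_{\mathcal{P}^{\mathrm{op}}}$ and the above class of distinguished right triangles; since the axioms of a left triangulated category are, by definition, the axioms of a right triangulated category on the opposite category, this is exactly the assertion that $\mathcal{Z}/\mathcal{I}$ is left triangulated with loop functor $\Omega$ and the distinguished left triangles of Definition \ref{DefLeftTriangle}. Note that, just as in Proposition \ref{PropZIRightTria}, no extra hypothesis such as Condition \ref{CondCDE} is needed. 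The main obstacle I anticipate is the bookkeeping of this dualization: one must check that the opposite of each construction in Section \ref{section_PreTr} — the functors $\langle\pm1\rangle$, $\sigma,\omega,\sigma_{\mathcal{T}},\omega_{\mathcal{U}}$, the units and counits, and the signs occurring in the octahedra — really is the corresponding construction for $\mathcal{P}$, so that the opposite of the standard right triangle of $\mathcal{P}^{\mathrm{op}}$ is the standard left triangle of Definition \ref{DefLeftTriangle} on the nose and not merely up to sign or isomorphism. Equivalently, and perhaps more transparently, one may simply carry out verbatim the dual of the proof of Proposition \ref{PropZIRightTria}, invoking the dual forms of Lemmas \ref{LemSUU}, \ref{LemNUU}, \ref{LemInvariantStan} and \ref{LemUUUTria} and of Propositions \ref{PropMorphStan} and \ref{PropUniqueRightComplete}, all of which were already recorded as holding dually.
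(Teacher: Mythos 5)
Your proposal is correct and is essentially the paper's own argument: the paper proves this proposition simply by declaring it dual to Proposition \ref{PropZIRightTria}, and your passage to $\mathcal{C}^{\mathrm{op}}$ equipped with the concentric TCP $((\mathcal{V},\mathcal{U}),(\mathcal{T},\mathcal{S}))$ merely makes that duality explicit (correctly identifying $\mathcal{Z}_{\mathcal{P}^{\mathrm{op}}}/\mathcal{I}$ with $(\mathcal{Z}/\mathcal{I})^{\mathrm{op}}$, $\Sigma_{\mathcal{P}^{\mathrm{op}}}$ with $\Omega$, and standard right triangles with standard left triangles). One negligible bookkeeping slip: the functor $\sigma_{\mathcal{T}}$ attached to $\mathcal{P}^{\mathrm{op}}$ (whose ``$\mathcal{T}$'' is $\mathcal{U}$) corresponds to $\omega_{\mathcal{U}}$ for $\mathcal{P}$, not to $\sigma_{\mathcal{T}}$, but this plays no role in the left-triangulation argument.
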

\begin{proof}
This is dual to Proposition \ref{PropZIRightTria}.
\end{proof}

\begin{thm}\label{ThmPreTriangulated}
Let $\mathcal{P}$ be any concentric TCP. With the right and left triangulation given in Propositions \ref{PropZIRightTria} and \ref{PropLeftTriangulated}, the category $\mathcal{Z}/\mathcal{I}$ becomes a pretriangulated category in the sense of \cite[Definition 1.1]{BR}.
\end{thm}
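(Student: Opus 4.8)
The plan is to combine the structures already constructed and check the one remaining axiom. By Proposition~\ref{PropZIRightTria} the suspension $\Sigma$ of Definition~\ref{DefShifts}, together with the class of distinguished right triangles (Definition~\ref{DefDistRight}), makes $\mathcal{Z}/\mathcal{I}$ a right triangulated category; dually, by Proposition~\ref{PropLeftTriangulated} the loop functor $\Omega$ and the distinguished left triangles (Definition~\ref{DefLeftTriangle}) make it left triangulated; and by Proposition~\ref{PropShiftAdj} the pair $(\Sigma,\Omega)$ is adjoint. Hence the only thing left is the compatibility condition of \cite[Definition~1.1]{BR} linking the two triangulations through this adjunction. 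Concretely, for a morphism $\underline{f}\colon X\to Y$ in $\mathcal{Z}/\mathcal{I}$ one completes it both to a distinguished right triangle $X\overset{\underline{f}}{\longrightarrow}Y\longrightarrow Z\overset{w}{\longrightarrow}\Sigma X$ (Proposition~\ref{PropZIRightTria}, via Lemma~\ref{LemInvariantStan}(1)) and to a distinguished left triangle $\Omega W\overset{w'}{\longrightarrow}X\overset{\underline{f}}{\longrightarrow}Y\longrightarrow W$ (the left-triangulated dual); the content of the axiom is that these can be chosen with $Z\cong W$ and with $w$ and $w'$ corresponding to one another under the unit and counit of $\Sigma\dashv\Omega$ (that is, $\Sigma w'$ and $w\circ\varepsilon_Z$ agree up to sign, equivalently $\eta_X\circ w'$ and $\Omega w$ agree up to sign).

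To verify this I would realize both completions from one and the same octahedral configuration in $\mathcal{C}$. Given $f\in\mathcal{Z}(X,Y)$, fix distinguished triangles $X\overset{\iota_X}{\longrightarrow}I_X\longrightarrow U_X\longrightarrow X[1]$ with $I_X\in\mathcal{I}$, $U_X\in\mathcal{U}$ as in Definition~\ref{DefUStan}, and dually $T^Y\longrightarrow I^Y\overset{\wp^Y}{\longrightarrow}Y\longrightarrow T^Y[1]$ with $I^Y\in\mathcal{I}$, $T^Y\in\mathcal{T}$. Completing $\begin{bmatrix}f\\ \iota_X\end{bmatrix}$ as in Lemma~\ref{LemInvariantStan}(1) produces the $\mathcal{U}$-conic triangle $(\ref{CanonUTria})$, hence the standard right triangle $X\overset{\underline{f}}{\longrightarrow}Y\longrightarrow\sigma(C_f)\longrightarrow\Sigma X$; applying the dual construction on the $Y$-side produces a distinguished left triangle $\Omega W\longrightarrow X\overset{\underline{f}}{\longrightarrow}Y\longrightarrow W$. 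I would then stack octahedra over the triangle defining $C_f$, over the $\mathcal{S}$- and $\mathcal{V}$-decompositions used to form $\sigma$ and $\omega$, and over $I_X$ and $I^Y$, and use Lemmas~\ref{LemSUU} and \ref{LemNUU} to see that the third objects $\sigma(C_f)$ and $W$ become canonically isomorphic in $\mathcal{Z}/\mathcal{I}$, so that both triangles are carried by a common object $Z$.

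It then remains to trace the connecting morphisms. On the right, $w=\sigma(\underline{c}_f)$ is built from a morphism of triangles as in Definition~\ref{DefUStan}, and is governed by the single equation $\gamma_X\circ c_f=b_f$ in $\mathcal{C}$; on the left, $w'$ is built dually, governed by the dual equation. Unwinding the explicit descriptions of the units and counits (Definition~\ref{DefTCPAdj}), of $\langle 1\rangle$ and $\langle-1\rangle$ (Definition~\ref{Def_poshi_neshi}), and of the natural bijection $(\ref{Eq7.3})$ through which $\Sigma\dashv\Omega$ was obtained, these two equations translate precisely into the adjunction-compatibility of $w$ and $w'$. Proposition~\ref{PropMorphStan} and its left-triangulated dual ensure that everything in sight is independent of the auxiliary choices and compatible with morphisms of triangles, so once the compatibility condition is checked for the standard triangles attached to a common octahedron it follows for all distinguished right and left triangles, which completes the proof.

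The step I expect to be the real obstacle is the bookkeeping underlying the second and third paragraphs: assembling the correct compound octahedron that simultaneously displays the auxiliary $\mathcal{I}$-objects $I_X,I^Y$ and the $\mathcal{S},\mathcal{V}$-decomposition triangles, and then carefully pushing the relations $\gamma_X\circ c_f=b_f$ and its dual through the chain $\Sigma=\sigma\circ\langle1\rangle$, $\Omega=\omega\circ\langle-1\rangle$ and the bijection $(\ref{Eq7.3})$. One must also keep track of coherences: $\Sigma$, $\Omega$, $\langle\pm1\rangle$, $\sigma$ and $\omega$ are pinned down only up to natural isomorphism, so each diagram has to be read modulo the coherent isomorphisms fixed in Definitions~\ref{DefTCPAdj} and \ref{Def_poshi_neshi}, and one must check these are respected at every stage.
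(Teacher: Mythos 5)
There is a genuine gap: you have misidentified the compatibility axiom of \cite[Definition 1.1]{BR}. That axiom is \emph{not} the statement that the right-triangle completion and a left-triangle completion of one and the same morphism $\underline{f}$ can be chosen with a common third object and with connecting maps matching under $\Sigma\dashv\Omega$. It is a gluing property relating an \emph{arbitrary} distinguished right triangle to an \emph{arbitrary} distinguished left triangle: given $X\overset{\underline{f}}{\to}Y\to\sigma(U)\to\Sigma X$, $\Omega P\overset{\omega(\underline{n})}{\to}\omega(T)\to Z\overset{\underline{m}}{\to}P$, and a commuting square $\underline{y}\circ\underline{f}=\omega(\underline{n})\circ\underline{x}$, one must produce a filler $\sigma(U)\to Z$ compatible with the adjoint transpose $\alpha\colon\Sigma X\to P$ of $\underline{x}$, together with the dual statement. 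This is what the paper verifies, by an explicit octahedron/factorization argument using vanishing such as $\mathcal{C}(C_f,I_P^{\prime}[1])=0$ and $\mathcal{C}(S_{C_f}[-1],Z)=0$ to construct the filler; your plan never addresses morphisms between two unrelated triangles, so it cannot yield the axiom.

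Moreover, the condition you propose to check is false in general for a concentric TCP, so the argument would break even on its own terms. For a general $\mathcal{P}$ the functors $\Sigma$ and $\Omega$ are only adjoint, not quasi-inverse (that requires conditions {\rm (I)+(II)}, cf.\ Corollary \ref{CorSigmaOmega}), so there is no reason the $\Sigma$-side cone $\sigma(C_f)$ and an $\Omega$-side third object $W$ should be isomorphic; your claim would essentially force the two triangulations to coincide, i.e.\ push the pretriangulated category toward being triangulated, which is exactly what does not hold at this level of generality. In addition, the left version of {\rm (TR1)} only completes a morphism sitting at the right end of a left triangle ($\Omega Y\to W^{\prime}\to X\overset{\underline{f}}{\to}Y$), so the left triangle $\Omega W\to X\overset{\underline{f}}{\to}Y\to W$ you start from need not even exist. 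To repair the proof you should state the BR conditions as in the paper's {\rm (1)} and {\rm (2)}, reduce to standard right and left triangles via Lemma \ref{LemInvariantStan} and Proposition \ref{PropMorphStan}, and then build the filler morphisms directly in $\mathcal{C}$.
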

\begin{proof}
By Propositions \ref{PropShiftAdj}, \ref{PropZIRightTria}, \ref{PropLeftTriangulated}, it remains to show the following {\rm (1),(2)} for any standard right triangle
\begin{equation}\label{PreRight}
X\overset{\underline{f}}{\longrightarrow}Y\overset{\eta_U\circ\underline{a}}{\longrightarrow}\sigma(U)\overset{\sigma(\underline{c})}{\longrightarrow}\Sigma X
\end{equation}
and any standard left triangle
\begin{equation}\label{PreLeft}
\Omega P\overset{\omega(\underline{n})}{\longrightarrow}\omega(T)\overset{\underline{\ell}\circ\varepsilon_T}{\longrightarrow}Z\overset{\underline{m}}{\longrightarrow}P.
\end{equation}
\begin{enumerate}
\item Let $\underline{x}\in(\mathcal{Z}/\mathcal{I})(X,\Omega P),\underline{y}\in(\mathcal{Z}/\mathcal{I})(Y,\omega(T))$ be any pair of morphisms satisfying $\underline{y}\circ\underline{f}=\omega(\underline{n})\circ\underline{x}$. Let $\alpha\in(\mathcal{Z}/\mathcal{I})(\Sigma X,P)$ be the morphism corresponding to $\underline{x}$ by the adjointness $\Sigma\dashv\Omega$.
Then, there exists a morphism $\sigma(U)\to Z$ in $\mathcal{Z}/\mathcal{I}$, which makes the following diagram commutative.
\[
\xy
(-21,6)*+{X}="0";
(-7,6)*+{Y}="2";
(8,6)*+{\sigma(U)}="4";
(23,6)*+{\Sigma X}="6";
(-21,-7)*+{\Omega P}="10";
(-7,-7)*+{\omega(T)}="12";
(8,-7)*+{Z}="14";
(23,-7)*+{P}="16";
{\ar^{\underline{f}} "0";"2"};
{\ar^{\eta_U\circ\underline{a}} "2";"4"};
{\ar^{\sigma(\underline{c})} "4";"6"};
{\ar_{\underline{x}} "0";"10"};
{\ar^{\underline{y}} "2";"12"};
{\ar "4";"14"};
{\ar^{\alpha} "6";"16"};
{\ar_{\omega(\underline{n})} "10";"12"};
{\ar_{\underline{\ell}\circ\varepsilon_T} "12";"14"};
{\ar_{\underline{m}} "14";"16"};
{\ar@{}|\circlearrowright "0";"12"};
{\ar@{}|\circlearrowright "2";"14"};
{\ar@{}|\circlearrowright "4";"16"};
\endxy
\]
\item Let $\underline{z}\in(\mathcal{Z}/\mathcal{I})(\sigma(U),Z),\underline{p}\in(\mathcal{Z}/\mathcal{I})(\Sigma X,P)$ be any pair of morphisms satisfying $\underline{m}\circ\underline{z}=\underline{p}\circ\sigma(\underline{c})$. Let $\beta\in(\mathcal{Z}/\mathcal{I})(X,\Omega P)$ be the morphism corresponding to $\underline{p}$ by the adjointness $\Sigma\dashv\Omega$.
Then, there exists a morphism $Y\to\omega(T)$ in $\mathcal{Z}/\mathcal{I}$, which makes the following diagram commutative.
\[
\xy
(-21,6)*+{X}="0";
(-7,6)*+{Y}="2";
(8,6)*+{\sigma(U)}="4";
(23,6)*+{\Sigma X}="6";
(-21,-7)*+{\Omega P}="10";
(-7,-7)*+{\omega(T)}="12";
(8,-7)*+{Z}="14";
(23,-7)*+{P}="16";
{\ar^{\underline{f}} "0";"2"};
{\ar^{\eta_U\circ\underline{a}} "2";"4"};
{\ar^{\sigma(\underline{c})} "4";"6"};
{\ar_{\beta} "0";"10"};
{\ar^{} "2";"12"};
{\ar^{\underline{z}} "4";"14"};
{\ar^{\underline{p}} "6";"16"};
{\ar_{\omega(\underline{n})} "10";"12"};
{\ar_{\underline{\ell}\circ\varepsilon_T} "12";"14"};
{\ar_{\underline{m}} "14";"16"};
{\ar@{}|\circlearrowright "0";"12"};
{\ar@{}|\circlearrowright "2";"14"};
{\ar@{}|\circlearrowright "4";"16"};
\endxy
\]
\end{enumerate}

\smallskip

Since {\rm (2)} can be shown in a similar way, we only show {\rm (1)}. Let
\[ X\overset{\iota_X}{\longrightarrow}I_X\overset{\wp_X}{\longrightarrow}U_X\overset{\gamma_X}{\longrightarrow}X[1],\quad %
T_P\overset{\iota_P^{\prime}}{\longrightarrow}I^{\prime}_P\overset{\wp_P^{\prime}}{\longrightarrow}P\overset{\gamma_P^{\prime}}{\longrightarrow}T_P[1] \]
be distinguished triangles in $\mathcal{C}$ satisfying $I_X,I_P^{\prime}\in\mathcal{I},\, U_X\in\mathcal{U},\, T_P\in\mathcal{T}$. Let $(\ref{PreRight})$ be the standard right triangle obtained in Definition \ref{DefUStan}, and $(\ref{PreLeft})$ the standard left triangle obtained in Definition \ref{DefLeftTriangle}, respectively.

By Lemma \ref{LemInvariantStan}, by an isomorphism of right triangles, we may replace $(\ref{PreRight})$ by $(\ref{AssocRightCanon})$
\[ X\overset{\begin{bmatrix}\underline{f}\\ \underline{\iota_X}\end{bmatrix}}{\longrightarrow}Y\oplus I_X\overset{\eta_{C_f}\circ\underline{a}_f}{\longrightarrow}\sigma(C_f)\overset{\sigma(\underline{c}_f)}{\longrightarrow}\Sigma X, \]
obtained from $(\ref{ShiftedOcta})$. Take a morphism of distinguished triangles in $\mathcal{C}$
\[
\xy
(-23,6)*+{S_{C_f}[-1]}="0";
(-7,6)*+{C_f}="2";
(8,6)*+{Z_{C_f}}="4";
(23,6)*+{S_{C_f}}="6";
(-23,-7)*+{S_{U_X}[-1]}="10";
(-7,-7)*+{U_X}="12";
(8,-7)*+{Z_{U_X}}="14";
(23,-7)*+{S_{U_X}}="16";
{\ar^{} "0";"2"};
{\ar^{z_{C_f}} "2";"4"};
{\ar^{} "4";"6"};
{\ar_{} "0";"10"};
{\ar^{c_f} "2";"12"};
{\ar^{u} "4";"14"};
{\ar^{} "6";"16"};
{\ar_{} "10";"12"};
{\ar_{z_{U_X}} "12";"14"};
{\ar_{} "14";"16"};
{\ar@{}|\circlearrowright "0";"12"};
{\ar@{}|\circlearrowright "2";"14"};
{\ar@{}|\circlearrowright "4";"16"};
\endxy
\]
satisfying $S_{C_f},S_{U_X}\in\mathcal{S}$ and $Z_{C_f},Z_{U_X}\in\mathcal{Z}$. We may assume $Z_{C_f}=\sigma(C_f),\, Z_{U_X}=\sigma(U_X)$ and $\underline{z}_{C_f}=\eta_{C_f},\, \underline{z}_{U_X}=\eta_{U_X},\, \underline{u}=\sigma(\underline{c}_f)$.

Take morphisms of distinguished triangles $(\ref{Left})$ and
\[
\xy
(-21,6)*+{V_{T_P}}="0";
(-7,6)*+{Z_{T_P}}="2";
(8,6)*+{T_P}="4";
(25,6)*+{V_{T_P}[1]}="6";
(-21,-7)*+{V_T}="10";
(-7,-7)*+{Z_T}="12";
(8,-7)*+{T}="14";
(25,-7)*+{V_T[1]}="16";
{\ar^{} "0";"2"};
{\ar^{z^{\prime}_{T_P}} "2";"4"};
{\ar^{} "4";"6"};
{\ar_{} "0";"10"};
{\ar^{p} "2";"12"};
{\ar^{n} "4";"14"};
{\ar^{} "6";"16"};
{\ar_{} "10";"12"};
{\ar_{z_T^{\prime}} "12";"14"};
{\ar_{} "14";"16"};
{\ar@{}|\circlearrowright "0";"12"};
{\ar@{}|\circlearrowright "2";"14"};
{\ar@{}|\circlearrowright "4";"16"};
\endxy
\]
satisfying $V_T,V_{T_P}\in\mathcal{V}$ and $Z_T,Z_{T_P}\in\mathcal{Z}$.
We may assume $Z_{T_P}=\omega(T_P)=\Omega P,\, Z_T=\omega(T)$ and $\underline{z}_{T_P}^{\prime}=\varepsilon_{T_P},\, \underline{z}_T^{\prime}=\varepsilon_T,\, \underline{p}=\omega(\underline{n})$.

Complete $n$ into a distinguished triangle
\[ T_P\overset{n}{\longrightarrow}T\overset{q}{\longrightarrow}Q\overset{r}{\longrightarrow}T_P[1] \]
in $\mathcal{C}$. Applying the octahedron axiom to
\begin{eqnarray*}
&P[-1]\overset{-\gamma_P^{\prime}[1]}{\longrightarrow}T_P\overset{\iota_P^{\prime}}{\longrightarrow}I_P^{\prime}\overset{\wp_P^{\prime}}{\longrightarrow}P,&\\
&P[-1]\overset{k}{\longrightarrow}T\overset{\ell}{\longrightarrow}Z\overset{m}{\longrightarrow}P,&\\
&T_P\overset{n}{\longrightarrow}T\overset{q}{\longrightarrow}Q\overset{r}{\longrightarrow}T_P[1],&
\end{eqnarray*}
we obtain a distinguished triangle
\[ I_P^{\prime}\overset{{}^{\exists}i^{\prime}}{\longrightarrow}Z\overset{{}^{\exists}q^{\prime}}{\longrightarrow}Q\overset{\iota_P^{\prime}[1]\circ r}{\longrightarrow}I_P^{\prime} [1] \]
which makes the following diagram commutative.
\[
\xy
(-26,23)*+{P[-1]}="0";
(-8,23)*+{T_P}="2";
(8,23)*+{I^{\prime}_P}="4";
(24,23)*+{P}="6";
(-26,8)*+{P[-1]}="10";
(-8,8)*+{T}="12";
(8,8)*+{Z}="14";
(24,8)*+{P}="16";
(-8,-7)*+{Q}="22";
(8,-7)*+{Q}="24";
(24,-7)*+{T_P[1]}="26";
(-8,-22)*+{T_P[1]}="32";
(8,-22)*+{I_P^{\prime}[1]}="34";
{\ar^{-\gamma_P^{\prime}[1]} "0";"2"};
{\ar^{\iota_P^{\prime}} "2";"4"};
{\ar^{\wp_P^{\prime}} "4";"6"};
{\ar@{=} "0";"10"};
{\ar_{n} "2";"12"};
{\ar^{i^{\prime}} "4";"14"};
{\ar@{=} "6";"16"};
{\ar_{k} "10";"12"};
{\ar_{\ell} "12";"14"};
{\ar_{m} "14";"16"};
{\ar_{q} "12";"22"};
{\ar^{q^{\prime}} "14";"24"};
{\ar^{\gamma_P^{\prime}} "16";"26"};
{\ar@{=} "22";"24"};
{\ar_{r} "24";"26"};
{\ar_{r} "22";"32"};
{\ar^{\iota_P^{\prime}[1]\circ r} "24";"34"};
{\ar_{\iota_P^{\prime}[1]} "32";"34"};
{\ar@{}|\circlearrowright "0";"12"};
{\ar@{}|\circlearrowright "2";"14"};
{\ar@{}|\circlearrowright "4";"16"};
{\ar@{}|\circlearrowright "12";"24"};
{\ar@{}|\circlearrowright "14";"26"};
{\ar@{}|\circlearrowright "22";"34"};
\endxy
\]
By $\underline{y}\circ\underline{f}=\omega(\underline{n})\circ\underline{x}=\underline{p}\circ\underline{x}$, the difference $p\circ x-y\circ f$ factors through $\iota_X$. Namely, there exists $j\in\mathcal{C}(I_X,Z_T)$ which makes the following diagram commutative in $\mathcal{C}$.
\[
\xy
(-9,7)*+{X}="0";
(9,7)*+{Y\oplus I_X}="2";
(-9,-7)*+{Z_{T_P}}="4";
(9,-7)*+{Z_T}="6";
{\ar^(0.46){\spmatrix{f}{\iota_X}} "0";"2"};
{\ar_{x} "0";"4"};
{\ar^{[y\ j]} "2";"6"};
{\ar_{p} "4";"6"};
{\ar@{}|\circlearrowright "0";"6"};
\endxy
\]
By {\rm (TR3)} in $\mathcal{C}$, we obtain $e\in\mathcal{C}(C_f,Q)$ which gives the following morphism of triangles in $\mathcal{C}$.
\[
\xy
(-25,7)*+{X}="0";
(-9,7)*+{Y\oplus I_X}="2";
(8,7)*+{C_f}="4";
(23,7)*+{X[1]}="6";
(-25,-7)*+{T_P}="10";
(-9,-7)*+{T}="12";
(8,-7)*+{Q}="14";
(23,-7)*+{T_P[1]}="16";
{\ar^(0.46){\spmatrix{f}{\iota_X}} "0";"2"};
{\ar^(0.56){a_f} "2";"4"};
{\ar^{b_f} "4";"6"};
{\ar_{z_{T_P}^{\prime}\circ x} "0";"10"};
{\ar|*+{_{z_T^{\prime}\circ [y\ j]}} "2";"12"};
{\ar^{e} "4";"14"};
{\ar^{(z_{T_P}^{\prime}\circ x)[1]} "6";"16"};
{\ar_{n} "10";"12"};
{\ar_{q} "12";"14"};
{\ar_{r} "14";"16"};
{\ar@{}|\circlearrowright "0";"12"};
{\ar@{}|\circlearrowright "2";"14"};
{\ar@{}|\circlearrowright "4";"16"};
\endxy
\]
Then by $\mathcal{C}(C_f,I_P^{\prime}[1])=0$ and $\mathcal{C}(S_{C_f}[-1],Z)=0$, we obtain $e^{\prime}\in\mathcal{C}(C_f,Z)$ and $e^{\prime\prime}\in\mathcal{C}(Z_{C_f},Z)$ which makes the following diagram commutative in $\mathcal{C}$.
\[
\xy
(-20,8)*+{S_{C_f}[-1]}="-2";
(-20,-8)*+{I_P^{\prime}[1]}="-4";
(0,8)*+{C_f}="0";
(16,8)*+{Z_{C_f}}="2";
(6,0)*+{}="3";
(0,-8)*+{Q}="4";
(8,2)*+{}="5";
(16,-8)*+{Z}="6";
{\ar^{z_{C_f}} "0";"2"};
{\ar_{e} "0";"4"};
{\ar^{e^{\prime\prime}} "2";"6"};
{\ar^{q^{\prime}} "6";"4"};
{\ar|*+{_{e^{\prime}}} "0";"6"};
{\ar "-2";"0"};
{\ar "4";"-4"};
{\ar@{}|\circlearrowright "2";"3"};
{\ar@{}|\circlearrowright "4";"5"};
\endxy
\]
It remains to show the commutativity of the following diagram.
\begin{equation}\label{CommToShowPreTr}
\xy
(-20,8)*+{Y\oplus I_X}="0";
(0,8)*+{Z_{C_f}}="2";
(16,8)*+{\Sigma X}="4";
(-20,-8)*+{Z_T}="10";
(0,-8)*+{Z}="12";
(16,-8)*+{P}="14";
{\ar^{\underline{z}_{C_f}\circ\underline{a}_f} "0";"2"};
{\ar^{\underline{u}} "2";"4"};
{\ar_{\underline{[y\ j]}} "0";"10"};
{\ar^{\underline{e}^{\prime\prime}} "2";"12"};
{\ar^{\alpha} "4";"14"};
{\ar_{\underline{\ell}\circ\underline{z}^{\prime}_T} "10";"12"};
{\ar_{\underline{m}} "12";"14"};
{\ar@{}|\circlearrowright "0";"12"};
{\ar@{}|\circlearrowright "2";"14"};
\endxy
\end{equation}

\medskip

Commutativity of the left square in $(\ref{CommToShowPreTr})$ follows from the equality
\[ q^{\prime}\circ(\ell\circ z_T^{\prime}\circ [y\ j])=q\circ z_T^{\prime}\circ [y\ j]=e\circ a_f=q^{\prime}\circ(e^{\prime\prime}\circ z_{C_f}\circ a_f) \]
and the existence of the distinguished triangle $I_P^{\prime}\to Z\overset{q^{\prime}}{\longrightarrow}Q\to I_P^{\prime}[1]$ in $\mathcal{C}$.
%\[
%\xy
%(-12,10)*+{Y\oplus I_X}="0";
%(12,10)*+{Z_{C_f}}="2";
%(-12,-10)*+{Z_T}="4";
%(2,-10)*+{Z}="6";
%(12,-1)*+{Z}="8";
%(12,-10)*+{Q}="10";
%%
%{\ar^{z_{C_f}\circ a_f} "0";"2"};
%{\ar_{[y\ j]} "0";"4"};
%{\ar^{e^{\prime\prime}} "2";"8"};
%{\ar_{\ell\circ z_T^{\prime}} "4";"6"};
%{\ar_{q^{\prime}} "6";"10"};
%{\ar^{q^{\prime}} "8";"10"};
%%
%{\ar@{}|\circlearrowright "0";"10"};
%\endxy
%\]

Let us show the commutativity of the right square in $(\ref{CommToShowPreTr})$.
As in Proposition \ref{PropShiftAdj}, the morphism $\alpha$ is obtained in the following way.
\begin{itemize}
\item[-] There is a morphism of triangles in $\mathcal{C}$
\[
\xy
(-23,7)*+{X}="0";
(-7,7)*+{I_X}="2";
(8,7)*+{U_X}="4";
(23,7)*+{X[1]}="6";
(-23,-7)*+{T_P}="10";
(-7,-7)*+{I_P^{\prime}}="12";
(8,-7)*+{P}="14";
(23,-7)*+{T_P[1]}="16";
{\ar^{\iota_X} "0";"2"};
{\ar^{\wp_X} "2";"4"};
{\ar^{\gamma_X} "4";"6"};
{\ar_{z_{T_P}^{\prime}\circ x} "0";"10"};
{\ar "2";"12"};
{\ar^{{}^{\exists}g} "4";"14"};
{\ar^{(z_{T_P}^{\prime}\circ x)[1]} "6";"16"};
{\ar_{\iota_P^{\prime}} "10";"12"};
{\ar_{\wp_P^{\prime}} "12";"14"};
{\ar_{\gamma_P^{\prime}} "14";"16"};
{\ar@{}|\circlearrowright "0";"12"};
{\ar@{}|\circlearrowright "2";"14"};
{\ar@{}|\circlearrowright "4";"16"};
\endxy
\]
\item[-] By $\mathcal{C}(S_{U_X}[-1],P)=0$, there exists $h\in\mathcal{C}(Z_{U_X},P)$ which makes the following diagram commutative.
\[
\xy
(-26,6)*+{S_{U_X}[-1]}="0";
(-8,6)*+{U_X}="2";
(2,-2)*+{}="3";
(8,6)*+{Z_{U_X}}="4";
(24,6)*+{S_{U_X}}="6";
(-8,-10)*+{P}="8";
{\ar^{} "0";"2"};
{\ar^{z_{U_X}} "2";"4"};
{\ar^{} "4";"6"};
{\ar_{g} "2";"8"};
{\ar^{h} "4";"8"};
{\ar@{}|\circlearrowright "2";"3"};
\endxy
\]
This gives $\alpha=\underline{h}$.
\end{itemize}
By the adjointness of $\sigma$ as in Definition \ref{DefTCPAdj} {\rm (iv)}, it suffices to show the commutativity of
\[
\xy
(-9,7)*+{C_f}="0";
(10,7)*+{Z_{U_X}}="2";
(-9,-7)*+{Z}="4";
(10,-7)*+{P}="6";
{\ar^{\underline{z}_{U_X}\circ\underline{c}_f} "0";"2"};
{\ar_{\underline{e}^{\prime}} "0";"4"};
{\ar^{\underline{h}} "2";"6"};
{\ar_{\underline{m}} "4";"6"};
{\ar@{}|\circlearrowright "0";"6"};
\endxy
\]
in $\mathcal{U}/\mathcal{I}$. This follows from the equation
\begin{eqnarray*}
\gamma_P^{\prime}\circ (h\circ z_{U_X}\circ c_f)&=&\gamma_P^{\prime}\circ g\circ c_f\ =\ (z_{T_P}^{\prime}\circ x)[1]\circ\gamma_X\circ c_f\\
&=&(z_{T_P}^{\prime}\circ x)[1]\circ b_f\ =\ r\circ e\\
&=&r\circ q^{\prime}\circ e^{\prime}\ =\ \gamma_P^{\prime}\circ (m\circ e^{\prime})
\end{eqnarray*}
and the existence of the distinguished triangle $T_P\overset{\iota_P^{\prime}}{\longrightarrow}I_P^{\prime}\overset{\wp_P^{\prime}}{\longrightarrow}P\overset{\gamma_P^{\prime}}{\longrightarrow}T_P[1]$ in $\mathcal{C}$.
%\[
%\xy
%(-12,10)*+{C_f}="0";
%(12,10)*+{Z_{U_X}}="2";
%(-12,-10)*+{Z}="4";
%(2,-10)*+{P}="6";
%(12,-1)*+{P}="8";
%(12,-10)*+{T_P[1]}="10";
%%
%{\ar^{z_{U_X}\circ c_f} "0";"2"};
%{\ar_{e^{\prime}} "0";"4"};
%{\ar^{h} "2";"8"};
%{\ar_{m} "4";"6"};
%{\ar_{\gamma_P^{\prime}} "6";"10"};
%{\ar^{\gamma_P^{\prime}} "8";"10"};
%%
%{\ar@{}|\circlearrowright "0";"10"};
%\endxy
%\]
\end{proof}

\section{Condition for mutation} \label{section_Mutation}

\subsection{Condition {\rm (I)+(II)}} 
In this section, we give a generalization of the bijection between cotorsion pairs given in \cite[Theorem3.5]{ZZ1}, under suitable conditions {\rm (I),(II)} (Theorem \ref{ThmBij}). This allows us to define {\it mutation} with respect to $\mathcal{P}$ (Definition \ref{DefGeneralMut}).
\begin{dfn}\label{DefMutable}
Let $\mathcal{P}=((\mathcal{S},\mathcal{T}),(\mathcal{U},\mathcal{V}))$ be a concentric TCP.
Define the class of {\it mutable cotorsion pairs on} $\mathcal{C}$ {\it with respect to} $\mathcal{P}$ to be
\[ \mathfrak{M}_{\mathcal{P}}=\Set{ (\mathcal{A},\mathcal{B})\in\mathfrak{CP}(\mathcal{C})| \begin{array}{c}\mathcal{S}\subseteq\mathcal{A}\subseteq\mathcal{U}\\\mathcal{V}\subseteq\mathcal{B}\subseteq\mathcal{T}\end{array},
\mathrm{Ext}^1_{\mathcal{Z}/\mathcal{I}}(\sigma(\mathcal{A}/\mathcal{I}),\omega(\mathcal{B}/\mathcal{I}))=0 }. \]
Here, we define $\mathrm{Ext}^1_{\mathcal{Z}/\mathcal{I}}$ by $\mathrm{Ext}^1_{\mathcal{Z}/\mathcal{I}}(X,Y)=(\mathcal{Z}/\mathcal{I})(X,\Sigma Y)$ for any $X,Y\in\mathcal{Z}/\mathcal{I}$.
Remark that $\mathcal{S}\subseteq\mathcal{A}$ implies $\mathcal{B}\subseteq\mathcal{T}$, and $\mathcal{V}\subseteq\mathcal{B}$ implies $\mathcal{A}\subseteq\mathcal{U}$.
For another description of $\mathfrak{M}_{\mathcal{P}}$ under the assumption of the following {\rm (I)+(II)}, see Corollary \ref{CorABBij2}.
\end{dfn}

\begin{cond}\label{CondCDE}
Let $\mathcal{P}=((\mathcal{S},\mathcal{T}),(\mathcal{U},\mathcal{V}))$ be a concentric TCP. We consider the following conditions.
\begin{itemize}
\item[{\rm (I)}] For any $X\in\mathcal{T}\ast\mathcal{U}$, the morphism
\[ \mu_X\colon\sigma(\omega_{\mathcal{U}}(X))\to\omega(\sigma_{\mathcal{T}}(X)) \]
given in Proposition \ref{PropNatural} is isomorphism in $\mathcal{Z}/\mathcal{I}$.
\item[{\rm (II)}] $\mathcal{U}\cap\mathcal{N}^f=\mathcal{S}$ and $\mathcal{T}\cap\mathcal{N}^i=\mathcal{V}$ hold.
\end{itemize}
As for examples, see section \ref{section_Typical}.
\end{cond}

\begin{rem}\label{RemCDE}
The following holds for {\it any} concentric $\mathcal{P}$.
\begin{itemize}
\item For any $X$ in $\mathcal{T}\cup\mathcal{U}$, the morphism $\mu_X$ is isomorphism in $\mathcal{Z}/\mathcal{I}$.
\item $\mathcal{U}\cap\mathcal{N}^i=\mathcal{S}$ and $\mathcal{T}\cap\mathcal{N}^f=\mathcal{V}$ hold (Remark \ref{RemTCP_UNTN} {\rm (1)}).
\end{itemize}
\end{rem}

\begin{claim}\label{ClaimCot}
Let $\mathcal{P}$ be a concentric TCP satisfying {\rm (II)}. Then, $(\mathcal{S},\mathcal{T})$ is a co-$t$-structure if and only if $(\mathcal{U},\mathcal{V})$ is a co-$t$-structure.
\end{claim}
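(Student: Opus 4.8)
The plan is to reduce the claim to the defining inclusion of a co-$t$-structure and to play Condition (II) against the containments $\mathcal{S}\subseteq\mathcal{U}$ and $\mathcal{V}\subseteq\mathcal{T}$ that are built into every TCP.

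First I would record two elementary facts. (a) For any cotorsion pair $(\mathcal{A},\mathcal{B})$ on $\mathcal{C}$, each of the inclusions $\mathcal{A}[-1]\subseteq\mathcal{A}$ and $\mathcal{B}[1]\subseteq\mathcal{B}$ is equivalent to $\mathcal{C}(\mathcal{A},\mathcal{B}[2])=0$; this is immediate from $\mathcal{A}={}^{\perp}\mathcal{B}[1]$ and $\mathcal{B}=\mathcal{A}[-1]^{\perp}$, so in particular the two inclusions are equivalent to each other. (b) For any object $A$ the split triangle $A\overset{\mathrm{id}}{\longrightarrow}A\to0\to A[1]$ shows $A\in A\ast0$; hence, with no use of (II), $\mathcal{S}[-1]\subseteq\mathcal{S}[-1]\ast\mathcal{V}=\mathcal{N}^f$ and $\mathcal{V}[1]\subseteq\mathcal{S}\ast\mathcal{V}[1]=\mathcal{N}^i$.

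With these in hand both implications are two lines. If $(\mathcal{U},\mathcal{V})$ is a co-$t$-structure, i.e. $\mathcal{U}[-1]\subseteq\mathcal{U}$, then $\mathcal{S}\subseteq\mathcal{U}$ gives $\mathcal{S}[-1]\subseteq\mathcal{U}[-1]\subseteq\mathcal{U}$, while (b) gives $\mathcal{S}[-1]\subseteq\mathcal{N}^f$; hence $\mathcal{S}[-1]\subseteq\mathcal{U}\cap\mathcal{N}^f=\mathcal{S}$, the last equality being the first half of (II), so $(\mathcal{S},\mathcal{T})$ is a co-$t$-structure. Conversely, if $(\mathcal{S},\mathcal{T})$ is a co-$t$-structure, then $\mathcal{T}[1]\subseteq\mathcal{T}$ by (a), so $\mathcal{V}\subseteq\mathcal{T}$ gives $\mathcal{V}[1]\subseteq\mathcal{T}[1]\subseteq\mathcal{T}$; together with $\mathcal{V}[1]\subseteq\mathcal{N}^i$ from (b) this yields $\mathcal{V}[1]\subseteq\mathcal{T}\cap\mathcal{N}^i=\mathcal{V}$, the last equality being the second half of (II), and then (a) gives $\mathcal{U}[-1]\subseteq\mathcal{U}$, i.e. $(\mathcal{U},\mathcal{V})$ is a co-$t$-structure.

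There is essentially no obstacle here; the one thing to get right is to route each implication through the appropriate half of (II) (the half $\mathcal{U}\cap\mathcal{N}^f=\mathcal{S}$ for the direction ending in $(\mathcal{S},\mathcal{T})$, and the half $\mathcal{T}\cap\mathcal{N}^i=\mathcal{V}$ for the direction ending in $(\mathcal{U},\mathcal{V})$), using the self-duality from (a) to pass between the $\mathcal{U}[-1]\subseteq\mathcal{U}$ and $\mathcal{V}[1]\subseteq\mathcal{V}$ forms of the co-$t$-structure condition.
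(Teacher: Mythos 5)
Your proof is correct and follows essentially the same route as the paper: in each direction you combine the shift-closure inherited from the other cotorsion pair (via $\mathcal{S}\subseteq\mathcal{U}$, $\mathcal{V}\subseteq\mathcal{T}$ and the standard equivalence $\mathcal{A}[-1]\subseteq\mathcal{A}\Leftrightarrow\mathcal{B}[1]\subseteq\mathcal{B}$) with the trivial inclusions $\mathcal{S}[-1]\subseteq\mathcal{N}^f$, $\mathcal{V}[1]\subseteq\mathcal{N}^i$ and the relevant half of (II), exactly as in the paper's argument. The only difference is cosmetic: the paper proves one implication and invokes duality, while you write out both directions explicitly.
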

\begin{proof}
By duality, it suffices to show
\[ \mathcal{S}[-1]\subseteq\mathcal{S}\ \Rightarrow\ \mathcal{V}[1]\subseteq\mathcal{V}. \]

Suppose $\mathcal{S}[-1]\subseteq\mathcal{S}$ holds. Then $\mathcal{C}(\mathcal{S}[-1],\mathcal{V}[1])=0$ implies $\mathcal{V}[1]\subseteq\mathcal{T}$. From condition {\rm (II)}, it follows $\mathcal{V}[1]=\mathcal{T}\cap\mathcal{V}[1]\subseteq\mathcal{T}\cap\mathcal{N}^i=\mathcal{V}$.
\end{proof}

\begin{lem}\label{LemCondC1}
Let $\mathcal{P}$ be a concentric TCP satisfying {\rm (I)+(II)}.
Then, we have the following.
\begin{enumerate}
\item $(\mathcal{T}\ast\mathcal{U})\cap(\mathcal{I}\ast\mathcal{V}[1])\subseteq\mathcal{N}^f$.
\item $(\mathcal{T}\ast\mathcal{U})\cap(\mathcal{S}[-1]\ast\mathcal{I})\subseteq\mathcal{N}^i$.
\end{enumerate}
\end{lem}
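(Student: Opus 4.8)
The plan is to chase an object $X$ around the commutative square of Proposition~\ref{PropNatural}, interpreting each corner by the explicit descriptions of $\omega_{\mathcal{U}}$ and $\sigma_{\mathcal{T}}$ in Fact~\ref{FactCPAdj}, and to invoke conditions (I) and (II) only where they are genuinely needed. I will carry out (1) in detail; statement (2) is proved dually.

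For (1), fix $X\in(\mathcal{T}\ast\mathcal{U})\cap(\mathcal{I}\ast\mathcal{V}[1])$. A triangle $I\to X\to V_{0}[1]\to I[1]$ with $I\in\mathcal{I}\subseteq\mathcal{U}$ and $V_{0}\in\mathcal{V}$ is of the form $(\ref{TriaAdj1})$, so $\omega_{\mathcal{U}}(X)\cong I\cong 0$ in $\mathcal{U}/\mathcal{I}$, whence $\sigma(\omega_{\mathcal{U}}(X))\cong 0$ in $\mathcal{Z}/\mathcal{I}$. Since $X\in\mathcal{T}\ast\mathcal{U}$, condition (I) makes $\mu_{X}$ an isomorphism, so $\omega(\sigma_{\mathcal{T}}(X))\cong 0$ in $\mathcal{Z}/\mathcal{I}$ as well. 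Now take a triangle $S[-1]\to X\to T\to S$ with $S\in\mathcal{S}$, $T\in\mathcal{T}$; by Fact~\ref{FactCPAdj}~(1) one has $T\cong\sigma_{\mathcal{T}}(X)$ in $\mathcal{T}/\mathcal{I}$, so $\omega(T)\cong 0$ in $\mathcal{Z}/\mathcal{I}$. Because $\omega$ is the restriction of $\omega_{\mathcal{U}}$, a triangle $U\to T\to V[1]\to U[1]$ with $U\in\mathcal{U}$, $V\in\mathcal{V}$ satisfies $\omega(T)\cong U$, and moreover $U\in V\ast T\subseteq\mathcal{V}\ast\mathcal{T}\subseteq\mathcal{T}$ (as $\mathcal{V}\subseteq\mathcal{T}$ and $\mathcal{T}$ is extension-closed), so $U\in\mathcal{Z}$. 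Hence $\omega(T)\cong 0$ in $\mathcal{Z}/\mathcal{I}$ forces $U\in\mathcal{I}$ (an object of $\mathcal{Z}$ isomorphic to $0$ in $\mathcal{Z}/\mathcal{I}$ is a direct summand of an object of $\mathcal{I}$, hence lies in $\mathcal{I}$), and therefore $T\in\mathcal{I}\ast\mathcal{V}[1]\subseteq\mathcal{S}\ast\mathcal{V}[1]=\mathcal{N}^{i}$. Since $T\in\mathcal{T}$, Remark~\ref{RemCDE} gives $T\in\mathcal{T}\cap\mathcal{N}^{i}=\mathcal{V}$, and then $S[-1]\to X\to T\to S$ with $S\in\mathcal{S}$, $T\in\mathcal{V}$ exhibits $X\in\mathcal{S}[-1]\ast\mathcal{V}=\mathcal{N}^{f}$, proving (1).

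For (2), argue dually. A triangle $S[-1]\to X\to I\to S$ with $I\in\mathcal{I}\subseteq\mathcal{T}$ gives $\sigma_{\mathcal{T}}(X)\cong 0$, hence $\omega(\sigma_{\mathcal{T}}(X))\cong 0$, hence by condition (I) also $\sigma(\omega_{\mathcal{U}}(X))\cong 0$ in $\mathcal{Z}/\mathcal{I}$. A triangle $U\to X\to V[1]\to U[1]$ yields $\omega_{\mathcal{U}}(X)\cong U$, so $\sigma(U)\cong 0$ in $\mathcal{Z}/\mathcal{I}$; since $\sigma$ restricts $\sigma_{\mathcal{T}}$, a triangle $S^{\prime}[-1]\to U\to Z\to S^{\prime}$ has $Z\in U\ast S^{\prime}\subseteq\mathcal{U}\ast\mathcal{S}\subseteq\mathcal{U}$, so $Z\in\mathcal{Z}$, and $Z\cong\sigma(U)\cong 0$ forces $Z\in\mathcal{I}$ and $U\in\mathcal{S}[-1]\ast\mathcal{I}\subseteq\mathcal{N}^{f}$. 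Condition (II) now yields $U\in\mathcal{U}\cap\mathcal{N}^{f}=\mathcal{S}$, so the triangle $U\to X\to V[1]\to U[1]$ with $U\in\mathcal{S}$ exhibits $X\in\mathcal{S}\ast\mathcal{V}[1]=\mathcal{N}^{i}$.

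No step here is genuinely difficult; the care required is bookkeeping — keeping straight that $\sigma$ and $\omega$ are the restrictions of $\sigma_{\mathcal{T}}$ and $\omega_{\mathcal{U}}$, that the triangles written down compute the adjoints via Fact~\ref{FactCPAdj}, and that an object vanishing in $\mathcal{Z}/\mathcal{I}$ already belongs to $\mathcal{I}$. The essential uses of the hypotheses are exactly two: condition (I), which transports the vanishing across $\mu_{X}$; and the identities $\mathcal{T}\cap\mathcal{N}^{i}=\mathcal{V}$ (Remark~\ref{RemCDE}, valid for any TCP, used in (1)) and $\mathcal{U}\cap\mathcal{N}^{f}=\mathcal{S}$ (condition (II), used in (2)), which convert a presentation of $T$ resp.\ $U$ involving a shift back into honest membership in $\mathcal{V}$ resp.\ $\mathcal{S}$.
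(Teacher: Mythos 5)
Your argument is correct and is essentially the paper's own proof, unpacked at the level of explicit decomposition triangles instead of the $\widetilde{(-)}$/$\overline{(-)}$ formalism of Proposition \ref{PropInverseImage}: condition (I) transports the vanishing of $\sigma(\omega_{\mathcal{U}}(X))$ to $\omega(\sigma_{\mathcal{T}}(X))$, and the intersection identities finish the job. One correction: the identity $\mathcal{T}\cap\mathcal{N}^i=\mathcal{V}$ that you invoke in (1) is \emph{not} Remark \ref{RemCDE} (that remark gives $\mathcal{U}\cap\mathcal{N}^i=\mathcal{S}$ and $\mathcal{T}\cap\mathcal{N}^f=\mathcal{V}$ for an arbitrary concentric TCP); it is exactly the second half of Condition \ref{CondCDE} (II), so your closing claim that part (1) uses only (I) plus facts valid for any concentric TCP is inaccurate --- both halves of (II) are genuinely used, one in each part, just as in the paper. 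Since (II) is assumed in the lemma, this is merely a mislabeled citation and does not affect the validity of your proof.
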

\begin{proof}
{\rm (1)} For any $X\in(\mathcal{T}\ast\mathcal{U})\cap(\mathcal{I}\ast\mathcal{V}[1])$, we have $\sigma(\omega_{\mathcal{U}}(X))\cong\omega(\sigma_{\mathcal{T}}(X))$ by {\rm (I)}.
Since $X\in\mathcal{I}\ast\mathcal{V}[1]=\widetilde{\omega_{\mathcal{U}}^{-1}(0)}$, it follows $\omega(\sigma_{\mathcal{T}}(X))=0$ in $\mathcal{Z}/\mathcal{I}$, which means
\[ \sigma_{\mathcal{T}}(X)\in\omega^{-1}(0)=\overline{\mathcal{T}\cap(\mathcal{I}\ast\mathcal{V}[1])}\subseteq\overline{\mathcal{T}\cap(\mathcal{S}\ast\mathcal{V}[1])}=\overline{\mathcal{T}\cap\mathcal{N}^i}=\overline{\mathcal{V}} \]
by {\rm (II)}. Thus we have
\[ X\in\widetilde{\sigma_{\mathcal{T}}^{-1}(\overline{\mathcal{V}})}=\mathcal{S}[-1]\ast\mathcal{V}=\mathcal{N}^f. \]
{\rm (2)} can be shown dually.
\end{proof}

\begin{lem}\label{LemCondC2}
Let $\mathcal{P}$ be a concentric TCP satisfying {\rm (I)+(II)}. Then the following holds.
\begin{enumerate}
\item $\tau^+_{(\mathcal{U},\mathcal{V})}(\overline{\mathcal{T}})\subseteq\overline{\mathcal{N}^f}$.
\item $\tau^-_{(\mathcal{S},\mathcal{T})}(\overline{\mathcal{U}})\subseteq\overline{\mathcal{N}^i}$.
\end{enumerate}
\end{lem}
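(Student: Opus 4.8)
\medskip

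The plan is to reduce both statements to Lemma \ref{LemCondC1} by running the explicit octahedral construction of the reflection functors from Fact \ref{FactCPAdj}(2).

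For (1), fix $C\in\mathcal{T}$ and compute a representative $Z$ of $\tau^+_{(\mathcal{U},\mathcal{V})}(\overline{C})$ as in Fact \ref{FactCPAdj}(2): choose distinguished triangles $U\to C\to V[1]\to U[1]$ with $U\in\mathcal{U}$, $V\in\mathcal{V}$, and $U'[-1]\to U\to V'\to U'$ with $U'\in\mathcal{U}$, $V'\in\mathcal{V}$, and let $Z$ complete the octahedron on $U'[-1]\to U\to C$. This produces, inside $\mathcal{C}$, a distinguished triangle $V'\to Z\to V[1]\to V'[1]$, whence $Z\in V'\ast V[1]\subseteq\mathcal{V}\ast\mathcal{V}[1]=\mathcal{C}^+=\mathcal{I}\ast\mathcal{V}[1]$, and a distinguished triangle $U'[-1]\to C\to Z\to U'$, equivalently $C\to Z\to U'\to C[1]$, whence $Z\in C\ast U'\subseteq\mathcal{T}\ast\mathcal{U}$ (using $C\in\mathcal{T}$ and $U'\in\mathcal{U}$). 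Therefore $Z\in(\mathcal{T}\ast\mathcal{U})\cap(\mathcal{I}\ast\mathcal{V}[1])\subseteq\mathcal{N}^f$ by Lemma \ref{LemCondC1}(1). Since $\mathcal{N}^f=\widetilde{\overline{\mathcal{N}^f}}$ and $\mathcal{I}\subseteq\mathcal{N}^f$ (Corollary \ref{CorInverseImage}), this shows $\tau^+_{(\mathcal{U},\mathcal{V})}(\overline{C})=\overline{Z}\in\overline{\mathcal{N}^f}$; letting $C$ range over $\mathcal{T}$ gives $\tau^+_{(\mathcal{U},\mathcal{V})}(\overline{\mathcal{T}})\subseteq\overline{\mathcal{N}^f}$.

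For (2), I would dualize: working with $\mathcal{P}$ in the opposite category $\mathcal{C}^{\mathrm{op}}$ interchanges the roles $(\mathcal{S},\mathcal{T})\leftrightarrow(\mathcal{V},\mathcal{U})$, $[1]\leftrightarrow[-1]$, $\tau^+\leftrightarrow\tau^-$ and $\mathcal{N}^f\leftrightarrow\mathcal{N}^i$, and it preserves Condition {\rm (I)+(II)}; so part (1) applied in $\mathcal{C}^{\mathrm{op}}$, together with Lemma \ref{LemCondC1}(2), yields $\tau^-_{(\mathcal{S},\mathcal{T})}(\overline{\mathcal{U}})\subseteq\overline{\mathcal{N}^i}$.

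The substance of the argument is the single observation that the cone of the canonical morphism $C\to Z=\tau^+_{(\mathcal{U},\mathcal{V})}(C)$ appearing in the octahedron of Fact \ref{FactCPAdj}(2) is the object $U'\in\mathcal{U}$ (in degree $0$, not $U'[1]$), so that $Z$ lands in $\mathcal{T}\ast\mathcal{U}$ automatically as soon as $C\in\mathcal{T}$; everything else is bookkeeping with Corollary \ref{CorInverseImage} and the already-established Lemma \ref{LemCondC1}. The only points demanding care are tracking the degrees in the octahedron so that one really obtains the triangle $C\to Z\to U'\to C[1]$, and noting that the particular representative $Z$ produced by the construction — not merely its class in $\mathcal{C}/\mathcal{I}$ — lies in $\mathcal{C}^+=\mathcal{V}\ast\mathcal{V}[1]$, which is exactly why Lemma \ref{LemCondC1}(1) applies to it directly.
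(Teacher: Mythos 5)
Your proof is correct and follows essentially the same route as the paper: run the octahedral construction of $\tau^+_{(\mathcal{U},\mathcal{V})}$ from Fact \ref{FactCPAdj}(2) on $T\in\mathcal{T}$, read off the two triangles placing the representative in $(\mathcal{T}\ast\mathcal{U})\cap\mathcal{C}^+$, apply Lemma \ref{LemCondC1}, and dualize for (2). The only cosmetic difference is that the paper arranges the approximations so that the middle terms lie in $\mathcal{Z}$ and $\mathcal{I}$ (giving $\mathcal{I}\ast\mathcal{V}[1]$ directly), while you keep $V'\in\mathcal{V}$ and invoke $\mathcal{V}\ast\mathcal{V}[1]=\mathcal{I}\ast\mathcal{V}[1]$ from Fact \ref{FactCPAdj}, which is equally valid.
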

\begin{proof}
{\rm (1)} We use Fact \ref{FactCPAdj} {\rm (2)}. For any $T\in\mathcal{T}$, take diagram
\begin{equation}\label{2TriaT_1}
\xy
(-14,18)*+{U_T[-1]}="0";
(-14.1,0)*+{Z_T}="2";
(-3,0)*+{T}="4";
(-14,-16)*+{I_T}="6";
(2,-9)*+{M_T}="8";
(20,0)*+{V_T[1]}="10";
(-8,-8)*+_{_{\circlearrowright}}="12";
(5,-3)*+_{_{\circlearrowright}}="14";
(-10,5)*+_{_{\circlearrowright}}="14";
{\ar_{b_T} "0";"2"};
{\ar_{} "2";"6"};
{\ar^{c_T=a_T\circ b_T} "0";"4"};
{\ar_{a_T} "2";"4"};
{\ar_{} "4";"8"};
{\ar^{} "4";"10"};
{\ar_{} "6";"8"};
{\ar_{} "8";"10"};
\endxy
\end{equation}
obtained from distinguished triangles
\begin{eqnarray*}
&Z_T\overset{a_T}{\longrightarrow} T\to V_T[1]\to Z_T[1]&(Z_T\in\mathcal{Z},V_T\in\mathcal{V}),\\
&U_T[-1]\overset{b_T}{\longrightarrow} Z_T\to I_T\to U_T&(I_T\in\mathcal{I},U_T\in\mathcal{U}),\\
&U_T[-1]\overset{c_T}{\longrightarrow} T\to {}^{\exists}M_T\to U_T&
\end{eqnarray*}
by the octahedron axiom. $V_T\to I_T\to M_T\to V_T[1]$ also becomes a distinguished triangle in $\mathcal{C}$. This $M_T$ gives $\tau^+_{(\mathcal{U},\mathcal{V})}(T)\cong M_T$ in $\mathcal{C}/\mathcal{I}$. It follows $M_T\in(\mathcal{T}\ast\mathcal{U})\cap(\mathcal{I}\ast\mathcal{V}[1])\subseteq\mathcal{N}^f$ by Lemma \ref{LemCondC1}. Dually for {\rm (2)}.
\end{proof}

\begin{rem}\label{Rem35Revised}
Let $(\ref{2TriaT_1})$ as in the proof of Lemma \ref{LemCondC2}. If we decompose $M_T$ into a distinguished triangle
\[ S_{M_T}[-1]\to M_T\to V_{M_T}\to S_{M_T}\quad(S_{M_T}\in\mathcal{S},V_{M_T}\in\mathcal{V}) \]
in $\mathcal{C}$, we also have a commutative diagram in $\mathcal{C}$
\begin{equation}\label{2TriaT_2}
\xy
(-14,18)*+{S_{M_T}[-1]}="0";
(-14.1,0)*+{M_T}="2";
(-3,0)*+{U_T}="4";
(-14,-16)*+{V_{M_T}}="6";
(2,-9)*+{{}^{\exists}U_{M_T}}="8";
(20,0)*+{T[1]}="10";
(-8,-8)*+_{_{\circlearrowright}}="12";
(5,-3)*+_{_{\circlearrowright}}="14";
(-10,5)*+_{_{\circlearrowright}}="14";
{\ar_{} "0";"2"};
{\ar_{} "2";"6"};
{\ar^{} "0";"4"};
{\ar_{} "2";"4"};
{\ar_{u_T} "4";"8"};
{\ar^{c_T[1]} "4";"10"};
{\ar_{} "6";"8"};
{\ar_{d_T} "8";"10"};
\endxy
\end{equation}
by the octahedron axiom, where
\[ S_{M_T}[-1]\to U_T\overset{u_T}{\longrightarrow} U_{M_T}\to S_{M_T},\quad T\to V_{M_T}\to U_{M_T}\overset{d_T}{\longrightarrow} T[1] \]
are distinguished triangles. This $U_{M_T}$ satisfies $U_{M_T}\in U_T\ast S_{M_T}\subseteq\mathcal{U}$.
\end{rem}

The following can be shown without assuming condition {\rm (I)}. We omit its details, since we do not use this result in this article.
\begin{rem}\label{RemCond2}
If a concentric $\mathcal{P}$ satisfies {\rm (1),(2)} in Lemma \ref{LemCondC2}, then the following holds.
\begin{enumerate}
\item $\mathcal{P}$ satisfies {\rm (II)}.
\item $\Omega\circ\Sigma\cong\mathrm{Id}_{\mathcal{Z}/\mathcal{I}}$, $\Sigma\circ\Omega\cong\mathrm{Id}_{\mathcal{Z}/\mathcal{I}}$.
\end{enumerate}
\end{rem}

\begin{prop}\label{PropShiftCompatT}
Let $\mathcal{P}$ be a concentric TCP satisfying {\rm (I)+(II)}.
For any $T\in\mathcal{T}$, the diagrams $(\ref{2TriaT_1}),(\ref{2TriaT_2})$ induce the isomorphism
\[ \sigma(\underline{u}_T)\colon\Sigma(\omega T)\overset{\cong}{\longrightarrow}\sigma(\omega_{\mathcal{U}}(T[1])) \]
in $\mathcal{Z}/\mathcal{I}$. This isomorphism is natural in $T$. Namely, the following diagram is commutative up to natural isomorphism.
\[
\xy
(-15,15)*+{\mathcal{T}}="0";
(0,15)*+{\mathcal{T}[1]}="2";
(15,15)*+{\mathcal{C}}="4";
(-15,0)*+{\mathcal{T}/\mathcal{I}}="6";
(-15,-15)*+{\mathcal{Z}/\mathcal{I}}="8";
(15,5)*+{\mathcal{C}/\mathcal{I}}="10";
(15,-5)*+{\mathcal{U}/\mathcal{I}}="12";
(15,-15)*+{\mathcal{Z}/\mathcal{I}}="14";
{\ar^(0.46){[1]} "0";"2"};
{\ar@{^(->} "2";"4"};
{\ar_{p_{\mathcal{T}}} "0";"6"};
{\ar_{\omega} "6";"8"};
{\ar_{} "4";"10"};
{\ar^{\omega_{\mathcal{U}}} "10";"12"};
{\ar^{\sigma} "12";"14"};
{\ar_{\Sigma} "8";"14"};
{\ar@{}|\circlearrowright "0";"14"};
\endxy
\]
Here, $p_{\mathcal{T}}$ denotes the residue functor.
Dually, $\omega(\sigma_{\mathcal{T}}(U[-1]))\cong\Omega(\sigma U)$ holds for any $U\in\mathcal{U}$.
\end{prop}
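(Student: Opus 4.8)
The plan is to unwind both composite functors on an object $T\in\mathcal T$, identify each with $\sigma$ applied to an object already occurring in the diagrams $(\ref{2TriaT_1})$ and $(\ref{2TriaT_2})$, and then extract the comparison isomorphism directly from $(\ref{2TriaT_2})$ via Lemma \ref{LemSUU}. First I would observe that in $(\ref{2TriaT_1})$ the triangle $Z_T\overset{a_T}{\longrightarrow}T\to V_T[1]\to Z_T[1]$, with $Z_T\in\mathcal Z\subseteq\mathcal U$ and $V_T\in\mathcal V$, exhibits $Z_T$ as the $\mathcal U$-part of $T$, so that $\omega(T)\cong Z_T$ by Fact \ref{FactCPAdj}~(1) and Definition \ref{DefTCPAdj}; and the triangle $U_T[-1]\overset{b_T}{\longrightarrow}Z_T\to I_T\to U_T$, with $I_T\in\mathcal I$ and $U_T\in\mathcal U$, is exactly of the shape defining $\langle1\rangle$, so $Z_T\langle1\rangle\cong U_T$ and hence $\Sigma(\omega T)=\sigma(\langle1\rangle\omega T)\cong\sigma(U_T)$. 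Dually, rotating the triangle $T\to V_{M_T}\to U_{M_T}\overset{d_T}{\longrightarrow}T[1]$ of $(\ref{2TriaT_2})$ to $U_{M_T}\overset{d_T}{\longrightarrow}T[1]\to V_{M_T}[1]\to U_{M_T}[1]$, with $U_{M_T}\in\mathcal U$ and $V_{M_T}\in\mathcal V$, exhibits $U_{M_T}$ as the $\mathcal U$-part of $T[1]$, so $\omega_{\mathcal U}(T[1])\cong U_{M_T}$ and $\sigma(\omega_{\mathcal U}(T[1]))\cong\sigma(U_{M_T})$. Finally the triangle $S_{M_T}[-1]\to U_T\overset{u_T}{\longrightarrow}U_{M_T}\to S_{M_T}$ of $(\ref{2TriaT_2})$ has $S_{M_T}\in\mathcal S$ and $U_T,U_{M_T}\in\mathcal U$, so Lemma \ref{LemSUU}~(1) gives that $\sigma(\underline u_T)\colon\sigma(U_T)\to\sigma(U_{M_T})$ is an isomorphism; composing with the two identifications above produces the asserted isomorphism $\Sigma(\omega T)\overset{\cong}{\longrightarrow}\sigma(\omega_{\mathcal U}(T[1]))$.

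For naturality, given $t\in\mathcal C(T,T')$ with $T,T'\in\mathcal T$, I would propagate $t$ through the diagrams step by step, at each stage lifting along a triangle whose connecting datum is killed by a vanishing Ext group. Since $\mathrm{Ext}^1(\mathcal U,\mathcal V)=0$ forces $\mathcal C(Z_T,V_{T'}[1])=0$, the map $t$ lifts to a morphism of triangles over $Z_T\overset{a_T}{\longrightarrow}T\to V_T[1]$, yielding $z\colon Z_T\to Z_{T'}$; naturality of the counit of $\omega_{\mathcal U}$ together with $Z_T\in\mathcal U$ (Fact \ref{FactCPAdj}~(1)) identifies $\underline z$ with $\omega_{\mathcal U}(\underline t)=\omega(\underline t)$. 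Using $\mathcal C(U_T[-1],I_{T'})\subseteq\mathrm{Ext}^1(\mathcal U,\mathcal V)=0$ one then lifts $z$ along $U_T[-1]\overset{b_T}{\longrightarrow}Z_T\to I_T\to U_T$ to $u\colon U_T\to U_{T'}$, which by Definition \ref{Def_poshi_neshi}~(2) represents $\langle1\rangle\omega(\underline t)$, so $\sigma(\underline u)=\Sigma(\omega(\underline t))$. Carrying $t$ further through the remaining triangles of $(\ref{2TriaT_1})$ and through $(\ref{2TriaT_2})$ in the same way --- using $\mathrm{Ext}^1(\mathcal S,\mathcal V)=0$, hence $\mathcal C(S_{M_T}[-1],V_{M_{T'}})=0$, for the triangles involving $S_{M_T}$, $V_{M_T}$, $U_{M_T}$, and the functoriality of $\tau^+_{(\mathcal U,\mathcal V)}$ and of $\omega_{\mathcal U}$ to pin down each lift --- produces a morphism of the entire configuration, in particular a map $\widehat u\colon U_{M_T}\to U_{M_{T'}}$ representing $\omega_{\mathcal U}(\underline t[1])$ and satisfying $u_{T'}\circ u=\widehat u\circ u_T$ in $\mathcal C$. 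Passing to $\mathcal U/\mathcal I$ and applying $\sigma$ to the equality $\underline u_{T'}\circ\underline u=\omega_{\mathcal U}(\underline t[1])\circ\underline u_T$ gives the required naturality square, whence the triangle of functors in the statement commutes up to natural isomorphism.

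I expect the main obstacle to be exactly this naturality bookkeeping: each lift above is non-canonical, so one must verify that the ambiguities genuinely vanish in $\mathcal U/\mathcal I$ (and after applying $\sigma$), and that the lifts through $(\ref{2TriaT_1})$ and $(\ref{2TriaT_2})$ can be chosen compatibly so that the single equality $u_{T'}\circ u=\widehat u\circ u_T$ holds on the nose; this is where the $\mathrm{Ext}^1$-vanishing hypotheses must be combined with the uniqueness clauses in the universal properties of $\omega_{\mathcal U}$, $\langle1\rangle$, and $\tau^+_{(\mathcal U,\mathcal V)}$. Everything else is a direct reading of the diagrams constructed in Lemma \ref{LemCondC2} and Remark \ref{Rem35Revised}. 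Finally, the dual assertion $\omega(\sigma_{\mathcal T}(U[-1]))\cong\Omega(\sigma U)$ for $U\in\mathcal U$, natural in $U$, follows by running the same argument in $\mathcal C^{\mathrm{op}}$.
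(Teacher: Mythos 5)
Your first half (the isomorphism) is correct and is exactly the paper's argument: $\omega(T)\cong Z_T$, $Z_T\langle1\rangle\cong U_T$, $\omega_{\mathcal U}(T[1])\cong U_{M_T}$, and Lemma \ref{LemSUU}\,(1) applied to $S_{M_T}[-1]\to U_T\overset{u_T}{\to}U_{M_T}\to S_{M_T}$. The problem is the naturality half, and it is precisely the step you yourself flag as ``the main obstacle'' but never carry out. You propose to choose the lifts through $(\ref{2TriaT_1})$ and $(\ref{2TriaT_2})$ ``compatibly so that the single equality $u_{T'}\circ u=\widehat u\circ u_T$ holds on the nose'' in $\mathcal{C}$. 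Nothing in the universal properties you cite gives this: the representative $g$ of $\langle1\rangle\omega(\underline f)$ is produced from the triangles of $(\ref{2TriaT_1})$, while the representative $h=\widehat u$ of $\omega_{\mathcal U}(\underline{f[1]})$ is produced from the triangle $T'\to V_{M_{T'}}\to U_{M_{T'}}\overset{d_{T'}}{\to}T'[1]$ of $(\ref{2TriaT_2})$; these are two independent constructions, and the uniqueness clauses only pin each of them down modulo $\mathcal{I}$, not in $\mathcal{C}$. Strict commutativity in $\mathcal{C}$ is neither available nor needed, and the vanishing you invoke for this purpose, $\mathcal{C}(S_{M_T}[-1],V_{M_{T'}})=0$, does not bear on the ambiguity at all.

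What is actually needed, and what the paper proves, is the comparison for \emph{arbitrary} representatives: one computes, using $d_T\circ u_T=c_T[1]$, $c_T=a_T\circ b_T$ and the compatibilities of the chosen lifts, that
\[ d_{T'}\circ(h\circ u_T-u_{T'}\circ g)=f[1]\circ c_T[1]-f[1]\circ a_T[1]\circ b_T[1]=0, \]
so the difference $h\circ u_T-u_{T'}\circ g$ factors through $V_{M_{T'}}$; one then needs $(\mathcal{C}/\mathcal{I})(U_T,V_{M_{T'}})=0$, which holds because $V_{M_{T'}}\in\mathcal{V}\subseteq\mathcal{I}\ast\mathcal{V}[1]$ and $\mathrm{Ext}^1_{\mathcal{C}}(\mathcal{U},\mathcal{V})=0$, to conclude $\underline h\circ\underline u_T=\underline u_{T'}\circ\underline g$ in $\mathcal{U}/\mathcal{I}$ and hence the naturality square after applying $\sigma$. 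This computation with $d_{T'}$ and the vanishing of $\mathcal{U}$-to-$\mathcal{V}$ morphisms in $\mathcal{C}/\mathcal{I}$ is the substance of the naturality proof; since your proposal replaces it with an unjustified (and most likely false) claim that the lifts can be made strictly compatible, the naturality assertion — and with it the commutativity of the functor diagram — is not established in your argument.
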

\begin{proof}
From diagram $(\ref{2TriaT_1})$, we have
\[ U_T\cong Z_T\langle1\rangle\cong(\omega T)\langle1\rangle \]
in $\mathcal{U}/\mathcal{I}$.
Applying Lemma \ref{LemSUU} to the distinguished triangle
\[ S_{M_T}[-1]\to U_T\overset{u_T}{\longrightarrow}U_{M_T}\to S_{M_T} \]
in $(\ref{2TriaT_2})$, it follows $\sigma(\underline{u}_T)$ is isomorphism in $\mathcal{Z}/\mathcal{I}$.

Since distinguished triangle
\[ U_{M_T}\to T[1]\to V_{M_T}[1]\to U_{M_T}[1] \]
gives $\omega_{\mathcal{U}}(T[1])\cong U_{M_T}$ in $\mathcal{U}/\mathcal{I}$, we obtain
\[  \Sigma(\omega T)=\sigma((\omega T)\langle1\rangle)\cong\sigma(U_T)\underset{\sigma(\underline{u}_T)}{\overset{\cong}{\longrightarrow}}\sigma(U_{M_T})\cong\sigma(\omega_{\mathcal{U}}(T[1])). \]

As for the naturality, since the other parts are obvious, it suffices to show that $\underline{u}_T\colon U_T\to U_{M_T}$ is natural in $T\in\mathcal{T}$.
Let $f\in\mathcal{T}(T,T^{\prime})$ be any morphism in $\mathcal{T}$, and take diagrams $(\ref{2TriaT_1}),(\ref{2TriaT_2})$ similarly for $T^{\prime}$. By the definition of $\langle1\rangle$ and $\omega$, the morphism $(\omega\underline{f})\langle1\rangle$ corresponds to $\underline{g}\colon U_T\to U_{T^{\prime}}$, where $g$ is a morphism given by the following morphisms of triangles in $\mathcal{C}$.
\[
\xy
(-23,7)*+{V_T}="0";
(-7,7)*+{Z_T}="2";
(8,7)*+{T}="4";
(23,7)*+{V_T[1]}="6";
(-23,-7)*+{V_{T^{\prime}}}="10";
(-7,-7)*+{Z_{T^{\prime}}}="12";
(8,-7)*+{T^{\prime}}="14";
(23,-7)*+{V_{T^{\prime}}[1]}="16";
{\ar^{} "0";"2"};
{\ar^{a_T} "2";"4"};
{\ar^{} "4";"6"};
{\ar_{} "0";"10"};
{\ar_{{}^{\exists}z} "2";"12"};
{\ar^{f} "4";"14"};
{\ar^{} "6";"16"};
{\ar_{} "10";"12"};
{\ar_{a_{T^{\prime}}} "12";"14"};
{\ar_{} "14";"16"};
{\ar@{}|\circlearrowright "0";"12"};
{\ar@{}|\circlearrowright "2";"14"};
{\ar@{}|\circlearrowright "4";"16"};
\endxy
\ \ ,\ \ 
\xy
(-23,7)*+{U_T[-1]}="0";
(-7,7)*+{Z_T}="2";
(8,7)*+{I_T}="4";
(23,7)*+{U_T}="6";
(-23,-7)*+{U_{T^{\prime}}[-1]}="10";
(-7,-7)*+{Z_{T^{\prime}}}="12";
(8,-7)*+{I_{T^{\prime}}}="14";
(23,-7)*+{U_{T^{\prime}}}="16";
{\ar^(0.6){b_T} "0";"2"};
{\ar^{} "2";"4"};
{\ar^{} "4";"6"};
{\ar_{g[-1]} "0";"10"};
{\ar_{z} "2";"12"};
{\ar^{} "4";"14"};
{\ar^{g} "6";"16"};
{\ar_(0.6){b_{T^{\prime}}} "10";"12"};
{\ar_{} "12";"14"};
{\ar_{} "14";"16"};
{\ar@{}|\circlearrowright "0";"12"};
{\ar@{}|\circlearrowright "2";"14"};
{\ar@{}|\circlearrowright "4";"16"};
\endxy
\]
On the other hand, by the definition of $\omega_{\mathcal{U}}$, the morphism $\omega_{\mathcal{U}}(\underline{f[1]})$ corresponds to $\underline{h}\colon U_{M_T}\to U_{M_{T^{\prime}}}$, where $h$ is a morphism appearing in the following morphism of triangles in $\mathcal{C}$.
\[
\xy
(-23,7)*+{V_{M_T}}="0";
(-7,7)*+{U_{M_T}}="2";
(8,7)*+{T[1]}="4";
(25,7)*+{V_{M_T}[1]}="6";
(-23,-7)*+{V_{M_{T^{\prime}}}}="10";
(-7,-7)*+{U_{M_{T^{\prime}}}}="12";
(8,-7)*+{T^{\prime} [1]}="14";
(25,-7)*+{V_{M_{T^{\prime}}}[1]}="16";
{\ar^{} "0";"2"};
{\ar^{d_T} "2";"4"};
{\ar^{} "4";"6"};
{\ar_{} "0";"10"};
{\ar_{h} "2";"12"};
{\ar^{f[1]} "4";"14"};
{\ar^{} "6";"16"};
{\ar_{} "10";"12"};
{\ar_{d_{T^{\prime}}} "12";"14"};
{\ar_{} "14";"16"};
{\ar@{}|\circlearrowright "0";"12"};
{\ar@{}|\circlearrowright "2";"14"};
{\ar@{}|\circlearrowright "4";"16"};
\endxy
\]
Thus we have
\begin{eqnarray*}
d_{T^{\prime}}\circ(h\circ u_T-u_{T^{\prime}}\circ g)&=&f[1]\circ d_T\circ u_T-a_{T^{\prime}}[1]\circ b_{T^{\prime}}[1]\circ g\\
&=&f[1]\circ c_T[1]-f[1]\circ a_T[1]\circ b_T[1]\ =\ 0,
\end{eqnarray*}
which implies that $h\circ u_T-u_{T^{\prime}}\circ g$ factors through $V_{M_{T^{\prime}}}$. Since $(\mathcal{C}/\mathcal{I})(U_T,V_{M_{T^{\prime}}})=0$ by $V_{M_{T^{\prime}}}\in\mathcal{V}\subseteq\mathcal{I}\ast\mathcal{V}[1]$, this means that
\[
\xy
(-7,6)*+{U_T}="0";
(7,6)*+{U_{M_T}}="2";
(-7,-6)*+{U_{T^{\prime}}}="4";
(7,-6)*+{U_{M_{T^{\prime}}}}="6";
{\ar^{\underline{u}_T} "0";"2"};
{\ar_{\underline{g}} "0";"4"};
{\ar^{\underline{h}} "2";"6"};
{\ar_{\underline{u}_{T^{\prime}}} "4";"6"};
{\ar@{}|\circlearrowright "0";"6"};
\endxy
\]
is commutative in $\mathcal{U}/\mathcal{I}$. This shows the naturality.
\end{proof}

\begin{cor}\label{CorRevise}
Let $\mathcal{P}$ be a concentric TCP satisfying {\rm (I)+(II)}. Let $\Gamma\colon\mathcal{T}\to\mathcal{U}/\mathcal{I}$ be the composition of functors
\[ \mathcal{T}\overset{[1]}{\longrightarrow}\mathcal{T} [1]\hookrightarrow\mathcal{C}\to\mathcal{C}/\mathcal{I}\overset{\omega_{\mathcal{U}}}{\longrightarrow}\mathcal{U}/\mathcal{I}. \]
Then $\Gamma$ induces a functor $\overline{\Gamma}\colon\mathcal{T}/\mathcal{I}\to\mathcal{U}/\mathcal{I}$ which makes the following diagram of functors commutative up to natural isomorphisms.
\begin{equation}\label{DiagRevise}
\xy
(-9,12)*+{\mathcal{T}}="-2";
(-9,0)*+{\mathcal{T}/\mathcal{I}}="0";
(4,10)*+{}="1";
(9,0)*+{\mathcal{U}/\mathcal{I}}="2";
(-9,-12)*+{\mathcal{Z}/\mathcal{I}}="4";
(9,-12)*+{\mathcal{Z}/\mathcal{I}}="6";
{\ar_{p_{\mathcal{T}}} "-2";"0"};
{\ar@/^0.40pc/^{\Gamma} "-2";"2"};
{\ar^{\overline{\Gamma}} "0";"2"};
{\ar_{\omega} "0";"4"};
{\ar^{\sigma} "2";"6"};
{\ar_{\Sigma} "4";"6"};
{\ar@{}|\circlearrowright "0";"1"};
{\ar@{}|\circlearrowright "0";"6"};
\endxy
\end{equation}
\end{cor}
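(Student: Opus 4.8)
The plan is to obtain the induced functor $\overline{\Gamma}$ from the universal property of the ideal quotient, and then to recognize the square in $(\ref{DiagRevise})$ as a reformulation of Proposition \ref{PropShiftCompatT}. \emph{Step 1 (factoring $\Gamma$ through $p_{\mathcal{T}}$).} First I would check that $\Gamma$ annihilates every morphism of $\mathcal{T}$ that factors through an object of $\mathcal{I}$, so that by the universal property of the ideal quotient $\Gamma$ factors uniquely and additively as $\overline{\Gamma}\circ p_{\mathcal{T}}$ for an additive functor $\overline{\Gamma}\colon\mathcal{T}/\mathcal{I}\to\mathcal{U}/\mathcal{I}$; this makes the triangle in $(\ref{DiagRevise})$ commute on the nose. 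The key input is that $\omega_{\mathcal{U}}(I[1])\cong 0$ in $\mathcal{U}/\mathcal{I}$ for every $I\in\mathcal{I}$: since $\mathcal{I}\subseteq\mathcal{V}$, the distinguished triangle $0\to I[1]\to I[1]\to 0$ decomposes $I[1]$ as in Fact \ref{FactCPAdj} {\rm (1)} with $\mathcal{U}$-part $0$. Hence, if $f\in\mathcal{T}(T,T')$ factors through some $I\in\mathcal{I}$, then $f[1]$ factors through $I[1]$, so its image $\underline{f[1]}$ in $\mathcal{C}/\mathcal{I}$ factors through $I[1]$, and applying the functor $\omega_{\mathcal{U}}$ forces $\Gamma(f)=\omega_{\mathcal{U}}(\underline{f[1]})=0$, as it factors through $\omega_{\mathcal{U}}(I[1])\cong 0$.

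\emph{Step 2 (the square).} Precomposing the two composites $\sigma\circ\overline{\Gamma}$ and $\Sigma\circ\omega$ with $p_{\mathcal{T}}$ yields the functors $\sigma\circ\Gamma$ and $\Sigma\circ\omega\circ p_{\mathcal{T}}$ from $\mathcal{T}$ to $\mathcal{Z}/\mathcal{I}$. By Proposition \ref{PropShiftCompatT}, the isomorphisms $\sigma(\underline{u}_T)\colon\Sigma(\omega T)\overset{\cong}{\longrightarrow}\sigma(\omega_{\mathcal{U}}(T[1]))$ are natural in $T\in\mathcal{T}$, i.e. they assemble into a natural isomorphism $\Sigma\circ\omega\circ p_{\mathcal{T}}\overset{\cong}{\Longrightarrow}\sigma\circ\Gamma$. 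Since $p_{\mathcal{T}}$ is the identity on objects and surjective on morphism sets, and both sides factor through $p_{\mathcal{T}}$ (the second by Step 1), this natural isomorphism descends to a natural isomorphism $\Sigma\circ\omega\overset{\cong}{\Longrightarrow}\sigma\circ\overline{\Gamma}$ on $\mathcal{T}/\mathcal{I}$: its components are indexed by $\mathrm{Ob}(\mathcal{T})=\mathrm{Ob}(\mathcal{T}/\mathcal{I})$, and the naturality square over a morphism $\underline{f}$ of $\mathcal{T}/\mathcal{I}$ is obtained, by applying the relevant functors, from the naturality square over any lift $f\in\mathcal{T}$ of $\underline{f}$. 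This gives the commutativity of the square in $(\ref{DiagRevise})$ up to natural isomorphism.

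I do not anticipate a genuine obstacle: all the homological content has already been absorbed into Proposition \ref{PropShiftCompatT}, and the two remaining ingredients — the vanishing $\omega_{\mathcal{U}}(\mathcal{I}[1])=0$ that makes $\overline{\Gamma}$ well defined, and the descent of a natural isomorphism along the full, identity-on-objects quotient functor $p_{\mathcal{T}}$ — are purely formal. The only point to state carefully is that Proposition \ref{PropShiftCompatT} already provides naturality in $T\in\mathcal{T}$ with respect to honest morphisms, which is exactly what is needed for the descent.
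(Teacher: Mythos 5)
Your proposal is correct and follows essentially the same route as the paper: establish $\Gamma(\mathcal{I})=0$ so that $\Gamma$ factors through $p_{\mathcal{T}}$ by the universal property of the ideal quotient, and then obtain the square from the naturality statement of Proposition \ref{PropShiftCompatT} together with descent along the full, identity-on-objects residue functor $p_{\mathcal{T}}$. The only (harmless) difference is that you verify $\omega_{\mathcal{U}}(I[1])\cong 0$ directly from the trivial decomposition $0\to I[1]\to I[1]\to 0$ using $\mathcal{I}\subseteq\mathcal{V}$, whereas the paper takes an arbitrary decomposition triangle and shows its $\mathcal{U}$-part lies in $\mathcal{I}$; both are valid.
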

\begin{proof}
By definition, $\omega_{\mathcal{U}}(T[1])$ is given by $U$ appearing in a distinguished triangle
\[ T\to V\to U\to T[1]\quad(U\in\mathcal{U},V\in\mathcal{V}) \]
for each $T\in\mathcal{T}$. If $T\in\mathcal{I}$, then the morphism $U\to T[1]$ is zero, and $U$ becomes a direct summand of $V$, which implies $U\in\mathcal{I}$. Thus the additive functor $\Gamma$ satisfies $\Gamma(\mathcal{I})=0$. This shows the existence of $\overline{\Gamma}$ which makes the upper half of $(\ref{DiagRevise})$ commutative. The lower half is obtained by Proposition \ref{PropShiftCompatT} and the property of the residue functor $p_{\mathcal{T}}\colon\mathcal{T}\to\mathcal{T}/\mathcal{I}$.
\end{proof}

\begin{cor}\label{CorSigmaOmega}
Let $\mathcal{P}$ be a concentric TCP satisfying {\rm (I)+(II)}. Then $\mathcal{Z}/\mathcal{I}$ is triangulated. In particular, we have $\mathrm{Ext}^1_{\mathcal{Z}/\mathcal{I}}(X,Y)=(\mathcal{Z}/\mathcal{I})(X,\Sigma Y)\cong(\mathcal{Z}/\mathcal{I})(\Omega X,Y)$ for any $X,Y\in\mathcal{Z}/\mathcal{I}$.
\end{cor}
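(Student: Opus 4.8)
The plan is to combine the pretriangulated structure already built in Theorem \ref{ThmPreTriangulated} with the classical criterion that a pretriangulated category is triangulated precisely when its suspension functor $\Sigma$ and loop functor $\Omega$ are mutually quasi-inverse auto-equivalences (see \cite[Chapter II]{BR}). Since Theorem \ref{ThmPreTriangulated} endows $\mathcal{Z}/\mathcal{I}$ with compatible right and left triangulations together with the adjoint pair $\Sigma\dashv\Omega$ of Proposition \ref{PropShiftAdj}, the only thing left to check is that $\Sigma$ and $\Omega$ are quasi-inverse to one another.

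First I would note that under the hypothesis {\rm (I)+(II)}, Lemma \ref{LemCondC2} gives $\tau^+_{(\mathcal{U},\mathcal{V})}(\overline{\mathcal{T}})\subseteq\overline{\mathcal{N}^f}$ and $\tau^-_{(\mathcal{S},\mathcal{T})}(\overline{\mathcal{U}})\subseteq\overline{\mathcal{N}^i}$, which are exactly the two properties {\rm (1),(2)} referred to in Remark \ref{RemCond2}. Applying Remark \ref{RemCond2} {\rm (2)} then yields natural isomorphisms $\Omega\circ\Sigma\cong\mathrm{Id}_{\mathcal{Z}/\mathcal{I}}$ and $\Sigma\circ\Omega\cong\mathrm{Id}_{\mathcal{Z}/\mathcal{I}}$, so $\Sigma$ is an auto-equivalence of $\mathcal{Z}/\mathcal{I}$ with quasi-inverse $\Omega$. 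Because we already have the adjunction $\Sigma\dashv\Omega$, this is automatically an adjoint equivalence. Invoking the criterion of \cite[Chapter II]{BR} with the pretriangulation of Theorem \ref{ThmPreTriangulated}, we conclude that $\mathcal{Z}/\mathcal{I}$ is triangulated, with shift functor $\Sigma$ and $\Omega\cong\Sigma^{-1}$.

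For the displayed isomorphisms, the equality $\mathrm{Ext}^1_{\mathcal{Z}/\mathcal{I}}(X,Y)=(\mathcal{Z}/\mathcal{I})(X,\Sigma Y)$ holds by the very definition of $\mathrm{Ext}^1_{\mathcal{Z}/\mathcal{I}}$ in Definition \ref{DefMutable}. Since $(\Sigma,\Omega)$ is now an adjoint equivalence, we also have $\Omega\dashv\Sigma$, which supplies a natural isomorphism $(\mathcal{Z}/\mathcal{I})(X,\Sigma Y)\cong(\mathcal{Z}/\mathcal{I})(\Omega X,Y)$; alternatively this follows from $\Omega\circ\Sigma\cong\mathrm{Id}_{\mathcal{Z}/\mathcal{I}}$ together with the full faithfulness of $\Omega$.

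The only slightly delicate point I anticipate is the bookkeeping around \cite[Chapter II]{BR}: one must be sure that the compatibility axioms of a pretriangulated category checked in Theorem \ref{ThmPreTriangulated}, once $\Sigma$ is known to be invertible, really do recover all the axioms of a triangulated category in that reference's formulation. This, however, is exactly the content of the cited characterization, so no computation beyond what Lemma \ref{LemCondC2} and Remark \ref{RemCond2} already furnish is needed.
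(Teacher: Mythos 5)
Your overall skeleton is the same as the paper's: start from the pretriangulation of Theorem \ref{ThmPreTriangulated} and the adjunction $\Sigma\dashv\Omega$ of Proposition \ref{PropShiftAdj}, reduce everything to showing that $\Sigma$ and $\Omega$ are quasi-inverse, and then read off the Hom-isomorphism from the resulting adjoint equivalence (that last part of your argument is fine). The problem is the step where the actual mathematical content lives. You obtain $\Omega\circ\Sigma\cong\mathrm{Id}$ and $\Sigma\circ\Omega\cong\mathrm{Id}$ by feeding Lemma \ref{LemCondC2} into Remark \ref{RemCond2}, but Remark \ref{RemCond2} is stated in the paper explicitly \emph{without} proof ("We omit its details, since we do not use this result in this article"), and it is never used in the paper precisely because the present corollary is where the quasi-inverse property is supposed to be established. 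Citing it here makes your argument rest on an unproven assertion whose verification is essentially the whole point of the corollary; as a blind proof it therefore has a genuine gap rather than a complete alternative route.

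What the paper actually does at this step is construct the natural isomorphisms by hand under {\rm (I)+(II)}: Proposition \ref{PropShiftCompatT} produces the natural isomorphism $\Sigma(\omega T)\cong\sigma(\omega_{\mathcal{U}}(T[1]))$ for $T\in\mathcal{T}$ (with a nontrivial naturality check), Corollary \ref{CorRevise} packages this into the functor $\overline{\Gamma}\colon\mathcal{T}/\mathcal{I}\to\mathcal{U}/\mathcal{I}$ with $\sigma\circ\overline{\Gamma}\cong\Sigma\circ\omega$, and then one checks $\overline{\Gamma}(Z\langle-1\rangle)\cong Z$ naturally in $Z\in\mathcal{Z}/\mathcal{I}$ to conclude $\Sigma\circ\Omega\cong\mathrm{Id}$, with the dual argument giving $\Omega\circ\Sigma\cong\mathrm{Id}$. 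To repair your proposal you would either have to carry out this construction (or an equivalent one), or supply a genuine proof of Remark \ref{RemCond2}; noting that Lemma \ref{LemCondC2} furnishes its hypotheses does not substitute for either.
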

\begin{proof}
By Theorem \ref{ThmPreTriangulated}, it remains to show that $\Sigma$  and $\Omega$ are quasi-inverses to each other.

Let us show $\Sigma\circ\Omega\cong\mathrm{Id}$. Remark that $Z\langle-1\rangle$ is given by $T$ appearing in a distinguished triangle
\[ T\to I\to Z\to T[1]\quad(I\in\mathcal{I},T\in\mathcal{T}), \]
for each $Z\in\mathcal{Z}/\mathcal{I}$. This in turn gives $Z\cong\omega_{\mathcal{U}}(T[1])$, and we obtain an isomorphism $\overline{\Gamma}(Z\langle-1\rangle)\cong Z$ in $\mathcal{U}/\mathcal{I}$, which is natural in $Z\in\mathcal{Z}/\mathcal{I}$. Thus by Corollary \ref{CorRevise} we have the following diagram of functors, commutative up to natural isomorphisms.
\[
\xy
(-22,13)*+{\mathcal{Z}/\mathcal{I}}="-2";
(22,13)*+{\mathcal{Z}/\mathcal{I}}="-4";
(-20,-3)*+{}="-1";
(-9,1)*+{\mathcal{T}/\mathcal{I}}="0";
(9,1)*+{\mathcal{U}/\mathcal{I}}="2";
(20,-3)*+{}="3";
(-9,-14)*+{\mathcal{Z}/\mathcal{I}}="4";
(9,-14)*+{\mathcal{Z}/\mathcal{I}}="6";
(0,12)*+{}="10";
(0,7)*+{}="11";
{\ar@/^0.40pc/^{\mathrm{Id}} "-2";"-4"};
{\ar^{\langle-1\rangle} "-2";"0"};
{\ar@/_0.80pc/_{\Omega} "-2";"4"};
{\ar@/^0.80pc/^{\mathrm{Id}} "-4";"6"};
{\ar^{\overline{\Gamma}} "0";"2"};
{\ar@{^(->}_{} "-4";"2"};
{\ar_{\omega} "0";"4"};
{\ar^{\sigma} "2";"6"};
{\ar_{\Sigma} "4";"6"};
{\ar@{}|\circlearrowright "10";"11"};
{\ar@{}|\circlearrowright "-1";"0"};
{\ar@{}|\circlearrowright "2";"3"};
{\ar@{}|\circlearrowright "0";"6"};
\endxy
\]
This shows $\Sigma\circ\Omega\cong\mathrm{Id}$. Dually for $\Omega\circ\Sigma\cong\mathrm{Id}$.
\end{proof}

\subsection{Reduction-bijection}

In this section, we will give a bijection between $\mathfrak{M}_{\mathcal{P}}$ and $\mathfrak{CP}(\mathcal{Z}/\mathcal{I})$, when $\mathcal{P}$ satisfies {\rm (I)+(II)}. Its construction involves the following functors.
\begin{dfn}\label{DefTildes}
Let $\mathcal{P}$ be a concentric TCP. Put
\[ \overline{\sigma}=\sigma\circ p_{\mathcal{U}}\colon\mathcal{U}\to\mathcal{Z}/\mathcal{I},\quad\overline{\omega}=\omega\circ p_{\mathcal{T}}\colon\mathcal{T}\to\mathcal{Z}/\mathcal{I}. \]
Here, $p_{\mathcal{U}}\colon\mathcal{U}\to\mathcal{U}/\mathcal{I}$ and $p_{\mathcal{T}}\colon\mathcal{T}\to\mathcal{T}/\mathcal{I}$ are the residue functors.
\end{dfn}

Let us begin with some lemmas.
\begin{lem}\label{LemExtInj}
Let $\mathcal{P}$ be a concentric TCP satisfying {\rm (I)+(II)}.
Let $\mathcal{B}\subseteq\mathcal{C}$ be a full additive subcategory, closed under isomorphisms, direct summands and extension, satisfying $\mathcal{V}\subseteq\mathcal{B}\subseteq\mathcal{T}$ (for example, $\mathcal{B}=\mathcal{V}$ or $\mathcal{B}=\mathcal{T}$).

Then for any $B\in\mathcal{B}$, there exists a distinguished triangle
\begin{equation}\label{Tria_ExtInj}
B_0\to R\to N\to B_0[1]
\end{equation}
with  $B_0\in\mathcal{B}, N\in\mathcal{N}^i$ and $R\in\mathcal{T}\cap(\mathcal{B}\ast\mathcal{V}[1])$, which satisfies the following for any $U\in\mathcal{U}$.
\begin{enumerate}
\item $\mathrm{Ext}^1_{\mathcal{C}}(U,B)\cong\mathrm{Ext}^1_{\mathcal{C}}(U,B_0)$.
\item $\mathrm{Ext}^1_{\mathcal{Z}/\mathcal{I}}(\sigma(U),\omega(B))\cong\mathrm{Ext}^1_{\mathcal{C}}(U,R)$.
\end{enumerate}
\end{lem}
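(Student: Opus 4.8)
The plan is to realise $R$ as a relative $\mathcal{U}$-Ext-injective replacement of $B$ inside $\mathcal{T}$, recording the ``correction term'' along $\mathcal{N}^{i}$ in the triangle $B_{0}\to R\to N\to B_{0}[1]$, and then to prove (1) by a long exact sequence comparison and (2) by a chain of adjunction isomorphisms. In fact, once the triangle is built, (1) is essentially formal; the real content is the identification in (2).

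For the construction I would start from $B\in\mathcal{B}\subseteq\mathcal{T}$ and apply Fact~\ref{FactCPAdj}(1): in the triangle $\omega_{\mathcal{U}}(B)\to B\to V_{B}[1]\to\omega_{\mathcal{U}}(B)[1]$ one has, after rotation, $\omega_{\mathcal{U}}(B)\in\mathcal{V}\ast B\subseteq\mathcal{T}$, hence $\omega_{\mathcal{U}}(B)\in\mathcal{U}\cap\mathcal{T}=\mathcal{Z}$ and $\omega(B)=\omega_{\mathcal{U}}(B)$. Next, using Definition~\ref{Def_poshi_neshi} together with the inclusions $\mathcal{V}\subseteq\mathcal{I}\ast\mathcal{V}[1]$ and $\mathcal{S}\subseteq\mathcal{S}[-1]\ast\mathcal{I}$ of Remark~\ref{RemConcentric}, I would splice the resolution triangle of $B$ (and of $\omega_{\mathcal{U}}(B[1])$, cf.\ Remark~\ref{Rem35Revised}) together by two applications of the octahedron axiom. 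This yields distinguished triangles $B\to B_{0}\to N'\to B[1]$, $B\to R\to V_{1}[1]\to B[1]$ and $B_{0}\to R\to N\to B_{0}[1]$ in $\mathcal{C}$ with $V_{1}\in\mathcal{V}$, $N\in\mathcal{S}\ast\mathcal{V}[1]=\mathcal{N}^{i}$, and $N'\in\mathcal{T}\cap\mathcal{N}^{i}$, which equals $\mathcal{V}$ by condition~(II). Then $B_{0}\in\mathcal{B}\ast\mathcal{V}\subseteq\mathcal{B}$ (as $\mathcal{V}\subseteq\mathcal{B}$ and $\mathcal{B}$ is closed under extensions), $R\in\mathcal{B}\ast\mathcal{V}[1]$ from the second triangle, and $R\in\mathcal{T}$ is a direct check from the octahedron diagram together with condition~(II) via $\mathcal{T}=\mathcal{S}[-1]^{\perp}$; so $R\in\mathcal{T}\cap(\mathcal{B}\ast\mathcal{V}[1])$.

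For (1), apply $\mathcal{C}(U,-)$ with $U\in\mathcal{U}$ to $B\to B_{0}\to N'\to B[1]$. Since $N'\in\mathcal{V}$ and $(\mathcal{U},\mathcal{V})$ is a cotorsion pair, $\mathcal{C}(U,N'[1])=\mathrm{Ext}^{1}_{\mathcal{C}}(U,N')=0$; and the connecting map $\mathcal{C}(U,N')\to\mathcal{C}(U,B[1])$ vanishes because the relevant composites $U\to N'\to B[1]$ factor through $N'\in\mathcal{N}^{i}$ with $B\in\mathcal{T}$, so they are zero by Remark~\ref{RemTCP_UNTN}(2). The long exact sequence therefore collapses to the desired isomorphism $\mathrm{Ext}^{1}_{\mathcal{C}}(U,B)\overset{\cong}{\longrightarrow}\mathrm{Ext}^{1}_{\mathcal{C}}(U,B_{0})$.

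The main obstacle is (2), namely identifying $\mathrm{Ext}^{1}_{\mathcal{Z}/\mathcal{I}}(\sigma(U),\omega(B))=(\mathcal{Z}/\mathcal{I})(\sigma(U),\Sigma\omega(B))$ with $\mathrm{Ext}^{1}_{\mathcal{C}}(U,R)=\mathcal{C}(U,R[1])$. Here I would use: Proposition~\ref{PropShiftCompatT}, giving $\Sigma\omega(B)\cong\sigma(\omega_{\mathcal{U}}(B[1]))$; the adjunction $\sigma\dashv(\text{inclusion})$, giving $(\mathcal{Z}/\mathcal{I})(\sigma(U),\Sigma\omega(B))\cong(\mathcal{U}/\mathcal{I})(U,\sigma(\omega_{\mathcal{U}}(B[1])))$; condition~(I) (or the finer Lemmas~\ref{LemCondC1}, \ref{LemCondC2}) to rewrite $\sigma\omega_{\mathcal{U}}$ as $\omega\sigma_{\mathcal{T}}$ on a suitable object of $\mathcal{T}\ast\mathcal{U}$ manufactured from the triangle $B\to R\to V_{1}[1]\to B[1]$ and the defining triangle of $\omega_{\mathcal{U}}$; and finally the explicit description of $\omega$ and $\sigma_{\mathcal{T}}$ through the resolution triangles produced in the construction, which unwinds the right-hand side to $\mathcal{C}(U,R[1])$; Corollary~\ref{CorSigmaOmega} lets one move freely between $\Sigma$ and $\Omega$ while doing this. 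The delicate points I anticipate are (a) exhibiting a genuine object of $\mathcal{T}\ast\mathcal{U}$ whose $\omega_{\mathcal{U}}$ agrees with $\omega_{\mathcal{U}}(B[1])$, so that condition~(I) is applicable, and (b) checking that all the Hom-set identifications between $\mathcal{Z}/\mathcal{I}$, $\mathcal{U}/\mathcal{I}$ and $\mathcal{C}$ are compatible with the adjunction units and counits, so that the composite isomorphism is natural enough to be used in the subsequent sections.
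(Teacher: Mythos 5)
Your argument for (1) would be fine once the relevant triangle exists, and the external results you cite (Proposition \ref{PropShiftCompatT}, condition (I), the adjunction $\sigma\dashv\mathrm{incl}$, Corollary \ref{CorSigmaOmega}) are indeed the right tools; but the proposal has a genuine gap. The objects $B_0$, $R$, $N$ are never actually constructed: asserting that ``two applications of the octahedron axiom'' splice the resolution triangles into triangles $B\to B_0\to N'\to B[1]$, $B\to R\to V_1[1]\to B[1]$ and $B_0\to R\to N\to B_0[1]$ specifies neither the composable morphisms the octahedra are applied to nor what $R$ is, and several of the stated memberships (e.g.\ $N'\in\mathcal{T}$, $R\in\mathcal{T}$) are not checkable without that data. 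More seriously, part (2) --- which you correctly identify as the real content of the lemma --- is only a plan, and the two ``delicate points'' you list at the end are exactly the steps that constitute the proof: one must exhibit an explicit co-resolution of $\Sigma\omega(B)$ tying it to $R$, and nothing in the triangle $B\to R\to V_1[1]$ you posit produces the isomorphism $\mathcal{C}(U,R[1])\cong(\mathcal{Z}/\mathcal{I})(\sigma U,\Sigma\omega B)$. Note also that your configuration (morphisms out of $B$ into both $B_0$ and $R$) differs from the one a working construction yields, so even the existence of such a compatible triple with property (2) is itself something to be proved.

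For comparison, the paper pins $R$ down concretely. Applying Lemma \ref{LemCondC2} and Remark \ref{Rem35Revised} to $B$ gives $U_{M_B}$ with $U_{M_B}\cong\omega_{\mathcal{U}}(B[1])$ in $\mathcal{U}/\mathcal{I}$; decomposing $V_{M_B}$ as $I_0\to V_{M_B}\to V_0[1]$ ($I_0\in\mathcal{I}$, $V_0\in\mathcal{V}$) and taking an octahedron produces an extension $V_0\to B_0\to B$, which yields (1) (your version of (1) runs the triangle the other way, but the mechanism is the same). Then the dual construction of Lemma \ref{LemCondC2} {\rm (2)} applied to $U_{M_B}$ produces $S'\in\mathcal{S}$, $Z'\cong\sigma(U_{M_B})$, $I'\in\mathcal{I}$, $T'\in\mathcal{T}$, $N\in\mathcal{N}^i$, and a further octahedron using $\mathcal{C}(I_0,T'[1])=0$ gives both the triangle $B_0\to R\to N\to B_0[1]$ and a triangle $I_0\overset{0}{\to}T'[1]\to R[1]\to I_0[1]$. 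This last triangle, together with $I'\to Z'\to T'[1]\to I'[1]$, is what makes (2) a short computation: $\mathcal{C}(U,R[1])\cong\mathcal{C}(U,T'[1])\cong(\mathcal{U}/\mathcal{I})(U,Z')\cong(\mathcal{Z}/\mathcal{I})(\sigma U,\Sigma\omega B)$, the final identification using Proposition \ref{PropShiftCompatT} and the adjunction. Supplying an explicit construction of this kind (or a genuinely different one with the same computational handle on $\mathcal{C}(U,R[1])$) is what your proposal is missing.
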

\begin{proof}
Applying Lemma \ref{LemCondC2} and Remark \ref{Rem35Revised} to $B\in\mathcal{B}\subseteq\mathcal{T}$, we obtain diagrams as in $(\ref{2TriaT_1})$ and $(\ref{2TriaT_2})$:
\begin{eqnarray*}
&\xy
(-14,18)*+{U_B[-1]}="0";
(-14.1,0)*+{Z_B}="2";
(-3,0)*+{B}="4";
(-14,-16)*+{I_B}="6";
(2,-9)*+{M_B}="8";
(20,0)*+{V_B[1]}="10";
(-8,-8)*+_{_{\circlearrowright}}="12";
(5,-3)*+_{_{\circlearrowright}}="14";
(-10,5)*+_{_{\circlearrowright}}="14";
{\ar_{} "0";"2"};
{\ar_{} "2";"6"};
{\ar^{} "0";"4"};
{\ar_{} "2";"4"};
{\ar_{} "4";"8"};
{\ar^{} "4";"10"};
{\ar_{} "6";"8"};
{\ar_{} "8";"10"};
\endxy
\quad,\quad
\xy
(-14,18)*+{S_{M_B}[-1]}="0";
(-14.1,0)*+{M_B}="2";
(-3,0)*+{U_B}="4";
(-14,-16)*+{V_{M_B}}="6";
(2,-9)*+{U_{M_B}}="8";
(20,0)*+{B[1]}="10";
(-8,-8)*+_{_{\circlearrowright}}="12";
(5,-3)*+_{_{\circlearrowright}}="14";
(-10,5)*+_{_{\circlearrowright}}="14";
{\ar_{} "0";"2"};
{\ar_{} "2";"6"};
{\ar^{} "0";"4"};
{\ar_{} "2";"4"};
{\ar_{u_B} "4";"8"};
{\ar^{} "4";"10"};
{\ar_{} "6";"8"};
{\ar_{} "8";"10"};
\endxy&\\
&(Z_B\in\mathcal{Z},\, V_B,V_{M_B}\in\mathcal{V},\, I_B\in\mathcal{I},\, U_B,U_{M_B}\in\mathcal{U},\, S_{M_B}\in\mathcal{S})&
\end{eqnarray*}

Since $V_{M_B}\in\mathcal{V}\subseteq\mathcal{I}\ast\mathcal{V}[1]$, we can decompose $V_{M_B}$ into a distinguished triangle
\[ I_0\to V_{M_B}\to V_0[1]\to I_0[1]\quad(I_0\in\mathcal{I},V_0\in\mathcal{V}) \]
in $\mathcal{C}$.
By the octahedron axiom, we have the following commutative diagram made of distinguished triangles
\begin{equation}\label{Octa_ExtInj0}
\xy
(-14,18)*+{U_{M_B}}="0";
(-14.1,0)*+{B_0[1]}="2";
(-3,0)*+{B[1]}="4";
(-14,-16)*+{I_0[1]}="6";
(2,-9)*+{V_{M_B}[1]}="8";
(20,0)*+{V_0[2]}="10";
(-8,-8)*+_{_{\circlearrowright}}="12";
(5,-3)*+_{_{\circlearrowright}}="14";
(-10,5)*+_{_{\circlearrowright}}="14";
{\ar_{} "0";"2"};
{\ar_{} "2";"6"};
{\ar^{} "0";"4"};
{\ar_{} "2";"4"};
{\ar_{} "4";"8"};
{\ar^{} "4";"10"};
{\ar_{} "6";"8"};
{\ar_{} "8";"10"};
\endxy
\end{equation}
for some $B_0\in V_0\ast B\subseteq\mathcal{B}\ast\mathcal{B}\subseteq\mathcal{B}$. This satisfies
\[ \omega_{\mathcal{U}}(B_0[1])\cong U_{M_B}\cong\omega_{\mathcal{U}}(B[1]) \]
in $\mathcal{U}/\mathcal{I}$. Moreover, for any $U\in\mathcal{U}$, if we apply $\mathcal{C}(U,-)$ to $(\ref{Octa_ExtInj0})$, we obtain an exact sequence
\[ 0\to\mathcal{C}(U,B_0[1])\to\mathcal{C}(U,B[1])\overset{0}{\longrightarrow}\mathcal{C}(U,V_0[2]). \]
This shows {\rm (1)}.

Let us construct $(\ref{Tria_ExtInj})$.
Applying Lemma \ref{LemCondC2} {\rm (2)} to $U_{M_B}$, we obtain the following commutative diagram made of distinguished triangles in $\mathcal{C}$
\begin{equation}\label{Octa_ExtInj1}
\xy
(-20,0)*+{S^{\prime}[-1]}="0";
(-1,8)*+{N}="2";
(16,15)*+{I^{\prime}}="4";
(3,0)*+{U_{M_B}}="6";
(16.1,0)*+{Z^{\prime}}="8";
(16,-19)*+{T^{\prime}[1]}="10";
(9,6)*+_{_{\circlearrowright}}="12";
(-5,3)*+_{_{\circlearrowright}}="14";
(12,-6)*+_{_{\circlearrowright}}="14";
{\ar^{} "0";"2"};
{\ar^{} "2";"4"};
{\ar_{} "0";"6"};
{\ar^{} "2";"6"};
{\ar^{} "6";"8"};
{\ar_{} "6";"10"};
{\ar^{} "4";"8"};
{\ar^{} "8";"10"};
\endxy
\end{equation}
with $S^{\prime}\in\mathcal{S},\, Z^{\prime}\in\mathcal{Z},\, T^{\prime}\in\mathcal{T},\, I^{\prime}\in\mathcal{I},\, N\in\mathcal{N}^i$, dually to $(\ref{2TriaT_1})$. Remark that this gives $\sigma(U_{M_B})\cong Z^{\prime}$ in $\mathcal{Z}/\mathcal{I}$. By the octahedron axiom and $\mathcal{C}(I_0,T^{\prime}[1])=0$, we obtain
\begin{equation}\label{Octa_ExtInj2}
\xy
(14,18)*+{I_0}="0";
(14.1,0)*+{T^{\prime}[1]}="2";
(14,-16)*+{{}^{\exists}R[1]}="4";
(3,0)*+{U_{M_B}}="6";
(-2,-9)*+{B_0[1]}="8";
(-20,0)*+{N}="10";
(8,-8)*+_{_{\circlearrowright}}="12";
(-5,-3)*+_{_{\circlearrowright}}="14";
(10,5)*+_{_{\circlearrowright}}="14";
{\ar^{0} "0";"2"};
{\ar^{} "2";"4"};
{\ar_{} "0";"6"};
{\ar_{} "6";"2"};
{\ar^{} "6";"8"};
{\ar^{} "10";"6"};
{\ar_{} "8";"4"};
{\ar_{} "10";"8"};
\endxy
\end{equation}
with $R\in\mathcal{T}\cap(\mathcal{B}\ast\mathcal{N}^i)=\mathcal{T}\cap(\mathcal{B}\ast\mathcal{V}[1])$, by Proposition \ref{PropInverseImage}.

It remains to show {\rm (2)}. Let $U\in\mathcal{U}$ be any object.
Applying $\mathcal{C}(U,-)$ to $(\ref{Octa_ExtInj1})$ and $(\ref{Octa_ExtInj2})$, we obtain exact sequences
\begin{eqnarray*}
&\mathcal{C}(U,I^{\prime})\to\mathcal{C}(U,Z^{\prime})\to\mathcal{C}(U,T^{\prime}[1])\to0,&\\
&\mathcal{C}(U,I_0)\overset{0}{\longrightarrow}\mathcal{C}(U,T^{\prime}[1])\to\mathcal{C}(U,R[1])\to0,&
\end{eqnarray*}
which show
\begin{eqnarray}\label{Eq_ExtInj1}
\mathrm{Ext}^1_{\mathcal{C}}(U,R)&=&\mathcal{C}(U,R[1])\ \cong\ \mathcal{C}(U,T^{\prime}[1])\\
&\cong&\mathcal{C}(U,Z^{\prime})/\mathcal{C}(U,I^{\prime})\ \cong\ (\mathcal{U}/\mathcal{I})(U,Z^{\prime}).\nonumber
\end{eqnarray}
Since $Z^{\prime}\cong\sigma(U_{M_B})\cong\sigma(\omega_{\mathcal{U}}(B[1]))\cong\Sigma(\omega B)$ holds in $\mathcal{Z}/\mathcal{I}$ by Proposition \ref{PropShiftCompatT}, we obtain
\begin{eqnarray}\label{Eq_ExtInj2}
(\mathcal{U}/\mathcal{I})(U,Z^{\prime})&\cong&(\mathcal{U}/\mathcal{I})(U,\Sigma(\omega B))\\
&\cong&(\mathcal{Z}/\mathcal{I})(\sigma(U),\Sigma(\omega B))\ =\ \mathrm{Ext}^1_{\mathcal{Z}/\mathcal{I}}(\sigma(U),\omega(B)).\nonumber
\end{eqnarray}
Now {\rm (2)} follows from $(\ref{Eq_ExtInj1})$ and $(\ref{Eq_ExtInj2})$.
\end{proof}

\begin{prop}\label{PropExtInj}
Let $\mathcal{P}$ be a concentric TCP satisfying {\rm (I)+(II)}. For any $U\in\mathcal{U}$ and $T\in\mathcal{T}$, there is a monomorphism
\[ \mathrm{Ext}^1_{\mathcal{C}}(U,T)\hookrightarrow\mathrm{Ext}^1_{\mathcal{Z}/\mathcal{I}}(\sigma(U),\omega(T)). \]
\end{prop}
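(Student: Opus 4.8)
The plan is to deduce this directly from Lemma~\ref{LemExtInj}, applied with $\mathcal{B}=\mathcal{T}$, by reading off a monomorphism from the long exact cohomology sequence attached to the triangle $(\ref{Tria_ExtInj})$ that lemma produces. Since the proposition only asserts the existence of a monomorphism, it will be enough to factor the desired map as a composite of two isomorphisms (coming from Lemma~\ref{LemExtInj}) with one genuinely new monomorphism.

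First I would apply Lemma~\ref{LemExtInj} with $\mathcal{B}=\mathcal{T}$ --- this is legitimate, since $\mathcal{T}$ is a full additive subcategory closed under isomorphisms, direct summands and extensions, and trivially $\mathcal{V}\subseteq\mathcal{T}\subseteq\mathcal{T}$. For the given $T\in\mathcal{T}$ this yields a distinguished triangle in $\mathcal{C}$
\[ T_0\overset{a}{\longrightarrow}R\overset{b}{\longrightarrow}N\overset{c}{\longrightarrow}T_0[1] \]
with $T_0\in\mathcal{T}$, $N\in\mathcal{N}^i$ and $R\in\mathcal{T}\cap(\mathcal{T}\ast\mathcal{V}[1])$, together with isomorphisms
\[ \mathrm{Ext}^1_{\mathcal{C}}(U,T)\cong\mathrm{Ext}^1_{\mathcal{C}}(U,T_0)\quad\text{and}\quad\mathrm{Ext}^1_{\mathcal{Z}/\mathcal{I}}(\sigma(U),\omega(T))\cong\mathrm{Ext}^1_{\mathcal{C}}(U,R) \]
for every $U\in\mathcal{U}$. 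It therefore suffices to exhibit a monomorphism $\mathrm{Ext}^1_{\mathcal{C}}(U,T_0)\hookrightarrow\mathrm{Ext}^1_{\mathcal{C}}(U,R)$.

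Next I would apply the cohomological functor $\mathcal{C}(U,-)$ to the triangle above; the relevant portion of the resulting long exact sequence is
\[ \mathcal{C}(U,N)\overset{c\circ-}{\longrightarrow}\mathcal{C}(U,T_0[1])\overset{a[1]\circ-}{\longrightarrow}\mathcal{C}(U,R[1]), \]
so $a[1]\circ-$ is a monomorphism as soon as the connecting map $c\circ-$ vanishes. But for any $g\in\mathcal{C}(U,N)$ the composite $c\circ g\colon U\to T_0[1]$ factors through $N\in\mathcal{N}^i$, hence equals $0$ by Remark~\ref{RemTCP_UNTN}~(2) (applied to $U\in\mathcal{U}$ and $T_0\in\mathcal{T}$). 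Thus $c\circ-=0$, and $a[1]\circ-\colon\mathrm{Ext}^1_{\mathcal{C}}(U,T_0)\to\mathrm{Ext}^1_{\mathcal{C}}(U,R)$ is the required monomorphism. Composing it with the two isomorphisms above gives the asserted monomorphism $\mathrm{Ext}^1_{\mathcal{C}}(U,T)\hookrightarrow\mathrm{Ext}^1_{\mathcal{Z}/\mathcal{I}}(\sigma(U),\omega(T))$.

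I do not expect any real obstacle here: everything substantial --- the construction of $R$ and $N$ and the two Ext-identifications --- is already packaged in Lemma~\ref{LemExtInj}, and the only point to be careful about is the orientation of the triangle $(\ref{Tria_ExtInj})$, so that the connecting homomorphism landing in $\mathrm{Ext}^1_{\mathcal{C}}(U,T_0)=\mathcal{C}(U,T_0[1])$ is exactly precomposition with $c\colon N\to T_0[1]$. Once this is pinned down, Remark~\ref{RemTCP_UNTN}~(2) --- the orthogonality making $\mathcal{N}^i$ behave as a ``null'' subcategory between $\mathcal{U}$ and $\mathcal{T}$ --- forces that connecting map to vanish, which is precisely what is needed.
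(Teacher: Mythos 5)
Your proof is correct and is essentially the paper's own argument: apply Lemma \ref{LemExtInj} with $\mathcal{B}=\mathcal{T}$, apply $\mathcal{C}(U,-)$ to the resulting triangle, and kill the map out of $\mathcal{C}(U,N)$ via Remark \ref{RemTCP_UNTN}~(2). The only quibble is the phrase ``precomposition with $c$'' at the end --- the connecting map is $g\mapsto c\circ g$, i.e.\ composition with $c$ on the left, as your displayed formula $c\circ-$ correctly indicates.
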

\begin{proof}
Apply Lemma \ref{LemExtInj} to $\mathcal{B}=\mathcal{T}$ and $B=T\in\mathcal{T}$, to obtain a distinguished triangle
\begin{equation}\label{Tria_PropExtInj}
T_0\to R\to N\overset{n}{\longrightarrow}T_0[1]\quad(T_0\in\mathcal{T},\, N\in\mathcal{N}^i)
\end{equation}
satisfying $\mathrm{Ext}^1_{\mathcal{C}}(U,T_0)\cong\mathrm{Ext}^1_{\mathcal{C}}(U,T)$ and $\mathrm{Ext}^1_{\mathcal{C}}(U,R)\cong\mathrm{Ext}^1_{\mathcal{Z}/\mathcal{I}}(\sigma(U),\omega(T))$.

Applying $\mathcal{C}(U,-)$ to $(\ref{Tria_PropExtInj})$, we obtain an exact sequence
\[ \mathcal{C}(U,N)\overset{\mathcal{C}(U,n)}{\longrightarrow}\mathcal{C}(U,T_0[1])\to\mathcal{C}(U,R[1]). \]
Since the image of $\mathcal{C}(U,n)$ is $0$ by Remark \ref{RemTCP_UNTN}, it follows $\mathcal{C}(U,T_0[1])\to\mathcal{C}(U,R[1])$ is monomorphic. This gives $\mathrm{Ext}^1_{\mathcal{C}}(U,T)\hookrightarrow\mathrm{Ext}^1_{\mathcal{Z}/\mathcal{I}}(\sigma(U),\omega(T))$.
\end{proof}

\begin{prop}\label{PropABtoLR}
Let $\mathcal{P}$ be a concentric TCP satisfying {\rm (I)+(II)}. Let $\mathcal{S}\subseteq\mathcal{A}\subseteq\mathcal{U}$ and $\mathcal{V}\subseteq\mathcal{B}\subseteq\mathcal{T}$ be full additive subcategories closed under isomorphisms and direct summands.
If we define $\mathscr{L},\mathscr{R}\subseteq\mathcal{Z}/\mathcal{I}$ by
\[ \mathscr{L}=\overline{\sigma}(\mathcal{A})=\sigma(\overline{\mathcal{A}}) \quad\text{and}\quad \mathscr{R}=\overline{\omega}(\mathcal{B})=\omega(\overline{\mathcal{B}}), \]
then $\mathcal{C}=\mathcal{A}\ast\mathcal{B}[1]$ implies $\mathcal{Z}/\mathcal{I}=\mathscr{L}\ast\Sigma\mathscr{R}$.
\end{prop}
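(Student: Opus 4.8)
The plan is to take an arbitrary object $Z \in \mathcal{Z}$, regarded as an object of $\mathcal{Z}/\mathcal{I}$, and produce a distinguished right triangle $\mathscr{L}_0 \to Z \to \Sigma\mathscr{R}_0 \to \Sigma\mathscr{L}_0$ in $\mathcal{Z}/\mathcal{I}$ with $\mathscr{L}_0 \in \mathscr{L}$ and $\mathscr{R}_0 \in \mathscr{R}$; this is exactly the assertion $\mathcal{Z}/\mathcal{I} = \mathscr{L} \ast \Sigma\mathscr{R}$. Since $Z \in \mathcal{C}$ and $\mathcal{C} = \mathcal{A}\ast\mathcal{B}[1]$, first I would fix a distinguished triangle in $\mathcal{C}$
\[
A \xrightarrow{\ a\ } Z \xrightarrow{\ b\ } B[1] \xrightarrow{\ c\ } A[1] \qquad (A \in \mathcal{A},\ B \in \mathcal{B}).
\]
Because $\mathcal{S} \subseteq \mathcal{A} \subseteq \mathcal{U}$ and $\mathcal{V} \subseteq \mathcal{B} \subseteq \mathcal{T}$, this triangle has $A \in \mathcal{U}$ and $B[1]$ built from $\mathcal{T}$-data; the point is to compare it, via the adjunctions of Definition~\ref{DefTCPAdj} and the functor $\langle1\rangle$, with a standard right triangle on $Z$.

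The key step is to recognize the above triangle (up to rotation) as giving rise to a $\mathcal{U}$-conic triangle or a standard right triangle whose terms are $\overline{\sigma}$- and $\overline{\omega}$-images. Rotating, we have $B \xrightarrow{c[-1]} A \xrightarrow{a} Z \xrightarrow{b} B[1]$. Now I would apply $\sigma$ to $A \in \mathcal{U}$ and $\omega$ to $B \in \mathcal{B} \subseteq \mathcal{T}$: by definition $\overline{\sigma}(A) = \sigma(\overline{A}) \in \mathscr{L}$ and $\overline{\omega}(B) = \omega(\overline{B}) \in \mathscr{R}$. The plan is to show that the image in $\mathcal{Z}/\mathcal{I}$ of the morphism $\underline{a}\colon A \to Z$, after composing with the unit $\eta$ of $\sigma\dashv(\text{inclusion})$ on the source, fits into a distinguished right triangle whose third term computes as $\Sigma$ of $\overline{\omega}(B)$. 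Concretely, the rotation $B \to A \to Z$ together with the triangle $B[-1] \to A \to Z$ should, after passing to $\mathcal{Z}/\mathcal{I}$ and invoking the identification of $\langle-1\rangle$ with the $\mathcal{T}$-completion together with $\omega$ (Definition~\ref{Def_poshi_neshi}, Definition~\ref{DefShifts}), yield a standard left triangle
\[
\Omega\,\overline{\omega}(B) \to \overline{\sigma}(A) \to Z \to \overline{\omega}(B)
\]
in the sense of Definition~\ref{DefLeftTriangle}. Rotating this left triangle (legitimate since $\mathcal{Z}/\mathcal{I}$ is triangulated by Corollary~\ref{CorSigmaOmega}, or pre-triangulated by Theorem~\ref{ThmPreTriangulated} with the stated adjunction) gives $\overline{\sigma}(A) \to Z \to \overline{\omega}(B) \to \Sigma\,\overline{\sigma}(A)$, which is the desired decomposition of $Z$ as an object of $\mathscr{L} \ast \Sigma\mathscr{R}$ once one checks that the rotation of a distinguished left triangle by the quasi-inverse equivalences $\Sigma, \Omega$ lands among distinguished (right) triangles — this uses Corollary~\ref{CorSigmaOmega}.

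The main obstacle I expect is bookkeeping the precise compatibility between the ambient triangle $A \to Z \to B[1]$ and the adjoint functors $\sigma, \omega, \langle\pm1\rangle$: one must verify that completing $A \in \mathcal{U}$ by $\mathcal{I}$ on one side and $B \in \mathcal{T}$ by $\mathcal{I}$ on the other, and splicing via the octahedron axiom, genuinely reproduces the standard left triangle of Definition~\ref{DefLeftTriangle} attached to a distinguished triangle $P[-1] \to T \to Z' \to P$ with $T \in \mathcal{T}$, $Z', P \in \mathcal{Z}$ — in particular that the cone of the relevant map lies in $\mathcal{Z}$ after the $\mathcal{I}$-corrections. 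Here Lemma~\ref{LemSUU}, Lemma~\ref{LemNUU}, and Remark~\ref{RemConcentric} (giving $\mathcal{S} \subseteq \mathcal{S}[-1]\ast\mathcal{I}$, etc.) should suffice to push all the stray $\mathcal{S}$- and $\mathcal{V}$-terms into $\mathcal{I}$; concentricity is what makes $\mathcal{S}\cap\mathcal{Z} = \mathcal{V}\cap\mathcal{Z} = \mathcal{I}$, which is exactly what is needed for the completions to become isomorphisms in $\mathcal{Z}/\mathcal{I}$. Once this identification is in place, the statement follows formally.
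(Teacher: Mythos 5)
There is a genuine gap, and it sits exactly where the content of the proposition lies. Start from your triangle $A\xrightarrow{a}Z\xrightarrow{b}B[1]\to A[1]$; its rotation is $Z[-1]\to B\to A\to Z$ (there is no triangle ``$B[-1]\to A\to Z$''). First, the ``main obstacle'' you anticipate is not there: since $B\in\mathcal{B}\subseteq\mathcal{T}$, $Z\in\mathcal{Z}\subseteq\mathcal{T}$ and $\mathcal{T}$ is extension-closed (Definition \ref{DefCPC}), the triangle $B\to A\to Z\to B[1]$ forces $A\in\mathcal{T}$, hence $A\in\mathcal{U}\cap\mathcal{T}=\mathcal{Z}$; no $\mathcal{I}$-correction is needed. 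But then Definition \ref{DefLeftTriangle}, applied to $Z[-1]\to B\to A\to Z$, produces the standard left triangle $\Omega Z\to\omega(B)\to A\to Z$, not the triangle $\Omega\,\overline{\omega}(B)\to\overline{\sigma}(A)\to Z\to\overline{\omega}(B)$ you display: you have interchanged the roles of $Z$ and $\overline{\omega}(B)$. Consequently your rotated triangle $\overline{\sigma}(A)\to Z\to\overline{\omega}(B)\to\Sigma\overline{\sigma}(A)$ would only place $Z$ in $\mathscr{L}\ast\mathscr{R}$, which is not the assertion; what is needed is $\sigma(A)\to Z\to\Sigma\omega(B)\to\Sigma\sigma(A)$, with third term in $\Sigma\mathscr{R}$. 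Producing that $\Sigma$ is precisely the nontrivial point of the proposition: one must identify the image of $B[1]$ in $\mathcal{Z}/\mathcal{I}$ with $\Sigma(\omega B)$, which is Proposition \ref{PropShiftCompatT} ($\sigma(\omega_{\mathcal{U}}(B[1]))\cong\Sigma(\omega B)$), and this is where conditions {\rm (I)+(II)} really enter; your sketch never engages with this identification, its only appeal to {\rm (I)+(II)} being Corollary \ref{CorSigmaOmega}.

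For comparison, the paper stays entirely on the right-triangle side: it decomposes $B[1]$ by $(\mathcal{U},\mathcal{V})$ as $U_B\to B[1]\to V_B[1]$, uses the octahedron axiom together with $\mathcal{C}(Z,V_B[1])=0$ to obtain a triangle $A\to Z\oplus V_B\to U_B\to A[1]$ with all terms in $\mathcal{U}$ (whence $V_B\in\mathcal{U}\cap\mathcal{V}=\mathcal{I}$), applies Lemma \ref{LemUUUTria} to get a distinguished right triangle $\sigma(A)\to Z\to\sigma(U_B)\to\Sigma\sigma(A)$, and concludes by $\sigma(U_B)\cong\sigma(\omega_{\mathcal{U}}(B[1]))\cong\Sigma(\omega B)\in\Sigma\mathscr{R}$ via Proposition \ref{PropShiftCompatT}. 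A corrected version of your left-triangle route does work and is shorter: use $A\in\mathcal{Z}$ as above, form the standard left triangle $\Omega Z\to\omega(B)\to A\to Z$, and rotate twice in the triangulated category of Corollary \ref{CorSigmaOmega} to get $A\to Z\to\Sigma\omega(B)\to\Sigma A$ with $A\cong\sigma(A)\in\mathscr{L}$ and $\omega(B)\in\mathscr{R}$ — but then you must also justify that distinguished left triangles become distinguished triangles up to rotation in that triangulated structure, a point the paper's argument avoids by working with right triangles throughout.
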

\begin{proof}
If $\mathcal{C}=\mathcal{A}\ast\mathcal{B}[1]$, then in particular for any $Z\in\mathcal{Z}$, there is a distinguished triangle in $\mathcal{C}$
\[ A\overset{a}{\longrightarrow}Z\to B[1]\to A[1]\quad(A\in\mathcal{A},B\in\mathcal{B}). \]
Decompose $B[1]$ into a distinguished triangle in $\mathcal{C}$
\[ U_B\to B[1]\to V_B[1]\to U_B[1]\quad(U_B\in\mathcal{U},V_B\in\mathcal{V}), \]
which gives $U_B\cong\omega_{\mathcal{U}}(B[1])$ in $\mathcal{U}/\mathcal{I}$.
By the octahedron axiom and $\mathcal{C}(Z,V_B[1])=0$, we have
\[
\xy
(-8,23)*+{V_B}="-12";
(8,23)*+{V_B}="-14";
(-24,8)*+{A}="0";
(-8,8)*+{Z\oplus V_B}="2";
(8,8)*+{U_B}="4";
(24,8)*+{A[1]}="6";
(-24,-7)*+{A}="10";
(-8,-7)*+{Z}="12";
(8,-7)*+{B[1]}="14";
(24,-7)*+{A[1]}="16";
(-8,-22)*+{V_B[1]}="22";
(8,-22)*+{V_B[1]}="24";
{\ar@{=} "-12";"-14"};
{\ar_{} "-12";"2"};
{\ar^{} "-14";"4"};
{\ar^{\spmatrix{a}{g}} "0";"2"};
{\ar^{} "2";"4"};
{\ar^{} "4";"6"};
{\ar@{=} "0";"10"};
{\ar_{[1\ 0]} "2";"12"};
{\ar^{} "4";"14"};
{\ar@{=} "6";"16"};
{\ar_{a} "10";"12"};
{\ar^{} "12";"14"};
{\ar^{} "14";"16"};
{\ar_{0} "12";"22"};
{\ar^{} "14";"24"};
{\ar@{=} "22";"24"};
{\ar@{}|\circlearrowright "-12";"4"};
{\ar@{}|\circlearrowright "0";"12"};
{\ar@{}|\circlearrowright "2";"14"};
{\ar@{}|\circlearrowright "4";"16"};
{\ar@{}|\circlearrowright "12";"24"};
\endxy
\]
for some $g\in\mathcal{C}(A,V_B)$.
Thus $Z\oplus V_B\in A\ast U_B\subseteq\mathcal{U}$, which shows $V_B\in\mathcal{V}\cap\mathcal{U}=\mathcal{I}$.
Applying Lemma \ref{LemUUUTria} to
\[ A\to Z\oplus V_B\to U_B\to A[1], \]
we obtain a distinguished right triangle
\[ \sigma(A)\to Z\to\sigma(U_B)\to\Sigma(\sigma A). \]
This satisfies $\sigma (A)\in\mathscr{L}$, and $\sigma(U_B)\cong\sigma(\omega_{\mathcal{U}}(B[1]))\cong\Sigma(\omega B)\in\Sigma\mathscr{R}$ by Proposition \ref{PropShiftCompatT}.
\end{proof}

\begin{prop}\label{PropLRtoAB}
Let $\mathcal{P}$ be a concentric TCP satisfying {\rm (I)+(II)}.
Let $\mathscr{L},\mathscr{R}\subseteq\mathcal{Z}/\mathcal{I}$ be full additive subcategories, closed under isomorphisms and direct summands.
If we define $\mathcal{A}\subseteq\mathcal{U}$ and $\mathcal{B}\subseteq\mathcal{T}$ by
\[ \mathcal{A}=\overline{\sigma}^{-1}(\mathscr{L})=\widetilde{\sigma^{-1}(\mathscr{L})}\quad\text{and}\quad \mathcal{B}=\overline{\omega}^{-1}(\mathscr{R})=\widetilde{\omega^{-1}(\mathscr{R})}, \]
then $\mathcal{Z}/\mathcal{I}=\mathscr{L}\ast\Sigma\mathscr{R}$ implies $\mathcal{C}=\mathcal{A}\ast\mathcal{B}[1]$.
\end{prop}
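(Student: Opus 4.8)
The plan is to prove the nontrivial inclusion $\mathcal{C}\subseteq\mathcal{A}\ast\mathcal{B}[1]$ in two stages: a reduction, and then a transport of the hypothesis along $\sigma$. For the reduction, recall that $\mathcal{C}=\mathcal{U}\ast\mathcal{V}[1]$ since $(\mathcal{U},\mathcal{V})$ is a cotorsion pair, and that $\ast$ is associative by the octahedron axiom. I first observe that $\widetilde{\mathscr{R}}\ast\mathcal{V}\subseteq\mathcal{B}$: indeed $\widetilde{\mathscr{R}}\ast\mathcal{V}\subseteq\mathcal{T}$ (both factors lie in $\mathcal{T}$, which is extension-closed); moreover $\mathcal{V}\subseteq\mathcal{I}\ast\mathcal{V}[1]$ by Remark~\ref{RemConcentric}, $\mathcal{I}\subseteq\widetilde{\mathscr{R}}$, and $\widetilde{\mathscr{R}}\ast\mathcal{I}\subseteq\widetilde{\mathscr{R}}$ (an extension of an object of $\mathcal{I}$ by one of $\widetilde{\mathscr{R}}$ lies in $\mathcal{Z}$ and becomes isomorphic to the $\widetilde{\mathscr{R}}$-object in $\mathcal{Z}/\mathcal{I}$, by Corollary~\ref{Cor3-2}), so $\widetilde{\mathscr{R}}\ast\mathcal{V}\subseteq\widetilde{\mathscr{R}}\ast\mathcal{I}\ast\mathcal{V}[1]\subseteq\widetilde{\mathscr{R}}\ast\mathcal{V}[1]$, whence $\widetilde{\mathscr{R}}\ast\mathcal{V}\subseteq\mathcal{T}\cap(\widetilde{\mathscr{R}}\ast\mathcal{V}[1])=\mathcal{B}$ by Proposition~\ref{PropInverseImage}(3). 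Consequently it is enough to prove $\mathcal{U}\subseteq\mathcal{A}\ast\widetilde{\mathscr{R}}[1]$, for then $\mathcal{C}=\mathcal{U}\ast\mathcal{V}[1]\subseteq(\mathcal{A}\ast\widetilde{\mathscr{R}}[1])\ast\mathcal{V}[1]=\mathcal{A}\ast(\widetilde{\mathscr{R}}\ast\mathcal{V})[1]\subseteq\mathcal{A}\ast\mathcal{B}[1]$.

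Now fix $U\in\mathcal{U}$. Using $\mathcal{C}=\mathcal{S}[-1]\ast\mathcal{T}$, pick a distinguished triangle $S[-1]\to U\to Z\to S$ with $S\in\mathcal{S}$; since $Z\in U\ast S\subseteq\mathcal{U}$ we have $Z\in\mathcal{Z}$, and by Lemma~\ref{LemSUU} this identifies $Z=\sigma(U)$, while $S\in\mathcal{S}\subseteq\mathcal{A}$. Applying the hypothesis to $\overline{Z}\in\mathcal{Z}/\mathcal{I}=\mathscr{L}\ast\Sigma\mathscr{R}$ yields a distinguished right triangle $L\to Z\to\Sigma R\to\Sigma L$ with $L\in\mathscr{L}$ and $R\in\mathscr{R}$; then $L\in\mathcal{A}$ and $R\in\widetilde{\mathscr{R}}$ (Definition~\ref{DefTildes}, using that $\sigma$ and $\omega$ restrict to the identity on $\mathcal{Z}/\mathcal{I}$). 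By Definition~\ref{DefDistRight}, Lemma~\ref{LemInvariantStan} and Proposition~\ref{PropUniqueRightComplete} this right triangle is isomorphic to the standard one attached to a $\mathcal{U}$-conic triangle in $\mathcal{C}$, which — possibly after enlarging $U$ by a direct summand in $\mathcal{I}$, harmless since one re-absorbs it at the end, so that the middle term is exactly $Z$ — we may take to be $L\overset{f}{\to}Z\overset{a}{\to}W\overset{b}{\to}L[1]$ with $W\in\mathcal{U}$ and $\sigma(W)\cong\Sigma R$.

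The crux, addressed below, is the construction of a distinguished triangle $N'\to W\to R'[1]\to N'[1]$ in $\mathcal{C}$ with $R'\in\widetilde{\mathscr{R}}$ and $N'\in\mathcal{N}^i$ (up to direct summands in $\mathcal{I}$). Granting it, apply the octahedron axiom to $Z\overset{a}{\to}W\overset{w}{\to}R'[1]$: the cone $E$ of $w\circ a$ satisfies $E[-1]\in R'\ast Z\subseteq\mathcal{Z}$, and there is a triangle $L\to E[-1]\to N'\to L[1]$; since $N'\in\mathcal{N}^i$, Corollary~\ref{Cor3-2} shows $L\to E[-1]$ is an isomorphism in $\mathcal{Z}/\mathcal{I}$, so $\overline{E[-1]}\cong\overline{L}\in\mathscr{L}$. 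Since $Z\in E[-1]\ast R'[1]$ and $U\in\mathcal{S}[-1]\ast Z$, a further application of the octahedron axiom produces a triangle $A\to U\to R'[1]\to A[1]$ with $A\in\mathcal{S}[-1]\ast E[-1]$; then $A\in R'\ast U\subseteq\mathcal{U}$, and Lemma~\ref{LemSUU} applied to the triangle $S_0[-1]\to A\to E[-1]\to S_0$ (with $S_0\in\mathcal{S}$) gives $\sigma(A)\cong\overline{E[-1]}\in\mathscr{L}$, i.e. $A\in\widetilde{\sigma^{-1}(\mathscr{L})}=\mathcal{A}$. Hence $U\in\mathcal{A}\ast\widetilde{\mathscr{R}}[1]$, as needed.

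The hard part is therefore to pass from the bare isomorphism $\sigma(W)\cong\Sigma R$ to the concrete triangle $N'\to W\to R'[1]\to N'[1]$; this is exactly where conditions {\rm (I)+(II)} enter essentially, since without them $\sigma$ retains too little information to recover such a decomposition. The intended route: by Proposition~\ref{PropShiftCompatT}, $\Sigma R=\Sigma(\omega R)\cong\sigma(\omega_{\mathcal{U}}(R[1]))$, and $\omega_{\mathcal{U}}(R[1])\cong R\langle1\rangle$ sits in the triangle $R\to I_R\to R\langle1\rangle\to R[1]$; meanwhile $W\in\widetilde{\sigma^{-1}(\Sigma\mathscr{R})}=\mathcal{U}\cap(\mathcal{N}^f\ast\widetilde{\Sigma\mathscr{R}})$ by Proposition~\ref{PropInverseImage}(3), and $\Omega\circ\Sigma\cong\mathrm{Id}$ by Corollary~\ref{CorSigmaOmega}. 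Feeding these into an octahedral computation modelled on the proof of Lemma~\ref{LemExtInj} — using Lemmas~\ref{LemCondC1}, \ref{LemCondC2}, Remark~\ref{RemTCP_UNTN} and condition {\rm (II)} to keep track of which of $\mathcal{N}^i$, $\mathcal{N}^f$, $\mathcal{S}$, $\mathcal{V}$, $\widetilde{\mathscr{R}}$ each intermediate object belongs to — should yield the desired decomposition of $W$. I expect this bookkeeping, rather than any single deep idea, to be the bulk of the work.
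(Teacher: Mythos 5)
Your Stage~1 reduction (the inclusion $\widetilde{\mathscr{R}}\ast\mathcal{V}\subseteq\mathcal{B}$, which plays the role of step (iii) of the paper's proof) and your ``granting it'' bookkeeping are correct as far as they go, but the proof has a genuine gap at exactly the point you flag: the construction of a distinguished triangle $N'\to W\to R'[1]\to N'[1]$ with $N'\in\mathcal{N}^i$ and $R'\in\widetilde{\mathscr{R}}\subseteq\mathcal{Z}$ is never carried out, only announced as an ``intended route.'' Moreover, there is reason to doubt it can be done in this strong form. The natural attempt is the one you hint at: decompose $W[-1]$ with respect to $(\mathcal{S},\mathcal{T})$ to get $S_1\to W\to T_1[1]\to S_1[1]$ with $T_1\in\mathcal{T}$ and $\omega(T_1)\cong\Omega\Sigma R\cong R$, and then replace $T_1$ by its coreflection $Z_1\in\mathcal{Z}$ sitting in $Z_1\to T_1\to V_1[1]$. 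But lifting the map $W\to T_1[1]$ along $Z_1[1]\to T_1[1]$ is obstructed by the composite in $\mathcal{C}(W,V_1[2])=\mathrm{Ext}^2_{\mathcal{C}}(W,V_1)$, and conditions {\rm (I)+(II)} give no vanishing there. Since your subsequent octahedra crucially use $R'\in\mathcal{Z}$ (e.g.\ ``$E[-1]\in R'\ast Z\subseteq\mathcal{Z}$'' and ``$A\in R'\ast U\subseteq\mathcal{U}$''), you cannot simply weaken $R'$ to an object of $\mathcal{B}\subseteq\mathcal{T}$ without the rest of the argument collapsing. Note also that your intermediate target $\mathcal{U}\subseteq\mathcal{A}\ast\widetilde{\mathscr{R}}[1]$ is strictly stronger than what the statement requires, and even granting the Proposition one only gets $\mathcal{U}\subseteq\mathcal{A}\ast\mathcal{B}[1]$ with $\mathcal{B}=\mathcal{T}\cap(\widetilde{\mathscr{R}}\ast\mathcal{V}[1])$, so the extra strength you are asking for is precisely the unproved content.

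The paper avoids this issue by never demanding that the ``$\mathscr{R}$-side'' object lie in $\mathcal{Z}$. In its step (i), for $Z\in\mathcal{Z}$ it produces from the $\mathcal{U}$-conic cone $U$ a triangle $S_U[-1]\to U[-1]\to T_U\to S_U$, uses $\Omega\Sigma\cong\mathrm{Id}$ and the dual of Proposition~\ref{PropShiftCompatT} to see $\omega(T_U)\in\mathscr{R}$, i.e.\ $T_U\in\mathcal{B}$ (only in $\mathcal{T}$!), and then an octahedron with $\mathcal{C}(S_U,L[1])=0$ yields $T_U\to L'\to Z\to T_U[1]$, i.e.\ $\mathcal{Z}\subseteq\mathcal{L}\ast\mathcal{B}[1]$ with the $\mathcal{B}$-object in the third slot but not in $\mathcal{Z}$. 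Its step (ii) then treats $U\in\mathcal{U}$ by applying condition {\rm (I)} to an auxiliary object $E\in\mathcal{T}\ast\mathcal{U}$ to get the $\mathcal{A}$-part, and tracks the $\mathcal{B}$-part only through its $\omega$-image via Lemma~\ref{LemSUU}(2); step (iii) does the same for general $C\in\mathcal{C}$. If you want to salvage your argument, you should abandon the triangle with third term in $\widetilde{\mathscr{R}}[1]$ and instead prove the chain $\mathcal{Z}\subseteq\mathcal{L}\ast\mathcal{B}[1]\Rightarrow\mathcal{U}\subseteq\mathcal{A}\ast\mathcal{B}[1]\Rightarrow\mathcal{C}=\mathcal{A}\ast\mathcal{B}[1]$, replacing your Corollary~\ref{Cor3-2}/Lemma~\ref{LemSUU}(1) bookkeeping (which needs objects in $\mathcal{Z}$, resp.\ $\mathcal{U}$) by the $\omega$-image tracking of Lemma~\ref{LemSUU}(2) and a genuine use of condition {\rm (I)} on $\mathcal{T}\ast\mathcal{U}$, which your sketch invokes only through Corollary~\ref{CorSigmaOmega} and Proposition~\ref{PropShiftCompatT}.
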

\begin{proof}
Put $\mathcal{L}=\widetilde{\mathscr{L}}, \mathcal{R}=\widetilde{\mathscr{R}}$ as in Remark \ref{RemQuot2}. We proceed in the following 3 steps.
\begin{itemize}
\item[{\rm (i)}] $\mathcal{Z}/\mathcal{I}=\mathscr{L}\ast\Sigma\mathscr{R}\ \Rightarrow\ \mathcal{Z}\subseteq\mathcal{L}\ast\mathcal{B}[1]$.
\item[{\rm (ii)}] $\mathcal{Z}\subseteq\mathcal{L}\ast\mathcal{B}[1]\ \Rightarrow\ \mathcal{U}\subseteq\mathcal{A}\ast\mathcal{B}[1]$.
\item[{\rm (iii)}] $\mathcal{U}\subseteq\mathcal{A}\ast\mathcal{B}[1]\ \Rightarrow\ \mathcal{C}=\mathcal{A}\ast\mathcal{B}[1]$.
\end{itemize}

\bigskip

{\rm (i)} By $\mathcal{Z}/\mathcal{I}=\mathscr{L}\ast\Sigma\mathscr{R}$, for any $Z\in\mathcal{Z}$, there exists a distinguished triangle in $\mathcal{Z}/\mathcal{I}$
\begin{equation}\label{LZSR}
L\overset{\alpha}{\longrightarrow} Z\to\Sigma R\to\Sigma L
\end{equation}
for some $L\in\mathcal{L}$ and $R\in\mathcal{R}$. Take $f\in\mathcal{Z}(L,Z)$ satisfying $\alpha=\underline{f}$, take a distinguished triangle
\[ L\overset{\iota}{\longrightarrow}I\overset{\wp}{\longrightarrow}U_L\overset{\gamma}{\longrightarrow}L[1]\quad(I\in\mathcal{I},\, U_L\in\mathcal{U}) \]
in $\mathcal{C}$. As in Lemma \ref{LemInvariantStan} {\rm (1)}, complete $\begin{bmatrix}f\\ \iota\end{bmatrix}$ to obtain a $\mathcal{U}$-conic triangle
\begin{equation}\label{CanonL}
L\overset{\scriptstyle\begin{bmatrix}f\\ \iota\end{bmatrix}}{\longrightarrow}Z\oplus I\overset{a}{\longrightarrow}U\overset{b}{\longrightarrow}L[1]
\end{equation}
in $\mathcal{C}$. Since $\mathcal{Z}/\mathcal{I}$ is triangulated, the standard right triangle associated to $(\ref{CanonL})$ is isomorphic to $(\ref{LZSR})$. In particular, we have $\sigma(U)\cong\Sigma R\in\Sigma\mathscr{R}$.

By $\Omega\circ\Sigma\cong\mathrm{Id}_{\mathcal{Z}/\mathcal{I}}$ (Corollary \ref{CorSigmaOmega}) and the dual of Proposition \ref{PropShiftCompatT}, we have
\[ \omega(\sigma_{\mathcal{T}}(U[-1]))\cong\Omega(\sigma(U))\in\Omega\Sigma\mathscr{R}=\mathscr{R}. \]

Namely, if we take a distinguished triangle in $\mathcal{C}$
\[ S_U[-1]\to U[-1]\to T_U\to S_U\quad(S_U\in\mathcal{S},T_U\in\mathcal{T}), \]
which gives $\sigma_{\mathcal{T}}(U[-1])\cong T_U$ in $\mathcal{T}/\mathcal{I}$, then this $T_U$ satisfies $T_U\in\overline{\omega}^{-1}(\mathscr{R})=\mathcal{B}$.

By the octahedron axiom and $\mathcal{C}(S_U,L[1])=0$, we obtain a commutative diagram made of distinguished triangles in $\mathcal{C}$
\[
\xy
(-8,23)*+{T_U}="-12";
(8,23)*+{T_U}="-14";
(-24,8)*+{L}="0";
(-8,8)*+{L\oplus S_U}="2";
(8,8)*+{S_U}="4";
(24,8)*+{L[1]}="6";
(-24,-7)*+{L}="10";
(-8,-7)*+{Z\oplus I}="12";
(8,-7)*+{U}="14";
(24,-7)*+{L[1]}="16";
(-8,-22)*+{T_U[1]}="22";
(8,-22)*+{T_U[1]}="24";
{\ar@{=} "-12";"-14"};
{\ar_{} "-12";"2"};
{\ar^{} "-14";"4"};
{\ar^{} "0";"2"};
{\ar^{} "2";"4"};
{\ar^{0} "4";"6"};
{\ar@{=} "0";"10"};
{\ar_{} "2";"12"};
{\ar^{} "4";"14"};
{\ar@{=} "6";"16"};
{\ar_{} "10";"12"};
{\ar^{} "12";"14"};
{\ar^{} "14";"16"};
{\ar_{{}^{\exists}[z\ i]} "12";"22"};
{\ar^{} "14";"24"};
{\ar@{=} "22";"24"};
{\ar@{}|\circlearrowright "-12";"4"};
{\ar@{}|\circlearrowright "0";"12"};
{\ar@{}|\circlearrowright "2";"14"};
{\ar@{}|\circlearrowright "4";"16"};
{\ar@{}|\circlearrowright "12";"24"};
\endxy
\]
which implies $L\oplus S_U\in T_U\ast(Z\oplus I)\subseteq\mathcal{T}$, and thus $S_U\in\mathcal{S}\cap\mathcal{T}=\mathcal{I}$. It follows $L\oplus S_U\in\mathcal{L}$.

Moreover, since $i\in\mathcal{C}(I,T_U[1])=0$, this gives a distinguished triangle in $\mathcal{C}$
\begin{equation}\label{TriaLRtoAB1}
T_U\to L\oplus S_U\to Z\oplus I\overset{[z\ 0]}{\longrightarrow}T_U[1].
\end{equation}
It follows that $I$ is a direct summand of $L\oplus S_U\in\mathcal{L}$, and thus there exists $L^{\prime}\in\mathcal{L}$ satisfying $L^{\prime}\oplus I\cong L\oplus S_U$.
Then from $(\ref{TriaLRtoAB1})$, we obtain a distinguished triangle in $\mathcal{C}$
\[ T_U\to L^{\prime}\to Z\overset{z}{\longrightarrow} T_U[1] \]
satisfying $L^{\prime}\in\mathcal{L}$ and $T_U\in\mathcal{B}$.

\medskip

{\rm (ii)} Let $U\in\mathcal{U}$ be any object. Decompose it into a distinguished triangle in $\mathcal{C}$
\[ S[-1]\to U\to Z\to S\quad(S\in\mathcal{S},Z\in\mathcal{Z}), \]
which gives $\sigma(U)\cong Z$ in $\mathcal{Z}/\mathcal{I}$.

By assumption, there is a distinguished triangle in $\mathcal{C}$
\[ L\to Z\to B[1]\to L[1]\quad(L\in\mathcal{L},B\in\mathcal{B}). \]
By the octahedron axiom, we have
\[
\xy
(-8,23)*+{S[-1]}="-12";
(8,23)*+{S[-1]}="-14";
(-24,8)*+{B}="0";
(-8,8)*+{{}^{\exists}E}="2";
(8,8)*+{U}="4";
(24,8)*+{B[1]}="6";
(-24,-7)*+{B}="10";
(-8,-7)*+{L}="12";
(8,-7)*+{Z}="14";
(24,-7)*+{B[1]}="16";
(-8,-22)*+{S}="22";
(8,-22)*+{S}="24";
{\ar@{=} "-12";"-14"};
{\ar_{} "-12";"2"};
{\ar^{} "-14";"4"};
{\ar^{} "0";"2"};
{\ar^{} "2";"4"};
{\ar^{} "4";"6"};
{\ar@{=} "0";"10"};
{\ar_{} "2";"12"};
{\ar^{} "4";"14"};
{\ar@{=} "6";"16"};
{\ar_{} "10";"12"};
{\ar^{} "12";"14"};
{\ar^{} "14";"16"};
{\ar_{} "12";"22"};
{\ar^{} "14";"24"};
{\ar@{=} "22";"24"};
{\ar@{}|\circlearrowright "-12";"4"};
{\ar@{}|\circlearrowright "0";"12"};
{\ar@{}|\circlearrowright "2";"14"};
{\ar@{}|\circlearrowright "4";"16"};
{\ar@{}|\circlearrowright "12";"24"};
\endxy.
\]
Decompose $E$ into a distinguished triangle in $\mathcal{C}$
\[ U_E\to E\to V_E[1]\to U_E[1]\quad(U_E\in\mathcal{U},V_E\in\mathcal{V}). \]
Applying Condition {\rm (I)} to $E\in B\ast U\subseteq\mathcal{T}\ast\mathcal{U}$, we obtain an isomorphism in $\mathcal{Z}/\mathcal{I}$
\[ \sigma(\omega_{\mathcal{U}}(E))\cong\omega(\sigma_{\mathcal{T}}(E)), \]
namely, $\sigma(U_E)\cong\omega(L)\cong L$ in $\mathcal{Z}/\mathcal{I}$.

This implies $U_E\in\overline{\sigma}^{-1}(\mathscr{L})=\mathcal{A}$. By the octahedron axiom, we obtain the following commutative diagram made of distinguished triangles.
\[
\xy
(-14,18)*+{U_E}="0";
(-14.1,0)*+{E}="2";
(-14,-16)*+{V_E[1]}="4";
(-3,0)*+{U}="6";
(2,-9)*+{{}^{\exists}F[1]}="8";
(20,0)*+{B[1]}="10";
(-8,-8)*+_{_{\circlearrowright}}="12";
(5,-3)*+_{_{\circlearrowright}}="14";
(-10,5)*+_{_{\circlearrowright}}="14";
{\ar_{} "0";"2"};
{\ar_{} "2";"4"};
{\ar^{} "0";"6"};
{\ar_{} "2";"6"};
{\ar_{} "6";"8"};
{\ar^{} "6";"10"};
{\ar_{} "4";"8"};
{\ar_{} "8";"10"};
\endxy
\]

By Lemma \ref{LemSUU} {\rm (2)}, $F\in\mathcal{T}$ satisfies $\omega(F)\cong\omega(B)\in\mathscr{R}$, i.e., $F\in\overline{\omega}^{-1}(\mathscr{R})=\mathcal{B}$.
Thus the distinguished triangle in $\mathcal{C}$
\[ U_E\to U\to F[1]\to U_E[1] \]
satisfies $U_E\in\mathcal{A},F\in\mathcal{B}$.

\medskip

{\rm (iii)} Let $C\in\mathcal{C}$ be any object. Decompose it into a distinguished triangle in $\mathcal{C}$
\[ U_C\to C\to V_C[1]\to U_C[1]\quad(U_C\in\mathcal{U},V_C\in\mathcal{V}). \]
By the assumption, there is a distinguished triangle in $\mathcal{C}$
\[ A\to U_C\to B[1]\to A[1]\quad(A\in\mathcal{A},B\in\mathcal{B}). \]
By the octahedron axiom, we have the following commutative diagram made of distinguished triangles.
\[
\xy
(-20,0)*+{A}="0";
(-1,8)*+{U_C}="2";
(16,15)*+{B[1]}="4";
(3,0)*+{C}="6";
(16.1,0)*+{{}^{\exists}G[1]}="8";
(16,-19)*+{V_C[1]}="10";
(9,6)*+_{_{\circlearrowright}}="12";
(-5,3)*+_{_{\circlearrowright}}="14";
(12,-6)*+_{_{\circlearrowright}}="14";
{\ar^{} "0";"2"};
{\ar^{} "2";"4"};
{\ar_{} "0";"6"};
{\ar^{} "2";"6"};
{\ar^{} "6";"8"};
{\ar_{} "6";"10"};
{\ar^{} "4";"8"};
{\ar^{} "8";"10"};
\endxy
\]
It remains to show $G\in\mathcal{B}$.

Remark that we have $G\in B\ast V_C\subseteq\mathcal{T}$.
Since $V_C\in\mathcal{V}\subseteq\mathcal{I}\ast\mathcal{V} [1]$, there is a distinguished triangle in $\mathcal{C}$
\[ I^{\prime}\to V_C\to V^{\prime}[1]\to I^{\prime}[1]\quad(I^{\prime}\in\mathcal{I},V^{\prime}\in\mathcal{V}). \]
By the octahedron axiom and $\mathcal{C}(I^{\prime},B[1])=0$, we obtain
\[
\xy
(-8,23)*+{V^{\prime}}="-12";
(8,23)*+{V^{\prime}}="-14";
(-24,8)*+{B}="0";
(-8,8)*+{B\oplus I^{\prime}}="2";
(8,8)*+{I^{\prime}}="4";
(24,8)*+{B[1]}="6";
(-24,-7)*+{B}="10";
(-8,-7)*+{G}="12";
(8,-7)*+{V_C}="14";
(24,-7)*+{B[1]}="16";
(-8,-22)*+{V^{\prime}[1]}="22";
(8,-22)*+{V^{\prime}[1]}="24";
{\ar@{=} "-12";"-14"};
{\ar_{} "-12";"2"};
{\ar^{} "-14";"4"};
{\ar^{} "0";"2"};
{\ar^{} "2";"4"};
{\ar^{0} "4";"6"};
{\ar@{=} "0";"10"};
{\ar_{} "2";"12"};
{\ar^{} "4";"14"};
{\ar@{=} "6";"16"};
{\ar_{} "10";"12"};
{\ar^{} "12";"14"};
{\ar^{} "14";"16"};
{\ar_{} "12";"22"};
{\ar^{} "14";"24"};
{\ar@{=} "22";"24"};
{\ar@{}|\circlearrowright "-12";"4"};
{\ar@{}|\circlearrowright "0";"12"};
{\ar@{}|\circlearrowright "2";"14"};
{\ar@{}|\circlearrowright "4";"16"};
{\ar@{}|\circlearrowright "12";"24"};
\endxy.
\]
Applying Lemma \ref{LemSUU} {\rm (2)} to $V^{\prime}\to B\oplus I^{\prime}\to G\to V^{\prime}[1]$, we obtain
\[ \omega(G)\cong\omega(B\oplus I^{\prime})\cong\omega(B)\in\mathscr{R} \]
in $\mathcal{Z}/\mathcal{I}$, which shows $G\in\overline{\omega}^{-1}(\mathscr{R})=\mathcal{B}$.
\end{proof}

\begin{thm}\label{ThmBij}
Let $\mathcal{P}$ be a concentric TCP satisfying {\rm (I)+(II)}.
Then we have the following mutually inverse bijections.
\begin{eqnarray*}
\mathbb{R}\colon\mathfrak{M}_{\mathcal{P}}\to\mathfrak{CP}(\mathcal{Z}/\mathcal{I})&;&(\mathcal{A},\mathcal{B})\mapsto(\overline{\sigma}(\mathcal{A}),\overline{\omega}(\mathcal{B})),\\
\mathbb{I}\colon\mathfrak{CP}(\mathcal{Z}/\mathcal{I})\to\mathfrak{M}_{\mathcal{P}}&;&(\mathscr{L},\mathscr{R})\mapsto(\overline{\sigma}^{-1}(\mathscr{L}),\overline{\omega}^{-1}(\mathscr{R})).
\end{eqnarray*}
\end{thm}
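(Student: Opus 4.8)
The plan is to deduce the theorem from the structural results of the preceding subsections, leaving only a short formal argument at the end: well-definedness of $\mathbb{R}$ and $\mathbb{I}$ will come from Propositions \ref{PropABtoLR}, \ref{PropLRtoAB}, \ref{PropInverseImage} and \ref{PropExtInj}, and the fact that they are mutually inverse will then follow from a cotorsion pair being determined by either of its two members. Throughout I will use that $\mathcal{Z}/\mathcal{I}$ is triangulated with shift $\Sigma$ (Corollary \ref{CorSigmaOmega}), so that $\mathfrak{CP}(\mathcal{Z}/\mathcal{I})$ makes sense and its condition {\rm (i)} reads $\mathcal{Z}/\mathcal{I}=\mathscr{L}\ast\Sigma\mathscr{R}$.

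First I would check that $\mathbb{R}$ lands in $\mathfrak{CP}(\mathcal{Z}/\mathcal{I})$. Given $(\mathcal{A},\mathcal{B})\in\mathfrak{M}_{\mathcal{P}}$ and writing $\mathscr{L}=\overline{\sigma}(\mathcal{A})$, $\mathscr{R}=\overline{\omega}(\mathcal{B})$, the equality $\mathcal{C}=\mathcal{A}\ast\mathcal{B}[1]$ coming from $(\mathcal{A},\mathcal{B})\in\mathfrak{CP}(\mathcal{C})$ feeds into Proposition \ref{PropABtoLR} to give $\mathcal{Z}/\mathcal{I}=\mathscr{L}\ast\Sigma\mathscr{R}$, while $\mathrm{Ext}^1_{\mathcal{Z}/\mathcal{I}}(\mathscr{L},\mathscr{R})=0$ is exactly the defining condition of $\mathfrak{M}_{\mathcal{P}}$ (Definition \ref{DefMutable}). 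The remaining point is that $\mathscr{L}$ and $\mathscr{R}$ are closed under direct summands; I would deduce this from the identity $\overline{\sigma}(\mathcal{A})=p(\mathcal{A}\cap\mathcal{Z})$ — which holds because $\mathcal{A}\ast\mathcal{S}=\mathcal{A}$ (as $\mathcal{A}$ is extension-closed and contains $\mathcal{S}$, so any triangle $S[-1]\to A\to Z_A\to S$ with $S\in\mathcal{S},Z_A\in\mathcal{Z}$ has $Z_A\in\mathcal{A}\cap\mathcal{Z}$ and $\sigma(\underline{A})\cong\underline{Z_A}$), together with the fact that $\sigma$ restricts to the identity on $\mathcal{Z}/\mathcal{I}$ (Definition \ref{DefTCPAdj}) — combined with Remark \ref{RemQuot2} applied to the isomorphism- and summand-closed subcategory $\mathcal{A}\cap\mathcal{Z}\supseteq\mathcal{I}$; the argument for $\mathscr{R}$ is dual. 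Hence $(\mathscr{L},\mathscr{R})\in\mathfrak{CP}(\mathcal{Z}/\mathcal{I})$.

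Next I would check that $\mathbb{I}$ lands in $\mathfrak{M}_{\mathcal{P}}$. For $(\mathscr{L},\mathscr{R})\in\mathfrak{CP}(\mathcal{Z}/\mathcal{I})$ and $\mathcal{A}=\overline{\sigma}^{-1}(\mathscr{L})=\widetilde{\sigma^{-1}(\mathscr{L})}$, $\mathcal{B}=\overline{\omega}^{-1}(\mathscr{R})=\widetilde{\omega^{-1}(\mathscr{R})}$, Proposition \ref{PropInverseImage}{\rm (3)} already says these are full additive subcategories of $\mathcal{C}$, closed under isomorphisms and summands, containing $\mathcal{I}$, with $\mathcal{A}\subseteq\mathcal{U}$ and $\mathcal{B}\subseteq\mathcal{T}$; the containments $\mathcal{S}\subseteq\mathcal{A}$ and $\mathcal{V}\subseteq\mathcal{B}$ follow from Remark \ref{RemConcentric}{\rm (ii)} (for instance $\mathcal{S}\subseteq\mathcal{S}[-1]\ast\mathcal{I}\subseteq\mathcal{S}[-1]\ast\widetilde{\mathscr{L}}$ and $\mathcal{S}\subseteq\mathcal{U}$, so $\mathcal{S}\subseteq\mathcal{U}\cap(\mathcal{S}[-1]\ast\widetilde{\mathscr{L}})=\mathcal{A}$). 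Then $\mathcal{C}=\mathcal{A}\ast\mathcal{B}[1]$ follows from Proposition \ref{PropLRtoAB} applied to $\mathcal{Z}/\mathcal{I}=\mathscr{L}\ast\Sigma\mathscr{R}$, and $\mathrm{Ext}^1_{\mathcal{C}}(\mathcal{A},\mathcal{B})=0$ follows from the monomorphism of Proposition \ref{PropExtInj} together with $\mathrm{Ext}^1_{\mathcal{Z}/\mathcal{I}}(\overline{\sigma}(\mathcal{A}),\overline{\omega}(\mathcal{B}))=0$, valid because $\overline{\sigma}(\mathcal{A})\subseteq\mathscr{L}$, $\overline{\omega}(\mathcal{B})\subseteq\mathscr{R}$ and $(\mathscr{L},\mathscr{R})$ is a cotorsion pair. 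Thus $(\mathcal{A},\mathcal{B})\in\mathfrak{CP}(\mathcal{C})$, and this last vanishing together with the containments places it in $\mathfrak{M}_{\mathcal{P}}$.

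Finally, for the two composites, given $(\mathcal{A},\mathcal{B})\in\mathfrak{M}_{\mathcal{P}}$ the pair $(\mathcal{A}',\mathcal{B}')=\mathbb{I}\mathbb{R}(\mathcal{A},\mathcal{B})$ satisfies $\mathcal{A}\subseteq\mathcal{A}'$ and $\mathcal{B}\subseteq\mathcal{B}'$ (image of a preimage) and is again a cotorsion pair by the previous step; since $\mathcal{B}\subseteq\mathcal{B}'$ forces $\mathcal{A}={}^{\perp}\mathcal{B}[1]\supseteq{}^{\perp}\mathcal{B}'[1]=\mathcal{A}'$, we get $\mathcal{A}=\mathcal{A}'$ and hence $\mathcal{B}=\mathcal{A}[-1]^{\perp}=\mathcal{A}'[-1]^{\perp}=\mathcal{B}'$, so $\mathbb{I}\mathbb{R}=\mathrm{id}$; the dual argument on $\mathfrak{CP}(\mathcal{Z}/\mathcal{I})$, using $\mathscr{L}={}^{\perp}\mathscr{R}[1]$ and $\mathscr{R}=\mathscr{L}[-1]^{\perp}$ in the triangulated category $\mathcal{Z}/\mathcal{I}$, gives $\mathbb{R}\mathbb{I}=\mathrm{id}$. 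The genuine difficulty is entirely contained in the already-established Propositions \ref{PropABtoLR}, \ref{PropLRtoAB} and \ref{PropExtInj} (the mutual compatibility of the two decompositions and the injectivity of $\mathrm{Ext}^1$); within the proof of the theorem itself the only delicate verification is the summand-closure of $\overline{\sigma}(\mathcal{A})$ and $\overline{\omega}(\mathcal{B})$ via the identity $\overline{\sigma}(\mathcal{A})=p(\mathcal{A}\cap\mathcal{Z})$, and once both maps are known to be well defined, being mutual inverses is purely formal.
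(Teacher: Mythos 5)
Your proposal is correct and takes essentially the same route as the paper's proof: well-definedness of $\mathbb{R}$ and $\mathbb{I}$ from Propositions \ref{PropABtoLR}, \ref{PropLRtoAB}, \ref{PropExtInj} (together with Definition \ref{DefMutable} and Proposition \ref{PropInverseImage}), and then mutual inverseness from the inclusions $\mathcal{A}\subseteq\overline{\sigma}^{-1}(\overline{\sigma}(\mathcal{A}))$, $\overline{\sigma}(\overline{\sigma}^{-1}(\mathscr{L}))\subseteq\mathscr{L}$ (and their counterparts) plus the fact that the two halves of a cotorsion pair determine each other by orthogonality. The only deviation is your explicit verification that $\overline{\sigma}(\mathcal{A})$ and $\overline{\omega}(\mathcal{B})$ are closed under direct summands via $\overline{\sigma}(\mathcal{A})=p(\mathcal{A}\cap\mathcal{Z})$ and Remark \ref{RemQuot2}, a point the paper leaves implicit; this is a correct and welcome refinement rather than a different argument.
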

\begin{proof}
First we show the well-definedness of these maps.

Let $(\mathcal{A},\mathcal{B})\in\mathfrak{M}_{\mathcal{P}}$ be any element. Put $\mathscr{L}=\overline{\sigma}(\mathcal{A})$, $\mathscr{R}=\overline{\omega}(\mathcal{B})$. Then,
\begin{itemize}
\item[-] $\mathrm{Ext}^1_{\mathcal{Z}/\mathcal{I}}(\mathscr{L},\mathscr{R})=0$ follows from the definition of $\mathfrak{M}_{\mathcal{P}}$, 
\item[-] $\mathcal{Z}/\mathcal{I}=\mathscr{L}\ast\Sigma\mathscr{R}$ follows from $\mathcal{C}=\mathcal{A}\ast\mathcal{B}[1]$ and Proposition \ref{PropABtoLR}.
\end{itemize}
It follows $(\mathscr{L},\mathscr{R})\in\mathfrak{CP}(\mathcal{Z}/\mathcal{I})$.

\smallskip

Let $(\mathscr{L},\mathscr{R})\in\mathfrak{CP}(\mathcal{Z}/\mathcal{I})$ be any element. Put $\mathcal{A}=\overline{\sigma}^{-1}(\mathscr{L})$, $\mathcal{B}=\overline{\omega}^{-1}(\mathscr{R})$. Then,
\begin{itemize}
\item[-] $\mathcal{S}\subseteq\mathcal{A}\subseteq\mathcal{U}$ and $\mathcal{V}\subseteq\mathcal{B}\subseteq\mathcal{T}$ are satisfied,
\item[-] $\mathrm{Ext}^1_{\mathcal{C}}(\mathcal{A},\mathcal{B})=0$ follows from $\mathrm{Ext}^1_{\mathcal{Z}/\mathcal{I}}(\mathscr{L},\mathscr{R})=0$ and Proposition \ref{PropExtInj}, 
\item[-] $\mathcal{C}=\mathcal{A}\ast\mathcal{B}[1]$ follows from $\mathcal{Z}/\mathcal{I}=\mathscr{L}\ast\Sigma\mathscr{R}$ and Proposition \ref{PropLRtoAB},
\item[-] $\mathrm{Ext}^1_{\mathcal{Z}/\mathcal{I}}(\overline{\sigma}(\overline{\sigma}^{-1}(\mathscr{L}),\overline{\omega}(\overline{\omega}^{-1}(\mathscr{R})))=0$ immediately follows from $\mathrm{Ext}^1_{\mathcal{Z}/\mathcal{I}}(\mathscr{L},\mathscr{R})=0$, since we have $\overline{\sigma}(\overline{\sigma}^{-1}(\mathscr{L}))\subseteq\mathscr{L}$ and $\overline{\omega}(\overline{\omega}^{-1}(\mathscr{R}))\subseteq\mathscr{R}$.
\end{itemize}
It follows $(\mathcal{A},\mathcal{B})\in\mathfrak{M}_{\mathcal{P}}$.

Thus these maps are well-defined.
It remains to show that they are mutually inverses.

Let $(\mathcal{A},\mathcal{B})\in\mathfrak{M}_{\mathcal{P}}$ be any element. Then it obviously satisfies
\begin{equation}\label{InclBij1}
\overline{\sigma}^{-1}(\overline{\sigma}(\mathcal{A}))\supseteq\mathcal{A}\quad\text{and}\quad \overline{\omega}^{-1}(\overline{\omega}(\mathcal{B}))\supseteq\mathcal{B}.
\end{equation}
Since both $(\mathcal{A},\mathcal{B})$ and $\mathbb{I}\circ\mathbb{R}((\mathcal{A},\mathcal{B}))$ are cotorsion pairs in $\mathcal{C}$, $(\ref{InclBij1})$ implies $(\mathcal{A},\mathcal{B})=\mathbb{I}\circ\mathbb{R}((\mathcal{A},\mathcal{B}))$.
This means $\mathbb{I}\circ\mathbb{R}=\mathrm{id}$.

Similarly, for any $(\mathscr{L},\mathscr{R})\in\mathfrak{CP}(\mathcal{Z}/\mathcal{I})$,
\[ \overline{\sigma}(\overline{\sigma}^{-1}(\mathscr{L}))\subseteq\mathscr{L} \quad\text{and}\quad \overline{\omega}(\overline{\omega}^{-1}(\mathscr{R}))\subseteq\mathscr{R} \]
imply $\mathbb{R}\circ\mathbb{I}((\mathscr{L},\mathscr{R}))=(\mathscr{L},\mathscr{R})$. This means $\mathbb{R}\circ\mathbb{I}=\mathrm{id}$.

\end{proof}

\begin{cor}\label{CorABBij1}
Let $\mathcal{P}$ be a concentric TCP satisfying {\rm (I)+(II)}. For any $(\mathcal{A},\mathcal{B})\in\mathfrak{CP}(\mathcal{C})$ satisfying $\mathcal{S}\subseteq\mathcal{A}$ and $\mathcal{V}\subseteq\mathcal{B}$, the following are equivalent.
\begin{enumerate}
\item $(\mathcal{A},\mathcal{B})\in\mathfrak{M}_{\mathcal{P}}$.
\item $\mathcal{U}\cap(\mathcal{S}[-1]\ast\mathcal{A})\subseteq\mathcal{A}$.
\item $\mathcal{T}\cap(\mathcal{B}\ast\mathcal{V}[1])\subseteq\mathcal{B}$.
\end{enumerate}
\end{cor}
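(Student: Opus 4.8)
The plan is to use (1) as a hub and prove $(1)\Rightarrow(2)$, $(1)\Rightarrow(3)$, $(2)\Rightarrow(1)$, $(3)\Rightarrow(1)$, the two ``$\sigma$-side'' implications being dual to the two ``$\omega$-side'' ones. First a reduction: under the standing hypotheses $\mathcal{S}\subseteq\mathcal{A}$ and $\mathcal{V}\subseteq\mathcal{B}$, the Remark following Definition \ref{DefMutable} already gives $\mathcal{S}\subseteq\mathcal{A}\subseteq\mathcal{U}$ and $\mathcal{V}\subseteq\mathcal{B}\subseteq\mathcal{T}$, so membership $(\mathcal{A},\mathcal{B})\in\mathfrak{M}_{\mathcal{P}}$ is equivalent to the single requirement $\mathrm{Ext}^1_{\mathcal{Z}/\mathcal{I}}(\sigma(\mathcal{A}/\mathcal{I}),\omega(\mathcal{B}/\mathcal{I}))=0$. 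Moreover $\mathcal{A}\subseteq\mathcal{U}\cap(\mathcal{S}[-1]\ast\mathcal{A})$ and $\mathcal{B}\subseteq\mathcal{T}\cap(\mathcal{B}\ast\mathcal{V}[1])$ hold automatically (take the zero object as the first, resp.\ last, term of the defining triangle), so (2) and (3) are each equivalent to the corresponding equality. Since $(\mathcal{A},\mathcal{B})$ is a cotorsion pair I shall freely use $\mathcal{A}={}^{\perp}\mathcal{B}[1]=\{C\mid\mathrm{Ext}^1_{\mathcal{C}}(C,\mathcal{B})=0\}$ and $\mathcal{B}=\mathcal{A}[-1]^{\perp}=\{C\mid\mathrm{Ext}^1_{\mathcal{C}}(\mathcal{A},C)=0\}$.

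For $(1)\Rightarrow(3)$: let $Y\in\mathcal{T}\cap(\mathcal{B}\ast\mathcal{V}[1])$ and rotate a defining triangle of $Y\in\mathcal{B}\ast\mathcal{V}[1]$ to obtain a distinguished triangle $V\to B\overset{t}{\to} Y\to V[1]$ with $V\in\mathcal{V}$ and $B\in\mathcal{B}$. Since $V\in\mathcal{V}$ and $Y\in\mathcal{T}$, Lemma \ref{LemSUU}(2) applies and yields $\omega(\underline{t})\colon\omega(B)\overset{\cong}{\to}\omega(Y)$ in $\mathcal{Z}/\mathcal{I}$. Hence, for every $A\in\mathcal{A}$, combining the monomorphism of Proposition \ref{PropExtInj} with this isomorphism gives
\[ \mathrm{Ext}^1_{\mathcal{C}}(A,Y)\hookrightarrow\mathrm{Ext}^1_{\mathcal{Z}/\mathcal{I}}(\sigma(A),\omega(Y))\cong\mathrm{Ext}^1_{\mathcal{Z}/\mathcal{I}}(\sigma(A),\omega(B))=0, \]
the last vanishing by (1). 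Thus $Y\in\mathcal{B}$, which is (3). The implication $(1)\Rightarrow(2)$ is dual: for $X\in\mathcal{U}\cap(\mathcal{S}[-1]\ast\mathcal{A})$, a defining triangle $S[-1]\to X\overset{u}{\to} A\to S$ with $S\in\mathcal{S}$, $A\in\mathcal{A}$ gives $\sigma(\underline{u})\colon\sigma(X)\overset{\cong}{\to}\sigma(A)$ by Lemma \ref{LemSUU}(1), and Proposition \ref{PropExtInj} together with (1) then forces $\mathrm{Ext}^1_{\mathcal{C}}(X,\mathcal{B})=0$, i.e.\ $X\in\mathcal{A}$.

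For $(3)\Rightarrow(1)$: Lemma \ref{LemExtInj} applies to $\mathcal{B}$ (a cotorsion pair component is closed under isomorphisms, direct summands and extensions, and $\mathcal{V}\subseteq\mathcal{B}\subseteq\mathcal{T}$), so for each $B\in\mathcal{B}$ it supplies $R\in\mathcal{T}\cap(\mathcal{B}\ast\mathcal{V}[1])$ with $\mathrm{Ext}^1_{\mathcal{Z}/\mathcal{I}}(\sigma(U),\omega(B))\cong\mathrm{Ext}^1_{\mathcal{C}}(U,R)$ for all $U\in\mathcal{U}$. By (3) we have $R\in\mathcal{B}$, hence $\mathrm{Ext}^1_{\mathcal{C}}(A,R)=0$ for every $A\in\mathcal{A}$, and therefore $\mathrm{Ext}^1_{\mathcal{Z}/\mathcal{I}}(\sigma(\mathcal{A}/\mathcal{I}),\omega(\mathcal{B}/\mathcal{I}))=0$, which is (1). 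Again $(2)\Rightarrow(1)$ is the dual argument, obtained by applying Lemma \ref{LemExtInj} in $\mathcal{C}^{\mathrm{op}}$ to the concentric TCP $((\mathcal{V},\mathcal{U}),(\mathcal{T},\mathcal{S}))$ (under which conditions {\rm (I)}, {\rm (II)} are preserved, $\mathcal{N}^i$ and $\mathcal{N}^f$ are interchanged, and $\sigma$ and $\omega$ are interchanged): for each $A\in\mathcal{A}$ one obtains $L\in\mathcal{U}\cap(\mathcal{S}[-1]\ast\mathcal{A})$ with $\mathrm{Ext}^1_{\mathcal{Z}/\mathcal{I}}(\sigma(A),\omega(T))\cong\mathrm{Ext}^1_{\mathcal{C}}(L,T)$ for all $T\in\mathcal{T}$; by (2) we get $L\in\mathcal{A}$, whence $\mathrm{Ext}^1_{\mathcal{C}}(L,\mathcal{B})=0$ and (1) follows. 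The same $\mathcal{C}^{\mathrm{op}}$-symmetry incidentally transforms $(1)\Leftrightarrow(3)$ directly into $(1)\Leftrightarrow(2)$, so only one side really needs to be written out.

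The only steps that are not purely formal are the appeals to Lemma \ref{LemExtInj} and its dual; everything else is bookkeeping. The points that require care are: keeping track of which distinguished triangle is fed into Lemma \ref{LemSUU} so that the resulting isomorphism lands on $\omega(B)$ (resp.\ $\sigma(A)$) rather than on a wrong vertex; and checking that Proposition \ref{PropExtInj} is being applied to objects in $\mathcal{U}$ and $\mathcal{T}$. The conceptual content of $(2)\Rightarrow(1)$ and $(3)\Rightarrow(1)$ is precisely that, up to the correction furnished by Lemma \ref{LemExtInj}, the group $\mathrm{Ext}^1_{\mathcal{Z}/\mathcal{I}}$ between $\sigma(\mathcal{A}/\mathcal{I})$ and $\omega(\mathcal{B}/\mathcal{I})$ is computed by an $\mathrm{Ext}^1_{\mathcal{C}}$ against an object which the closure conditions (3), resp.\ (2), force to lie in $\mathcal{B}$, resp.\ $\mathcal{A}$; I do not anticipate any further obstacle.
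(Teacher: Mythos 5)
Your proposal is correct, and its two nontrivial implications coincide with the paper's proof: for $(3)\Rightarrow(1)$ you apply Lemma \ref{LemExtInj} to $\mathcal{B}$ to replace $\mathrm{Ext}^1_{\mathcal{Z}/\mathcal{I}}(\sigma(A),\omega(B))$ by $\mathrm{Ext}^1_{\mathcal{C}}(A,R)$ with $R\in\mathcal{T}\cap(\mathcal{B}\ast\mathcal{V}[1])\subseteq\mathcal{B}$, exactly as in the paper, and you dispose of $(2)\Rightarrow(1)$ by the same duality the paper invokes. The only place you diverge is the forward direction: the paper deduces $(1)\Rightarrow(2),(3)$ in one line from $\mathbb{I}\circ\mathbb{R}=\mathrm{id}$ of Theorem \ref{ThmBij} (indeed it gets the equalities $\mathcal{A}=\mathcal{U}\cap(\mathcal{S}[-1]\ast\mathcal{A})$ and $\mathcal{B}=\mathcal{T}\cap(\mathcal{B}\ast\mathcal{V}[1])$), whereas you argue directly: rotate the defining triangle of $Y\in\mathcal{T}\cap(\mathcal{B}\ast\mathcal{V}[1])$, use Lemma \ref{LemSUU} to identify $\omega(Y)\cong\omega(B)$, and then squeeze $\mathrm{Ext}^1_{\mathcal{C}}(A,Y)$ through the monomorphism of Proposition \ref{PropExtInj}. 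Your route is slightly more elementary and self-contained, since it does not pass through the reduction bijection (in particular not through Propositions \ref{PropABtoLR} and \ref{PropLRtoAB}); what it costs is redoing by hand, for this one direction, information that Theorem \ref{ThmBij} already packages. Both arguments are valid, and your applications of Lemma \ref{LemSUU} and Proposition \ref{PropExtInj} are made to the correct vertices ($A\in\mathcal{U}$, $Y,B\in\mathcal{T}$, resp.\ $X\in\mathcal{U}$, $B\in\mathcal{T}$), which is the only point where this direction could have gone wrong.
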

\begin{proof}
Since $\mathbb{I}\circ\mathbb{R}=\mathrm{id}$, {\rm (1)} implies {\rm (2)} and {\rm (3)}. In fact, we have $\mathcal{U}\cap(\mathcal{S}[-1]\ast\mathcal{A})=\mathcal{A}$ and $\mathcal{T}\cap(\mathcal{B}\ast\mathcal{V}[1])=\mathcal{B}$.

Conversely, suppose {\rm (3)} holds. To show {\rm (1)}, it suffices to show $\mathrm{Ext}^1_{\mathcal{Z}/\mathcal{I}}(\sigma(\mathcal{A}/\mathcal{I}),\omega(\mathcal{B}/\mathcal{I}))=0$.
Let $A\in\mathcal{A},B\in\mathcal{B}$ be any pair of objects. By Lemma \ref{LemExtInj}, there is a distinguished triangle in $\mathcal{C}$
\[ B_0\to R\to N\to B_0[1] \]
satisfying $R\in\mathcal{T}\cap(\mathcal{B}\ast\mathcal{V}[1])$ and $\mathrm{Ext}^1_{\mathcal{C}}(A,R)\cong\mathrm{Ext}^1_{\mathcal{Z}/\mathcal{I}}(\sigma(A),\omega(B))$. By {\rm (3)}, we have $R\in\mathcal{B}$ and thus $\mathrm{Ext}^1_{\mathcal{C}}(A,R)=0$.

$(2)\Rightarrow(1)$ can be shown dually.
\end{proof}

\begin{cor}\label{CorABBij2}
For any concentric $\mathcal{P}$ satisfying {\rm (I)+(II)}, we have
\[ \mathfrak{M}_{\mathcal{P}}=\{(\mathcal{A},\mathcal{B})\in\mathfrak{CP}(\mathcal{C})\mid\mathcal{A}=\mathcal{U}\cap(\mathcal{S}[-1]\ast\mathcal{A}),\ \mathcal{B}=\mathcal{T}\cap(\mathcal{B}\ast\mathcal{V}[1])\}. \]
\end{cor}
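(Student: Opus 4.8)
The plan is to establish the displayed equality of subsets of $\mathfrak{CP}(\mathcal{C})$ by proving the two inclusions separately, with Corollary \ref{CorABBij1} doing essentially all the work; write $\mathfrak{M}'_{\mathcal{P}}$ for the right-hand side.

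\emph{$\mathfrak{M}_{\mathcal{P}}\subseteq\mathfrak{M}'_{\mathcal{P}}$.} Let $(\mathcal{A},\mathcal{B})\in\mathfrak{M}_{\mathcal{P}}$. By definition $\mathcal{S}\subseteq\mathcal{A}\subseteq\mathcal{U}$ and $\mathcal{V}\subseteq\mathcal{B}\subseteq\mathcal{T}$, so Corollary \ref{CorABBij1} gives the inclusions $\mathcal{U}\cap(\mathcal{S}[-1]\ast\mathcal{A})\subseteq\mathcal{A}$ and $\mathcal{T}\cap(\mathcal{B}\ast\mathcal{V}[1])\subseteq\mathcal{B}$. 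The opposite inclusions are formal: from $\mathcal{A}\subseteq\mathcal{U}$ together with the distinguished triangle $0\to A\to A\to 0$ (which exhibits $A\in\mathcal{S}[-1]\ast\mathcal{A}$) one gets $\mathcal{A}\subseteq\mathcal{U}\cap(\mathcal{S}[-1]\ast\mathcal{A})$, and dually for $\mathcal{B}$. Hence both fixed-point equations hold and $(\mathcal{A},\mathcal{B})\in\mathfrak{M}'_{\mathcal{P}}$.

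\emph{$\mathfrak{M}'_{\mathcal{P}}\subseteq\mathfrak{M}_{\mathcal{P}}$.} This is the substantive direction. Let $(\mathcal{A},\mathcal{B})\in\mathfrak{CP}(\mathcal{C})$ satisfy $\mathcal{A}=\mathcal{U}\cap(\mathcal{S}[-1]\ast\mathcal{A})$ and $\mathcal{B}=\mathcal{T}\cap(\mathcal{B}\ast\mathcal{V}[1])$; then immediately $\mathcal{A}\subseteq\mathcal{U}$ and $\mathcal{B}\subseteq\mathcal{T}$. The key step is to upgrade these one-sided inclusions to $\mathcal{S}\subseteq\mathcal{A}$ and $\mathcal{V}\subseteq\mathcal{B}$, after which Corollary \ref{CorABBij1} applies, its conditions {\rm (2)} and {\rm (3)} holding trivially since we even have equality, and yields $(\mathcal{A},\mathcal{B})\in\mathfrak{M}_{\mathcal{P}}$. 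For the upgrade I would use the orthogonality descriptions of the cotorsion pairs in play: since $(\mathcal{A},\mathcal{B})$ is a cotorsion pair we have $\mathcal{B}=\mathcal{A}[-1]^{\perp}$ and $\mathcal{A}={}^{\perp}\mathcal{B}[1]$, so $\mathcal{A}\subseteq\mathcal{U}$ forces $\mathcal{B}=\mathcal{A}[-1]^{\perp}\supseteq\mathcal{U}[-1]^{\perp}=\mathcal{V}$, while $\mathcal{B}\subseteq\mathcal{T}$ forces $\mathcal{A}={}^{\perp}\mathcal{B}[1]\supseteq{}^{\perp}\mathcal{T}[1]=\mathcal{S}$, the last equalities being the orthogonality relations for $(\mathcal{U},\mathcal{V})$ and $(\mathcal{S},\mathcal{T})$ recorded in Definition \ref{DefCPC}.

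The only step that is not pure bookkeeping is this passage from $\mathcal{A}\subseteq\mathcal{U}$, $\mathcal{B}\subseteq\mathcal{T}$ to $\mathcal{S}\subseteq\mathcal{A}$, $\mathcal{V}\subseteq\mathcal{B}$, and even that is a short diagram-free argument once one recalls that $\mathcal{S},\mathcal{T},\mathcal{U},\mathcal{V}$ are determined as orthogonal subcategories; so I do not anticipate a genuine obstacle. It is worth noting in passing that both sides of the claimed identity are by construction subsets of $\mathfrak{CP}(\mathcal{C})$, so no cotorsion-pair axiom has to be re-verified.
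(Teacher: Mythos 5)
Your proof is correct and follows essentially the same route as the paper: the paper's own argument is simply that the fixed-point equations force $\mathcal{S}\subseteq\mathcal{A}$ and $\mathcal{V}\subseteq\mathcal{B}$ "automatically," after which Corollary \ref{CorABBij1} gives the equality. Your orthogonality argument ($\mathcal{B}\subseteq\mathcal{T}\Rightarrow\mathcal{S}={}^{\perp}\mathcal{T}[1]\subseteq{}^{\perp}\mathcal{B}[1]=\mathcal{A}$ and dually) is exactly the intended justification of that step, and your trivial-triangle observation for the reverse inclusions agrees with the equalities already noted in the proof of Corollary \ref{CorABBij1}.
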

\begin{proof}
If $\mathcal{A}=\mathcal{U}\cap(\mathcal{S}[-1]\ast\mathcal{A})$ and $\mathcal{B}=\mathcal{T}\cap(\mathcal{B}\ast\mathcal{V}[1])$ hold, then $\mathcal{S}\subseteq\mathcal{A}$ and $\mathcal{V}\subseteq\mathcal{B}$ follows automatically. Thus the equation follows from Corollary \ref{CorABBij1}.
\end{proof}

As a consequence of Theorem \ref{ThmBij}, we can define {\it mutation} of cotorsion pairs in $\mathfrak{M}_{\mathcal{P}}$ as a $\mathbb{Z}$-action as below.
\begin{dfn}\label{DefGeneralMut}
Let $\mathcal{P}$ be a concentric TCP satisfying {\rm (I)+(II)}.
Define the {\it mutation} on $\mathfrak{M}_{\mathcal{P}}$ to be a $\mathbb{Z}$-action, obtained by pulling back the shift $\mathbb{Z}$-action on $\mathfrak{CP}(\mathcal{Z}/\mathcal{I})$.

Namely, for any $n\in\mathbb{Z}$, we define as $\mu_n=\mathbb{I}\circ [n]\circ \mathbb{R}$. This gives a $\mathbb{Z}$-action $\mathbb{Z}\times\mathfrak{M}_{\mathcal{P}}\to\mathfrak{M}_{\mathcal{P}}$.
\end{dfn}

\section{Typical cases} \label{section_Typical}

Let us see the meaning of Theorem \ref{ThmBij} in the following particular cases (\S \ref{section_HE}, \S \ref{section_HTCP}).
\[
\xy
(0,24)*+{\left[\begin{array}{c}\text{Concentric TCP}\end{array}\right]}="-2";
(0,8)*+{\left[\begin{array}{c}\text{Concentric TCP}\\ \text{satisfying {\rm (I)+(II)}}\end{array}\right]}="0";
(-24,-8)*+{\left[\begin{array}{c}\text{Heart-equivalent}\\ \text{case}\ (\text{\S \ref{section_HE}})\end{array}\right]}="2";
(-24,-30)*+{\left[\begin{array}{c}\text{TCP appearing}\\ \text{in mutation}\\ (\text{Example \ref{ExZZ}}) \end{array}\right]}="4";
(24,-8)*+{\left[\begin{array}{c}\text{Hovey}\\ \text{TCP}\ (\text{\S \ref{section_HTCP}})\end{array}\right]}="6";
(24,-30)*+{\left[\begin{array}{c}\text{Recollement of}\\ \text{triangulated categories}\\ (\text{Corollary \ref{CorRecolltoHTCP}})\end{array}\right]}="8";
(-48,28)*+{}="10";
(-48,-36)*+{}="11";
(-56,24)*+{\text{general}}="20";
(-56,-32)*+{\text{special}}="20";
{\ar@{<=} "-2";"0"};
{\ar@{<=} "0";"2"};
{\ar@{<=} "2";"4"};
{\ar@{<=} "0";"6"};
{\ar@{<=} "6";"8"};
{\ar "11";"10"};
\endxy
\]

\subsection{Heart-equivalent case}\label{section_HE}

Instead of the condition {\rm (I)+(II)}, let us consider the following, a bit stronger condition.
\begin{cond}\label{CondHeartEq}
Let $\mathcal{P}=((\mathcal{S},\mathcal{T}),(\mathcal{U},\mathcal{V}))$ be a concentric TCP.
We consider the following condition.
\begin{itemize}
\item[{\rm (III)}] $(\mathcal{S},\mathcal{T})$ and $(\mathcal{U},\mathcal{V})$ are heart-equivalent. Namely, $\mathcal{P}$ satisfies $H_{(\mathcal{S},\mathcal{T})}(\mathcal{U})=0$ and $H_{(\mathcal{U},\mathcal{V})}(\mathcal{T})=0$ (Remark \ref{RemTCPAdj}).
\end{itemize}
\end{cond}

\begin{prop}\label{PropIIItoIandII}
Let $\mathcal{P}$ be a concentric TCP satisfying {\rm (III)}. Then it satisfies {\rm (I)} and {\rm (II)}.
\end{prop}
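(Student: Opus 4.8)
The plan is to unwind condition (III) and then derive (II) first and (I) afterwards. By Remark~\ref{RemTCPAdj}, (III) amounts to $H_{(\mathcal{U},\mathcal{V})}(\mathcal{T})=0$ and $H_{(\mathcal{S},\mathcal{T})}(\mathcal{U})=0$, and by Fact~\ref{FactCPAdj}(4) the first of these says precisely that in every $(\mathcal{U},\mathcal{V})$-decomposition $U\xrightarrow{u}T\xrightarrow{v}V[1]\to U[1]$ of an object $T\in\mathcal{T}$ the morphism $v$ factors through an object of $\mathcal{V}$ (equivalently $\tau^+_{(\mathcal{U},\mathcal{V})}(\overline{\mathcal{T}})\subseteq\overline{\mathcal{V}}$); the second says dually that in every $\sigma_{\mathcal{T}}$-decomposition $S[-1]\xrightarrow{s}U\to\sigma_{\mathcal{T}}(U)\to S$ of an object $U\in\mathcal{U}$ the morphism $s$ factors through an object of $\mathcal{S}$. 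I will also freely use the general facts, valid for any concentric $\mathcal{P}$, that $\omega_{\mathcal{U}}(\mathcal{T})\subseteq\mathcal{Z}$ and $\sigma_{\mathcal{T}}(\mathcal{U})\subseteq\mathcal{Z}$ (because $\mathcal{V}\subseteq\mathcal{T}$, $\mathcal{S}\subseteq\mathcal{U}$ and $\mathcal{T},\mathcal{U}$ are extension-closed) and that $\omega_{\mathcal{U}}(\mathcal{V})\subseteq\mathcal{I}$, $\sigma_{\mathcal{T}}(\mathcal{S})\subseteq\mathcal{I}$ (apply $\mathcal{C}(\mathcal{U},-)$ to a $(\mathcal{U},\mathcal{V})$-decomposition of $V\in\mathcal{V}$, resp. dually).

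To prove (II) it is enough to show $\mathcal{T}\cap\mathcal{N}^i\subseteq\mathcal{V}$; the reverse inclusion holds by Remark~\ref{RemConcentric}, and $\mathcal{U}\cap\mathcal{N}^f=\mathcal{S}$ is dual. So let $X\in\mathcal{T}\cap\mathcal{N}^i$. From $X\in\mathcal{T}$ we get $\omega_{\mathcal{U}}(X)\in\mathcal{Z}$, and from $X\in\mathcal{N}^i=\widetilde{\omega_{\mathcal{U}}^{-1}(\overline{\mathcal{S}})}$ (Corollary~\ref{CorInverseImage}) we get $\omega_{\mathcal{U}}(X)\in\mathcal{S}$, hence $\omega_{\mathcal{U}}(X)\in\mathcal{Z}\cap\mathcal{S}=(\mathcal{U}\cap\mathcal{T})\cap\mathcal{S}=\mathcal{T}\cap\mathcal{S}=\mathcal{I}$ by concentricity. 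Therefore in a $(\mathcal{U},\mathcal{V})$-decomposition $U_X\xrightarrow{u}X\xrightarrow{v}V_X[1]\to U_X[1]$ we may take $U_X\in\mathcal{I}\subseteq\mathcal{V}$, and since $X\in\mathcal{T}$ the first half of (III) makes $v$ factor through some $V_0\in\mathcal{V}$. For any $U'\in\mathcal{U}$, applying $\mathcal{C}(U'[-1],-)$ to this triangle, the term $\mathcal{C}(U'[-1],U_X)=\mathrm{Ext}^1(U',U_X)$ vanishes because $U_X\in\mathcal{V}$, and $v_*$ factors through $\mathcal{C}(U'[-1],V_0)=\mathrm{Ext}^1(U',V_0)=0$; exactness then gives $\mathcal{C}(U'[-1],X)=0$, so $X\in\mathcal{U}[-1]^{\perp}=\mathcal{V}$. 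The inclusion $\mathcal{U}\cap\mathcal{N}^f\subseteq\mathcal{S}$ is obtained by the symmetric argument, using $\sigma_{\mathcal{T}}$ in place of $\omega_{\mathcal{U}}$, the second half of (III), and the relation $\mathrm{Ext}^1(\mathcal{S},\mathcal{T})=0$ in place of $\mathrm{Ext}^1(\mathcal{U},\mathcal{V})=0$.

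For (I), fix $X\in\mathcal{T}\ast\mathcal{U}$ and a triangle $T_1\xrightarrow{f}X\xrightarrow{g}U_1\to T_1[1]$ with $T_1\in\mathcal{T}$, $U_1\in\mathcal{U}$. Since $H_{(\mathcal{U},\mathcal{V})}$ and $H_{(\mathcal{S},\mathcal{T})}$ are cohomological (Fact~\ref{FactCPAdj}(3)) and vanish on $\mathcal{T},\mathcal{V}$ and on $\mathcal{S},\mathcal{U}$ respectively (the vanishing on $\mathcal{U}$, resp. $\mathcal{T}$, being exactly (III)), both vanish on $X$. Hence in the $(\mathcal{U},\mathcal{V})$-decomposition of $X$ the map to the $\mathcal{V}[1]$-part factors through $\mathcal{V}$, and in the $\sigma_{\mathcal{T}}$-decomposition $S_X[-1]\xrightarrow{s}X\xrightarrow{t_X}T_X\to S_X$ of $X$ the map $s$ factors through some object of $\mathcal{S}$. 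Performing the octahedron on $U_X\xrightarrow{u_X}X\xrightarrow{t_X}T_X$, where $U_X=\omega_{\mathcal{U}}(X)$, $u_X$ is the counit and $T_X=\sigma_{\mathcal{T}}(X)$, and using these two factorizations together with the defining relations $\mathrm{Ext}^1(\mathcal{S},\mathcal{V})=0$ and $\mathrm{Ext}^1(\mathcal{U},\mathcal{V})=0$ to kill the connecting maps that arise, one obtains $\mathrm{Cone}(t_X\circ u_X)\cong V_X[1]\oplus S_X$ with $V_X\in\mathcal{V}$, $S_X\in\mathcal{S}$. Now write the commutative pentagon in $\mathcal{C}$ attached to $X$ (diagram~\ref{PentaNat_inC}), so that $\mu_X=\underline{z}$ for some $z\in\mathcal{Z}(Z_U,Z_T)$ with $z_T\circ z\circ z_U=t_X\circ u_X$, $\mathrm{Cone}(z_U)\in\mathcal{S}$ and $\mathrm{Cone}(z_T)\in\mathcal{V}[1]$. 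Feeding the last identity through the octahedra relating $\mathrm{Cone}(z_U)$, $\mathrm{Cone}(z)$, $\mathrm{Cone}(z_T)$ and $\mathrm{Cone}(z_T\circ z\circ z_U)$ — again using $\mathrm{Ext}^1(\mathcal{S},\mathcal{V})=0$ to split the $\mathcal{S}$-summands off from the $\mathcal{V}[1]$-terms, and the triangles defining $\langle\pm1\rangle$ (Definition~\ref{Def_poshi_neshi}) to handle $\omega_{\mathcal{U}}$ of the $\mathcal{V}[1]$-pieces — gives $\mathrm{Cone}(z)\in\mathcal{S}\ast\mathcal{V}[1]=\mathcal{N}^i$, whence $\mu_X=\underline{z}$ is an isomorphism in $\mathcal{Z}/\mathcal{I}$ by Corollary~\ref{Cor3-2}. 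This is (I).

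The main obstacle is this last cone computation. The object-level statement $\mathrm{Cone}(t_X\circ u_X)\cong V_X[1]\oplus S_X$ is a short octahedron argument, but propagating it to $\mathrm{Cone}(z)\in\mathcal{N}^i$ requires carefully tracking, through several octahedra, which connecting morphisms vanish (by $\mathrm{Ext}^1(\mathcal{S},\mathcal{V})=0$) and how the $\mathcal{V}[1]$-contributions are absorbed into $\mathcal{N}^i$; the whole point is that the $(\mathcal{U},\mathcal{V})$- and $\sigma_{\mathcal{T}}$-decompositions of $X$ become "exact after applying $\omega_{\mathcal{U}}$, resp. $\sigma_{\mathcal{T}}$" precisely because the two heart functors vanish on $X$. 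A cleaner-looking but not actually easier alternative would be to show that $\sigma\circ\omega_{\mathcal{U}}$ and $\omega\circ\sigma_{\mathcal{T}}$ send the triangle $T_1\to X\to U_1\to T_1[1]$ to distinguished right triangles in the right triangulated category $\mathcal{Z}/\mathcal{I}$ with $\mu$ inducing a morphism between them; since $\mu_{T_1}$ and $\mu_{U_1}$ are isomorphisms (Remark~\ref{RemCDE}), the five-lemma in a right triangulated category would then force $\mu_X$ to be an isomorphism — but verifying that these functors really do output right triangles with compatible connecting maps rests on the same $H$-vanishing input and on Proposition~\ref{PropShiftCompatT}-type shift compatibilities, so the difficulty is merely relocated.
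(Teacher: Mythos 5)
Your argument for (II) is correct and is essentially the paper's: for $X\in\mathcal{T}\cap\mathcal{N}^i$ you arrange the $(\mathcal{U},\mathcal{V})$-decomposition to have its $\mathcal{U}$-part in $\mathcal{I}$, use (III) to factor $v\colon X\to V_X[1]$ through $\mathcal{V}$, and conclude $X\in\mathcal{U}[-1]^{\perp}=\mathcal{V}$; the paper reaches the same decomposition via the identity $\mathcal{T}\cap(\mathcal{S}\ast\mathcal{V}[1])=\mathcal{T}\cap(\mathcal{I}\ast\mathcal{V}[1])$, but the core step is identical.

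For (I) there is a genuine gap, at exactly the step you yourself flag: the claim $\mathrm{Cone}(z)\in\mathcal{N}^i$ is asserted, not proved, and the mechanism you indicate does not suffice. Concretely, set $h=z_T\circ z\circ z_U=t_X\circ u_X$ and grant your computation $\mathrm{Cone}(h)\cong V_X[1]\oplus S_X$. The octahedron for $h=(z_T\circ z)\circ z_U$ gives a triangle $S_U\to V_X[1]\oplus S_X\to\mathrm{Cone}(z_T\circ z)\to S_U[1]$, and since $\mathcal{C}(S_U,V_X[1])=0$ the first map is of the form $(0,\phi)$ with $\phi\colon S_U\to S_X$, so $\mathrm{Cone}(z_T\circ z)\cong V_X[1]\oplus\mathrm{Cone}(\phi)$ with $\mathrm{Cone}(\phi)\in\mathcal{S}\ast\mathcal{S}[1]$; the octahedron for $z_T\circ z$ then exhibits $\mathrm{Cone}(z)$ as an extension of this by $V_T$. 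Membership-wise this only places $\mathrm{Cone}(z)$ in something of the shape $\mathcal{N}^i\ast\mathcal{S}[1]$, and no vanishing of the form $\mathrm{Ext}^1(\mathcal{S},\mathcal{V})=0$ or $\mathrm{Ext}^1(\mathcal{U},\mathcal{V})=0$ can remove the shifted $\mathcal{S}[1]$ term; the second factorization (of $s\colon S_X[-1]\to X$ through $\mathcal{S}$), which you list as an input, never actually enters such a cone-membership computation. The paper's proof sidesteps the cone of $z$ entirely: from the factorization of $v$ through $V_0\in\mathcal{V}$ and $\mathcal{C}(S_X[-1],V_0)=0$ one gets $v\circ s=0$, hence $s$ factors through $u_X$; a single octahedron then produces an object $Z$ with triangles $S_X[-1]\to U_X\to Z\to S_X$ and $V_X\to Z\to T_X\to V_X[1]$, so $Z\in\mathcal{U}\cap\mathcal{T}=\mathcal{Z}$, and the commuting middle square identifies $\mu_X$ with the resulting isomorphism $\sigma(\omega_{\mathcal{U}}(X))\cong Z\cong\omega(\sigma_{\mathcal{T}}(X))$ — note this uses only $H_{(\mathcal{U},\mathcal{V})}(X)=0$. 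Your alternative five-lemma route is also not available as written: besides the unverified claim that $\sigma\circ\omega_{\mathcal{U}}$ and $\omega\circ\sigma_{\mathcal{T}}$ produce compatible distinguished right triangles, it appeals to Proposition~\ref{PropShiftCompatT}, which the paper establishes only under (I)+(II), so that route as described would be circular.
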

\begin{proof}
{\rm (I)} Let $X\in\mathcal{T}\ast\mathcal{U}$ be any object. Since $H_{(\mathcal{U},\mathcal{V})}(\mathcal{T})=H_{(\mathcal{U},\mathcal{V})}(\mathcal{U})=0$, we have $H_{(\mathcal{U},\mathcal{V})}(X)=0$. This means that there exists a diagram
\[
\xy
(-24,0)*+{V_X}="0";
(-8,0)*+{U_X}="2";
(8,0)*+{X}="4";
(28,0)*+{V_X[1]}="6";
(18,-12)*+{V_0}="8";
(18,-5)*+{_{\circlearrowright}}="10";
{\ar^{} "0";"2"};
{\ar^{u} "2";"4"};
{\ar^{v} "4";"6"};
{\ar_{} "4";"8"};
{\ar_{} "8";"6"};
\endxy
\quad(U_X\in\mathcal{U},\, V_X,V_0\in\mathcal{V})
\]
where $V_X\to U_X\overset{u}{\longrightarrow}X\overset{v}{\longrightarrow}V_X[1]$ is a distinguished triangle (Fact \ref{FactCPAdj} {\rm (4)}). Decompose $X$ into a distinguished triangle
\[ S_X[-1]\overset{s}{\longrightarrow}X\overset{t}{\longrightarrow}T_X\to S_X\quad(S_X\in\mathcal{S},T_X\in\mathcal{T}). \]
Since $v\circ s=0$ by $\mathcal{C}(S_X[-1],V_0)=0$, this $s$ factors through $u$. By the octahedron axiom, we obtain
\[
\xy
(-8,23)*+{V_X}="-12";
(8,23)*+{V_X}="-14";
(-25,8)*+{S_X[-1]}="0";
(-8,8)*+{U_X}="2";
(8,8)*+{{}^{\exists}Z}="4";
(24,8)*+{S_X}="6";
(-25,-7)*+{S_X[-1]}="10";
(-8,-7)*+{X}="12";
(8,-7)*+{T_X}="14";
(24,-7)*+{S_X}="16";
(-8,-22)*+{V_X[1]}="22";
(8,-22)*+{V_X[1]}="24";
{\ar@{=} "-12";"-14"};
{\ar_{} "-12";"2"};
{\ar^{} "-14";"4"};
{\ar^{} "0";"2"};
{\ar^{{}^{\exists}x} "2";"4"};
{\ar^{} "4";"6"};
{\ar@{=} "0";"10"};
{\ar_{u} "2";"12"};
{\ar^{{}^{\exists}y} "4";"14"};
{\ar@{=} "6";"16"};
{\ar_(0.56){s} "10";"12"};
{\ar_{t} "12";"14"};
{\ar^{} "14";"16"};
{\ar_{v} "12";"22"};
{\ar^{} "14";"24"};
{\ar@{=} "22";"24"};
{\ar@{}|\circlearrowright "-12";"4"};
{\ar@{}|\circlearrowright "0";"12"};
{\ar@{}|\circlearrowright "2";"14"};
{\ar@{}|\circlearrowright "4";"16"};
{\ar@{}|\circlearrowright "12";"24"};
\endxy
\]
which shows $Z\in\mathcal{U}\cap\mathcal{T}=\mathcal{Z}$. The commutative square in the middle shows that $\mu_X\colon\sigma(\omega_{\mathcal{U}}(X))\to\omega(\sigma_{\mathcal{T}}(X))$ is isomorphism in $\mathcal{Z}/\mathcal{I}$.

\medskip

{\rm (II)} Obviously, we always have $\mathcal{T}\cap(\mathcal{S}\ast\mathcal{V}[1])=\mathcal{T}\cap(\mathcal{I}\ast\mathcal{V}[1])$. Suppose $H_{(\mathcal{U},\mathcal{V})}(\mathcal{T})=0$ holds. Then for any $T\in\mathcal{T}$ and any distinguished triangle
\[ V\to U\to T\overset{v}{\longrightarrow}V[1]\quad(U\in\mathcal{U},\, V\in\mathcal{V}), \]
$v$ factors through some $V_0\in\mathcal{V}$ (Fact \ref{FactCPAdj} {\rm (4)}). In particular if $T\in\mathcal{T}\cap(\mathcal{I}\ast\mathcal{V}[1])$, we obtain a diagram
\[
\xy
(-24,0)*+{V}="0";
(-8,0)*+{I}="2";
(8,0)*+{T}="4";
(28,0)*+{V[1]}="6";
(18,-12)*+{V_0}="8";
(18,-5)*+{_{\circlearrowright}}="10";
{\ar^{} "0";"2"};
{\ar^{} "2";"4"};
{\ar^{v} "4";"6"};
{\ar_{} "4";"8"};
{\ar_{} "8";"6"};
\endxy
\quad(I\in\mathcal{I},\, V,V_0\in\mathcal{V}).
\]
This shows $T\in\mathcal{U}[-1]^{\perp}=\mathcal{V}$. Thus it follows
\[ \mathcal{T}\cap\mathcal{N}^i=\mathcal{T}\cap(\mathcal{I}\ast\mathcal{V}[1])\subseteq\mathcal{V}. \]
Since $\mathcal{V}\subseteq\mathcal{T}\cap\mathcal{N}^i$ follows from Remark \ref{RemConcentric} {\rm (ii)}, we obtain $\mathcal{T}\cap\mathcal{N}^i=\mathcal{V}$.
Dually, $H_{(\mathcal{S},\mathcal{T})}(\mathcal{U})=0$ implies $\mathcal{U}\cap\mathcal{N}^f=\mathcal{S}$.
\end{proof}

\begin{prop}\label{PropIIItoMut}
If $\mathcal{P}$ is concentric with {\rm (III)}, then $(\mathcal{Z},\mathcal{Z})$ becomes an $\mathcal{I}$-mutation pair, in the sense of Fact \ref{FactIYo}.
\end{prop}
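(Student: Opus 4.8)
The plan is to verify, one by one, the defining conditions of an $\mathcal{I}$-mutation pair as listed in Fact \ref{FactIYo}. First, since $\mathcal{P}$ is concentric with (III), Proposition \ref{PropIIItoIandII} gives conditions (I)+(II), so the whole of Section \ref{section_Mutation} applies; in particular $\mathcal{Z}/\mathcal{I}$ is already triangulated (Corollary \ref{CorSigmaOmega}) with shift functor $\Sigma = \sigma\circ\langle1\rangle$. According to Fact \ref{FactIYo} it then remains to check, for $\mathcal{I}\subseteq\mathcal{Z}\subseteq\mathcal{C}$: that $\mathcal{Z}$ is closed under isomorphisms, direct summands and extensions; the two orthogonality inclusions $\mathcal{Z}\subseteq\mathcal{I}[-1]^{\perp}$ and $\mathcal{Z}\subseteq{}^{\perp}\mathcal{I}[1]$; and the two ``$\mathcal{I}$-envelope'' inclusions $\mathcal{Z}\subseteq\mathcal{Z}[-1]\ast\mathcal{I}$ and $\mathcal{Z}\subseteq\mathcal{I}\ast\mathcal{Z}[1]$. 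The routine parts are immediate: $\mathcal{I}=\mathcal{U}\cap\mathcal{V}\subseteq\mathcal{U}$ and $\mathcal{I}=\mathcal{S}\cap\mathcal{T}\subseteq\mathcal{T}$ give $\mathcal{I}\subseteq\mathcal{U}\cap\mathcal{T}=\mathcal{Z}$; $\mathcal{Z}$ is an intersection of two subcategories closed under isomorphisms, summands and extensions; $\mathcal{C}(\mathcal{I}[-1],\mathcal{Z})=\mathrm{Ext}^1(\mathcal{I},\mathcal{Z})=0$ because $\mathcal{I}\subseteq\mathcal{S}$, $\mathcal{Z}\subseteq\mathcal{T}$ and $\mathrm{Ext}^1(\mathcal{S},\mathcal{T})=0$; and dually $\mathcal{C}(\mathcal{Z},\mathcal{I}[1])=\mathrm{Ext}^1(\mathcal{Z},\mathcal{I})=0$ since $\mathcal{Z}\subseteq\mathcal{U}$, $\mathcal{I}\subseteq\mathcal{V}$ and $\mathrm{Ext}^1(\mathcal{U},\mathcal{V})=0$.

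The real work is in the two envelope inclusions, and by op-duality — passing to the concentric TCP $((\mathcal{V},\mathcal{U}),(\mathcal{T},\mathcal{S}))$ on $\mathcal{C}^{\mathrm{op}}$, which has the same $\mathcal{Z}$ and $\mathcal{I}$ and again satisfies (III) — it suffices to prove one of them, say $\mathcal{Z}\subseteq\mathcal{Z}[-1]\ast\mathcal{I}$. Unwinding, this asks that every $Z\in\mathcal{Z}$ sit in a triangle $Z\to I\to Z'\to Z[1]$ with $I\in\mathcal{I}$, $Z'\in\mathcal{Z}$; taking the triangle $U[-1]\to Z\to I\to U$ defining $U=Z\langle1\rangle$ (Definition \ref{Def_poshi_neshi}), using the well-definedness of $\langle1\rangle$ and Remark \ref{RemQuot2}, one sees this is equivalent to the single statement $\mathcal{Z}\langle1\rangle\subseteq\mathcal{Z}$, i.e.\ that $U=Z\langle1\rangle\in\mathcal{U}$ actually lies in $\mathcal{T}$. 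Equivalently: writing $\sigma_{\mathcal{T}}(U)$ via a triangle $S_U[-1]\to U\to\sigma_{\mathcal{T}}(U)\to S_U$ with $S_U\in\mathcal{S}$ — which automatically has $\sigma_{\mathcal{T}}(U)\in\mathcal{U}\cap\mathcal{T}=\mathcal{Z}$ because $U,S_U\in\mathcal{U}$ — one must show that the unit $U\to\sigma_{\mathcal{T}}(U)$ is an isomorphism in $\mathcal{C}/\mathcal{I}$, as then $\overline{U}\in\overline{\mathcal{Z}}$, whence $U\in\mathcal{Z}$. This is where condition (III) is genuinely needed: since $U\in\mathcal{U}$ we have $H_{(\mathcal{S},\mathcal{T})}(U)=0$, and the description of $\ker H_{(\mathcal{S},\mathcal{T})}$ (the dual of Fact \ref{FactCPAdj}(4)) says that in a decomposition $S\to U\overset{w}{\to}T[1]\to S[1]$ with $S\in\mathcal{S}$, $T\in\mathcal{T}$, the map $w$ factors through some object of $\mathcal{T}$; feeding this factorization into the octahedron axiom — exactly in the spirit of the proof of Proposition \ref{PropIIItoIandII}(I) — produces a triangle exhibiting the isomorphism $U\cong\sigma_{\mathcal{T}}(U)$ in $\mathcal{C}/\mathcal{I}$. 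Once $\mathcal{Z}\langle1\rangle\subseteq\mathcal{Z}$ (hence both envelope inclusions) is established, Fact \ref{FactIYo} yields that $(\mathcal{Z},\mathcal{Z})$ is an $\mathcal{I}$-mutation pair, and one checks that the triangulation it produces on $\mathcal{Z}/\mathcal{I}$ coincides with the one of Corollary \ref{CorSigmaOmega} (both have shift $\langle1\rangle$).

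I expect this last step to be the main obstacle. The naive homological estimate only bounds $\mathcal{C}(\mathcal{S}[-1],U)$ by an $\mathrm{Ext}^2$-group between $\mathcal{S}$ and $\mathcal{Z}$, which need not vanish; so mere $\mathrm{Ext}^1$-orthogonality of the cotorsion pairs is insufficient, and one must exploit the sharper factorization property furnished by heart-equivalence (condition (III)) through the characterization of $\ker H_{(\mathcal{S},\mathcal{T})}$. Everything else — the closure and orthogonality conditions, and the transport along op-duality — is routine bookkeeping with octahedra.
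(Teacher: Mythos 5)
Your overall plan coincides with the paper's: the closure and orthogonality checks are correct, the op-duality reduction to one envelope inclusion is legitimate (the opposite TCP $((\mathcal{V},\mathcal{U}),(\mathcal{T},\mathcal{S}))$ is again concentric with {\rm (III)} and has the same $\mathcal{Z}$ and $\mathcal{I}$), and the heart of the matter is, as you say, to show that $U=Z\langle1\rangle\in\mathcal{U}$ lies in $\mathcal{T}$; the paper does exactly this by proving $\mathcal{U}\cap(\mathcal{I}\ast\mathcal{Z}[1])\subseteq\mathcal{Z}$. Your key input, the characterization of $\ker H_{(\mathcal{S},\mathcal{T})}$, is also the paper's input — it is Fact \ref{FactCPAdj} {\rm (4)} applied to the pair $(\mathcal{S},\mathcal{T})$ (not its dual), used in form {\rm (iii)} (factorization of $w$ through $\mathcal{T}$) where the paper uses form {\rm (iv)} ($\tau^+_{(\mathcal{S},\mathcal{T})}(\overline{\mathcal{U}})\subseteq\overline{\mathcal{T}}$) together with an explicit octahedron.

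The gap is at the decisive step. The sentence ``feeding this factorization into the octahedron axiom \dots produces a triangle exhibiting the isomorphism $U\cong\sigma_{\mathcal{T}}(U)$'' is an assertion, not an argument, and as phrased it uses only $U\in\mathcal{U}$, condition {\rm (III)}, and a generic $(\mathcal{S},\mathcal{T})$-decomposition of $U$. No argument of that shape can work: it would prove $\mathcal{U}\subseteq\mathcal{T}$, which already fails for a nondegenerate $t$-structure $(\mathcal{U},\mathcal{V})$ viewed as the degenerate TCP $((\mathcal{U},\mathcal{V}),(\mathcal{U},\mathcal{V}))$, which does satisfy {\rm (III)}. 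The special membership $U\in\mathcal{I}\ast\mathcal{Z}[1]$ must enter, via the rotated triangle $I\to U\overset{w}{\longrightarrow}Z[1]\to I[1]$ whose first term is in $\mathcal{I}$. Once you use it, your factorization closes the proof in two lines and no octahedron is needed: since $H_{(\mathcal{S},\mathcal{T})}(U)=0$, the map $w$ factors through some $T_0\in\mathcal{T}$; hence for every $S\in\mathcal{S}$ and every $f\in\mathcal{C}(S[-1],U)$ one has $w\circ f=0$ (because $\mathcal{C}(S[-1],T_0)=\mathrm{Ext}^1(S,T_0)=0$), so $f$ factors through $I$, so $f=0$ (because $\mathrm{Ext}^1(S,I)=0$). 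Therefore $U\in\mathcal{S}[-1]^{\perp}=\mathcal{T}$, i.e.\ $U\in\mathcal{Z}$, and the unit $U\to\sigma_{\mathcal{T}}(U)$ is then automatically invertible; equivalently, the same vanishing applied to the connecting morphism $S_U[-1]\to U$ of your triangle defining $\sigma_{\mathcal{T}}(U)$ splits it and exhibits $U$ as a direct summand of an object of $\mathcal{T}$, which is exactly the splitting the paper extracts from its octahedron via $\mathcal{C}(S[-1],I)=0$. With this step supplied, your proof is complete and is essentially the paper's; the final compatibility of the two triangulations is not needed for the proposition itself (it is recorded separately in Corollary \ref{CorIIItoMut}).
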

\begin{proof}
First, let us show that $H_{(\mathcal{S},\mathcal{T})}(\mathcal{U})=0$ implies $\mathcal{U}\cap(\mathcal{I}\ast\mathcal{Z}[1])\subseteq\mathcal{Z}$. Remark that $H_{(\mathcal{S},\mathcal{T})}(\mathcal{U})=0$ means $\tau_{(\mathcal{S},\mathcal{T})}^+(\overline{\mathcal{U}})\subseteq\overline{\mathcal{T}}$.
In particular, for any $U\in\mathcal{U}\cap(\mathcal{I}\ast\mathcal{Z}[1])$, we have the following commutative diagram made of distinguished triangles in $\mathcal{C}$ (Fact \ref{FactCPAdj} {\rm (2)}).
\begin{eqnarray*}
&\xy
(-14,18)*+{S[-1]}="0";
(-14.1,0)*+{I}="2";
(-14,-16)*+{T}="4";
(-3,0)*+{U}="6";
(2,-9)*+{T^{\prime}}="8";
(20,0)*+{Z[1]}="10";
(-8,-8)*+_{_{\circlearrowright}}="12";
(5,-3)*+_{_{\circlearrowright}}="14";
(-10,5)*+_{_{\circlearrowright}}="14";
{\ar_{} "0";"2"};
{\ar_{} "2";"4"};
{\ar^{} "0";"6"};
{\ar_{} "2";"6"};
{\ar_{} "6";"8"};
{\ar^{} "6";"10"};
{\ar_{} "4";"8"};
{\ar_{} "8";"10"};
\endxy&\\
&(S\in\mathcal{S},\, T,T^{\prime}\in\mathcal{T},\, I\in\mathcal{I},\, Z\in\mathcal{Z})&
\end{eqnarray*}
Since $\mathcal{C}(S[-1],I)=0$, it follows that $U$ is a direct summand of $T^{\prime}$, and thus satisfies $U\in\mathcal{U}\cap\mathcal{T}=\mathcal{Z}$. Thus $\mathcal{U}\cap(\mathcal{I}\ast\mathcal{Z}[1])\subseteq\mathcal{Z}$ is shown.

Then, for any $Z\in\mathcal{Z}$, the distinguished triangle
\[ Z\to I_Z\to U_Z\to Z[1]\quad(I_Z\in\mathcal{I},U_Z\in\mathcal{U}) \]
should satisfy $U_Z\in\mathcal{U}\cap(\mathcal{I}\ast\mathcal{Z}[1])\subseteq\mathcal{Z}$.
This means
\[ \mathcal{Z}\subseteq(\mathcal{Z}[-1]\ast\mathcal{I})\cap\mathcal{I}[-1]^{\perp}. \]
Dually, $H_{(\mathcal{U},\mathcal{V})}(\mathcal{T})=0$ implies $\mathcal{Z}\subseteq(\mathcal{I}\ast\mathcal{Z}[1])\cap {}^{\perp}\mathcal{I}[1]$.
\end{proof}

\begin{cor}\label{CorIIItoMut}
If $\mathcal{P}$ is concentric with {\rm (III)}, then we have isomorphisms of functors 
\[ \Sigma\cong\langle1\rangle,\quad \Omega\cong\langle-1\rangle. \]
Thus the triangulation of $\mathcal{Z}/\mathcal{I}$ agrees with the one given in \cite[Theorem 4.2]{IYo}.
\end{cor}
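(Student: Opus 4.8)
The plan is to exploit the $\mathcal{I}$-mutation pair structure provided by Proposition \ref{PropIIItoMut} in order to make a convenient choice of the auxiliary triangles used in Definition \ref{Def_poshi_neshi}, namely one for which $\langle1\rangle$ and $\langle-1\rangle$ take values in $\mathcal{Z}/\mathcal{I}$ itself rather than merely in $\mathcal{U}/\mathcal{I}$, resp. $\mathcal{T}/\mathcal{I}$.

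First I would note that, by Proposition \ref{PropIIItoIandII}, condition {\rm (III)} implies {\rm (I)+(II)}, so Corollary \ref{CorSigmaOmega} applies and $\mathcal{Z}/\mathcal{I}$ is already triangulated with $\Sigma\dashv\Omega$ quasi-inverse. By Proposition \ref{PropIIItoMut}, $(\mathcal{Z},\mathcal{Z})$ is an $\mathcal{I}$-mutation pair; in particular $\mathcal{Z}\subseteq\mathcal{Z}[-1]\ast\mathcal{I}$ and $\mathcal{Z}\subseteq\mathcal{I}\ast\mathcal{Z}[1]$. Hence for every $Z\in\mathcal{Z}$ we may choose the distinguished triangle $(\ref{Triaposhi1})$ defining $\langle1\rangle$ with $U\in\mathcal{Z}$ (and not just $U\in\mathcal{U}$), and dually the triangle defining $\langle-1\rangle$ with its relevant object in $\mathcal{Z}$. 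Since $\langle1\rangle$ and $\langle-1\rangle$ are determined uniquely up to natural isomorphism by Definition \ref{Def_poshi_neshi}, we may therefore assume $\langle1\rangle$ factors as $\mathcal{Z}/\mathcal{I}\to\mathcal{Z}/\mathcal{I}\hookrightarrow\mathcal{U}/\mathcal{I}$ and $\langle-1\rangle$ factors as $\mathcal{Z}/\mathcal{I}\to\mathcal{Z}/\mathcal{I}\hookrightarrow\mathcal{T}/\mathcal{I}$. Composing with the adjoints $\sigma$, $\omega$, which by Definition \ref{DefTCPAdj} restrict to (functors naturally isomorphic to, indeed equal to) the identity on $\mathcal{Z}/\mathcal{I}$, gives $\Sigma=\sigma\circ\langle1\rangle\cong\langle1\rangle$ and $\Omega=\omega\circ\langle-1\rangle\cong\langle-1\rangle$.

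Next I would compare the triangulated structures by checking that the distinguished right triangles of $\mathcal{Z}/\mathcal{I}$ coincide with those of \cite[Theorem 4.2]{IYo}. Given $\underline{f}\in(\mathcal{Z}/\mathcal{I})(X,Y)$, pick the triangle $(\ref{UStT1})$ $X\overset{\iota_X}{\longrightarrow}I_X\to U_X\to X[1]$ with $U_X\in\mathcal{Z}$ as above and form the $\mathcal{U}$-conic triangle $(\ref{CanonUTria})$. In diagram $(\ref{ShiftedOcta})$ the object $C_f$ sits in a distinguished triangle $Y\to C_f\to U_X\to Y[1]$ with $Y,U_X\in\mathcal{Z}$; since $\mathcal{Z}=\mathcal{U}\cap\mathcal{T}$ is closed under extensions, $C_f\in\mathcal{Z}$, whence $\sigma(C_f)\cong C_f$. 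Thus the standard right triangle associated to $(\ref{CanonUTria})$ in Lemma \ref{LemInvariantStan} is isomorphic to $X\overset{\underline{f}}{\longrightarrow}Y\to C_f\to X\langle1\rangle$, which is precisely the standard triangle of the Iyama--Yoshino quotient; and by Lemma \ref{LemInvariantStan} every distinguished right triangle is isomorphic to one of this shape. A dual argument with left triangles, or an appeal to the quasi-inverse pair $\Sigma,\Omega$, then shows that both triangulations of $\mathcal{Z}/\mathcal{I}$ have the same distinguished triangles.

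The main obstacle is purely bookkeeping: one must make sure the normalizations of Definition \ref{DefTCPAdj} (so that $\sigma|_{\mathcal{Z}/\mathcal{I}}$, $\omega|_{\mathcal{Z}/\mathcal{I}}$ are identities) are compatible with the "convenient choice" of auxiliary triangles, and that not only the objects but also the connecting morphisms in the standard triangles match those of \cite{IYo} up to the required isomorphism of right triangles. This last point is where Lemma \ref{LemInvariantStan}, Proposition \ref{PropMorphStan} and the uniqueness clause of Definition \ref{Def_poshi_neshi} carry the load; there is no genuinely difficult step, everything reducing to the mutation-pair inclusions of Proposition \ref{PropIIItoMut} together with the closedness of $\mathcal{Z}$ under extensions.
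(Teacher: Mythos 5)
Your argument is correct and is exactly the route the paper takes: its one-line proof ("immediately follows from Proposition \ref{PropIIItoMut}") is precisely your observation that the mutation-pair inclusions let one choose the triangles in Definition \ref{Def_poshi_neshi} with cone in $\mathcal{Z}$, so that $\sigma$ and $\omega$ act as the identity and the standard triangles coincide with the Iyama--Yoshino ones. Your expansion, including the extension-closedness of $\mathcal{Z}$ to put $C_f$ in $\mathcal{Z}$ and the appeal to Lemma \ref{LemInvariantStan} for uniqueness, is just the detailed version of the same proof.
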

\begin{proof}
This immediately follows from Proposition \ref{PropIIItoMut}.
\end{proof}

\begin{ex}\label{ExZZ}
Let $\mathcal{P}=((\mathcal{S},\mathcal{T}),(\mathcal{U},\mathcal{V}))$ be a TCP. Suppose $\mathcal{S}=\mathcal{V}(=\mathcal{I})$ and $\mathcal{U}=\mathcal{T}(=\mathcal{Z})$, as in Example \ref{Thefollowingareexamples} {\rm (5)}. Then $\mathcal{P}$ is a concentric TCP satisfying {\rm (III)}.
In this case, the map $\mu_1$ defined in Definition \ref{DefGeneralMut} agrees with the mutation $\mu^{-1}$ given in \cite[Theorem 3.7]{ZZ1}.
\end{ex}
\begin{proof}
Concentricity is trivial. Moreover, we have
\[ H_{(\mathcal{S},\mathcal{T})}(\mathcal{U})=H_{(\mathcal{S},\mathcal{T})}(\mathcal{T})=0,\ \ H_{(\mathcal{U},\mathcal{V})}(\mathcal{T})=H_{(\mathcal{U},\mathcal{V})}(\mathcal{U})=0. \]
The latter part follows from Corollary \ref{CorIIItoMut}.
\end{proof}

\subsection{Hovey TCP, as a generalization of recollement}\label{section_HTCP}

In view of \cite{Ho1}, \cite{Ho2}, the following has been defined in \cite[Definition 5.1]{NP}.
\begin{dfn}\label{DefHTCP}
We say that TCP $\mathcal{P}=((\mathcal{S},\mathcal{T}),(\mathcal{U},\mathcal{V}))$ is {\it Hovey TCP} %\footnote{The reason for this terminology will be given in \cite{NP}.}
if it satisfies $\mathcal{N}^i=\mathcal{N}^f\ (=:\mathcal{N})$.
\end{dfn}

\begin{ex}
If $\mathcal{P}$ is degenerated to a single cotorsion pair, then it is a Hovey TCP, which satisfies $\mathcal{N}=\mathcal{C}$.

A more interesting example is the one arising from a recollement. See Corollary \ref{CorRecolltoHTCP}.
\end{ex}

\begin{prop}\label{PropHTCP}
If $\mathcal{P}$ is Hovey TCP, then $\mathcal{N}\subseteq\mathcal{C}$ is a thick triangulated subcategory.
\end{prop}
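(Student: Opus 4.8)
The plan is to show that $\mathcal{N}=\mathcal{N}^i=\mathcal{N}^f$ is a full subcategory of $\mathcal{C}$ closed under isomorphisms, finite direct sums, direct summands, the shifts $[\pm1]$, and extensions; as is standard, such a subcategory is automatically a thick triangulated subcategory (given a distinguished triangle two of whose terms lie in $\mathcal{N}$, a suitable rotation exhibits the remaining term as an extension of two objects of $\mathcal{N}$, and summand‑closedness is the word ``thick'').

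First I would observe that a Hovey TCP is automatically concentric. Indeed $\mathcal{V}\subseteq\mathcal{S}[-1]\ast\mathcal{V}=\mathcal{N}^f=\mathcal{N}^i$ and $\mathcal{S}\subseteq\mathcal{S}\ast\mathcal{V}[1]=\mathcal{N}^i=\mathcal{N}^f$, so Remark \ref{RemTCP_UNTN} (1) gives $\mathcal{U}\cap\mathcal{V}\subseteq\mathcal{U}\cap\mathcal{N}^i=\mathcal{S}$ and $\mathcal{S}\cap\mathcal{T}\subseteq\mathcal{T}\cap\mathcal{N}^f=\mathcal{V}$, whence, together with $\mathcal{S}\subseteq\mathcal{U}$ and $\mathcal{V}\subseteq\mathcal{T}$, we get $\mathcal{S}\cap\mathcal{T}=\mathcal{U}\cap\mathcal{V}=:\mathcal{I}$. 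Now Corollary \ref{CorInverseImage} applies and tells us that $\mathcal{N}^i$ is a full additive subcategory of $\mathcal{C}$ closed under isomorphisms and direct summands; additivity in particular yields closure under finite direct sums, and also $\mathcal{S}\subseteq\mathcal{N}^i=\mathcal{N}$, $\mathcal{V}\subseteq\mathcal{N}^f=\mathcal{N}$. Closure under shift is immediate from the Hovey condition, since $\mathcal{N}[1]=\mathcal{N}^f[1]=(\mathcal{S}[-1]\ast\mathcal{V})[1]=\mathcal{S}\ast\mathcal{V}[1]=\mathcal{N}^i=\mathcal{N}$, hence also $\mathcal{N}[-1]=\mathcal{N}$.

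The substantive point is extension‑closedness, $\mathcal{N}\ast\mathcal{N}\subseteq\mathcal{N}$, which I would obtain purely from the associativity of the operation $\ast$ and the identity $(\mathcal{X}\ast\mathcal{Y})[n]=\mathcal{X}[n]\ast\mathcal{Y}[n]$ (both standard, cf. \cite{IYo}), by peeling off one copy of $\mathcal{S}$ or $\mathcal{V}$ at a time. Concretely: (a) $\mathcal{S}[-1]\ast\mathcal{N}=\mathcal{S}[-1]\ast(\mathcal{S}[-1]\ast\mathcal{V})=(\mathcal{S}[-1]\ast\mathcal{S}[-1])\ast\mathcal{V}=(\mathcal{S}\ast\mathcal{S})[-1]\ast\mathcal{V}\subseteq\mathcal{S}[-1]\ast\mathcal{V}=\mathcal{N}$, since $\mathcal{S}$ is closed under extensions; dually (b) $\mathcal{N}\ast\mathcal{V}[1]=(\mathcal{S}\ast\mathcal{V}[1])\ast\mathcal{V}[1]=\mathcal{S}\ast(\mathcal{V}[1]\ast\mathcal{V}[1])=\mathcal{S}\ast(\mathcal{V}\ast\mathcal{V})[1]\subseteq\mathcal{S}\ast\mathcal{V}[1]=\mathcal{N}$, since $\mathcal{V}$ is closed under extensions; and (c) $\mathcal{V}\ast\mathcal{S}\subseteq\mathcal{N}$, because in any distinguished triangle $\mathsf{V}\to C\to\mathsf{S}\to\mathsf{V}[1]$ with $\mathsf{V}\in\mathcal{V}$, $\mathsf{S}\in\mathcal{S}$ the connecting morphism lies in $\mathrm{Ext}^1(\mathsf{S},\mathsf{V})=0$, so $C\cong\mathsf{V}\oplus\mathsf{S}\in\mathcal{N}$ by the closure under direct sums above. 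Combining these, $\mathcal{N}\ast\mathcal{S}=(\mathcal{S}[-1]\ast\mathcal{V})\ast\mathcal{S}=\mathcal{S}[-1]\ast(\mathcal{V}\ast\mathcal{S})\subseteq\mathcal{S}[-1]\ast\mathcal{N}\subseteq\mathcal{N}$ by (c) and (a), and finally $\mathcal{N}\ast\mathcal{N}=\mathcal{N}\ast(\mathcal{S}\ast\mathcal{V}[1])=(\mathcal{N}\ast\mathcal{S})\ast\mathcal{V}[1]\subseteq\mathcal{N}\ast\mathcal{V}[1]\subseteq\mathcal{N}$ by (b).

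The main obstacle is circularity: a direct octahedral attack on a triangle $A\to X\to B\to A[1]$ with $A,B\in\mathcal{N}$ keeps reproducing subcategories of the shape $\mathcal{S}[-1]\ast\mathcal{S}$ or $\mathcal{S}[-1]\ast\mathcal{N}$ — which is precisely the assertion being proved — and higher $\mathrm{Ext}$‑groups between $\mathcal{S}$ and $\mathcal{V}$ intrude into the naive bookkeeping. The computation above sidesteps this by working entirely within the subcategory calculus, using the two descriptions $\mathcal{N}^i=\mathcal{S}\ast\mathcal{V}[1]$ and $\mathcal{N}^f=\mathcal{S}[-1]\ast\mathcal{V}$ of $\mathcal{N}$ each exactly where it is needed, and invoking $\mathrm{Ext}^1(\mathcal{S},\mathcal{V})=0$ only through the splitting in (c). The only things to verify carefully are the standard properties of $\ast$ used above, and that Corollary \ref{CorInverseImage} requires nothing beyond concentricity, which we have checked.
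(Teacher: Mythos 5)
Your proof is correct, and while its overall skeleton (summand-closure from Corollary \ref{CorInverseImage}, shift-closure from the Hovey condition, then extension-closure) matches the paper's, the decisive step is handled differently. The paper reduces $\mathcal{N}\ast\mathcal{N}\subseteq\mathcal{N}$ to the mixed term $\mathcal{V}[1]\ast\mathcal{S}$, where the connecting morphism lives in $\mathcal{C}(\mathcal{S},\mathcal{V}[2])$ and need not vanish; it therefore re-decomposes $V[1]$ inside $\mathcal{N}^f=\mathcal{S}[-1]\ast\mathcal{V}$ and runs an octahedron, using $\mathcal{C}(S[-1],V')=0$, to land in $\mathcal{S}[-1]\ast\mathcal{S}\ast\mathcal{V}$, finishing with Remark \ref{RemConcentric} and the same $\ast$-calculus you use. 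You instead bracket the products so that only $\mathcal{V}\ast\mathcal{S}$ ever appears, which splits outright by $\mathrm{Ext}^1(\mathcal{S},\mathcal{V})=0$, so no octahedron diagram is needed at all; the price is an explicit appeal to associativity of $\ast$ and compatibility with shifts, but the paper's own proof already uses these implicitly (e.g.\ in the unparenthesized expression $\mathcal{S}[-1]\ast(\mathcal{S}[-1]\ast\mathcal{V})\ast\mathcal{V}=\mathcal{N}$). Your preliminary check that a Hovey TCP is concentric is also a genuine improvement in bookkeeping: the paper's proof of Proposition \ref{PropHTCP} quietly uses Corollary \ref{CorInverseImage} and Remark \ref{RemConcentric}, both stated for concentric pairs, with concentricity only recorded later in Corollary \ref{CorHTCP}; your argument (via Remark \ref{RemTCP_UNTN}(1) and the trivial inclusions $\mathcal{S}\subseteq\mathcal{N}^i$, $\mathcal{V}\subseteq\mathcal{N}^f$) is exactly the one the paper gives there, so there is no circularity.
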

\begin{proof}
$\mathcal{N}\subseteq\mathcal{C}$ is closed under direct summands by Corollary \ref{CorInverseImage}. By the definition of a Hovey TCP, it is also closed under shifts.

Let us show $\mathcal{N}\subseteq\mathcal{C}$ is closed under extensions. To show $\mathcal{N}\ast\mathcal{N}\subseteq\mathcal{N}$, obviously it suffices to show $\mathcal{V}[1]\ast\mathcal{S}\subseteq\mathcal{N}$.

For any $X\in\mathcal{V}[1]\ast\mathcal{S}$, take a distinguished triangle
\[ S[-1]\overset{s}{\longrightarrow}V[1]\to X\to S\quad(S\in\mathcal{S},V\in\mathcal{V}) \]
and then decompose $V\in\mathcal{V}\subseteq\mathcal{N}$ into a distinguished triangle
\[ S^{\prime}[-1]\to V[1]\overset{v}{\longrightarrow}V^{\prime}\to S^{\prime}\quad(S^{\prime}\in\mathcal{S},V^{\prime}\in\mathcal{V}). \]
Since $v\circ s=0$, the octahedron axiom gives
\[
\xy
(-20,16)*+{S[-1]}="0";
(0.5,15)*+{S^{\prime}[-1]}="2";
(17,14)*+{{}^{\exists}M}="4";
(-2,4)*+{V[1]}="6";
(6.1,-0.6)*+{X}="8";
(-5.8,-14.2)*+{V^{\prime}}="10";
(7.5,8.5)*+_{_{\circlearrowright}}="12";
(-5.5,11.5)*+_{_{\circlearrowright}}="14";
(-0.5,-4.3)*+_{_{\circlearrowright}}="14";
{\ar^{} "0";"2"};
{\ar^{} "2";"4"};
{\ar_{s} "0";"6"};
{\ar^{} "2";"6"};
{\ar^{} "6";"8"};
{\ar_{v} "6";"10"};
{\ar^{} "4";"8"};
{\ar^{} "8";"10"};
\endxy,
\]
which shows $X\in\mathcal{S}[-1]\ast\mathcal{S}\ast\mathcal{V}$. By Remark \ref{RemConcentric} {\rm (ii)}, it follows $X\in\mathcal{S}[-1]\ast(\mathcal{S}[-1]\ast\mathcal{V})\ast\mathcal{V}=\mathcal{N}$.
\end{proof}

In particular, we obtain the Verdier quotient $\ell\colon\mathcal{C}\to\mathcal{C}_{\mathcal{N}}$. We call morphism $f\in\mathcal{C}(X,Y)$ a {\it quasi-isomorphism} if its cone belongs to $\mathcal{N}$.
\begin{dfn}\label{DefEquivZIandCN}
Let $\mathcal{P}$ be a Hovey TCP. Since $\mathcal{I}\subseteq\mathcal{N}$, we have a functor $\Phi_{\mathcal{C}}\colon\mathcal{C}/\mathcal{I}\to\mathcal{C}_{\mathcal{N}}$ which makes the following diagram commutative (on the nose), by the universal property of the ideal quotient.
\[
\xy
(-10,8)*+{\mathcal{C}}="0";
(0,-4)*+{}="1";
(-10,-8)*+{\mathcal{C}/\mathcal{I}}="2";
(8,8)*+{\mathcal{C}_{\mathcal{N}}}="4";
{\ar_{p} "0";"2"};
{\ar^{\ell} "0";"4"};
{\ar_{\Phi_{\mathcal{C}}} "2";"4"};
{\ar@{}|\circlearrowright "0";"1"};
\endxy
\]
We denote the restrictions of $\Phi_{\mathcal{C}}$ as
\[ \Phi_{\mathcal{U}}=(\Phi_{\mathcal{C}})|_{\mathcal{U}/\mathcal{I}},\ \ \Phi_{\mathcal{T}}=(\Phi_{\mathcal{C}})|_{\mathcal{T}/\mathcal{I}},\ \ \Phi=(\Phi_{\mathcal{C}})|_{\mathcal{Z}/\mathcal{I}} \]
in the rest.
\end{dfn}

\begin{prop}\label{PropEquivZIandCN}
Let $\mathcal{P}$ be a Hovey TCP. Then $\Phi\colon\mathcal{Z}/\mathcal{I}\to\mathcal{C}_{\mathcal{N}}$ is an equivalence
\begin{equation}\label{DiagEquivZIandCN}
\xy
(-8,6)*+{\mathcal{Z}}="0";
(8,6)*+{\mathcal{C}}="2";
(-8,-6)*+{\mathcal{Z}/\mathcal{I}}="4";
(8,-6)*+{\mathcal{C}_{\mathcal{N}}}="6";
{\ar@{^(->} "0";"2"};
{\ar_{p} "0";"4"};
{\ar^{\ell} "2";"6"};
{\ar_{\Phi}^{\simeq} "4";"6"};
{\ar@{}|\circlearrowright "0";"6"};
\endxy,
\end{equation}
which makes the following diagrams commutative up to natural isomorphisms.
\begin{equation}\label{Diag_ZIandCN}
\xy
(0,14)*+{\mathcal{U}}="2";
(18,14)*+{\mathcal{C}}="4";
(0,0)*+{\mathcal{U}/\mathcal{I}}="12";
(18,0)*+{\mathcal{C}/\mathcal{I}}="14";
(-10,0)*+{}="13";
(10,-8)*+{}="15";
(28,0)*+{}="17";
(0,-14)*+{\mathcal{Z}/\mathcal{I}}="22";
(10,-6)*+{}="23";
(18,-14)*+{\mathcal{C}_{\mathcal{N}}}="24";
{\ar@{^(->} "2";"4"};
{\ar^{p_{\mathcal{U}}} "2";"12"};
{\ar_{p} "4";"14"};
{\ar@{^(->} "12";"14"};
{\ar^{\sigma} "12";"22"};
{\ar^{\Phi_{\mathcal{C}}} "14";"24"};
{\ar^(0.56){\Phi_{\mathcal{U}}} "12";"24"};
{\ar^{\simeq}_{\Phi} "22";"24"};
{\ar@/_1.80pc/_{\overline{\sigma}} "2";"22"};
{\ar@/^1.80pc/^{\ell} "4";"24"};
{\ar@{}|\circlearrowright "2";"14"};
{\ar@{}|\circlearrowright "12";"13"};
{\ar@{}|\circlearrowright "14";"17"};
{\ar@{}|\circlearrowright "14";"15"};
{\ar@{}|\circlearrowright "22";"23"};
\endxy
\quad,\quad
\xy
(0,14)*+{\mathcal{T}}="2";
(18,14)*+{\mathcal{C}}="4";
(0,0)*+{\mathcal{T}/\mathcal{I}}="12";
(18,0)*+{\mathcal{C}/\mathcal{I}}="14";
(-10,0)*+{}="13";
(10,-8)*+{}="15";
(28,0)*+{}="17";
(0,-14)*+{\mathcal{Z}/\mathcal{I}}="22";
(10,-6)*+{}="23";
(18,-14)*+{\mathcal{C}_{\mathcal{N}}}="24";
{\ar@{^(->} "2";"4"};
{\ar^{p_{\mathcal{T}}} "2";"12"};
{\ar_{p} "4";"14"};
{\ar@{^(->} "12";"14"};
{\ar^{\omega} "12";"22"};
{\ar^{\Phi_{\mathcal{C}}} "14";"24"};
{\ar^(0.56){\Phi_{\mathcal{T}}} "12";"24"};
{\ar^{\simeq}_{\Phi} "22";"24"};
{\ar@/_1.80pc/_{\overline{\omega}} "2";"22"};
{\ar@/^1.80pc/^{\ell} "4";"24"};
{\ar@{}|\circlearrowright "2";"14"};
{\ar@{}|\circlearrowright "12";"13"};
{\ar@{}|\circlearrowright "14";"17"};
{\ar@{}|\circlearrowright "14";"15"};
{\ar@{}|\circlearrowright "22";"23"};
\endxy
\end{equation}
Here, $p_{\mathcal{U}}$ and $p_{\mathcal{T}}$ denote the residue functors.
\end{prop}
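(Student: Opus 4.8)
The plan is to prove, in order: (a) essential surjectivity of $\Phi$ together with the two natural isomorphisms appearing in $(\ref{Diag_ZIandCN})$, (b) faithfulness of $\Phi$, (c) fullness of $\Phi$. Throughout I will use that a Hovey TCP is automatically concentric: from the trivial (split) triangles one gets $\mathcal{S}\subseteq\mathcal{S}\ast\mathcal{V}[1]=\mathcal{N}^i$ and $\mathcal{V}\subseteq\mathcal{S}[-1]\ast\mathcal{V}=\mathcal{N}^f$, so by the Hovey condition and Remark \ref{RemTCP_UNTN} {\rm (1)} we have $\mathcal{S}\cap\mathcal{T}\subseteq\mathcal{N}^f\cap\mathcal{T}=\mathcal{V}$ and $\mathcal{U}\cap\mathcal{V}\subseteq\mathcal{U}\cap\mathcal{N}^i=\mathcal{S}$, whence $\mathcal{S}\cap\mathcal{T}=\mathcal{S}\cap\mathcal{V}=\mathcal{U}\cap\mathcal{V}$; thus $\mathcal{Z}/\mathcal{I},\sigma,\omega,\overline{\sigma},\overline{\omega}$ are all defined, and $\mathcal{N}$ is thick by Proposition \ref{PropHTCP} so $\mathcal{C}_{\mathcal{N}}$ and $\Phi$ make sense. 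The portions of $(\ref{DiagEquivZIandCN})$ and $(\ref{Diag_ZIandCN})$ built only from residue functors, inclusions, and the restrictions $\Phi_{\mathcal{U}},\Phi_{\mathcal{T}},\Phi$ of $\Phi_{\mathcal{C}}$ commute on the nose by Definition \ref{DefEquivZIandCN}, so the only nontrivial commutativities left are $\Phi\circ\overline{\sigma}\cong\ell|_{\mathcal{U}}$ and $\Phi\circ\overline{\omega}\cong\ell|_{\mathcal{T}}$ (equivalently $\Phi\circ\sigma\cong\Phi_{\mathcal{U}}$, $\Phi\circ\omega\cong\Phi_{\mathcal{T}}$).

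The device for (a) is the following observation. For $U\in\mathcal{U}$, the cotorsion pair $(\mathcal{S},\mathcal{T})$ supplies a distinguished triangle $S[-1]\to U\xrightarrow{w}T_U\to S$ with $S\in\mathcal{S}$, $T_U\in\mathcal{T}$; then $T_U\in U\ast S\subseteq\mathcal{U}$, so $T_U\in\mathcal{U}\cap\mathcal{T}=\mathcal{Z}$, the cone of $w$ is $S\in\mathcal{S}\subseteq\mathcal{N}$ so $w$ is a quasi-isomorphism, and $T_U\cong\sigma(U)$ in $\mathcal{Z}/\mathcal{I}$ by Fact \ref{FactCPAdj} {\rm (1)}, with $\underline{w}$ the component at $U$ of the unit of $\sigma_{\mathcal{T}}$, hence of $\sigma$. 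Since $\Phi_{\mathcal{C}}\circ p=\ell$, the morphism $\ell(w)\colon\ell(U)\to\ell(T_U)=\Phi(\sigma(U))$ is an isomorphism, natural in $U$ because the unit is; this is the asserted $\ell|_{\mathcal{U}}\cong\Phi\circ\overline{\sigma}$, and the dual argument gives $\ell|_{\mathcal{T}}\cong\Phi\circ\overline{\omega}$. For essential surjectivity, given $C\in\mathcal{C}$ decompose it via $\mathcal{C}=\mathcal{U}\ast\mathcal{V}[1]$ to get a quasi-isomorphism $U_C\to C$ with $U_C\in\mathcal{U}$ (its cone lying in $\mathcal{V}[1]\subseteq\mathcal{N}$), then apply the above to $U_C$: hence $C\cong U_C\cong T_{U_C}$ in $\mathcal{C}_{\mathcal{N}}$ with $T_{U_C}\in\mathcal{Z}$.

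For (b), let $f\in\mathcal{Z}(X,Y)$ with $\ell(f)=0$. Since $\mathcal{N}$ is thick, $f$ factors through some $N\in\mathcal{N}=\mathcal{N}^i=\mathcal{S}\ast\mathcal{V}[1]$. From a triangle $S'\to N\to V'[1]\to S'[1]$ with $S'\in\mathcal{S}$, $V'\in\mathcal{V}$, the composite $X\to N\to V'[1]$ lies in $\mathcal{C}(X,V'[1])=\mathrm{Ext}^1(X,V')=0$ (as $X\in\mathcal{Z}\subseteq\mathcal{U}$), so $X\to N$ factors through $S'\to N$; hence $f$ factors as $X\xrightarrow{g}S'\xrightarrow{h}Y$ with $S'\in\mathcal{S}$. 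Using $\mathcal{S}\subseteq\mathcal{S}[-1]\ast\mathcal{I}$ (Remark \ref{RemConcentric} {\rm (ii)}), take a triangle $S''[-1]\to S'\xrightarrow{p}I\to S''$ with $S''\in\mathcal{S}$, $I\in\mathcal{I}$; since $\mathcal{C}(S''[-1],Y)=\mathrm{Ext}^1(S'',Y)=0$ because $Y\in\mathcal{Z}\subseteq\mathcal{T}$, $h$ factors through $p$, so $f$ factors through $I\in\mathcal{I}$, i.e. $\underline{f}=0$ in $\mathcal{Z}/\mathcal{I}$. Thus $\Phi$ is faithful.

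Part (c) is the \emph{main obstacle}. Given $\alpha\in\mathcal{C}_{\mathcal{N}}(X,Y)$ with $X,Y\in\mathcal{Z}$, represent it by a roof $X\xleftarrow{s}W\xrightarrow{t}Y$ with $\mathrm{Cone}(s)\in\mathcal{N}$. Precomposing both legs with the quasi-isomorphism $U\to W$ coming from $\mathcal{C}=\mathcal{U}\ast\mathcal{V}[1]$ replaces the apex by an object $U\in\mathcal{U}$; then, with the triangle $S'[-1]\xrightarrow{a}U\xrightarrow{w}T'\to S'$ of part (a) ($T'\in\mathcal{Z}$, $w$ a quasi-isomorphism), the two legs $U\to X$ and $U\to Y$ of the new roof --- having target in $\mathcal{Z}\subseteq\mathcal{T}$, and $\mathcal{C}(S'[-1],Z'')=\mathrm{Ext}^1(S',Z'')=0$ for $Z''\in\mathcal{T}$ --- factor through $w$, so that (via $w$) the roof is equivalent to one of the form $X\xleftarrow{\bar s}T'\xrightarrow{\bar t}Y$ supported on $T'\in\mathcal{Z}$. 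Since $\bar s\circ w$ and $w$ are quasi-isomorphisms, so is $\bar s$, i.e. $\mathrm{Cone}(\bar s)\in\mathcal{N}=\mathcal{N}^i$, whence $\underline{\bar s}$ is invertible in $\mathcal{Z}/\mathcal{I}$ by Corollary \ref{Cor3-2} ($(4)\Rightarrow(1)$). Therefore $\alpha=\ell(\bar t)\ell(\bar s)^{-1}=\Phi(\underline{\bar t})\,\Phi(\underline{\bar s})^{-1}=\Phi(\underline{\bar t}\,\underline{\bar s}^{-1})$, so $\Phi$ is full, hence an equivalence. The delicate points, and the places where the Hovey hypothesis $\mathcal{N}^i=\mathcal{N}=\mathcal{N}^f$ is genuinely used, are precisely this reduction of an arbitrary roof to one supported on $\mathcal{Z}$ and the identification --- through Corollary \ref{Cor3-2} --- of the quasi-isomorphisms between objects of $\mathcal{Z}$ with the morphisms of $\mathcal{Z}$ that are inverted in $\mathcal{Z}/\mathcal{I}$; once these are in hand one still has to check the routine compatibilities between the various roofs, units, and natural transformations that assemble the diagrams $(\ref{DiagEquivZIandCN})$ and $(\ref{Diag_ZIandCN})$.
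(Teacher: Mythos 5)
Your proposal is correct and follows essentially the same strategy as the paper's proof: quasi-isomorphisms coming from $(\mathcal{S},\mathcal{T})$- and $(\mathcal{U},\mathcal{V})$-decompositions give essential surjectivity and the natural isomorphisms $\ell|_{\mathcal{U}}\cong\Phi\circ\overline{\sigma}$, $\ell|_{\mathcal{T}}\cong\Phi\circ\overline{\omega}$ (via the image of the unit), faithfulness is proved by factoring a morphism killed by $\ell$ through $\mathcal{N}$ and then through $\mathcal{I}$, and fullness by moving the apex of a fraction into $\mathcal{Z}$ and invoking Corollary \ref{Cor3-2}. The only differences are cosmetic and dual in flavor: you work with left fractions and reduce the apex through $\mathcal{U}$ using $\mathcal{C}(\mathcal{S}[-1],\mathcal{T})=0$ (and use $\mathcal{N}=\mathcal{S}\ast\mathcal{V}[1]$ in the faithfulness step), whereas the paper works with right fractions and reduces through $\mathcal{T}$ using $\mathcal{C}(\mathcal{Z},\mathcal{V}[1])=0$ (and uses $\mathcal{N}=\mathcal{S}[-1]\ast\mathcal{V}$).
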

\begin{proof}
For any $X\in\mathcal{C}$, take diagram $(\ref{PentaNat_inC})$
\begin{eqnarray*}
&\xy
(-16,-1)*+{U_X}="0";
(-8,8)*+{Z_U}="2";
(8,8)*+{Z_T}="4";
(16,-1)*+{T_X}="6";
(0,-9)*+{X}="8";
(0,10)*+{}="9";
{\ar^{z_U} "0";"2"};
{\ar^{z} "2";"4"};
{\ar^{z_T} "4";"6"};
{\ar_{u_X} "0";"8"};
{\ar_{t_X} "8";"6"};
{\ar@{}|\circlearrowright "8";"9"};
\endxy&\\
&(U_X\in\mathcal{U},\, T_X\in\mathcal{T},\, Z_U,Z_T\in\mathcal{Z})&\\
&(\underline{t}_X=(\eta_{\mathcal{T}})_X,\ %
\underline{z}_U=\eta_{(U_X)},\ %
\underline{u}_X=(\varepsilon_{\mathcal{U}})_X,\ %
\underline{z}_T=\varepsilon_{(T_X)}
).&
\end{eqnarray*}
Then $t_X,z_U,u_X,z_T$ are quasi-isomorphisms. (As a consequence, so is $z$.) In particular, we have an isomorphism
\[ \ell(z_U)\circ\ell(u_X)^{-1}\colon X\overset{\cong}{\longrightarrow} Z_U \]
in $\mathcal{C}_{\mathcal{N}}$. Thus $\Phi$ is essentially surjective.

Moreover, for any $U\in\mathcal{U}$, the unit morphism $\eta_U\in(\mathcal{U}/\mathcal{I})(U,\sigma(U))$ is sent to an isomorphism
\[ \Phi_{\mathcal{U}}(\eta_U)\colon\Phi_{\mathcal{U}}(U)\overset{\cong}{\longrightarrow}\Phi(\sigma(U)) \]
in $\mathcal{C}_{\mathcal{N}}$. This gives a natural isomorphism $\Phi_{\mathcal{U}}\cong\Phi\circ\sigma$, and thus the left diagram in $(\ref{Diag_ZIandCN})$ is commutative up to natural isomorphism. Commutativity of the right diagram in $(\ref{Diag_ZIandCN})$ can be shown dually.

It remains to show that $\Phi$ is fully faithful. Let $X,Y\in\mathcal{Z}$ be any pair of objects.

\medskip

Let $f\in\mathcal{Z}(X,Y)$ be any morphism. Remark that $\ell(f)=0$ holds if and only if there exists $C\in\mathcal{C}$ and a quasi-isomorphism $s\in\mathcal{C}(Y,C)$ satisfying $s\circ f=0$. This means that $f$ factors through some $N\in\mathcal{N}=\mathcal{S}[-1]\ast\mathcal{V}$. Since $\mathcal{C}(\mathcal{S}[-1],Y)=0$, it follows that $f$ factors through $V\in\mathcal{V}$.
Since $X\in\mathcal{Z}$, there is a distinguished triangle
\[ U_X[-1]\to X\to I_X\to U_X \]
satisfying $U_X\in\mathcal{U}$ and $I_X\in\mathcal{I}$. By $\mathcal{C}(U_X[-1],V)=0$, we can show that $f$ factors through $I_X$, as follows.
\[
\xy
(-18,8)*+{U_X[-1]}="0";
(0,8)*+{X}="2";
(17,8)*+{I_X}="4";
(-2,-1)*+{}="5";
(10,-2)*+{V}="6";
(9,10)*+{}="7";
(0,-10)*+{Y}="8";
{\ar^{} "0";"2"};
{\ar_{} "2";"4"};
{\ar^{} "2";"6"};
{\ar^{} "4";"6"};
{\ar_{} "6";"8"};
{\ar_{f} "2";"8"};
{\ar@{}|\circlearrowright "5";"6"};
{\ar@{}|\circlearrowright "6";"7"};
\endxy
\]
Thus if $\Phi(\underline{f})=\ell(f)=0$, then $\underline{f}=0$ follows. This means $\Phi$ is faithful.

Let us show that $\Phi$ is full. Remark that any morphism $\phi\in\mathcal{C}_{\mathcal{N}}(X,Y)$ can be expressed as a roof
\begin{equation}\label{Roof1}
\xy
(-11,-3)*+{X}="0";
(0,3)*+{C}="2";
(11,-3)*+{Y}="4";
{\ar^{x} "0";"2"};
{\ar_{s} "4";"2"};
\endxy
\end{equation}
with $C\in\mathcal{C}$ and a quasi-isomorphism $s$. Decompose $C$ into a distinguished triangle
\[ S[-1]\to C\overset{t}{\longrightarrow}T\to S\quad(S\in\mathcal{S},T\in\mathcal{T}) \]
in $\mathcal{C}$. Then, since $t$ is a quasi-isomorphism, the roof $(\ref{Roof1})$ gives the same morphism as
\begin{equation}\label{Roof2}
\xy
(-11,-3)*+{X}="0";
(0,3)*+{T}="2";
(11,-3)*+{Y}="4";
{\ar^{t\circ x} "0";"2"};
{\ar_{t\circ s} "4";"2"};
\endxy
\end{equation}
in $\mathcal{C}_{\mathcal{N}}$. Decompose $T$ into a distinguished triangle
\[ V\to Z\overset{z}{\longrightarrow}T\to V[1]\quad(V\in\mathcal{V}, Z\in\mathcal{Z}). \]
By $\mathcal{C}(X,V[1])=0$ and $\mathcal{C}(Y,V[1])=0$, there exists $x^{\prime}\in\mathcal{C}(X,Z)$ and $s^{\prime}\in\mathcal{C}(Y,Z)$ which makes the following diagram commutative in $\mathcal{C}$.
\[
\xy
(-14,0)*+{X}="0";
(4,0)*+{}="1";
(0,8)*+{Z}="2";
(14,0)*+{Y}="4";
(-4,0)*+{}="5";
(0,-8)*+{T}="6";
{\ar^{x^{\prime}} "0";"2"};
{\ar_{t\circ x} "0";"6"};
{\ar_{s^{\prime}} "4";"2"};
{\ar^{t\circ s} "4";"6"};
{\ar^{z} "2";"6"};
{\ar@{}|\circlearrowright "0";"1"};
{\ar@{}|\circlearrowright "4";"5"};
\endxy
\]
Since $\mathcal{N}\subseteq\mathcal{C}$ is thick and since the morphisms $z,\, t\circ s$ are quasi-isomorphisms, it follows that $s^{\prime}$ is also quasi-isomorphism. Thus $(\ref{Roof2})$ gives the same morphism as
\[
\xy
(-11,-3)*+{X}="0";
(0,3)*+{Z}="2";
(11,-3)*+{Y}="4";
{\ar^{x^{\prime}} "0";"2"};
{\ar_{s^{\prime}} "4";"2"};
\endxy
\]
in $\mathcal{C}_{\mathcal{N}}$. By Corollary \ref{Cor3-2}, it follows that $\underline{s}^{\prime}\in(\mathcal{Z}/\mathcal{I})(Y,Z)$ is isomorphism.
Thus $(\underline{s}^{\prime})^{-1}\circ\underline{x}^{\prime}\in(\mathcal{Z}/\mathcal{I})(X,Y)$ satisfies $\Phi((\underline{s}^{\prime})^{-1}\circ\underline{x}^{\prime})=\ell(s^{\prime})^{-1}\circ\ell(x^{\prime})=\phi$.

\end{proof}

\begin{rem}\label{RemBijHTCP}
Let $\mathcal{P}$ be a Hovey TCP. Let $\mathcal{D}\subseteq\mathcal{C}_{\mathcal{N}}$ be a full additive subcategory, closed under isomorphisms and direct summands. Then the commutativity of $(\ref{Diag_ZIandCN})$ shows the following.
\begin{enumerate}
\item $\mathcal{U}\cap\ell^{-1}(\mathcal{D})=\overline{\sigma}^{-1}\Phi^{-1}(\mathcal{D})$.
\item $\mathcal{T}\cap\ell^{-1}(\mathcal{D})=\overline{\omega}^{-1}\Phi^{-1}(\mathcal{D})$.
\end{enumerate}
\end{rem}

\begin{cor}\label{CorHTCP}
Any Hovey TCP is a concentric TCP satisfying {\rm (I)+(II)}.
\end{cor}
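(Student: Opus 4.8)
The plan is to verify, in order, that a Hovey TCP $\mathcal{P}=((\mathcal{S},\mathcal{T}),(\mathcal{U},\mathcal{V}))$ is concentric, that it satisfies {\rm (II)}, and that it satisfies {\rm (I)}. The first two assertions are formal, whereas {\rm (I)} will be deduced from the equivalence $\Phi\colon\mathcal{Z}/\mathcal{I}\overset{\simeq}{\longrightarrow}\mathcal{C}_{\mathcal{N}}$ of Proposition \ref{PropEquivZIandCN}.

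For concentricity, note that for \emph{any} TCP one has $\mathcal{S}\subseteq\mathcal{S}\ast\mathcal{V}[1]=\mathcal{N}^i$ and $\mathcal{V}\subseteq\mathcal{S}[-1]\ast\mathcal{V}=\mathcal{N}^f$ (via the split triangles $S\overset{\mathrm{id}}{\to}S\to 0$ and $0\to V\overset{\mathrm{id}}{\to}V$). Since $\mathcal{P}$ is Hovey, $\mathcal{N}^i=\mathcal{N}^f=\mathcal{N}$, so in fact $\mathcal{S}\subseteq\mathcal{N}^f$ and $\mathcal{V}\subseteq\mathcal{N}^i$. Hence if $X\in\mathcal{S}\cap\mathcal{T}$ then $X\in\mathcal{T}\cap\mathcal{N}^f=\mathcal{V}$ by Remark \ref{RemTCP_UNTN} {\rm (1)} and also $X\in\mathcal{S}\subseteq\mathcal{U}$, so $\mathcal{S}\cap\mathcal{T}\subseteq\mathcal{U}\cap\mathcal{V}$; conversely if $X\in\mathcal{U}\cap\mathcal{V}$ then $X\in\mathcal{U}\cap\mathcal{N}^i=\mathcal{S}$ and $X\in\mathcal{V}\subseteq\mathcal{T}$, so $\mathcal{U}\cap\mathcal{V}\subseteq\mathcal{S}\cap\mathcal{T}$. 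Thus $\mathcal{S}\cap\mathcal{T}=\mathcal{U}\cap\mathcal{V}$, i.e. $\mathcal{P}$ is concentric, $\mathcal{I}$ being this common subcategory. Condition {\rm (II)} is then immediate: Remark \ref{RemTCP_UNTN} {\rm (1)} gives $\mathcal{U}\cap\mathcal{N}^i=\mathcal{S}$ and $\mathcal{T}\cap\mathcal{N}^f=\mathcal{V}$, and substituting $\mathcal{N}^i=\mathcal{N}^f$ turns these into $\mathcal{U}\cap\mathcal{N}^f=\mathcal{S}$ and $\mathcal{T}\cap\mathcal{N}^i=\mathcal{V}$.

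It remains to check {\rm (I)}. Fix $X\in\mathcal{T}\ast\mathcal{U}$ and apply $\Phi_{\mathcal{C}}\colon\mathcal{C}/\mathcal{I}\to\mathcal{C}_{\mathcal{N}}$ to the commutative pentagon $(\ref{PentaNat})$. The key point is that each of its four edges other than $\mu_X$ is inverted by $\Phi_{\mathcal{C}}$: the counit $(\varepsilon_{\mathcal{U}})_X$ is represented in $\mathcal{C}$ by a morphism with cone in $\mathcal{V}[1]$ and the unit $(\eta_{\mathcal{T}})_X$ by one with cone in $\mathcal{S}$ (Fact \ref{FactCPAdj} {\rm (1)}), while $(\eta\circ\omega_{\mathcal{U}})_X=\eta_{(\omega_{\mathcal{U}}(X))}$ has cone in $\mathcal{S}$ and $(\varepsilon\circ\sigma_{\mathcal{T}})_X=\varepsilon_{(\sigma_{\mathcal{T}}(X))}$ has cone in $\mathcal{V}[1]$ (from the defining triangles for $\sigma$ and $\omega$ underlying Definition \ref{DefTCPAdj}). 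As $\mathcal{N}=\mathcal{N}^i=\mathcal{S}\ast\mathcal{V}[1]$ contains both $\mathcal{S}$ and $\mathcal{V}[1]$, all four of these morphisms are quasi-isomorphisms, hence inverted by $\ell$ and therefore by $\Phi_{\mathcal{C}}$ (recall $\Phi_{\mathcal{C}}\circ p=\ell$). Commutativity of the pentagon in $\mathcal{C}_{\mathcal{N}}$ then forces $\Phi_{\mathcal{C}}(\mu_X)$ to be an isomorphism. Since $\mu_X$ is a morphism of $\mathcal{Z}/\mathcal{I}$ and $\Phi=\Phi_{\mathcal{C}}|_{\mathcal{Z}/\mathcal{I}}$ is an equivalence, hence fully faithful and so reflects isomorphisms, $\mu_X$ is an isomorphism in $\mathcal{Z}/\mathcal{I}$. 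This proves {\rm (I)}; in fact the argument gives the stronger statement that $\mu_X$ is an isomorphism for every $X\in\mathcal{C}$.

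The only step requiring genuine care is the bookkeeping of the cones of the adjunction units and counits occurring along the boundary of $(\ref{PentaNat})$, but each is read off directly from the triangle defining the relevant adjoint in Fact \ref{FactCPAdj} and Definition \ref{DefTCPAdj} and visibly lies in $\mathcal{S}$ or in $\mathcal{V}[1]$. Given the equivalence $\Phi$ and Proposition \ref{PropNatural}, this is essentially the entire content, so I do not anticipate a serious obstacle.
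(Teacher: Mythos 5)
Your proof is correct and follows essentially the same route as the paper: concentricity and {\rm (II)} come straight from Remark \ref{RemTCP_UNTN} combined with $\mathcal{N}^i=\mathcal{N}^f$, and {\rm (I)} is obtained for every $X\in\mathcal{C}$ by noting that all edges of the pentagon $(\ref{PentaNat})$ other than $\mu_X$ are represented by quasi-isomorphisms, hence become invertible in $\mathcal{C}_{\mathcal{N}}$. The only (harmless) difference is the final step: the paper concludes that $\mu_X=\underline{z}$ is invertible in $\mathcal{Z}/\mathcal{I}$ directly via Corollary \ref{Cor3-2} (the cone of $z$ lies in $\mathcal{N}=\mathcal{N}^i$), whereas you invoke the full faithfulness of $\Phi$ from Proposition \ref{PropEquivZIandCN} — a slightly heavier tool whose proof itself rests on Corollary \ref{Cor3-2}, but with no circularity since that proposition does not use {\rm (I)} or {\rm (II)}.
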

\begin{proof}
By Remark \ref{RemTCP_UNTN}, we have
\[ \mathcal{U}\cap\mathcal{N}=\mathcal{S},\quad \mathcal{N}\cap\mathcal{T}=\mathcal{V}, \]
which gives $\mathcal{S}\cap\mathcal{T}=\mathcal{U}\cap\mathcal{N}\cap\mathcal{T}=\mathcal{U}\cap\mathcal{V}$. Thus $\mathcal{P}$ is concentric TCP satisfying {\rm (II)}.

Let us show {\rm (I)}. In fact, we will show that if $\mathcal{P}$ is Hovey TCP, then $\mu_X$ in $(\ref{PentaNat})$ becomes isomorphism for any $X\in\mathcal{C}$.

Let $X\in\mathcal{C}$ be any object. In $(\ref{PentaNat_inC})$, those $u_X,t_X,z_U,z_T$ are quasi-isomorphisms. Thus $\ell(z)$ also becomes isomorphism in $\mathcal{C}_{\mathcal{N}}$. Corollary \ref{Cor3-2} shows that $\mu_X=\underline{z}$ is isomorphism in $\mathcal{Z}/\mathcal{I}$.
\end{proof}

\begin{cor}\label{CorTriaEquivZIandCN}
Let $\mathcal{P}$ be a Hovey TCP. Then the equivalence $\Phi\colon\mathcal{Z}/\mathcal{I}\overset{\simeq}{\longrightarrow}\mathcal{C}_{\mathcal{N}}$ obtained in  Proposition \ref{PropEquivZIandCN} is triangle equivalence.
\end{cor}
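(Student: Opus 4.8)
The plan is to show that the additive equivalence $\Phi$ of Proposition~\ref{PropEquivZIandCN} is exact, i.e.\ that it commutes with suspension up to natural isomorphism and carries distinguished triangles to distinguished triangles; an additive equivalence with these two properties is automatically a triangle equivalence, and so is its quasi-inverse. Recall that by Corollary~\ref{CorHTCP} a Hovey TCP satisfies {\rm (I)+(II)}, so by Corollary~\ref{CorSigmaOmega} the category $\mathcal{Z}/\mathcal{I}$ is triangulated with suspension $\Sigma$ (quasi-inverse $\Omega$), and by Theorem~\ref{ThmPreTriangulated} together with Corollary~\ref{CorSigmaOmega} its distinguished triangles are exactly the right triangles isomorphic to a standard right triangle associated to some $\mathcal{U}$-conic triangle in $\mathcal{C}$. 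On the other side, $\mathcal{C}_{\mathcal{N}}$ is triangulated and the Verdier quotient functor $\ell\colon\mathcal{C}\to\mathcal{C}_{\mathcal{N}}$ is a triangle functor.

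\textbf{Compatibility with suspension.} For $Z\in\mathcal{Z}$ pick a distinguished triangle $(\ref{Triaposhi1})$, $U[-1]\to Z\to I\to U$, in $\mathcal{C}$ with $I\in\mathcal{I}$, $U\in\mathcal{U}$, so that $Z\langle1\rangle=U$ and $\Sigma Z=\sigma(U)$. Since $\mathcal{I}\subseteq\mathcal{N}$ we have $\ell(I)=0$, so the rotation $Z\to I\to U\to Z[1]$ of this triangle shows that its connecting morphism induces an isomorphism $\ell(U)\xrightarrow{\ \cong\ }\ell(Z)[1]$ in $\mathcal{C}_{\mathcal{N}}$. On the other hand the left diagram of $(\ref{Diag_ZIandCN})$ gives a natural isomorphism $\Phi\circ\sigma\cong\Phi_{\mathcal{U}}=\ell|_{\mathcal{U}}$, whence $\Phi(\Sigma Z)=\Phi(\sigma(U))\cong\ell(U)\cong\ell(Z)[1]=\Phi(Z)[1]$. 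Naturality of this isomorphism in $Z$ follows from the functoriality of $\langle1\rangle$ (Definition~\ref{Def_poshi_neshi}) and of $\sigma$, and from the naturality of $\Phi\circ\sigma\cong\ell|_{\mathcal{U}}$; this is a diagram chase in $\mathcal{C}/\mathcal{I}$ and $\mathcal{C}_{\mathcal{N}}$ requiring no new computation.

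\textbf{Distinguished triangles.} It suffices to check that $\Phi$ sends a standard right triangle $(\ref{UStanTria})$, $X\xrightarrow{\underline{f}}Y\xrightarrow{\eta_U\circ\underline{a}}\sigma(U)\xrightarrow{\sigma(\underline{c})}\Sigma X$, associated to a $\mathcal{U}$-conic triangle $(\ref{UConicTria})$, $X\xrightarrow{f}Y\xrightarrow{a}U\xrightarrow{b}X[1]$, to a distinguished triangle. Now $(\ref{UConicTria})$ is a genuine distinguished triangle of $\mathcal{C}$, so $\ell$ sends it to a distinguished triangle $\ell(X)\xrightarrow{\ell(f)}\ell(Y)\xrightarrow{\ell(a)}\ell(U)\xrightarrow{\ell(b)}\ell(X)[1]$ in $\mathcal{C}_{\mathcal{N}}$. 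I will identify the $\Phi$-image of $(\ref{UStanTria})$ with this triangle. Under $\Phi\circ\sigma\cong\ell|_{\mathcal{U}}$ the unit $\eta_U$ is carried precisely to the comparison isomorphism $\ell(U)\xrightarrow{\cong}\Phi(\sigma(U))$, so $\Phi(\eta_U\circ\underline{a})$ corresponds to $\ell(a)$. For the third map, take the distinguished triangle $(\ref{UStT1})$, $X\xrightarrow{\iota_X}I_X\xrightarrow{\wp_X}U_X\xrightarrow{\gamma_X}X[1]$, defining $\Sigma X=\sigma(U_X)$, and the morphism of triangles $(\ref{UStT2})$ giving $c\colon U\to U_X$ with $\gamma_X\circ c=b$; as above $\ell(I_X)=0$, so $\ell(\gamma_X)\colon\ell(U_X)\xrightarrow{\cong}\ell(X)[1]$ realises the suspension comparison isomorphism for $X$, and therefore $\Phi(\sigma(\underline{c}))$ corresponds to $\ell(\gamma_X\circ c)=\ell(b)$. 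Consequently the $\Phi$-image of $(\ref{UStanTria})$ is isomorphic in $\mathcal{C}_{\mathcal{N}}$ to $\ell$ applied to $(\ref{UConicTria})$, hence distinguished.

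Thus $\Phi$ is an exact functor; being moreover an equivalence, it is a triangle equivalence (and $\Phi^{-1}$ is exact as well). The only subtle point in the argument is the bookkeeping needed to match the units $\eta_U$, the morphisms $c$ from $(\ref{UStT2})$ and the various suspension comparison isomorphisms in a compatible and natural way; but every ingredient is already recorded in Definitions~\ref{Def_poshi_neshi} and~\ref{DefUStan} and in Proposition~\ref{PropEquivZIandCN}, so this amounts to careful diagram-chasing rather than fresh computation.
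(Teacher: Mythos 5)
Your proposal is correct and follows essentially the same route as the paper: you build the comparison isomorphism $[1]\circ\Phi\cong\Phi\circ\Sigma$ from the quasi-isomorphisms $\gamma_X$ (cone in $\mathcal{I}[1]$) and the unit $\eta_{U_X}$ (cone in $\mathcal{S}$), and then identify the $\Phi$-image of a standard right triangle with $\ell$ applied to the underlying $\mathcal{U}$-conic triangle via $\Phi_{\mathcal{U}}(\eta_U)$ and the relation $\gamma_X\circ c=b$, exactly as in the paper's proof. The only difference is cosmetic packaging of the comparison isomorphisms, so no further comment is needed.
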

\begin{proof}
Denote the shift functor on $\mathcal{C}_{\mathcal{N}}$ by $[1]$, the same symbol as that of $\mathcal{C}$. For any $X\in\mathcal{Z}$, its shift $\Sigma X=Z_X$ in $\mathcal{Z}/\mathcal{I}$ is given by the distinguished triangles
\[
\xy
(-24,8)*+{X}="0";
(-8,8)*+{I_X}="2";
(8,8)*+{U_X}="4";
(26,8)*+{X[1]}="6";
(8,24)*+{S_{U_X}[-1]}="12";
(8,-8)*+{Z_X}="16";
(8,-24)*+{S_{U_X}}="18";
{\ar^{\iota_X} "0";"2"};
{\ar^{\wp_X} "2";"4"};
{\ar^{\gamma_X} "4";"6"};
{\ar "12";"4"};
{\ar^{z_{U_X}} "4";"16"};
{\ar "16";"18"};
\endxy\quad (I_X\in\mathcal{I},\, U_X\in\mathcal{U},\, Z_X\in\mathcal{Z},\, S_{U_X}\in\mathcal{S})
\]
as in Definition \ref{DefShifts}.
Since $\gamma_X$ and $z_{U_X}$ are quasi-isomorphisms, we obtain an isomorphism in $\mathcal{C}_{\mathcal{N}}$
\[ \lambda_X=\ell(z_{U_X})\circ\ell(\gamma_X)^{-1}=\Phi_{\mathcal{U}}(\eta_{U_X})\circ\Phi(\underline{\gamma}_X)^{-1}\colon X[1]\overset{\cong}{\longrightarrow}Z_X=\Phi(\Sigma X). \]
This gives a natural isomorphism $\lambda\colon [1]\circ\Phi\overset{\cong}{\Longrightarrow}\Phi\circ\Sigma$.

Let us show that $\Phi$ sends any standard right triangle to a distinguished triangle in $\mathcal{C}_{\mathcal{N}}$.
Let $X\overset{\underline{f}}{\longrightarrow}Y\overset{\eta_U\circ\underline{a}}{\longrightarrow}\sigma(U)\overset{\sigma(\underline{c})}{\longrightarrow}\Sigma X$ be the standard right triangle in $\mathcal{Z}/\mathcal{I}$, associated to a $\mathcal{U}$-conic triangle
\[ X\overset{f}{\longrightarrow}Y\overset{a}{\longrightarrow}U\overset{b}{\longrightarrow}X[1]. \]
By definition, this is given by a morphism of triangles in $\mathcal{C}$
\[
\xy
(-23,7)*+{X}="0";
(-7,7)*+{Y}="2";
(8,7)*+{U}="4";
(23,7)*+{X[1]}="6";
(-23,-7)*+{X}="10";
(-7,-7)*+{I_X}="12";
(8,-7)*+{U_X}="14";
(23,-7)*+{X[1]}="16";
{\ar^{f} "0";"2"};
{\ar^{a} "2";"4"};
{\ar^{b} "4";"6"};
{\ar@{=} "0";"10"};
{\ar "2";"12"};
{\ar^{c} "4";"14"};
{\ar@{=} "6";"16"};
{\ar_{\iota_X} "10";"12"};
{\ar_{\wp_X} "12";"14"};
{\ar_{\gamma_X} "14";"16"};
{\ar@{}|\circlearrowright "0";"12"};
{\ar@{}|\circlearrowright "2";"14"};
{\ar@{}|\circlearrowright "4";"16"};
\endxy
\]
and the following commutative diagram in $\mathcal{U}/\mathcal{I}$.
\[
\xy
(-9,6)*+{U}="0";
(8,6)*+{\sigma(U)}="2";
(-9,-6)*+{U_X}="4";
(8,-6)*+{\sigma(U_X)}="6";
%(19.5,-6)*+{Z_X}="8";
%(29,-6)*+{\Sigma X}="10";
%
{\ar^(0.46){\eta_U} "0";"2"};
{\ar_{\underline{c}} "0";"4"};
{\ar^(0.46){\sigma(\underline{c})} "2";"6"};
{\ar_{\eta_{U_X}} "4";"6"};
%{\ar@{=} "6";"8"};
%{\ar@{=} "8";"10"};
{\ar@{}|\circlearrowright "0";"6"};
\endxy
\]
This gives the following commutative diagram in $\mathcal{C}_{\mathcal{N}}$.
\[
\xy
(-31,7)*+{X}="0";
(-14,7)*+{Y}="2";
(-18,7)*+{}="3";
(8,7)*+{U}="4";
(27,7)*+{X[1]}="6";
(-31,-7)*+{X}="10";
(-14,-7)*+{Y}="12";
(8,-7)*+{\sigma(U)}="14";
(27,-7)*+{\Phi(\Sigma X)}="16";
{\ar^{\ell(f)} "0";"2"};
{\ar^{\ell(a)} "2";"4"};
{\ar^{\ell(b)} "4";"6"};
{\ar@{=} "0";"10"};
{\ar_{\cong} "2";"12"};
{\ar_{\Phi_{\mathcal{U}}(\eta_U)}^{\cong} "4";"14"};
{\ar_{\cong}^{\lambda_X} "6";"16"};
{\ar_{\Phi(\underline{f})} "10";"12"};
{\ar_(0.46){\Phi(\eta_U\circ\underline{a})} "12";"14"};
{\ar_{\Phi(\sigma(\underline{c}))} "14";"16"};
{\ar@{}|\circlearrowright "0";"12"};
{\ar@{}|\circlearrowright "3";"14"};
{\ar@{}|\circlearrowright "4";"16"};
\endxy
\]
The upper row is a distinguished triangle in $\mathcal{C}_{\mathcal{N}}$.
\end{proof}

\begin{rem}
Proposition \ref{PropEquivZIandCN} and Corollary \ref{CorTriaEquivZIandCN} generalize the triangle equivalence \cite[Theorem 4.1, Theorem 4.7]{IYa} which has been established for pair of co-$t$-structures arising from a presilting subcategory\footnote{The author whishes to thank Professor Osamu Iyama for informing him of this.}.
\end{rem}

Theorem \ref{ThmBij} gives the following.
\begin{cor}\label{CorBijHTCP}
Let $\mathcal{P}$ be a Hovey TCP. Then we have the following mutually inverse bijections.
\begin{eqnarray*}
\mathbf{R}_{\mathcal{P}}\colon\mathfrak{M}_{\mathcal{P}}\to\mathfrak{CP}(\mathcal{C}_{\mathcal{N}})&;&(\mathcal{A},\mathcal{B})\mapsto(\ell(\mathcal{A}),\ell(\mathcal{B})),\\
\mathbf{I}_{\mathcal{P}}\colon\mathfrak{CP}(\mathcal{C}_{\mathcal{N}})\to\mathfrak{M}_{\mathcal{P}}&;&(\mathscr{E},\mathscr{F})\mapsto(\mathcal{U}\cap\ell^{-1}(\mathscr{E}),\mathcal{T}\cap\ell^{-1}(\mathscr{F})).
\end{eqnarray*}
\end{cor}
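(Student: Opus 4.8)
The plan is to obtain this as a transport of Theorem~\ref{ThmBij} along the triangle equivalence $\Phi\colon\mathcal{Z}/\mathcal{I}\overset{\simeq}{\longrightarrow}\mathcal{C}_{\mathcal{N}}$ established in Corollary~\ref{CorTriaEquivZIandCN}. Since $\mathcal{P}$ is a Hovey TCP, Corollary~\ref{CorHTCP} shows it is a concentric TCP satisfying {\rm (I)+(II)}, so Theorem~\ref{ThmBij} yields the mutually inverse bijections $\mathbb{R}\colon\mathfrak{M}_{\mathcal{P}}\to\mathfrak{CP}(\mathcal{Z}/\mathcal{I})$, $(\mathcal{A},\mathcal{B})\mapsto(\overline{\sigma}(\mathcal{A}),\overline{\omega}(\mathcal{B}))$, and $\mathbb{I}\colon\mathfrak{CP}(\mathcal{Z}/\mathcal{I})\to\mathfrak{M}_{\mathcal{P}}$, $(\mathscr{L},\mathscr{R})\mapsto(\overline{\sigma}^{-1}(\mathscr{L}),\overline{\omega}^{-1}(\mathscr{R}))$. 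It remains to compose these with a bijection $\mathfrak{CP}(\mathcal{Z}/\mathcal{I})\to\mathfrak{CP}(\mathcal{C}_{\mathcal{N}})$ coming from $\Phi$, and then to recognize the resulting maps as the ones in the statement.

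First I would record that $\Phi$ induces a bijection $\Phi_{\ast}\colon\mathfrak{CP}(\mathcal{Z}/\mathcal{I})\to\mathfrak{CP}(\mathcal{C}_{\mathcal{N}})$ sending $(\mathscr{L},\mathscr{R})$ to the pair of essential images $(\Phi(\mathscr{L}),\Phi(\mathscr{R}))$, with inverse sending $(\mathscr{E},\mathscr{F})$ to $(\Phi^{-1}(\mathscr{E}),\Phi^{-1}(\mathscr{F}))$. Here $\Phi(\mathscr{L})$ and $\Phi^{-1}(\mathscr{E})$ are full additive subcategories closed under isomorphisms and direct summands: for $\Phi^{-1}$ this is immediate from additivity of $\Phi$ together with closure of $\mathscr{E}$; for the essential image one uses essential surjectivity and full faithfulness of $\Phi$ to split off direct summands. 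Because $\Phi$ is a \emph{triangle} equivalence with a natural isomorphism $[1]\circ\Phi\cong\Phi\circ\Sigma$ (Corollary~\ref{CorTriaEquivZIandCN}), the two defining conditions of Definition~\ref{DefCPC}, namely $\mathcal{Z}/\mathcal{I}=\mathscr{L}\ast\Sigma\mathscr{R}$ and $\mathrm{Ext}^1_{\mathcal{Z}/\mathcal{I}}(\mathscr{L},\mathscr{R})=0$, are transported to $\mathcal{C}_{\mathcal{N}}=\Phi(\mathscr{L})\ast\Phi(\mathscr{R})[1]$ and $\mathrm{Ext}^1_{\mathcal{C}_{\mathcal{N}}}(\Phi(\mathscr{L}),\Phi(\mathscr{R}))=0$, and symmetrically in the other direction; hence $\Phi_{\ast}$ is well defined, and the two maps are mutually inverse since $\Phi$ is an equivalence.

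Next I would set $\mathbf{R}_{\mathcal{P}}=\Phi_{\ast}\circ\mathbb{R}$ and $\mathbf{I}_{\mathcal{P}}=\mathbb{I}\circ\Phi_{\ast}^{-1}$, which are automatically mutually inverse bijections between $\mathfrak{M}_{\mathcal{P}}$ and $\mathfrak{CP}(\mathcal{C}_{\mathcal{N}})$, and then identify them with the stated formulas. For $\mathbf{R}_{\mathcal{P}}$ we have $\mathbf{R}_{\mathcal{P}}(\mathcal{A},\mathcal{B})=(\Phi(\overline{\sigma}(\mathcal{A})),\Phi(\overline{\omega}(\mathcal{B})))$, and the natural isomorphisms $\Phi\circ\overline{\sigma}\cong\ell|_{\mathcal{U}}$ and $\Phi\circ\overline{\omega}\cong\ell|_{\mathcal{T}}$ recorded in the diagrams~(\ref{Diag_ZIandCN}), together with the fact that all the subcategories in play are closed under isomorphism, give $\Phi(\overline{\sigma}(\mathcal{A}))=\ell(\mathcal{A})$ and $\Phi(\overline{\omega}(\mathcal{B}))=\ell(\mathcal{B})$, so $\mathbf{R}_{\mathcal{P}}(\mathcal{A},\mathcal{B})=(\ell(\mathcal{A}),\ell(\mathcal{B}))$. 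For $\mathbf{I}_{\mathcal{P}}$ we have $\mathbf{I}_{\mathcal{P}}(\mathscr{E},\mathscr{F})=(\overline{\sigma}^{-1}\Phi^{-1}(\mathscr{E}),\overline{\omega}^{-1}\Phi^{-1}(\mathscr{F}))$, and Remark~\ref{RemBijHTCP} identifies $\overline{\sigma}^{-1}\Phi^{-1}(\mathscr{E})=\mathcal{U}\cap\ell^{-1}(\mathscr{E})$ and $\overline{\omega}^{-1}\Phi^{-1}(\mathscr{F})=\mathcal{T}\cap\ell^{-1}(\mathscr{F})$, which is the asserted description.

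The computation is essentially bookkeeping once the earlier results are in place; the only point requiring a little care is the verification that a triangle equivalence transports cotorsion pairs to cotorsion pairs and respects the closure conditions and the $\ast$/$\mathrm{Ext}^1$ axioms, and this is routine given that $\Phi$ has already been shown to be a triangle equivalence in Corollary~\ref{CorTriaEquivZIandCN}.
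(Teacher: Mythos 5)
Your proposal is correct and follows essentially the same route as the paper: transport Theorem \ref{ThmBij} along the triangle equivalence $\Phi$ of Corollary \ref{CorTriaEquivZIandCN}, using the induced bijection on cotorsion pairs, the commutativity of $(\ref{Diag_ZIandCN})$ (i.e.\ $\ell|_{\mathcal{U}}\cong\Phi\circ\overline{\sigma}$, $\ell|_{\mathcal{T}}\cong\Phi\circ\overline{\omega}$), and Remark \ref{RemBijHTCP}. You simply spell out the routine verifications (that a triangle equivalence transports cotorsion pairs, and the identification of the composed maps) which the paper leaves implicit.
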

\begin{proof}
Since $\Phi\colon\mathcal{Z}/\mathcal{I}\overset{\simeq}{\longrightarrow}\mathcal{C}_{\mathcal{N}}$ is a triangle equivalence, it induces a $\mathbb{Z}$-equivariant isomorphism (i.e., a bijection compatible with the shifts)
\[ \mathfrak{CP}(\mathcal{Z}/\mathcal{I})\overset{\cong}{\longleftrightarrow}\mathfrak{CP}(\mathcal{C}_{\mathcal{N}}) \]
which associates $(\Phi(\mathscr{L}),\Phi(\mathscr{R}))$ to $(\mathscr{L},\mathscr{R})\in\mathfrak{CP}(\mathcal{Z}/\mathcal{I})$ and $(\Phi^{-1}(\mathscr{E}),\Phi^{-1}(\mathscr{F}))$ to $(\mathscr{E},\mathscr{F})\in\mathfrak{CP}(\mathcal{C}_{\mathcal{N}})$.

Since $\ell|_{\mathcal{U}}\cong\Phi\circ\overline{\sigma}$ and $\ell|_{\mathcal{T}}\cong\Phi\circ\overline{\omega}$ by the commutativity of $(\ref{Diag_ZIandCN})$, this follows from Theorem \ref{ThmBij} and Remark \ref{RemBijHTCP}.
\end{proof}

\begin{rem}
Since $\mathcal{N}\subseteq\mathcal{C}$ is a triangulated subcategory, $(\mathcal{S},\mathcal{V})$ becomes a cotorsion pair on $\mathcal{N}$, satisfying $\mathcal{S}\cap\mathcal{V}=\mathcal{S}\cap\mathcal{T}=\mathcal{U}\cap\mathcal{V}$.
Its heart $\mathscr{A}_{(\mathcal{S},\mathcal{V})}$ is a full subcategory of both $\mathscr{A}_{(\mathcal{S},\mathcal{T})}$ and $\mathscr{A}_{(\mathcal{U},\mathcal{V})}$, which satisfies $\mathscr{A}_{(\mathcal{S},\mathcal{V})}=\mathscr{A}_{(\mathcal{S},\mathcal{T})}\cap\mathscr{A}_{(\mathcal{U},\mathcal{V})}$.
\end{rem}

\begin{claim}\label{ClaimTC2}
If $\mathcal{P}$ is a Hovey TCP, we have the following.
\begin{enumerate}
\item $(\mathcal{S},\mathcal{T})$ is a $t$-structure on $\mathcal{C}$ $\Leftrightarrow$ $(\mathcal{S},\mathcal{V})$ is a $t$-structure on $\mathcal{N}$ $\Leftrightarrow$ $(\mathcal{U},\mathcal{V})$ is a $t$-structure on $\mathcal{C}$ .
\item $\mathcal{S}=\mathcal{T}$ $(\Leftrightarrow\ \mathcal{U}=\mathcal{V})$ $\Rightarrow$ $\mathcal{S}=\mathcal{V}$, i.e., $\mathcal{S}$ is a cluster-tilting subcategory of $\mathcal{N}$.
\item $(\mathcal{S},\mathcal{T})$ is a co-$t$-structure on $\mathcal{C}$ $\Leftrightarrow$ $(\mathcal{S},\mathcal{V})$ is a co-$t$-structure on $\mathcal{N}$ $\Leftrightarrow$ $(\mathcal{U},\mathcal{V})$ is a co-$t$-structure on $\mathcal{C}$.
\end{enumerate}
\end{claim}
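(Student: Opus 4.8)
The plan is to reduce each of the three statements to the characterizations of $t$-structures, cluster tilting subcategories and co-$t$-structures among cotorsion pairs recorded in Example~\ref{Thefollowingareexamples}, together with the already established Claims~\ref{ClaimConcentric} and~\ref{ClaimCot}. The structural input is that, by Corollary~\ref{CorHTCP}, a Hovey TCP is concentric and satisfies {\rm (I)+(II)}; that $\mathcal{N}\subseteq\mathcal{C}$ is a thick triangulated subcategory (Proposition~\ref{PropHTCP}); and that $(\mathcal{S},\mathcal{V})$ is a cotorsion pair on $\mathcal{N}$ with $\mathcal{S}\cap\mathcal{V}=\mathcal{I}$ (the remark immediately preceding this claim). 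Since $\mathcal{N}$ is a full triangulated subcategory closed under the shift of $\mathcal{C}$ and contains $\mathcal{S}$ and $\mathcal{V}$, the inclusions $\mathcal{S}[1]\subseteq\mathcal{S}$ and $\mathcal{S}[-1]\subseteq\mathcal{S}$ mean the same thing whether interpreted inside $\mathcal{N}$ or inside $\mathcal{C}$; this is what lets one transport the Example~\ref{Thefollowingareexamples} characterizations between $\mathcal{N}$ and $\mathcal{C}$.

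For part {\rm (1)}: applying Example~\ref{Thefollowingareexamples}~{\rm (1)} to the cotorsion pair $(\mathcal{S},\mathcal{T})$ on $\mathcal{C}$ and, separately, to the cotorsion pair $(\mathcal{S},\mathcal{V})$ on the triangulated category $\mathcal{N}$, both ``$(\mathcal{S},\mathcal{T})$ is a $t$-structure on $\mathcal{C}$'' and ``$(\mathcal{S},\mathcal{V})$ is a $t$-structure on $\mathcal{N}$'' become equivalent to the single condition $\mathcal{S}[1]\subseteq\mathcal{S}$, hence to one another. The remaining equivalence with ``$(\mathcal{U},\mathcal{V})$ is a $t$-structure on $\mathcal{C}$'' is precisely Claim~\ref{ClaimConcentric}~{\rm (1)} (all three being equivalent to $\mathcal{I}=0$).

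For part {\rm (2)}: by Claim~\ref{ClaimConcentric}~{\rm (2)}, the hypothesis $\mathcal{S}=\mathcal{T}$ (equivalently $\mathcal{U}=\mathcal{V}$) forces $\mathcal{S}=\mathcal{T}=\mathcal{U}=\mathcal{V}=\mathcal{I}$, so in particular $\mathcal{S}=\mathcal{V}$. Then $(\mathcal{S},\mathcal{V})$ is a cotorsion pair on $\mathcal{N}$ with equal components, and Example~\ref{Thefollowingareexamples}~{\rm (2)} applied to $\mathcal{N}$ says exactly that $\mathcal{S}$ is a cluster tilting subcategory of $\mathcal{N}$.

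For part {\rm (3)}: Example~\ref{Thefollowingareexamples}~{\rm (3)} applied to $(\mathcal{S},\mathcal{T})$ on $\mathcal{C}$ and to $(\mathcal{S},\mathcal{V})$ on $\mathcal{N}$ shows that ``$(\mathcal{S},\mathcal{T})$ is a co-$t$-structure on $\mathcal{C}$'' and ``$(\mathcal{S},\mathcal{V})$ is a co-$t$-structure on $\mathcal{N}$'' both amount to $\mathcal{S}[-1]\subseteq\mathcal{S}$, hence are equivalent; and Claim~\ref{ClaimCot} (applicable since {\rm (II)} holds by Corollary~\ref{CorHTCP}) supplies the equivalence ``$(\mathcal{S},\mathcal{T})$ is a co-$t$-structure'' $\Leftrightarrow$ ``$(\mathcal{U},\mathcal{V})$ is a co-$t$-structure''. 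Concatenating the two chains gives {\rm (3)}. The only step demanding real care --- and hence the main (mild) obstacle --- is checking that restricting a cotorsion pair, or its left part $\mathcal{S}$, from $\mathcal{C}$ to $\mathcal{N}$ is compatible with the shift and with the relevant $\mathrm{Hom}$-orthogonality, so that the Example~\ref{Thefollowingareexamples} characterizations may be applied verbatim to $(\mathcal{S},\mathcal{V})$ on $\mathcal{N}$; the rest is a direct citation of the stated results.
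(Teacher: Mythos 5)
Your argument is correct and is essentially the paper's own proof, which simply cites Claims \ref{ClaimConcentric} and \ref{ClaimCot}; you merely make explicit the (implicit) bridging step that $(\mathcal{S},\mathcal{V})$ is a cotorsion pair on the shift-closed thick subcategory $\mathcal{N}$ with $\mathcal{S}\cap\mathcal{V}=\mathcal{I}$, so the Example \ref{Thefollowingareexamples} characterizations of $t$-structures, cluster tilting subcategories and co-$t$-structures (via $\mathcal{S}[1]\subseteq\mathcal{S}$, $\mathcal{S}=\mathcal{V}$, $\mathcal{S}[-1]\subseteq\mathcal{S}$, equivalently $\mathcal{I}=0$ in the $t$-structure case) read the same in $\mathcal{N}$ as in $\mathcal{C}$. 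No gaps; the use of Corollary \ref{CorHTCP} to guarantee condition {\rm (II)} for Claim \ref{ClaimCot} is exactly as intended.
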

\begin{proof}
This immediately follows from Claims \ref{ClaimConcentric} and \ref{ClaimCot}.
\end{proof}

\begin{fact}\label{FactChen}(\cite[Theorem 3.1, Theorem 3.3]{C})
For any recollement of triangulated categories
\begin{equation}\label{DiagRecoll}
\xy
(-20,0)*+{\mathcal{N}}="0";
(0,0)*+{\mathcal{C}}="2";
(20,0)*+{\mathcal{C}_{\mathcal{N}}}="4";
(-10,4)*+{\perp}="11";
(-10,-5)*+{\perp}="12";
(10,4)*+{\perp}="13";
(10,-5)*+{\perp}="14";
{\ar@{^(->}_{i} "0";"2"};
{\ar@/^1.80pc/^{i^{!}} "2";"0"};
{\ar@/_1.80pc/_{i^{\ast}} "2";"0"};
{\ar_{\ell} "2";"4"};
{\ar@/^1.80pc/^{j_{\ast}} "4";"2"};
{\ar@/_1.80pc/_{j_{!}} "4";"2"};
\endxy,
\end{equation}
the following holds.
\begin{enumerate}
\item For any $(\mathcal{S},\mathcal{V})\in\mathfrak{CP}(\mathcal{N})$ and any $(\mathscr{E},\mathscr{F})\in\mathfrak{CP}(\mathcal{C}_{\mathcal{N}})$, we can {\it glue} them by
\[ \mathcal{A}=i^{\ast-1}(\mathcal{S})\cap\ell^{-1}(\mathscr{E}),\ \ \mathcal{B}=i^{!-1}(\mathcal{V})\cap\ell^{-1}(\mathscr{F}) \]
to obtain $(\mathcal{A},\mathcal{B})\in\mathfrak{CP}(\mathcal{C})$, which satisfies $(\mathcal{A},\mathcal{B})|_{\mathcal{N}}=(\mathcal{S},\mathcal{V})$ and $(\ell(\mathcal{A}),\ell(\mathcal{B}))=(\mathscr{E},\mathscr{F})$.
\item If $(\mathcal{A},\mathcal{B})\in\mathfrak{CP}(\mathcal{C})$ satisfies
\begin{equation}\label{ABCond1}
ii^{\ast}(\mathcal{A})\subseteq\mathcal{A},\ \ ii^!(\mathcal{B})\subseteq\mathcal{B}\ \ \text{and}\ \ j_{\ast}\ell(\mathcal{B})\subseteq\mathcal{B},
\end{equation}
then
\begin{itemize}
\item[{\rm (i)}] $\mathcal{S}=i^{-1}(\mathcal{A})=\mathcal{A}\cap\mathcal{N}$ and $\mathcal{V}=i^{-1}(\mathcal{B})=\mathcal{B}\cap\mathcal{N}$ form a cotorsion pair $(\mathcal{S},\mathcal{V})\in\mathfrak{CP}(\mathcal{N})$.
\item[{\rm (ii)}] $\mathscr{E}=\ell(\mathcal{A})$ and $\mathscr{F}=\ell(\mathcal{B})$ (i.e. the essential images of $\mathcal{A}$ and $\mathcal{B}$ in $\mathcal{C}_{\mathcal{N}}$) form a cotorsion pair $(\mathscr{E},\mathscr{F})\in\mathfrak{CP}(\mathcal{C}_{\mathcal{N}})$.
\item[{\rm (iii)}] $(\mathcal{S},\mathcal{V})$ and $(\mathscr{E},\mathscr{F})$ glue to recover $(\mathcal{A},\mathcal{B})$.
\end{itemize}
\end{enumerate}
\end{fact}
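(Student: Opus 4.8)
The plan is to prove the two halves separately, relying throughout on the standard recollement identities $i^{\ast}i\cong\mathrm{id}_{\mathcal N}$, $i^{!}i\cong\mathrm{id}_{\mathcal N}$, $\ell j_{!}\cong\mathrm{id}\cong\ell j_{\ast}$, $\ell\circ i=0$, $i^{\ast}j_{!}=0$, $i^{!}j_{\ast}=0$, together with the two functorial triangles $j_{!}\ell C\to C\to ii^{\ast}C\to(j_{!}\ell C)[1]$ and $ii^{!}C\to C\to j_{\ast}\ell C\to(ii^{!}C)[1]$ supplied by the units and counits.

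For part (1), given $(\mathcal S,\mathcal V)\in\mathfrak{CP}(\mathcal N)$ and $(\mathscr E,\mathscr F)\in\mathfrak{CP}(\mathcal C_{\mathcal N})$, I would first record that $\mathcal A=i^{\ast-1}(\mathcal S)\cap\ell^{-1}(\mathscr E)$ and $\mathcal B=i^{!-1}(\mathcal V)\cap\ell^{-1}(\mathscr F)$ are full additive, and closed under isomorphisms, direct summands and extensions, being intersections of preimages of such subcategories under triangle functors. The orthogonality $\mathrm{Ext}^{1}_{\mathcal C}(\mathcal A,\mathcal B)=0$ follows by applying $\mathcal C(A,-)$ to $ii^{!}B\to B\to j_{\ast}\ell B\to$ and using adjunction: the two flanking groups are $\mathrm{Ext}^{1}_{\mathcal N}(i^{\ast}A,i^{!}B)$ and $\mathrm{Ext}^{1}_{\mathcal C_{\mathcal N}}(\ell A,\ell B)$, both zero. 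The condition $\mathcal C=\mathcal A\ast\mathcal B[1]$ I would get in three octahedral steps: (i) decompose $\ell C\in\mathscr E\ast\mathscr F[1]$ and pull the $\mathscr E$-part back through $j_{!}$, producing $j_{!}E\to C\to C_{1}\to$ with $j_{!}E\in\mathcal A$ (since $i^{\ast}j_{!}E=0$, $\ell j_{!}E\cong E$) and $\ell C_{1}\in\mathscr F[1]$; (ii) decompose $i^{!}C_{1}\in\mathcal S\ast\mathcal V[1]$ in $\mathcal N$, apply $i$, and splice with $ii^{!}C_{1}\to C_{1}\to j_{\ast}\ell C_{1}\to$ by the octahedron axiom, observing that $iV[1]$ and $j_{\ast}\ell C_{1}$ both lie in $\mathcal B[1]$, to obtain $iS\to C_{1}\to Q\to$ with $iS\in\mathcal A$ and $Q\in\mathcal B[1]$; (iii) compose $C\to C_{1}\to Q$ and use the octahedron once more to get $A\to C\to Q\to A[1]$ with $A\in j_{!}E\ast iS\subseteq\mathcal A$. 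The remaining claims $i^{-1}(\mathcal A)=\mathcal S$, $i^{-1}(\mathcal B)=\mathcal V$, $\ell(\mathcal A)=\mathscr E$, $\ell(\mathcal B)=\mathscr F$ are immediate from $i^{\ast}i\cong i^{!}i\cong\mathrm{id}$, $\ell\circ i=0$, and the fact that $j_{!}E\in\mathcal A$, $j_{\ast}F\in\mathcal B$ for $E\in\mathscr E$, $F\in\mathscr F$.

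For part (2), given $(\mathcal A,\mathcal B)\in\mathfrak{CP}(\mathcal C)$ satisfying $(\ref{ABCond1})$, set $\mathcal S=\mathcal A\cap\mathcal N$, $\mathcal V=\mathcal B\cap\mathcal N$ (identifying $\mathcal N$ with $i\mathcal N\subseteq\mathcal C$), $\mathscr E=\ell(\mathcal A)$, $\mathscr F=\ell(\mathcal B)$. Closure properties and the orthogonalities $\mathrm{Ext}^{1}_{\mathcal N}(\mathcal S,\mathcal V)=0$ and $\mathrm{Ext}^{1}_{\mathcal C_{\mathcal N}}(\mathscr E,\mathscr F)=0$ are easy: the first because $\mathrm{Ext}^{1}_{\mathcal N}(S,V)\cong\mathrm{Ext}^{1}_{\mathcal C}(iS,iV)=0$ ($i$ fully faithful), the second because $\mathrm{Ext}^{1}_{\mathcal C_{\mathcal N}}(\ell A,\ell B)\cong\mathrm{Ext}^{1}_{\mathcal C}(A,j_{\ast}\ell B)=0$, where $j_{\ast}\ell B\in\mathcal B$ is the hypothesis $j_{\ast}\ell(\mathcal B)\subseteq\mathcal B$. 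The decomposition $\mathcal C_{\mathcal N}=\mathscr E\ast\mathscr F[1]$ follows by decomposing $j_{!}D\in\mathcal A\ast\mathcal B[1]$ for $D\in\mathcal C_{\mathcal N}$ and applying $\ell$. For $\mathcal N=\mathcal S\ast\mathcal V[1]$: take $N\in\mathcal N$ and decompose $iN$ as $A\overset{a}{\to}iN\overset{b}{\to}B[1]\to A[1]$ in $\mathcal C$; since $\ell(iN)=0$, the composite $j_{!}\ell A\to A\overset{a}{\to}iN$ vanishes, so $a$ factors through the unit $A\to ii^{\ast}A$ as $a'\colon ii^{\ast}A\to iN$ with $ii^{\ast}A\in\mathcal A$ by $(\ref{ABCond1})$, and dually $b=\varepsilon\circ b'$ with $b'\colon iN\to ii^{!}B[1]$, $ii^{!}B\in\mathcal B$; one checks $b'\circ a'=0$ using the already established $\mathrm{Ext}^{1}_{\mathcal N}(\mathcal S,\mathcal V)=0$, and then that $ii^{\ast}A\overset{a'}{\to}iN\overset{b'}{\to}ii^{!}B[1]$ is a distinguished triangle, giving the desired approximation with $S=i^{\ast}A\in\mathcal S$, $V=i^{!}B\in\mathcal V$. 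Statement (iii) is then formal: applying part (1) to $(\mathcal S,\mathcal V)$ and $(\mathscr E,\mathscr F)$ yields a cotorsion pair $(\mathcal A',\mathcal B')$ on $\mathcal C$ with $\mathcal A\subseteq\mathcal A'$, $\mathcal B\subseteq\mathcal B'$ (the inclusions again use $(\ref{ABCond1})$), and since $\mathcal A={}^{\perp}(\mathcal B[1])$ together with $\mathcal B\subseteq\mathcal B'$ forces $\mathcal A'\subseteq\mathcal A$, we conclude $(\mathcal A',\mathcal B')=(\mathcal A,\mathcal B)$.

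The main obstacle will be the verification in part (2) that $ii^{\ast}A\to iN\to ii^{!}B[1]$ is genuinely a distinguished triangle — i.e. that the two independent corrections, of $A$ to $ii^{\ast}A$ on the left and of $B$ to $ii^{!}B$ on the right, are mutually compatible rather than merely each producing an object of the correct subcategory. This is exactly where all three conditions in $(\ref{ABCond1})$ and the recollement identities must be woven together; I expect to do it via a $3\times3$ diagram built from the counit triangle $j_{!}\ell A\to A\to ii^{\ast}A\to$, the identity on $iN$, and the counit triangle $ii^{!}B[1]\to B[1]\to j_{\ast}\ell B[1]\to$. Every other step reduces to routine octahedral bookkeeping or direct adjunction computations.
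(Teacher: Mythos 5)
First, a point of comparison: the paper does not prove this statement at all --- it is imported as a Fact from \cite[Theorems 3.1, 3.3]{C} --- so your attempt is a reconstruction of Chen's proof rather than of anything internal to this article. Your part (1) is correct and is the standard gluing argument: orthogonality via the triangle $ii^{!}B\to B\to j_{\ast}\ell B\to (ii^{!}B)[1]$ and adjunction, and $\mathcal{C}=\mathcal{A}\ast\mathcal{B}[1]$ by the three octahedra you describe. In part (2), the orthogonality computations, the decomposition of $\mathcal{C}_{\mathcal{N}}$, and the deduction of (iii) from (i) and (ii) are also essentially fine (for (ii) you should additionally check that $\ell(\mathcal{A})$ and $\ell(\mathcal{B})$ are closed under direct summands, as Definition \ref{DefCPC} requires; this can be done using $j_{\ast}\ell(\mathcal{B})\subseteq\mathcal{B}$ and the consequence $j_{!}\ell(\mathcal{A})\subseteq\mathcal{A}$ of $(\ref{ABCond1})$).

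The step you flag as the main obstacle in (2)(i) is, however, a genuine gap, and not one that a $3\times 3$ diagram can close, because the triangle you intend to produce does not exist in general. Since $\mathcal{C}((j_{!}\ell A)[1],iN)\cong\mathcal{C}_{\mathcal{N}}(\ell A[1],\ell (iN))=0$, the factorization $a'$ of $a$ through the unit $A\to ii^{\ast}A$ is unique and equals $i$ applied to the adjoint morphism $i^{\ast}A\to N$; hence any distinguished triangle beginning with $a'$ has third term $\mathrm{Cone}(a')\cong ii^{\ast}B[1]$ --- it is nothing but $ii^{\ast}$ applied to the chosen decomposition triangle. Dually, $b'$ is $i$ applied to the adjoint morphism $N\to i^{!}B[1]$, and the triangle it sits in is $ii^{!}$ applied to the decomposition triangle, with first term $ii^{!}A$. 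Consequently, $ii^{\ast}A\overset{a'}{\longrightarrow}iN\overset{b'}{\longrightarrow}ii^{!}B[1]$ being distinguished would force $i^{\ast}B\cong i^{!}B$ and $i^{\ast}A\cong i^{!}A$, which $(\ref{ABCond1})$ does not give. The failure is visible already for $N=0$: for any $W\in\mathcal{A}\cap\mathcal{B}$, the triangle $W\to 0\to W[1]\to W[1]$ is a valid $(\mathcal{A},\mathcal{B})$-decomposition of $i(0)$, and your claim would then force $i^{\ast}W\cong i^{!}W$ for every object of the core. What your two factorizations really give are the one-sided decompositions $i^{\ast}A\to N\to i^{\ast}B[1]\to i^{\ast}A[1]$ (left term in $\mathcal{S}$, right term not in $\mathcal{V}[1]$ in general) and $i^{!}A\to N\to i^{!}B[1]\to i^{!}A[1]$ (right term in $\mathcal{V}[1]$, left term not in $\mathcal{S}$ in general); bridging these two is exactly the nontrivial content of Chen's Theorem 3.3, and your proposal contains no argument for it.
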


\medskip

If we fix $(\mathcal{S},\mathcal{V})\in\mathfrak{CP}(\mathcal{N})$, then Fact \ref{FactChen} can be rephrased as follows.
\begin{cor}\label{CorChen}
Let $(\ref{DiagRecoll})$ be a recollement of triangulated categories, and let $(\mathcal{S},\mathcal{V})\in\mathfrak{CP}(\mathcal{N})$ be a fixed cotorsion pair. Put
\[ \mathfrak{M}^{\prime}_{(\mathcal{S},\mathcal{V})}=\Set{ (\mathcal{A},\mathcal{B})\in\mathfrak{CP}(\mathcal{C})| \begin{array}{c}\mathcal{S}\subseteq\mathcal{A}\subseteq {}^{\perp}\mathcal{V}[1]\\ \mathcal{V}\subseteq\mathcal{B}\subseteq\mathcal{S}[-1]^{\perp}\end{array},\ \mathrm{Ext}^1_{\mathcal{C}_{\mathcal{N}}}(\ell(\mathcal{A}),\ell(\mathcal{B}))=0 }. \]
Here, $\mathcal{S}[-1]^{\perp}$ and ${}^{\perp}\mathcal{V}[1]$ denotes the right and left orthogonal subcategories taken in $\mathcal{C}$.

Then {\rm (1)} and {\rm (2)} in Fact \ref{FactChen} give the following mutually inverse bijections.
\begin{eqnarray*}
\mathbf{R}_{(\mathcal{S},\mathcal{V})}\colon \mathfrak{M}^{\prime}_{(\mathcal{S},\mathcal{V})}&\to&\mathfrak{CP}(\mathcal{C}_{\mathcal{N}})\\
(\mathcal{A},\mathcal{B})&\mapsto&(\ell(\mathcal{A}),\ell(\mathcal{B}))\\
\mathbf{I}_{(\mathcal{S},\mathcal{V})}\colon \mathfrak{CP}(\mathcal{C}_{\mathcal{N}})&\to&\mathfrak{M}^{\prime}_{(\mathcal{S},\mathcal{V})}\\
(\mathscr{E},\mathscr{F})&\mapsto&(i^{\ast-1}(\mathcal{S})\cap\ell^{-1}(\mathscr{E}),i^{!-1}(\mathcal{V})\cap\ell^{-1}(\mathscr{F}))
\end{eqnarray*}
\end{cor}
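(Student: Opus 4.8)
The plan is to deduce the corollary directly from Fact \ref{FactChen}, by recognizing that $\mathfrak{M}^{\prime}_{(\mathcal{S},\mathcal{V})}$ is precisely the class of cotorsion pairs on $\mathcal{C}$ that arise by gluing $(\mathcal{S},\mathcal{V})$ with some cotorsion pair on $\mathcal{C}_{\mathcal{N}}$. Throughout I would identify $\mathcal{N}$ with its essential image $i(\mathcal{N})\subseteq\mathcal{C}$, and use repeatedly that $i,i^{\ast},i^{!},j_!,j^{\ast}(=\ell),j_{\ast}$ are triangle functors (so they commute with $[1]$ and hence carry $\mathrm{Ext}^1$ to $\mathrm{Ext}^1$ across the three adjunctions) and that $i$ is fully faithful (so $\mathrm{Ext}^1_{\mathcal{N}}$ and $\mathrm{Ext}^1_{\mathcal{C}}$ agree on objects of $\mathcal{N}$). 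As a preliminary simplification I would note that, for a cotorsion pair $(\mathcal{A},\mathcal{B})$ on $\mathcal{C}$, the inclusion $\mathcal{A}\subseteq{}^{\perp}\mathcal{V}[1]$ is equivalent to $\mathcal{V}\subseteq\mathcal{B}$ and $\mathcal{B}\subseteq\mathcal{S}[-1]^{\perp}$ is equivalent to $\mathcal{S}\subseteq\mathcal{A}$, so that membership in $\mathfrak{M}^{\prime}_{(\mathcal{S},\mathcal{V})}$ amounts to $\mathcal{S}\subseteq\mathcal{A}$, $\mathcal{V}\subseteq\mathcal{B}$ and $\mathrm{Ext}^1_{\mathcal{C}_{\mathcal{N}}}(\ell(\mathcal{A}),\ell(\mathcal{B}))=0$.

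First I would show $\mathbf{I}_{(\mathcal{S},\mathcal{V})}$ is well defined and $\mathbf{R}_{(\mathcal{S},\mathcal{V})}\circ\mathbf{I}_{(\mathcal{S},\mathcal{V})}=\mathrm{id}$. Given $(\mathscr{E},\mathscr{F})\in\mathfrak{CP}(\mathcal{C}_{\mathcal{N}})$, Fact \ref{FactChen}(1) produces $(\mathcal{A},\mathcal{B})=\mathbf{I}_{(\mathcal{S},\mathcal{V})}(\mathscr{E},\mathscr{F})\in\mathfrak{CP}(\mathcal{C})$ with $(\mathcal{A},\mathcal{B})|_{\mathcal{N}}=(\mathcal{S},\mathcal{V})$ and $(\ell(\mathcal{A}),\ell(\mathcal{B}))=(\mathscr{E},\mathscr{F})$; the second equality already gives $\mathbf{R}_{(\mathcal{S},\mathcal{V})}\circ\mathbf{I}_{(\mathcal{S},\mathcal{V})}=\mathrm{id}$ and $\mathrm{Ext}^1_{\mathcal{C}_{\mathcal{N}}}(\ell(\mathcal{A}),\ell(\mathcal{B}))=0$, while $\mathcal{A}\cap\mathcal{N}=\mathcal{S}$, $\mathcal{B}\cap\mathcal{N}=\mathcal{V}$ together with $\mathcal{S},\mathcal{V}\subseteq\mathcal{N}$ give $\mathcal{S}\subseteq\mathcal{A}$, $\mathcal{V}\subseteq\mathcal{B}$. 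Hence $(\mathcal{A},\mathcal{B})\in\mathfrak{M}^{\prime}_{(\mathcal{S},\mathcal{V})}$.

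Next I would show $\mathbf{R}_{(\mathcal{S},\mathcal{V})}$ is well defined and $\mathbf{I}_{(\mathcal{S},\mathcal{V})}\circ\mathbf{R}_{(\mathcal{S},\mathcal{V})}=\mathrm{id}$. Fix $(\mathcal{A},\mathcal{B})\in\mathfrak{M}^{\prime}_{(\mathcal{S},\mathcal{V})}$. The crux is to check that $(\mathcal{A},\mathcal{B})$ satisfies the hypotheses $(\ref{ABCond1})$ of Fact \ref{FactChen}(2) and that the cotorsion pair it induces on $\mathcal{N}$ is exactly the fixed $(\mathcal{S},\mathcal{V})$; granting these, Fact \ref{FactChen}(2)(ii),(iii) give $\mathbf{R}_{(\mathcal{S},\mathcal{V})}(\mathcal{A},\mathcal{B})=(\ell(\mathcal{A}),\ell(\mathcal{B}))\in\mathfrak{CP}(\mathcal{C}_{\mathcal{N}})$ and $\mathbf{I}_{(\mathcal{S},\mathcal{V})}\circ\mathbf{R}_{(\mathcal{S},\mathcal{V})}=\mathrm{id}$, completing the proof. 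For the induced pair: $\mathcal{S}\subseteq\mathcal{A}\cap\mathcal{N}$ is given, and if $X\in\mathcal{A}\cap\mathcal{N}$ then $\mathrm{Ext}^1_{\mathcal{N}}(X,\mathcal{V})\cong\mathrm{Ext}^1_{\mathcal{C}}(X,\mathcal{V})=0$ (as $\mathcal{V}\subseteq\mathcal{B}$), so $X\in\mathcal{S}$ since $(\mathcal{S},\mathcal{V})$ is a cotorsion pair on $\mathcal{N}$; dually $\mathcal{B}\cap\mathcal{N}=\mathcal{V}$. For $(\ref{ABCond1})$: using $j^{\ast}\dashv j_{\ast}$ one gets $\mathrm{Ext}^1_{\mathcal{C}}(A,j_{\ast}\ell(B))\cong\mathrm{Ext}^1_{\mathcal{C}_{\mathcal{N}}}(\ell(A),\ell(B))=0$ for all $A\in\mathcal{A},B\in\mathcal{B}$, whence $j_{\ast}\ell(\mathcal{B})\subseteq\mathcal{A}[-1]^{\perp}=\mathcal{B}$; using $i^{\ast}\dashv i$ one gets $\mathrm{Ext}^1_{\mathcal{N}}(i^{\ast}A,V)\cong\mathrm{Ext}^1_{\mathcal{C}}(A,iV)=0$ for $A\in\mathcal{A}$, $V\in\mathcal{V}\subseteq\mathcal{B}$, so $i^{\ast}A\in\mathcal{S}$ and hence $ii^{\ast}A\in\mathcal{S}\subseteq\mathcal{A}$; the inclusion $ii^{!}(\mathcal{B})\subseteq\mathcal{B}$ is dual, using $i\dashv i^{!}$ and $\mathcal{S}\subseteq\mathcal{A}$ to see $i^{!}B\in\mathcal{V}$.

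The adjunction bookkeeping is routine; the obstacle I anticipate is organizational rather than deep. One must be careful that each isomorphism of $\mathrm{Hom}$-groups is genuinely induced by a triangle functor so that the degree shift in $\mathrm{Ext}^1$ is respected, and one must confirm that the bracketing inclusions defining $\mathfrak{M}^{\prime}_{(\mathcal{S},\mathcal{V})}$ are, in the presence of the cotorsion-pair axioms, equivalent to the statement ``$(\ref{ABCond1})$ holds and $(\mathcal{A},\mathcal{B})|_{\mathcal{N}}=(\mathcal{S},\mathcal{V})$'', which is exactly the hypothesis under which Fact \ref{FactChen}(2) applies. Once that identification is made, both composites being the identity is immediate from Fact \ref{FactChen}.
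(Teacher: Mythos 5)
Your proposal is correct and follows essentially the same route as the paper: the paper's proof likewise reduces the corollary to checking that, for a cotorsion pair $(\mathcal{A},\mathcal{B})$ on $\mathcal{C}$, membership in $\mathfrak{M}^{\prime}_{(\mathcal{S},\mathcal{V})}$ is equivalent to the hypotheses of Fact \ref{FactChen} {\rm (2)} together with $(\mathcal{A},\mathcal{B})|_{\mathcal{N}}=(\mathcal{S},\mathcal{V})$, using exactly the same adjunction computations ($i^{\ast}\dashv i_{\ast}\dashv i^{!}$, $j^{\ast}\dashv j_{\ast}$) and the identification $\mathcal{N}\cap{}^{\perp}\mathcal{V}[1]=\mathcal{S}$, $\mathcal{N}\cap\mathcal{S}[-1]^{\perp}=\mathcal{V}$, before invoking Fact \ref{FactChen} for the bijectivity. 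Your preliminary reduction of the four inclusions to $\mathcal{S}\subseteq\mathcal{A}$, $\mathcal{V}\subseteq\mathcal{B}$ is a harmless repackaging of the same orthogonality facts.
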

\begin{proof}
It suffices to show that $(\mathcal{A},\mathcal{B})\in\mathfrak{CP}(\mathcal{C})$ satisfies
\begin{equation}\label{EQUIV1}
(\ref{ABCond1})\ \ \text{and}\ \ i^{-1}(\mathcal{A})=\mathcal{S},\ i^{-1}(\mathcal{B})=\mathcal{V}
\end{equation}
if and only if it satisfies
\begin{equation}\label{EQUIV2}
\mathcal{S}\subseteq\mathcal{A}\subseteq {}^{\perp}\mathcal{V}[1],\ \mathcal{V}\subseteq\mathcal{B}\subseteq\mathcal{S}[-1]^{\perp}\ \ \text{and}\ \ \mathrm{Ext}^1_{\mathcal{C}_{\mathcal{N}}}(\ell(\mathcal{A}),\ell(\mathcal{B}))=0.
\end{equation}

\smallskip

Suppose $(\mathcal{A},\mathcal{B})$ satisfies $(\ref{EQUIV1})$. Then we have
\[ i^{\ast}(\mathcal{A})\subseteq i^{-1}(\mathcal{A})=\mathcal{S},\ i^!(\mathcal{B})\subseteq i^{-1}(\mathcal{B})=\mathcal{V},\ j_{\ast}\ell(\mathcal{B})\subseteq\mathcal{B}. \]
Since $(\mathcal{A},\mathcal{B})\in\mathfrak{CP}(\mathcal{C})$ and $(\mathcal{S},\mathcal{V})\in\mathfrak{CP}(\mathcal{N})$, this implies
\[ \mathrm{Ext}^1_{\mathcal{N}}(i^{\ast}(\mathcal{A}),\mathcal{V})=0,\ \mathrm{Ext}^1_{\mathcal{N}}(\mathcal{S},i^!(\mathcal{B}))=0,\ \mathrm{Ext}^1_{\mathcal{C}}(\mathcal{A},j_{\ast}\ell(\mathcal{B}))=0. \]
By the adjointness, this means
\[ \mathrm{Ext}^1_{\mathcal{C}}(\mathcal{A},\mathcal{V})=0,\ \mathrm{Ext}^1_{\mathcal{C}}(\mathcal{S},\mathcal{B})=0,\ \mathrm{Ext}^1_{\mathcal{C}_{\mathcal{N}}}(\ell(\mathcal{A}),\ell(\mathcal{B}))=0, \]
which is equivalent to $(\ref{EQUIV2})$.

Conversely, suppose $(\mathcal{A},\mathcal{B})$ satisfies $(\ref{EQUIV2})$. Then $i^{-1}(\mathcal{A})=\mathcal{N}\cap\mathcal{A}$ satisfies
\[ \mathcal{S}\subseteq \mathcal{N}\cap\mathcal{A}\subseteq \mathcal{N}\cap {}^{\perp}\mathcal{V}[1]=\mathcal{S}, \]
and thus $i^{-1}(\mathcal{A})=\mathcal{S}$. Similarly, we have $i^{-1}(\mathcal{B})=\mathcal{V}$. Thus $ii^{\ast}(\mathcal{A})\subseteq\mathcal{A}\ (\Leftrightarrow\ i^{\ast}(\mathcal{A})\subseteq\mathcal{S})$ follows from $\mathrm{Ext}^1_{\mathcal{N}}(i^{\ast}(\mathcal{A}),\mathcal{V})=\mathrm{Ext}^1_{\mathcal{C}}(\mathcal{A},\mathcal{V})=0$. Dually for $ii^!(\mathcal{B})\subseteq\mathcal{B}$. Similarly, $j_{\ast}\ell(\mathcal{B})\subseteq\mathcal{B}$ follows from $\mathrm{Ext}^1_{\mathcal{C}_{\mathcal{N}}}(\ell(\mathcal{A}),\ell(\mathcal{B}))=0$ and the adjointness.
\end{proof}

As a corollary, we can associate a Hovey TCP to any recollement with specified $(\mathcal{S},\mathcal{V})\in\mathfrak{CP}(\mathcal{N})$, as follows.
\begin{cor}\label{CorRecolltoHTCP}
Let $(\ref{DiagRecoll})$ be a recollement, and let $(\mathcal{S},\mathcal{V})\in\mathfrak{CP}(\mathcal{N})$ be any cotorsion pair. If we put $\mathcal{U}=i^{\ast-1}(\mathcal{S})$ and $\mathcal{T}=i^{!-1}(\mathcal{V})$, then $\mathcal{P}=((\mathcal{S},\mathcal{T}),(\mathcal{U},\mathcal{V}))$ is a Hovey TCP on $\mathcal{C}$. In particular, we have $\mathcal{U}={}^{\perp}\mathcal{V}[1]$ and $\mathcal{T}=\mathcal{S}[-1]^{\perp}$ in $\mathcal{C}$.
\end{cor}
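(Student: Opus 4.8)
The plan is to obtain both $(\mathcal{S},\mathcal{T})$ and $(\mathcal{U},\mathcal{V})$ as gluings, in the sense of Fact \ref{FactChen}(1), of the given $(\mathcal{S},\mathcal{V})\in\mathfrak{CP}(\mathcal{N})$ with the two trivial cotorsion pairs on $\mathcal{C}_{\mathcal{N}}$, and then to read the Hovey condition off the cotorsion pair identities in $\mathcal{N}$. First I would recall the standard consequences of the recollement $(\ref{DiagRecoll})$: since $i\ (=i_{\ast})$ is fully faithful, $i^{\ast}i\cong\mathrm{Id}_{\mathcal{N}}\cong i^{!}i$; the essential image $i(\mathcal{N})$ equals $\ker\ell$ (from $\ell i=0$ together with the triangle $ii^{!}C\to C\to j_{\ast}\ell C\to(ii^{!}C)[1]$); and $\ker\ell$ is a thick triangulated subcategory of $\mathcal{C}$. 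Via $i$ we regard $\mathcal{S},\mathcal{V}$ as full subcategories of $\mathcal{C}$, closed under isomorphisms and, by thickness of $i(\mathcal{N})$, under direct summands.

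Next, $(\mathcal{C}_{\mathcal{N}},0)$ and $(0,\mathcal{C}_{\mathcal{N}})$ are cotorsion pairs on $\mathcal{C}_{\mathcal{N}}$ (in Definition \ref{DefCPC}, condition (i) is witnessed by the triangles $C\to C\to0\to C[1]$ and $0\to C\to C\to0$, and condition (ii) is vacuous). Gluing $(\mathcal{S},\mathcal{V})$ with $(\mathcal{C}_{\mathcal{N}},0)$ produces, by Fact \ref{FactChen}(1), a cotorsion pair on $\mathcal{C}$ with first component $i^{\ast-1}(\mathcal{S})\cap\ell^{-1}(\mathcal{C}_{\mathcal{N}})=i^{\ast-1}(\mathcal{S})=\mathcal{U}$ and second component $i^{!-1}(\mathcal{V})\cap\ell^{-1}(0)$; since $\ell^{-1}(0)=i(\mathcal{N})$ and $i^{!}(iN)\cong N$, this second component is exactly $i(\mathcal{V})=\mathcal{V}$, so $(\mathcal{U},\mathcal{V})\in\mathfrak{CP}(\mathcal{C})$. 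Symmetrically, gluing $(\mathcal{S},\mathcal{V})$ with $(0,\mathcal{C}_{\mathcal{N}})$ gives $(\mathcal{S},\mathcal{T})\in\mathfrak{CP}(\mathcal{C})$, because $i^{\ast-1}(\mathcal{S})\cap i(\mathcal{N})=i(\mathcal{S})=\mathcal{S}$ and $i^{!-1}(\mathcal{V})\cap\ell^{-1}(\mathcal{C}_{\mathcal{N}})=\mathcal{T}$. The inclusion $\mathcal{S}\subseteq\mathcal{U}$ is immediate from $i^{\ast}(iN)\cong N$, so $\mathcal{P}=((\mathcal{S},\mathcal{T}),(\mathcal{U},\mathcal{V}))$ is a TCP, and the asserted identities $\mathcal{U}={}^{\perp}\mathcal{V}[1]$ and $\mathcal{T}=\mathcal{S}[-1]^{\perp}$ in $\mathcal{C}$ are then precisely the orthogonality statements of Definition \ref{DefCPC}.

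For the Hovey condition I would show that the subcategories $\mathcal{N}^i,\mathcal{N}^f$ of Definition \ref{DefNiNf} both equal $i(\mathcal{N})$. Since $(\mathcal{S},\mathcal{V})$ is a cotorsion pair in $\mathcal{N}$ we have $\mathcal{N}=\mathcal{S}\ast\mathcal{V}[1]$, and shifting the defining triangles (using that $\mathcal{N}$ is closed under $[\pm1]$) also gives $\mathcal{N}=\mathcal{S}[-1]\ast\mathcal{V}$. Applying the exact functor $i$, which commutes with $[1]$, the subcategory $i(\mathcal{N})$ is contained in $i(\mathcal{S})\ast i(\mathcal{V})[1]=\mathcal{N}^i$ and in $i(\mathcal{S})[-1]\ast i(\mathcal{V})=\mathcal{N}^f$. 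Conversely, any object of $\mathcal{N}^i=\mathcal{S}\ast\mathcal{V}[1]$ (respectively of $\mathcal{N}^f$) is, inside $\mathcal{C}$, an extension of two objects of the triangulated subcategory $i(\mathcal{N})$, hence lies in $i(\mathcal{N})$. Thus $\mathcal{N}^i=\mathcal{N}^f=i(\mathcal{N})$, so $\mathcal{P}$ is a Hovey TCP.

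The only point requiring care is the bookkeeping of the identification of $\mathcal{S},\mathcal{V}$ as subcategories of $\mathcal{C}$ via $i$: one must invoke the thickness of $i(\mathcal{N})$ to know that $\mathcal{S},\mathcal{V}$ remain closed under direct summands in $\mathcal{C}$, and one must check that the two gluings with the trivial cotorsion pairs reproduce $\mathcal{U},\mathcal{V}$ and $\mathcal{S},\mathcal{T}$ exactly, not merely up to closure. Once these identifications are in place, the argument is a direct application of Fact \ref{FactChen} and Definition \ref{DefCPC} and involves no genuinely new computation.
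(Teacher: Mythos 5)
Your proposal is correct and follows essentially the same route as the paper: both obtain $(\mathcal{U},\mathcal{V})$ and $(\mathcal{S},\mathcal{T})$ by gluing $(\mathcal{S},\mathcal{V})$ with the trivial cotorsion pairs $(\mathcal{C}_{\mathcal{N}},0)$ and $(0,\mathcal{C}_{\mathcal{N}})$ via Fact \ref{FactChen}(1) (the paper phrases this as $\mathbf{I}_{(\mathcal{S},\mathcal{V})}((\mathcal{C}_{\mathcal{N}},0))$ and $\mathbf{I}_{(\mathcal{S},\mathcal{V})}((0,\mathcal{C}_{\mathcal{N}}))$), verify the TCP condition (you via $\mathcal{S}\subseteq\mathcal{U}$ using $i^{\ast}i\cong\mathrm{Id}$, the paper via $\mathrm{Ext}^1_{\mathcal{C}}(\mathcal{S},\mathcal{V})=\mathrm{Ext}^1_{\mathcal{N}}(\mathcal{S},\mathcal{V})=0$, which are equivalent by Definition of TCP), and deduce the Hovey property from $\mathcal{S}\ast\mathcal{V}[1]=\mathcal{N}$ being a triangulated subcategory of $\mathcal{C}$. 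Your extra bookkeeping (identifying $i(\mathcal{N})=\ker\ell$ and checking $\mathcal{N}^i=\mathcal{N}^f=i(\mathcal{N})$ explicitly) just makes the paper's terse final sentence precise.
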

\begin{proof}
Remark that we have
\[ (\mathcal{S},\mathcal{T})=\mathbf{I}_{(\mathcal{S},\mathcal{V})}((0,\mathcal{C}_{\mathcal{N}}))\ \ \text{and}\ \ (\mathcal{U},\mathcal{V})=\mathbf{I}_{(\mathcal{S},\mathcal{V})}((\mathcal{C}_{\mathcal{N}},0)). \]
Thus by Fact \ref{FactChen}, we have $(\mathcal{S},\mathcal{T}),(\mathcal{U},\mathcal{V})\in\mathfrak{CP}(\mathcal{C})$.
Since $(\mathcal{S},\mathcal{V})\in\mathfrak{CP}(\mathcal{N})$, we also have $\mathrm{Ext}^1_{\mathcal{C}}(\mathcal{S},\mathcal{V})=\mathrm{Ext}^1_{\mathcal{N}}(\mathcal{S},\mathcal{V})=0$, which means $\mathcal{P}$ is a TCP.
%Concentricity follows from\footnote{See also Remark \ref{RemarkAdditional} (2).}
%\[ \mathcal{S}\cap\mathcal{T}=\mathcal{S}\cap\mathcal{N}\cap\mathcal{T}=\mathcal{S}\cap\mathcal{V}, \]
%\[ \mathcal{U}\cap\mathcal{V}=\mathcal{U}\cap\mathcal{N}\cap\mathcal{V}=\mathcal{S}\cap\mathcal{V}. \]
Since $\mathcal{S}\ast\mathcal{V}[1]=\mathcal{N}\subseteq\mathcal{C}$ is triangulated subcategory, $\mathcal{P}$ is a Hovey TCP.
\end{proof}

Thus a Hovey TCP can be regarded as a generalization of a recollement (together with a chosen cotorsion pair $(\mathcal{S},\mathcal{V})\in\mathfrak{CP}(\mathcal{N})$). 
\begin{prop}\label{PropGeneralRecollement}
Let $(\ref{DiagRecoll})$ be a recollement, let $(\mathcal{S},\mathcal{V})\in\mathfrak{CP}(\mathcal{N})$ be a cotorsion pair, and let $\mathcal{P}=((\mathcal{S},\mathcal{T}),(\mathcal{U},\mathcal{V}))$ be the associated Hovey TCP obtained in Corollary \ref{CorRecolltoHTCP}. Then we have $\mathfrak{M}_{\mathcal{P}}=\mathfrak{M}^{\prime}_{(\mathcal{S},\mathcal{V})}$, and the maps $\mathbf{R}_{(\mathcal{S},\mathcal{V})}$ and $\mathbf{I}_{(\mathcal{S},\mathcal{V})}$ in Fact \ref{FactChen} agree with $\mathbf{R}_{\mathcal{P}}$ and $\mathbf{I}_{\mathcal{P}}$ in Corollary \ref{CorBijHTCP}.

With this view, Corollary \ref{CorBijHTCP} can be regarded as a generalization of Corollary \ref{CorChen} to Hovey TCP.
\end{prop}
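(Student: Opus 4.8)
The plan is to reduce the statement to the identifications already made in Corollary~\ref{CorRecolltoHTCP} and to the triangle equivalence of Corollary~\ref{CorTriaEquivZIandCN}, so that essentially no new argument is needed. First I would record that, by Corollary~\ref{CorRecolltoHTCP}, the associated Hovey TCP $\mathcal{P}=((\mathcal{S},\mathcal{T}),(\mathcal{U},\mathcal{V}))$ satisfies
\[ \mathcal{U}={}^{\perp}\mathcal{V}[1]=i^{\ast-1}(\mathcal{S})\quad\text{and}\quad\mathcal{T}=\mathcal{S}[-1]^{\perp}=i^{!-1}(\mathcal{V}), \]
the orthogonals being taken in $\mathcal{C}$. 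Consequently the boundary conditions ``$\mathcal{S}\subseteq\mathcal{A}\subseteq\mathcal{U}$ and $\mathcal{V}\subseteq\mathcal{B}\subseteq\mathcal{T}$'' entering the definition of $\mathfrak{M}_{\mathcal{P}}$ coincide verbatim with the conditions ``$\mathcal{S}\subseteq\mathcal{A}\subseteq{}^{\perp}\mathcal{V}[1]$ and $\mathcal{V}\subseteq\mathcal{B}\subseteq\mathcal{S}[-1]^{\perp}$'' entering the definition of $\mathfrak{M}^{\prime}_{(\mathcal{S},\mathcal{V})}$. Thus it suffices to check that, for a pair $(\mathcal{A},\mathcal{B})\in\mathfrak{CP}(\mathcal{C})$ subject to these bounds, the two $\mathrm{Ext}^1$-vanishing conditions are equivalent.

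For the Ext comparison I would invoke the commutative squares $(\ref{Diag_ZIandCN})$, which give natural isomorphisms $\ell|_{\mathcal{U}}\cong\Phi\circ\overline{\sigma}$ and $\ell|_{\mathcal{T}}\cong\Phi\circ\overline{\omega}$, together with Corollary~\ref{CorTriaEquivZIandCN}, by which $\Phi\colon\mathcal{Z}/\mathcal{I}\overset{\simeq}{\longrightarrow}\mathcal{C}_{\mathcal{N}}$ is a \emph{triangle} equivalence, so that there is a natural isomorphism $\lambda\colon[1]\circ\Phi\cong\Phi\circ\Sigma$. Since $\overline{\sigma}(\mathcal{A})=\sigma(\mathcal{A}/\mathcal{I})$ and $\overline{\omega}(\mathcal{B})=\omega(\mathcal{B}/\mathcal{I})$, applying the fully faithful $\Phi$ and $\lambda$ gives, for $X\in\mathcal{A}$ and $Y\in\mathcal{B}$,
\[ (\mathcal{Z}/\mathcal{I})(\sigma(X),\Sigma\omega(Y))\ \cong\ \mathcal{C}_{\mathcal{N}}(\Phi\sigma(X),\Phi\Sigma\omega(Y))\ \cong\ \mathcal{C}_{\mathcal{N}}(\ell(X),\ell(Y)[1]). \]
Hence $\mathrm{Ext}^1_{\mathcal{Z}/\mathcal{I}}(\sigma(\mathcal{A}/\mathcal{I}),\omega(\mathcal{B}/\mathcal{I}))=0$ if and only if $\mathrm{Ext}^1_{\mathcal{C}_{\mathcal{N}}}(\ell(\mathcal{A}),\ell(\mathcal{B}))=0$, and combined with the previous paragraph this yields $\mathfrak{M}_{\mathcal{P}}=\mathfrak{M}^{\prime}_{(\mathcal{S},\mathcal{V})}$.

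It then remains to match the maps, which is purely formal once the two underlying sets are identified. The maps $\mathbf{R}_{\mathcal{P}}$ of Corollary~\ref{CorBijHTCP} and $\mathbf{R}_{(\mathcal{S},\mathcal{V})}$ of Corollary~\ref{CorChen} are both given by $(\mathcal{A},\mathcal{B})\mapsto(\ell(\mathcal{A}),\ell(\mathcal{B}))$, hence coincide; and $\mathbf{I}_{\mathcal{P}}$ sends $(\mathscr{E},\mathscr{F})$ to $(\mathcal{U}\cap\ell^{-1}(\mathscr{E}),\mathcal{T}\cap\ell^{-1}(\mathscr{F}))$, which by $\mathcal{U}=i^{\ast-1}(\mathcal{S})$ and $\mathcal{T}=i^{!-1}(\mathcal{V})$ is exactly $(i^{\ast-1}(\mathcal{S})\cap\ell^{-1}(\mathscr{E}),i^{!-1}(\mathcal{V})\cap\ell^{-1}(\mathscr{F}))=\mathbf{I}_{(\mathcal{S},\mathcal{V})}(\mathscr{E},\mathscr{F})$. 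The only point requiring care — and it has in effect already been handled in Remark~\ref{RemBijHTCP} — is to be scrupulous that the natural isomorphisms $\ell|_{\mathcal{U}}\cong\Phi\circ\overline{\sigma}$ and $\ell|_{\mathcal{T}}\cong\Phi\circ\overline{\omega}$ are strong enough to identify the essential image $\overline{\sigma}^{-1}(\mathscr{L})$ with $\mathcal{U}\cap\ell^{-1}(\Phi\mathscr{L})$, and likewise for $\overline{\omega}$; since the relevant subcategories are closed under isomorphism this causes no difficulty, so I do not anticipate a genuine obstacle here.
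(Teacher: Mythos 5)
Your argument is correct and follows the same route as the paper, whose proof is simply the observation that everything is immediate from $\mathcal{U}=i^{\ast-1}(\mathcal{S})={}^{\perp}\mathcal{V}[1]$ and $\mathcal{T}=i^{!-1}(\mathcal{V})=\mathcal{S}[-1]^{\perp}$ together with the already-established Corollaries \ref{CorRecolltoHTCP}, \ref{CorTriaEquivZIandCN}, \ref{CorBijHTCP} and Remark \ref{RemBijHTCP}; you merely spell out the details (matching of the bounds, transport of the $\mathrm{Ext}^1$-conditions along the triangle equivalence $\Phi$ via $(\ref{Diag_ZIandCN})$, and identification of the maps) that the paper leaves implicit.
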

\begin{proof}
This immediately follows from $\mathcal{U}=i^{\ast-1}(\mathcal{S})$ and $\mathcal{T}=i^{!-1}(\mathcal{V})$.
\end{proof}

\section*{Acknowledgement}
This article has been written when the author was staying at LAMFA, l'Universit\'{e} de Picardie-Jules Verne, by the support of JSPS Postdoctoral Fellowships for Research Abroad. He wishes to thank the hospitality of Professor Serge Bouc, Professor Radu Stancu and the members of LAMFA.

\end{document}